\documentclass{article}
\usepackage{times}
\usepackage[hyperindex=true,pageanchor=true,hyperfigures=true,backref=false]{hyperref} 
\usepackage{amsmath}
\usepackage{amssymb}
\usepackage{mathtools}

\usepackage{url}
\usepackage{dsfont} 
\DeclareSymbolFontAlphabet{\Bbb}{AMSb}
\sloppy

\usepackage[T1]{fontenc}

\newlength{\myleftmargin}
\setlength{\myleftmargin}{-5ex}
%----------------Used packages -------------------------

\usepackage{amsmath}
\usepackage{amssymb}
\usepackage{amsthm}
\usepackage{color}
\DeclareSymbolFontAlphabet{\Bbb}{AMSb}
\newcommand{\frk}[1]{\mathfrak{#1}}

% \usepackage{url}
% \usepackage{graphicx}

%---------------Theorem and friends-------------------------------------------------%

\newtheorem{theorem}{Theorem}[section]
\newtheorem{lemma}[theorem]{Lemma}
\newtheorem{proposition}[theorem]{Proposition}
\newtheorem{corollary}[theorem]{Corollary}
% 
% 
% % \theoremstyle{definition}
\newtheorem{definition}[theorem]{Definition}

\newcommand{\atob}[2]{\emph{#1)} $\Rightarrow$ \emph{#2)}.} 
\newcommand{\aeqb}[2]{\emph{#1)} $\Leftrightarrow$ \emph{#2)}.} 
\newcommand{\ada}[1]{\emph{#1).}}

% \newcommand{\qed}{\hfill\rule{7pt}{7pt}}
% \newenvironment{proof}{\noindent{\bf Proof:}}{\qed\medskip}

%-------------tools----------------------------------

\newlength{\fixboxwidth}
\setlength{\fixboxwidth}{\marginparwidth}
\addtolength{\fixboxwidth}{\myleftmargin}

\definecolor{darkgreen}{rgb}{0,0.6,0}

\newcommand{\scriptonlyraw}[1]{}

%-------------paths-------------------------------

%---------------Font switches -------------------------

\newcommand{\ca}[1]{{\cal #1}}

\newcommand{\eul}{\mathrm{e}}
\newcommand{\imi}{\mathrm{i}}

%------------Misc----------------------------------------------

\newcommand{\bidualmap}{\iota}
\newcommand{\Cdualmap}{\hat}

\newcommand{\transpose}{^{\mathsf{T}}}
\newcommand{\mpinv}{^\dagger}

\newcommand{\coordproj}[1]{\pi_{#1}}
\newcommand{\orthproj}{\Pi}
\newcommand{\crdfct}[1]{\frk C_{#1}}

\newcommand{\hochkl}[1]{^{(#1)}}
\newcommand{\hochkls}[1]{^{#1)}}

\newcommand{\annil}{{\perp\hspace*{-1.1ex}\perp}}

%----------------Formatting-----------------------------

\newcommand{\mycdot}{\,\cdot\,}

\newcommand{\myqquad}{\qquad\qquad}

%------------Fields and friends-------------------------------------------

\newcommand{\N}{\mathbb{N}}
\newcommand{\Q}{\mathbb{Q}}
\newcommand{\R}{\mathbb{R}}
\newcommand{\Rd}{\mathbb{R}^d}

\newcommand{\Cfield}{\mathbb{C}}

%------------Expectations and friends-------------------------------------------

\renewcommand{\P}{\mathrm{P}}
\newcommand{\Qm}{\mathrm{Q}}

\newcommand{\PX}{\P_X}

\newcommand{\PU}{\P_U}

\newcommand{\dirac}[1]{\d_{\{#1\}}}

\newcommand{\diracf}[1]{\d_{#1}}
\newcommand{\gauss}[2]{\ca N(#1,#2)}
\newcommand{\gaussm}[2]{\ca N\bigl(#1,#2\bigr)}

\newcommand{\gaussspace}[1]{\ca G(#1)}

%------------Probabilities and integrals---------------------------------------------

\newcommand{\intd}{\, \mathrm{d}}
\newcommand{\ind}{\mathrm{d}}

\newcommand{\E}{\mathbb{E}}
\DeclareMathOperator{\var}{var}
\DeclareMathOperator{\cov}{cov}

\newcommand{\corXY}{R_{X,Y}}

%------------Greeks---------------------------------------------

\renewcommand{\a}{\alpha}
\renewcommand{\b}{\beta}

\renewcommand{\d}{\delta}

\newcommand{\e}{\varepsilon}

\newcommand{\lb}{\lambda}

\newcommand{\s}{\sigma}

\newcommand{\p}{\varphi}
\newcommand{\om}{\omega}
\newcommand{\Om}{\Omega}

%---------------Operators--------------------------------------

\DeclareMathOperator{\spann}{span}

\DeclareMathOperator{\ran}{ran}

\DeclareMathOperator{\co}{co}

\DeclareMathOperator{\expoints}{ex}
\DeclareMathOperator{\aco}{aco}

%-----------------Operations and Relations --------------------------

%--------------Functions-----------------------------------

\newcommand{\cfp}[1]{\varphi_{#1}}

%---------------Special Symbols------------------------------

\DeclareMathOperator{\id}{id}
\newcommand{\eins}{\boldsymbol{1}}

\newcommand{\unimodcont}[2]{\om(#1,#2)}

%---------------norms---------------------------------------

\newcommand{\snorm}[1]{\Vert #1 \Vert}
\newcommand{\mnorm}[1]{\bigl\Vert \, #1 \, \bigr\Vert}
\newcommand{\bnorm}[1]{\Bigl\Vert \, #1 \, \Bigr\Vert}

\newcommand{\inorm}[1]{\Vert #1 \Vert_\infty}
\newcommand{\nucnorm}[1]{\Vert #1 \Vert_{\mathrm{nuc}}}

\DeclareMathOperator{\metric}{d}
\newcommand{\topol}{\ca T}

\newcommand{\tauweaks}{w^*}
\newcommand{\tauweaksseq}{w^*\mathrm{\!-seq}}

\newcommand{\sborel}{\frk B}
\newcommand{\sborelnormx}[1]{\sborel(#1)}
\newcommand{\sborelnorm}[1]{\sborelnormx #1}

\newcommand{\sborelEx}[1]{\s(#1')}

\newcommand{\siC}{\ca C}

%---------------spaces---------------------------------------

\newcommand{\sC}[1] {C(#1)}
\newcommand{\sCb}[1] {C_b(#1)}
\newcommand{\measpace}[1]{\ca M(#1)}

\newcommand{\denseblo}{\frk D}
\newcommand{\blonspace}[2]{\frk N(#1,#2)}

\newcommand{\sLx}[2]{{\ca L_{#1}(#2)}}

\newcommand{\Lx}[2]{{L_{#1}(#2)}}

%------------skalar produts and dual pairings---------------------------------------------

\newcommand{\skprod}[2]{\langle #1, #2 \rangle}

\newcommand{\dualpairxy}[4]{\langle #1, #2 \rangle_{#3,#4}}
\newcommand{\dualpairxyb}[4]{\bigl\langle #1, #2 \bigr\rangle_{#3,#4}}
\newcommand{\dualpairxyB}[4]{\Bigl\langle #1, #2 \Bigr\rangle_{#3,#4}}
\newcommand{\dualpairxybb}[4]{\biggl\langle #1, #2 \biggr\rangle_{#3,#4}}

\newcommand{\dualpair}[3]{\dualpairxy{#1}{#2}{#3'}{#3}}
\newcommand{\dualpairb}[3]{\dualpairxyb{#1}{#2}{#3'}{#3}}
\newcommand{\dualpairB}[3]{\dualpairxyB{#1}{#2}{#3'}{#3}}
\newcommand{\dualpairbb}[3]{\dualpairxybb{#1}{#2}{#3'}{#3}}

\newcommand{\dualpairmc}[3]{\dualpairxy {#1}{#2}{\measpace #3}{\sC #3}}

%------------Vectors and Matrices---------------------------------------------

\newcommand{\muup}{\mu_{X|Y}}
\newcommand{\muupx}{\mu_{X|X}}
\newcommand{\muupi}{\mu_{X_i|Y}}
\newcommand{\muupn}{\mu_{X|Y_n}}
\newcommand{\muupxn}{\mu_{X|X_n}}
\newcommand{\muupA}{\mu_{X|Y, \nfiltseq}}

\newcommand{\KX}{K_{X,X}}
\newcommand{\KXi}{K_{X_i, X_i}}
\newcommand{\Kup}{K_{X|Y}}
\newcommand{\Kupi}{K_{X_i|Y}}
\newcommand{\KupO}{K_{X_0|Y}}
\newcommand{\covup}{C_{X|Y}}
\newcommand{\covupn}{C_{X|Y_n}}
\newcommand{\covupx}{C_{X|X}}
\newcommand{\covupxn}{C_{X|X_n}}
\newcommand{\KXY}{K_{X,Y}}
\newcommand{\KYX}{K_{Y,X}}
\newcommand{\KXiY}{K_{X_i,Y}}
\newcommand{\KYXi}{K_{Y,X_i}}
\newcommand{\KXOY}{K_{X_0,Y}}
\newcommand{\KYXO}{K_{Y,X_0}}
\newcommand{\KY}{K_{Y,Y}}

\newcommand{\meanupn}{m_{X|Y_n=y_n}}
\newcommand{\meanup}{m_{X|Y=y}}
\newcommand{\meanupg}{m_{X|Y=g}}

\newcommand{\kfctup}{k_{X|Y}}

%---------------diagrams---------------------------------------

\newcommand{\npeval}[1]{I_{#1}}
\newcommand{\nfilts}[1]{A_{#1}}
\newcommand{\nfiltseq}{\frk A}

\newcommand{\Fconv}{F_{\mathrm{conv}, \nfiltseq}}

\newcommand{\obsop}{\mathsf S}

\newcommand{\sppath}[1]{#1_{\bullet}}

%------------Basic spaces----------------------------------------------------------

\newcommand{\sA}{\ca A}

\newcommand{\sB}{\ca B}

\evensidemargin .2in
\oddsidemargin  .2in
\setlength\textwidth{6.15in}

\title{Conditioning of Banach Space Valued Gaussian Random Variables: An Approximation Approach Based on Martingales}

\author{Ingo Steinwart\footnote{I want to thank Aleksandar Arsenijevic,  Daniel Winkle, and Marvin Pf\"ortner  for helpful discussions. Funded by Deutsche Forschungsgemeinschaft (DFG, German Research Foundation) under Germany’s Excellence Strategy - EXC 2075 – 390740016.}\\
University of Stuttgart\\
Faculty 8: Mathematics and Physics\\
Institute for Stochastics and Applications\\
D-70569 Stuttgart Germany \\
\texttt{\small ingo.steinwart@mathematik.uni-stuttgart.de}
}

\begin{document}

\maketitle

\begin{abstract}
We investigate the conditional distributions of two Banach space valued, jointly Gaussian random variables. In particular, 
we show that these conditional distributions are again Gaussian and that their means and covariances can be determined by a general
finite dimensional approximation scheme. 
Here, it turns out that the covariance operators occurring in this scheme converge with respect to the nuclear norm and that 
the conditional probabilities converge weakly. Furthermore, we discuss how our approximation scheme 
can be implemented in several classes of important Banach spaces such as (reproducing kernel) Hilbert spaces, spaces of continuous functions,
and other spaces consisting of functions.
As an example,
we then apply our general results to the case of continuous Gaussian processes  that are
conditioned to partial but infinite observations of their paths. Here we show that conditioning on   sufficiently rich, increasing sets of finitely many observations leads 
to  consistent approximations, that is, both the mean and covariance functions converge uniformly
and the conditional probabilities converge weakly.
Moreover, we discuss 
how these results improve our understanding of the popular Gaussian processes for machine learning.
From a technical perspective our results are based upon a Banach space valued martingale approach for regular conditional probabilities.

\end{abstract}

\textbf{Mathematical Subject Classification (2010).} Primary 60G15; Secondary 60B10, 60B11, 62F15, 62G05, 62G20, 28C20, 68T05.

\textbf{Key Words.} Conditional Gaussian distributions in Banach spaces, Gaussian processes for machine learning, Probabilistic numerics

\section{Introduction}\label{sec:intro}

Gaussian processes for machine learning (GP4ML), see e.g.~\cite{RaWi06,Bishop06,KaHeSeSrXXa}, are a standard and widespread non-parametric
Bayesian technique that allows for both prediction and uncertainty quantification in a single approach.
In addition, their Bayesian update rule also plays an important role in other fields such as
information-based complexity, see e.g.~\cite{TrWaWo88,Ritter00,NoWo08,NoWo10}, probabilistic numerics and uncertainty quantification, see e.g.~\cite{Sullivan15,CoOaSuGi19a,HeOsKe22}, 
Bayesian optimization, see e.g.~\cite{Garnett23},
Bayesian methods for inverse problems, see e.g.~\cite{Stuart10a}, and spatial statistics, see e.g.~\cite{GeDiFuGu10,GeSc16a}. 
Finally, beginning with \cite{Neal96} they are also
interesting for neural networks. %, where they are often named as neural network Gaussian processes.

In the simplest setup, GP4MLs assume that a Gaussian process $X:= (X_t)_{t\in T}$ is given, where $T$ is some index set.
Moreover, it is assumed that both the mean function $m:T\to \R$ and the covariance function $k:T\times T\to \R$ defined by
\begin{align}\label{eq:gp4ml-mean}
m(t) &:= \E X_t \, ,\\ \label{eq:gp4ml-cov}
k(t_1,t_2) &:= \cov(X_{t_1}, X_{t_2}) 
\end{align}
are available. For some fixed, finite set $S:= \{s_1,\dots,s_n\}\subset T$ it is further assumed that
we can observe the Gaussian vector $Y:= (X_s)_{s\in S}$, that is, we can observe the Gaussian process $X$ at the points $s_1,\dots,s_n$.
Given an observation $y = (y_1,\dots,y_n)\in \R^n$ of $Y$, the ``conditional Gaussian process'' of $X$ given the observation $Y=y$ is
then also a Gaussian process, whose mean and covariance can be computed  by
\begin{align}\label{thm:grv-conditioning-y-fin-dim-CT-mean-intro}
\meanup(t) &= m(t) +    K_{t,S}   K_{S,S}\mpinv(y  - \E Y) \, ,\\ \label{thm:grv-conditioning-y-fin-dim-CT-cov-intro}
\kfctup(t_1,t_2)    &= k(t_1,t_2) -   K_{t_1,S}   K_{S,S}\mpinv K_{S,t_2} 
\end{align}
for all $t,t_1,t_2\in T$, where
  $K_{t_1,S}$, $K_{S,S}$, and $K_{S,t_2}$ denote covariance matrices with respect to the indicated indices and
$K_{S,S}\mpinv$ is the Moore-Penrose inverse of the $n\times n$-matrix $K_{S,S}$.
% Here we note that the conditioning step actually only considers finite dimensional marginals, and therefore
% the conditional distribution of paths is not covered, see  e.g.~\cite[Chapter 2.2]{RaWi06} and \cite[Chapter 6.4]{Bishop06}. 
% For this reason, we used quotation remarks when referring to the conditional Gaussian process.
Moreover,
recall that the GP4ML literature often assumes noisy observations of the form 
$Y + \e$, where $\e = (\e_1,\dots,\e_n)$ has i.i.d.~entries with $\e_i \sim \gauss 0 {\s^2}$ and 
$\e$ is independent of $X$. In this case, $K_{S,S}\mpinv$ needs to be replaced by $(K_{S,S} + \s^2 I_n)^{-1}$, where $I_n$ denotes the $n$-dimensional 
identity matrix, see again  e.g.~\cite[Chapter 2.2]{RaWi06} and \cite[Chapter 6.4]{Bishop06}, as well as the discussion following \eqref{eq:obs-mod-plus-moise}.

In the GP4ML literature, the Gaussian process $X$ is usually
viewed as a mechanism to generate random functions, e.g.~random continuous functions on a compact metric space $T$.
In this case, the ``conditional Gaussian process'' of $X$ given the observations $Y=y$ is also viewed as a mechanism generating random continuous functions,
despite the fact that the derivations of \eqref{thm:grv-conditioning-y-fin-dim-CT-mean-intro} and \eqref{thm:grv-conditioning-y-fin-dim-CT-cov-intro},
see e.g.~\cite[Chapter 2.2]{RaWi06} and \cite[Chapter 6.4]{Bishop06},
only consider the conditional distributions of $X_{t_1},\dots, X_{t_m}$ for arbitrary, but  \emph{a-priori fixed}   $t_1,\dots,t_m\in T$,
instead of the conditional distribution of entire paths. For this reason, we used quotation remarks when referring to the conditional Gaussian process.
One way to address this issue, is to view $X$ as a $\sC T$-valued
Gaussian random variable, where $\sC T$ denotes the Banach space  of continuous $\R$-valued functions on $T$,
see Sections \ref{sec:prelims} and \ref{sec:examples} for these and the following notions.
In this case, the above $Y$ is an $\R^n$-valued Gaussian random variable and $X$ and $Y$ are jointly Gaussian.
Given their joint distribution $\P_{(X,Y)}$ on $\sC T\times \R^n$, one then seeks a version $\P_{X|Y}(\mycdot|\mycdot)$ of the regular
conditional probability of $X$ given $Y$, since $\P_{X|Y}(\mycdot|y)$ is the conditional distribution  of $X$ given the observation $Y=y$.
Note that by definition, each  $\P_{X|Y}(\mycdot|y)$ is a distribution on $\sC T$ and therefore we can also
ensure that the ``conditional Gaussian process'' again produces continuous functions, provided that 
\eqref{thm:grv-conditioning-y-fin-dim-CT-mean-intro} and \eqref{thm:grv-conditioning-y-fin-dim-CT-cov-intro} match the mean and covariance functions of $\P_{X|Y}(\mycdot|y)$.

The references mentioned at the beginning indicate that for the conditioning problem,
$\sC T$ is by no means the only Banach space one is interested in.
Therefore, let us now consider the general case. To this end, let 
$E$ and $F$ be Banach spaces, which due to technical simplicity are assumed to be separable.
Moreover, let $X$ be an $E$-valued random variable and $Y$ be an $F$-valued random variable such that $X$ and $Y$ are jointly Gaussian.
In the abstract setting one then seeks a version $\P_{X|Y}(\mycdot|\mycdot)$ of the regular
conditional probability of $X$ given $Y$, since  $\P_{X|Y}(\mycdot|y)$ is the distribution on $E$ that equals
the conditional distribution  of $X$ given the observation $Y=y$. 
Compared to the $\sC T$-case described earlier, this generalization
opens several additional possibilities. First, of course, we can consider   Banach spaces of functions $E$ other than $\sC T$, e.g.~spaces
of differentiable functions. Second, we no longer need to assume that we can observe the random functions on some finite number of observational points.
Instead, we could assume, e.g., that we observe derivatives of the random functions at some observational points \cite{PfStHeWeXXa}.
More generally, $Y$ can, for example, be composed by any other finite set of bounded linear functionals as considered in e.g.~\cite{TrGi24a}
and in the context of (finite element) PDE solvers in e.g.~\cite{CoOaSuGi17a,ChHoOwSt21a,PfStHeWeXXa,PoKeRoMe24a}. Moreover, even 
  infinite sets of
  functionals can be meaningful:  To be concrete, in the $\sC T$-example above, the abstract setting makes it possible to
consider arbitrary subsets $S\subset T$ for the observational points. This generalization is  is important for e.g.~PDE-related applications, 
where $S$ may be used to describe (partial) boundary conditions
see e.g.~the recent \cite{HeOsKe22,PfStHeWeXXa} and the references mentioned therein.
Abstracting from these examples leads to the  ``observational model''
\begin{align}\label{eq:obs-model}
Y = \obsop \circ X\, ,
\end{align}
where $\obsop:E\to F$ is a
known bounded linear operator. 
Here we  note that  in the $\sC T$-example, $\obsop$ is the restriction operator $\sC T \to \sC S$ given by $f\mapsto f_{|S}$. However, 
\eqref{eq:obs-model} goes far beyond this case, and in fact \eqref{eq:obs-model} is a somewhat classical instance of the general conditional problem as we will 
see below when reviewing the existing literature.

% and hence \eqref{eq:obs-model} indeed generalizes this example.
% Finally, note that there are, of course, jointly Gaussian random variables $X$ and $Y$ that are not related to each other
% by an equation of the form \eqref{eq:obs-model}.

Although working with infinite dimensional $Y$ has clear conceptual advantages, there is also a  drawback: 
Indeed,  it is often infeasible to fully observe or process infinite dimensional $Y$, or to  
compute $\P_{X|Y}$ for such $Y$.
In these cases, a natural ansatz is to use a sequence $(Y_n)$ of
observable, finite dimensional Gaussian random variables, for which the condition step is feasible. For example, in the $\sC T$-case with
infinite $S\subset T$, one could choose an increasing sequence $(S_n)$ of finite subsets $S_n\subset S$, whose union is dense in $S$, and
set 
\begin{align}\label{eq:Yn}
Y_n := (X_s)_{s\in S_n}\, .
\end{align}
 The hope of this approach is,
of course, that the resulting $\P_{X|Y_n}$'s provide a convergent approximation of $\P_{X|Y}$, e.g.~in terms
of their means and covariances, or as distributions. Now, 
in the case $S=T$ of full observations,  there are indeed results that guarantee convergence of the means and variances as we will discuss below,
and since in this case  the conditional distributions $\P_{X|Y}$ are point masses at the observations, these results can be interpreted as contractions of the
Bayesian method given by \eqref{eq:gp4ml-mean} and \eqref{eq:gp4ml-cov}.
Because of the applications mentioned earlier, we are, however,   mostly interested in case $S\neq T$ of partial observations, in which 
$\P_{X|Y}$ does not simply concentrate on single observations, and where it is usually impossible to 
compute $\P_{X|Y}$ in closed form. 
This case covers, for example,
 the PDE applications mentioned above, where \eqref{eq:Yn} provides a simple approximation approach, see e.g.~\cite{CoOaSuGi17a,RaPeKa17a,PfStHeWeXXa} for some concrete examples.

Let us now discuss what is already known for 
the fully infinite dimensional conditioning and its possible approximations by finite dimensional observations:
To begin with, in the case of finite dimensional observations, that is
 $F= \R^n$,   \cite[Corollary 3.10.3]{Bogachev98} shows that there is a version
 $\P_{X|Y}(\mycdot|\mycdot)$, such that $\P_{X|Y}(\mycdot|y)$  is Gaussian for all $y$. However, no
formulas for the mean and covariance of $\P_{X|Y}(\mycdot|y)$ are provided.
Moreover, the observational model \eqref{eq:obs-model}  for centered $X$ and with arbitrary $E$ and $F$ has been investigated in
\cite{TaVa07a}, where the authors show that the regular conditional probabilities are Gaussian measures, see their Theorem 3.11.
Furthermore, in the case $F=\R^n$, their Corollary 3.13 presents explicit formulas for the conditional mean and covariance
with the help of a so-called representing sequence of the covariance operator $\cov (Y)$.
In addition, a closer look at their proofs reveals, that analogous formulas hold true for general, separable $F$.
The latter formulas have been made explicit and generalized to non-centered $X$ in \cite{TrGi24a}. Moreover, both papers
establish the linearity of the conditional mean mapping, and \cite{TrGi24a} also describes formulas for sequential updates, and 
in the case of $E= \sC T$ and $F=\R^n$, the update rules \eqref{thm:grv-conditioning-y-fin-dim-CT-mean-intro} and \eqref{thm:grv-conditioning-y-fin-dim-CT-cov-intro}
are generalized to arbitrary, bounded linear observational operators $\obsop:\sC T\to \R^n$.
Unfortunately, however, the mean and covariance formulas for infinite dimensional $F$ become less handy, as they require
to have a representing sequence for $\cov (Y)$. Although such
sequences as well as abstract constructions for them exist, see e.g.~\cite[Lemma 3.5]{TaVa07a} or \cite[Lemma 8.2.3]{Stroock11a},
they are rarely at hand in situations in which $F$ is not a Hilbert space.
% , and neither of the mentioned works do contain explicit examples
% that go beyond that case. 
As a consequence, it seems fair to say that the derived formulas have some limited practical value.

The case of $\sC T$-valued $X$ is also considered in \cite{LaGatta13a}. There, $T=[0,t^*]$ is
an interval  and $Y$ is given by a partial observation of $X$ up to some earlier time $s\in [0,t^*]$,
that is $Y:= X_{|[0,s]}$. Under an additional assumption,
the author then proves that the conditional distribution $\P_{X|Y}(\mycdot|y)$
is a Gaussian measure on $\sC T$ and that  the map  $y\mapsto \P_{X|Y}(\mycdot|y)$ is   continuous % if the set of probability measures on $\sC T$ is equipped with
% the topology describing 
with respect to the weak convergence of probability measures.
Furthermore, it is shown that his additional assumption is also necessary for the existence of such continuous  conditional probabilities.
In fact it is worth to mention that both results are actually first shown for the general observational model \eqref{eq:obs-model} and then applied to the
$\sC T$-case, but the motivation in \cite{LaGatta13a} clearly focuses on the $\sC T$-case.

A different dependence of the conditional distributions is considered in  \cite{AlYu17a}. There, the general case of joint Gaussian random variables
$X$ and $Y$ is considered and, in addition, it is assumed that their joint distribution $\P_{(X,Y), \a}$
depends measurably on a parameter $\a$. The main result of
 \cite{AlYu17a} then shows that there exist  (regular) conditional probabilities $\P_{(X|Y), \a}$
 such that $(y,\a)\mapsto \P_{(X|Y),\a}(\mycdot|y)$ is measurable if the set of probability measures is equipped with the Borel $\s$-algebra
 generated by the topology of weak convergence of probability measures.

If both $E$ and $F$ are Hilbert spaces, the conditioning problem has also been investigated by different techniques.
For example, in \cite{OwSc18a} formulas for the mean and covariance of $\P_{X|Y}(\mycdot|y)$ are
established in the full infinite dimensional setting in terms of both \emph{a)} so-called shorted operators, 
and   \emph{b)} suitable sequences of oblique, i.e.~non-orthogonal, projections.
Moreover, \cite{KlSpSu21a} shows, besides many other results, that some of the formulas derived in
\cite{OwSc18a} can be significantly simplified. Independently, \cite{Mandelbaum84a,GoMa08a} established simple formulas for the the mean and covariance,
which essentially resemble \eqref{thm:grv-conditioning-y-fin-dim-CT-mean-intro} and \eqref{thm:grv-conditioning-y-fin-dim-CT-cov-intro} in an abstract sense, under the additional assumption that $\cov (Y)$ is injective.

The approximation \eqref{eq:Yn} in the $\sC T$-case with $S= T$ has also been considered in the literature.
For example, convergence rates for the conditional mean functions, that is, for $\meanupn \to \meanup$,
have been derived in \cite{VaZa11a} for specific Gaussian processes
and the results of \cite{FiSt20a} can also be used to obtain such rates for random observational points and additional, vanishing observational noise.
We refer to \cite[Section 5.1]{KaHeSeSrXXa} for a detailed discussion, where we note that \cite{FiSt20a} provides the better results compared to
the source \cite{StHuSc09b} mentioned in \cite{KaHeSeSrXXa}, since \emph{a)} a clipping step for the mean is avoided, and \emph{b)} also stronger norms 
for the convergence are
considered.
In addition, convergence rates for Sobolev type covariance functions can be derived using the fill-distance and results from kernel interpolation
\cite{Wendland05}. For  a  detailed discussion, we    refer  to \cite[Section 5.2]{KaHeSeSrXXa} and the references mentioned therein.
Finally, \cite{KoPf21a} recently established a uniform convergence result for the conditional mean and covariance functions under rather general
assumptions on the underlying space $T$ and the prior kernel $k$. In addition, this paper provides a small list of earlier, less general convergence results
in the $\sC T$-case with $S= T$. 
% Besides this case, however, there are, to the best of our knowledge, no  other established convergence results. 
% This stands in contrast to the variety of 
% possible applications mentioned above. %, see \cite{TrWaWo88,TaVa07a,Sullivan15,HeOsKe22}, as well as in e.g.~\cite{TrGi24a,PfStHeWeXXa}.

% We view this gap as substantial since conditioning on observations that are not direct partial observations of the paths of $X$ are interesting for various practical
% reasons, see e.g.~the discussions in  \cite{TrGi24a,PfStHeWeXXa}.

In this work, we address the general infinite dimensional conditioning problem and a suitable approximation scheme simultaneously.
Namely, we show that given a sufficiently rich sequence $(y_n')\subset F'$ in the dual of $F$, see Definition \ref{def:filt-seq} and the examples in 
Section \ref{seq:filt-seq}, we have the weak convergence
\begin{align*}
\P_{X|Y_n}(\mycdot| y_n) & \to \P_{X|Y}(\mycdot|y)
\end{align*}
for almost all observations $y$ of $Y$, where $Y_n$ is the  $\R^n$-valued Gaussian random variable 
% and considering the $\R^n$-valued Gaussian random variables
$Y_n := (\dualpair {y_1'}YF,\dots, \dualpair {y_n'}YF$ and 
$y_n := (\dualpair {y_1'}yF,\dots, \dualpair {y_n'}yF$ is an observation of $Y_n$.
In addition, we show that the mean vectors  and covariance operators also converge, see Theorem \ref{thm:grv-conditioning-y-inf-dim} for these three statements.
Moreover, we prove that the means and covariances of $\P_{X|Y_n}(\mycdot| y_n)$ can be computed analogously to the GP4ML case
\eqref{thm:grv-conditioning-y-fin-dim-CT-mean-intro} and \eqref{thm:grv-conditioning-y-fin-dim-CT-cov-intro}, see
 Theorem %s \ref{thm:grv-conditioning-y-fin-dim} and  
 \ref{thm:grv-conditioning-y-inf-dim}. Furthermore,
 we  show that the set of $y\in F$ for which the means converge is a measurable, affine linear subspace of $F$ with full measure. This in turn
   can be used to construct a  ``benign'' version of $\P_{X|Y}(\mycdot|\mycdot)$, see Theorem \ref{thm:canonical-rcp}.
 Finally, we apply our  theory to the $\sC T$-valued case, where we establish uniformly convergent 
 limit versions of the update rules
\eqref{thm:grv-conditioning-y-fin-dim-CT-mean-intro} and \eqref{thm:grv-conditioning-y-fin-dim-CT-cov-intro} for general $S\subset T$. These results 
generalize the findings of 
\cite{KoPf21a} in the sense that neither $S=T$ is   required nor additional assumptions on the observational model are needed.

 As the title of this paper indicates, the major work horse in our proofs is a Banach space-valued martingale approach,
 which is coupled with the relationship between regular conditional probabilities and conditional expectations
 in the Banach space case, see Theorem \ref{thm:limit-conditioning}. In this respect we note that martingale arguments have already been
 used by e.g.~\cite{Mandelbaum84a,OwSc18a} in the Hilbert space setting, but as far as we know, not in the general Banach space setting.
 In addition, the resulting martingale approximations in  \cite{Mandelbaum84a,OwSc18a} are restricted to rather specific filtrations, whereas our
 results only require a natural richness assumption on the filtration.

The rest of this paper is organized as follows: In Section \ref{sec:prelims} we recall  all  notions and notations from probability theory and
functional analysis that are necessary to formulate our main results. These main results are then presented in Section \ref{sec:main-results}. 
Moreover, in Section \ref{seq:filt-seq} we investigate the   ``sufficiently rich'' sequences $(y_n')\subset F'$ mentioned above
in more detail. In particular, we establish a general existence result and discuss simple constructions for some concrete types of spaces including 
(reproducing kernel) Hilbert spaces and spaces of continuous or smooth functions. 
The 
 $\sC T$-valued case is subsequently investigated in more detail  in Section \ref{sec:examples}.
Sections \ref{sec:proofs}
to \ref{sec:example-proofs} contain the proofs of the results of Sections \ref{sec:main-results} to \ref{sec:examples}.
Finally, we have attached five detailed supplements on
\emph{A)} measurability in Banach spaces and Banach space valued integration,
\emph{B)} covariance operators,
\emph{C)} weak convergence, characteristic functions, and Gaussian random variables,
\emph{D)} regular conditional probabilities and conditional expectations in the Banach space case,
and
\emph{E)} continuous Gaussian processes.
Most material in these supplements has been simply collected and unified from the existing literature, but a couple of
mostly simple results are also added since we could not find them in the literature.

\section{Preliminaries}\label{sec:prelims}

Throughout this paper, $E$ and $F$ denote Banach spaces. Moreover, we write $B_E$ for the closed unit ball 
of $E$, and $E'$ for the dual of $E$. 
% In addition, 
% $\bidualmap_E:E\to E''$ denotes the canonical embedding of $E$ into its bi-dual $E''$.
Furthermore,  $\dualpair \mycdot\mycdot E:E'\times E\to \R$  denotes the standard dual pairing defined by
\begin{align*}
\dualpair {x'}x E := x'(x) \, , \myqquad x'\in E',x\in E.
\end{align*}
Moreover, recall that the $\tauweaks$-topology on $E'$ is the smallest topology for which all $\dualpair \mycdot xE:E'\to \R$
are continuous. 
% For separable Banach spaces, 
% the $\tauweaks$-topology has some highly useful properties, which are summarized in the next result. Its first two assertions can be found 
% in \cite[Theorem 2.6.18 and Theorem 2.6.23]{Megginson98}
% or \cite[Theorem V.3.1 and Theorem V.5.1]{Conway90}, and the last assertion follows
% from the fact that compact metric spaces are separable.
% 
% \begin{theorem}\label{thm:alaoglu-and-more} 
% Let $E$ be a separable Banach space. Then $B_{E'}$ is $\tauweaks$-compact,
% the relative $\tauweaks$-topology of $B_{E'}$ is metrizable, and
% % Finally,  
% there exists a countable $\denseblo \subset B_{E'}$
% that is 
% $\tauweaks$-dense in $B_{E'}$.
% \end{theorem}
% 
% Note that for the proofs of our main results we need the existence of such sets $\denseblo$, and
% another key result, namely Theorem \ref{thm:dense-gives-filter-seq} that ensures the existence of our approximation scheme, 
% also need such sets in its formulation.
% In addition,
% the fact that the $\tauweaks$-topology of $B_{E'}$ is metrizable is often used in the the proofs as it allows to work with sequences 
% rather than nets and this in turn opens the door for limit theorems of integrals. 

On $\Rd$ we always consider the usual Borel $\s$-algebra $\sborel^d$ and in the case $d=1$ we simply write 
$\sborel$. Furthermore recall that on general Banach spaces $E$, there are
 at least two natural
$\s$-algebras: The first one is the usual Borel $\s$-algebra 
$\sborelnorm E$ that is generated by the norm topology on $E$ and the second one is 
$\sborelEx E$, that is, the smallest $\s$-algebra on $E$, for which all bounded, linear $x':E\to \R$ are measurable.
In general, we have $\sborelEx E \subset \sborelnorm E$, but for separable $E$ it is well-known that  $\sborelEx E =  \sborelnorm E$, see 
Theorem \ref{thm:pettis-var}. % for details and links to the literature. 

% 
% % ,
% % where, as usual,  $\R$ is equipped with $\sborel := \sborelnorm \R$. We further write 
% % $\sborel^d := \sborelnormx {\R^d}$ and assume throughout this work that $\R^d$ is equipped with $\sborel^d$.
% 
% In general, we have $\sborelEx E \subset \sborelnorm E$ but for  separable $E$  both $\s$-algebras actually coincide as 
% the following theorem, which can be found in e.g.~\cite[Proposition 1.1.1]{HyvNVeWe16} and which also follows from a version of
%  Pettis' measurability theorem \cite[Theorem E.9]{Cohn13}, shows.
% 
% % the following version of Pettis' measurability theorem, which is a consequence of e.g.~\cite[Theorem E.9]{Cohn13} and which can be also found in 
% % \cite[Proposition 1.1.1]{HyvNVeWe16},
% % shows that for separable $E$  both $\s$-algebras coincide.
% 
% \begin{theorem}\label{thm:pettis-var}
% Let $E$ be a separable Banach space.  Then we have $\sborelEx E =  \sborelnorm E$.
% \end{theorem}

If  $(\Om, \sA)$ is a measurable space and  $E$ is a Banach space, then
 an $X:\Om\to E$ is weakly measurable if it is $(\sA, \sborelEx E)$-measurable, or equivalently, if $\dualpair {x'}XE:\Om\to   \R$
is measurable for all $x'\in E'$.
% 
% By a standard measurability argument, see e.g.~\cite[Corollary 1.82]{Klenke14}, 
% we see that this is equivalent to the $(\sA, \sborel)$-measurability of $\dualpair {x'}XE:\Om\to   \R$ for all $x'\in E'$.
Moreover, $X$ is   strongly measurable if it is $(\sA, \sborelnorm E)$-measurable and $X(\Om)$ is separable.
If $E$ is separable, then 
 weak and strong measurability are equivalent, see  Theorem \ref{thm:pettis-var}, and for this reason 
 we sometimes simply speak of measurable $X$.
 Finally, if $(\Om, \sA,\P)$ is a probability
 space, then a strongly measurable $X:\Om\to E$ is called a  random variable, 
 and in this case its image measure $\P_X$ on $\sborelnorm E$ is called the distribution of $X$.
%  Of course, if $E$ is separable, every weakly measurable $X:\Om\to E$ is  a  random variable,
%  and for this reason we sometimes simply speak of measurable $X$.
 
%  
% We also need to recall the notion of weak convergence of probability measures. To this end, let 
%  $E$ be a Banach space, $(\P_n)$ be a sequence of probability measures on $\sborelnorm E$, and $\P$
% be another probability measure on $\sborelnorm E$. Then we say that $(\P_n)$ converges weakly to $\P$, if for all 
% bounded and continuous $f:E\to \R$ we have 
% \begin{align}\label{def:weak-converg-of-meas}
% \int_E f \intd \P_n \to \int_E f \intd \P  \, .
% \end{align}
% Like in the finite dimensional case it turns out that the limit $\P$ is unique, see e.g.~\cite[Chapter I.3.7]{VaTaCh87}.
% Some more information including a Banach space valued version of L\'evy's continuity theorem can be found in Supplement \ref{app:cf}.
% Finally, like in the finite dimensional case, we say that 
% a sequence $(X_n)$ of $E$-valued random variables converges to an $E$-valued random
% variable $X$ in distribution, if the distributions of $(X_n)$  converge weakly to the distribution of $X$.

For a  fixed  probability space $(\Om,\sA,\P)$,       a Banach space $E$, and $p\in [1,\infty)$, we write
\begin{align*}
\sLx p {\P,E} := \biggl\{X:\Om \to E \,\Bigl |\, X \mbox{ is strongly measurable and } \int_\Om \snorm {X(\om)}_E^p \intd \P(\om) < \infty \biggr\} 
\end{align*}
for the space of $p$-times Bochner integrable functions. 
Each $X\in \sLx 1 {\P,E}$ is called Bochner integrable and for these $X$ one can define the expectation by the $E$-valued Bochner integral
\begin{align*}
\E X := \int_\Om X \intd \P \,  ,
\end{align*}
see also
Supplement \ref{app:bs-integration}, which collects some  information on Bochner integrals.
Moreover, for $X\in \sLx 2 {\P,E}$ and $Y \in \sLx 2 {\P,E}$, where $F$ is another Banach space, 
the cross covariance of $X$ and $Y$ is the bounded linear
operator $\cov(X,Y):F'\to E$ defined by the Bochner integrals
\begin{align*}
\cov(X,Y)(y') = \int_\Om \dualpair {y'} {Y-\E Y} F \, (X-\E X) \intd \P \, , \myqquad y'\in F'.
\end{align*}
Note that $\cov(X,Y)$ is even nuclear and thus also compact. We refer to
e.g.~\cite[Theorem 2.5 in Chapter III.2]{VaTaCh87}, or Lemma \ref{lem:cross-cov-as-integral} for an alternative
argument, and to \eqref{def:nuc-op} and \eqref{eq:nuc-norm} for the definition of nuclear operators and their norms.
In particular, the covariance of $X$, which is defined by 
\begin{align*}
\cov(X) := \cov(X,X)\, ,
\end{align*}
is a nuclear operator $\cov(X):E'\to E$. For more information on (cross) covariances we refer to 
Supplement \ref{app:covariances}.
Finally, in the case $E= \R^m$ and $F= \R^n$ the cross covariance operator $\cov(X,Y):(\R^n)'\to \R^m$
can, of course, be described by the $m\times n$-covariance matrix 
 \begin{align}\label{eq:cross-cov-matrix}
 \KXY := \bigl( \cov(X_i,Y_j)  \bigr)_{i,j} \, ,
 \end{align}
where $X_i$ and $Y_j$ denote the $i$th, respectively $j$th, component   of $X$ and $Y$.
% For later use note that
% in this description, we formally
% write elements of $(\R^n)'$ as row vectors and identify $(\R^n)'$ by $\R^n$ via 
% the canonical isomorphism given by the transposition, that is  $\mycdot\transpose:  (\R^n)'\to \R^n$. 
% In other words, we have $\cov(X,Y) (y') = \KXY \cdot (y')\transpose$ for all $y'\in (\R^n)'$.

% In the case $X=Y$ we further use the shorthand $\KX = K_{X,X}$ for the $m\times m$-matrix.
% 
% if we identify 
%  
% Here, we note that $\KXY$ is usually viewed as describing an operator $\R^n\to \R^m$.
% 
% If, however, the elements of $(\R^n)'$ are denoted as row vectors, then the transposition $\mycdot\transpose:\R^n \to (\R^n)'$ is an isomorphism whose inverse is also a transposition, and we have 
% \begin{align*}
% \cov(X,Y) (y') = \KXY \cdot  (y')\transpose 
% \end{align*}

Recall that a probability measure $\P$ on $\R$ is a Gaussian measure, if 
there are $\mu\in \R$ and $\s\in [0,\infty)$ such that 
the characteristic function of $\P$
is given by 
\begin{align*}
\cfp \P(t) =\eul^{\imi  t\mu} \cdot \exp\biggl( -\frac{ \s^2 t^2}{2} \biggr) 
\, , \qquad \qquad t\in \R.
\end{align*}
In the case $\s>0$, we thus have $\P = \gauss \mu{\s^2}$, i.e.~$\P$ is a normal distribution with expectation $\mu$
and variance $\s^2$.
If $\s = 0$, the  probability measure $\P$ has all its mass on $\mu$ and we write $\gauss \mu0 := \P$.
Finally, if $(\Om,\sA,\P)$ is a probability space and $X:\Om\to \R$ is measurable, then $X$ is said to be an $\R$-valued
Gaussian random variable  
if its distribution $\P_X$
% , that is,
% the image measure $\P_X$ of $\P$ under $X$,
is a Gaussian measure on  $\R$.
With these preparations we can now define both $E$-valued Gaussian random variables and Gaussian measures
on $E$. % for separable Banach spaces $E$. 
% . Since we will need the separability of $E$ at various places, we restrict the following definition to separable
% Banach spaces. 

\begin{definition}
Let $(\Om,\sA,\P)$  be a probability space and $E$ be a separable Banach space. Then $X:\Om\to E$ is 
a Gaussian random variable, if 
% it is weakly measurable and 
$\dualpair {x'}X E:\Om\to \R$ is a Gaussian random variable 
for all $x'\in E'$.

Moreover, a probability measure $\Qm$ on $\sborelEx E = \sborelnormx E$ is a Gaussian measure, if
the image measure $\Qm_{x'}$ on $\R$ is a Gaussian measure for all $x'\in E'$.  

Finally, if we have   another  separable Banach spaces $F$ and a $Y:\Om\to F$, then we say that 
 $X$ and $Y$ are jointly Gaussian
random variables, if $(X,Y):\Om\to E\times F$ is a Gaussian random variable, that is, 
\begin{align}\label{def:joint-gauss-eq}
\dualpair{x'}XE + \dualpair{y'}YF :\Om\to \R
\end{align}
is a Gaussian random variable
for all 
$x'\in E'$ and $y'\in F'$.
\end{definition}

Since $E$ is separable, Gaussian random variables are automatically strongly measurable. Moreover, 
it directly follows from the definitions that 
$\Qm$ is a Gaussian measure   on $\sborelEx E$, if and only if the identity map 
$\id_E:E\to E$ defined on the probability space $(E, \sborelEx E, \Qm)$ is a Gaussian random variable.
% 
% 
% 
% 
% 
% It turns out, that it suffices to verify the Gaussianity for a
% $\denseblo \subset B_{E'}$ that is $\tauweaks$-dense in $B_{E'}$, see Theorem \ref{thm:test-for-gms}.
% In addition, it follows from the definitions that 
% $\Qm$ is a Gaussian measure   on $\sborelEx E$, if and only if the identity map 
% $\id_E:E\to E$ defined on the probability space $(E, \sborelEx E, \Qm)$ is a Gaussian random variable.
% Finally, every affine linear, continuous transformation of a Gaussian random variable is a Gaussian random variable. 
% To be more precise, 
% if $X:\Om\to E$ is a Gaussian random variable, $T:E\to F$ is a bounded linear operator mapping into a separable Banach space $F$,  
% and $y\in F$, then $y+ T\circ X$ is Gaussian random variable. 
Furthermore, recall that 
Fernique's theorem, see e.g.~\cite[Corollary 3.2]{LeTa91}, shows that Gaussian random variables 
$X:\Om\to E$ on some probability space $(\Om,\sA,\P)$  satisfy 
\begin{align}\label{eq:Fernique's-theorem}
\int_\Om \snorm {X(\om)}_E^p \intd \P(\om) < \infty
\end{align}
for all $p\in [1,\infty)$. Consequently, for every Gaussian random variable $X$ its
expectation $\E X$ and covariance $\cov(X)$ are defined. Moreover, if $\Qm$ is a Gaussian measure on $\sborelEx E$, then
we define its expectation and covariance by those of the Gaussian random variable $\id_E$ mentioned above, that is 
\begin{align*}
\mu_\Qm &:= \E_\Qm \id_E = \int_E x \intd \Qm(x) \, , \\
\cov(\Qm)(x')  &:= \cov_\Qm (\id_E)(x')  = \int_E \dualpair {x'} {x-\mu_\Qm} E \cdot (x-\mu_\Qm) \intd \Qm(x) \, , \myqquad x'\in E'.
\end{align*}
Like in the finite dimensional case, the expectation and covariance
uniquely determine a Gaussian measure. Namely, if $\P$ and $\Qm$ are  
Gaussian measures on $\sborelEx E$ with $\mu_\P = \mu_\Qm$ and $\cov(\P) = \cov (\Qm)$, then we have $\P = \Qm$, see 
Proposition
\ref{prop:cf-of-general-gm} and the discussion at the beginning of Supplement \ref{app:cf}.
% 
% This can be quickly verified with the help of characteristic functions, e.g.~by combining \cite[Theorem 2.2 in Chapter IV.2.1]{VaTaCh87}
% with \cite[Proposition 2.8 in Chapter IV.2.4]{VaTaCh87}, where the latter is restated in a simplified form in Proposition
% \ref{prop:cf-of-general-gm}. 
For  Gaussian measures $\Qm$ we thus write 
\begin{align}\label{eq:gauss-meas-ex-cov}
\gaussm {\mu_\Qm}{\cov(\Qm)}:= \Qm\, .
\end{align}
Finally, for a Gaussian random variable 
$X:\Om\to E$ on a probability space $(\Om,\sA,\P)$ 
we call 
\begin{align*}
\gaussspace X := \overline{\bigl\{  \dualpair{x'}{X-\E X}E: x'\in E' \bigr\}}^{\Lx 2 \P}
\end{align*}
the Gaussian Hilbert space associated to $X$. %, where we identify the functions $\dualpair{x'}{X-\E X}E:\Om\to \R$ with their $\P$-equivalence classes.
 We refer to  Lemma \ref{lem:gaussspace-of-grv} for more information.

% 
% Note that $\gaussspace X$ is indeed a Gaussian Hilbert space in the sense of \cite{Janson97}, see Lemma \ref{lem:gaussspace-of-grv} for this and additional properties.
% For this and additional properties of $\gaussspace X$
% we refer to 
% Lemma \ref{lem:gaussspace-of-grv}.

% We will often work with $E\times F$-valued Gaussian random variables. 
% To describe the latter, recall that the map  $E'\times F' \to   (E\times F)'$ defined by 
% $(x',y') (x,y) := x'(x) + y'(y)$ for all $x'\in E'$, $y'\in F'$, $x\in E$, and $y\in F$
% is an isomorphism.
% This leads to the following definition.

% We will often work with jointly Gaussian random variables that are introduced next.

% \begin{definition}\label{def:joint-gauss}
% Let  $(\Om,\sA,\P)$  be a probability space, $E$ and $F$  be a separable Banach spaces, and 
% $X:\Om\to E$ and $Y:\Om\to F$ be random variables. Then   $X$ and $Y$ are jointly Gaussian
% random variables, if $(X,Y):\Om\to E\times F$ is a Gaussian random variable, that is, 
% \begin{align}\label{def:joint-gauss-eq}
% \dualpair{x'}XE + \dualpair{y'}YF :\Om\to \R
% \end{align}
% is a Gaussian random variable
% for all 
% $x'\in E'$ and $y'\in F'$.
% \end{definition}

% Obviously, if $X$ and $Y$ are jointly Gaussian
% random variables, then both $X$ and $Y$ are Gaussian random variables. 
% Moreover, if $X$ is an $E$-valued Gaussian random variable, then $(X,X)$ is an $E\times E$-valued Gaussian
% random variable.

Given jointly Gaussian
random variables $X:\Om\to E$ and $Y:\Om\to F$, the major goal of this work is to investigate the 
conditional distribution of $X$ given an observation $Y=y$. 
To make this mathematically rigorous, 
we need to recall the following definition.
% the following definition recalls the concept of 
% regular conditional probabilities.

\begin{definition}\label{def:reg-cond-prob}
Let $(T,\sB)$ and $(U,\sA)$ be measurable spaces and $\P$ be a probability measure on $(T\times U, \sB\otimes \sA)$.
Then we say that a map
$\P(\mycdot|\mycdot):\sB\times U\to [0,1]$ is a  regular conditional probability
of $\P$ given $U$,
if the following three
conditions are satisfied:
\begin{enumerate}
\item For all $u\in U$ the map $\P(\mycdot|u):\sB\to [0,1]$ is a probability measure.
\item For all $B\in \sB$ the map $\P(B|\mycdot): U\to [0,1]$ is $\sA$-measurable.
\item For all $B\in \sB$ and $A\in \sA$ we have
\begin{align}\label{eq:def-reg-cond-prob}
\P(B\times A) = \int_U \eins_A(u) \P(B|u) \intd \PU(u)\, .
\end{align}
Here, $\PU$ denotes the marginal distribution of $\P$ on $U$, that is $\P_U := \P_{\coordproj U}$, where $\P_{\coordproj U}$
is the image measure of $\P$ under the 
coordinate projection $\coordproj U:T\times U\to U$.
\end{enumerate}
\end{definition}

If $E$ and $F$ are separable Banach spaces  and $\Qm$ is a probability measure on 
$\sborelnormx E\otimes \sborelnormx F$, then there always exists a regular conditional probability
$\Qm(\mycdot|\mycdot):\sborelnormx E\otimes F\to [0,1]$ of $\Qm$ given $F$. In addition, this regular 
conditional probability is always $\Qm_F$-almost surely unique. These statements directly follow from a more general and well-known
result on the existence and uniqueness of regular conditional probabilities, which is recalled in Theorem \ref{thm:ex-ei-reg-cond-prob}.

Let us now apply these notion and results to jointly Gaussian
random variables $X:\Om\to E$ and $Y:\Om\to F$   defined on some probability space 
$(\Om,\sA,\P)$.
To this end,  let
 $\Qm := \P_{(X,Y)}$ be their joint distribution on $\sborelnormx E\otimes \sborelnormx F$.  
A quick calculation yields $\Qm_F = \P_Y$, and by our discussion above  there exists 
a $\P_Y$-almost surely unique regular conditional probability 
\begin{align*}
 \Qm(\mycdot|\mycdot) : \sborelnormx E\times F\to [0,1]
\end{align*}
of $\Qm$ given $F$.
We write $\P_{X|Y}(\mycdot|\mycdot) := \Qm(\mycdot|\mycdot)$ and speak of a version of the regular conditional probability of $X$ given $Y$.
Now recall that $\P_{X|Y}(B|y)$ describes the probability of $\{X\in B\}$ under the condition that we have observed $y=Y(\om)$.
Consequently, our goal is to 
 describe 
$\P_{X|Y}(\mycdot|\mycdot)$ modulo $\P_Y$-zero sets.
This task will be accomplished by  
\begin{enumerate}
\item showing that $\P_{X|Y}(\mycdot|y)$ is a Gaussian measure, and 
\item determining the expectation and covariance of $\P_{X|Y}(\mycdot|y)$.
\end{enumerate}
As discussed around \eqref{eq:gauss-meas-ex-cov} these two steps completely describe $\P_{X|Y}(\mycdot|y)$.

In the finite dimensional case, the solution of this task is folklore,
see e.g.~\cite[Appendix A.2]{RaWi06}, \cite[Chapter 2.3]{Bishop06}, \cite[Lemma 4.3]{HaStVoWi05a}, \cite[Theorem 6.20]{Stuart10a}, and also \cite[Section 4]{PfStHeWeXXa}.
In the following formulation of this solution, we write $A\mpinv$ for the Moore-Penrose pseudo-inverse of an 
$m\times n$-matrix $A$, and later we use the same notation for  linear maps $A:\R^n\to \R^m$. 

\begin{theorem}\label{thm:grv-conditioning-fin-dim}
Let $(\Om,\sA,\P)$ be a probability space and $X:\Om\to \R^m$ and $Y:\Om\to \R^n$ be
jointly Gaussian random variables. Then for every version
$\P_{X|Y}(\mycdot|\mycdot) :\sborel^m\times \R^n\to[0,1]$ of the regular conditional probability of $X$ given $Y$
there exists an $N\in \sborel^n$ with
$\P_Y(N) = 0$ such that for all $y\in \R^n\setminus N$ we have 
\begin{align*}
\P_{X|Y}(\mycdot|y) = \gaussm {\muup(y)}{\Kup}\, ,
\end{align*}
where 
\begin{align*}
\muup(y) &:= \E X + \KXY \KY\mpinv(y- \E Y) \, ,\\
\Kup     &:= \KX -\KXY \KY\mpinv \KYX \, .
\end{align*}
\end{theorem}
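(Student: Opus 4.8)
The plan is to construct one explicit version of the regular conditional probability, verify it against Definition~\ref{def:reg-cond-prob}, and then appeal to the $\P_Y$-almost sure uniqueness recalled in Theorem~\ref{thm:ex-ei-reg-cond-prob} to transfer the conclusion to an arbitrary version. Concretely, I would set $A := \KXY\KY\mpinv$ and introduce the ``regression residual''
\[
  Z := X - \E X - A(Y - \E Y)\, ,
\]
so that $X = \muup(Y) + Z$ with $\muup(y) = \E X + A(y - \E Y)$ and $\E Z = 0$. Since $(Z,Y)$ is an affine image of the jointly Gaussian pair $(X,Y)$, it is again jointly Gaussian.

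The conceptual core is to show that $Z$ and $Y$ are \emph{independent}; for jointly Gaussian vectors this reduces to $\cov(Z,Y) = 0$. By translation invariance of covariances, $\cov(Z,Y) = \KXY - A\KY = \KXY\bigl(I_n - \KY\mpinv\KY\bigr)$, and since $\KY$ is symmetric, $I_n - \KY\mpinv\KY$ is the orthogonal projection onto $\ker\KY$. Hence I must check $\ker\KY\subseteq\ker\KXY$, which holds because $\KY v = 0$ forces $\var(v\transpose Y) = v\transpose\KY v = 0$, so $v\transpose Y$ is $\P$-a.s.\ constant and therefore $\KXY v = \cov(X, v\transpose Y) = 0$. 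A parallel, purely algebraic expansion — now using only $\KY\mpinv = (\KY\mpinv)\transpose$ (symmetry of $\KY$) and $\KY\mpinv\KY\KY\mpinv = \KY\mpinv$ — collapses $\cov(Z) = \KX - \KXY A\transpose - A\KYX + A\KY A\transpose$ to $\KX - \KXY\KY\mpinv\KYX = \Kup$.

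Next I would check that $Q(\mycdot|y) := \gaussm{\muup(y)}{\Kup}$ is a regular conditional probability of $\P_{(X,Y)}$ given $\R^n$ in the sense of Definition~\ref{def:reg-cond-prob}. Property~1 is immediate. For property~2, I write $Q(B|y) = \P_Z\bigl(B - \muup(y)\bigr) = \int_{\R^m}\eins_B\bigl(z + \muup(y)\bigr)\intd\P_Z(z)$ and apply Fubini to the jointly Borel integrand $(y,z)\mapsto\eins_B(z + \muup(y))$. For property~3, I use $X = \muup(Y) + Z$, the independence of $Z$ and $Y$, and Fubini to get, for $B\in\sborel^m$ and $C\in\sborel^n$,
\[
  \P_{(X,Y)}(B\times C) = \E\bigl[\eins_C(Y)\,\eins_B(\muup(Y) + Z)\bigr] = \int_{\R^n}\eins_C(y)\,\P_Z\bigl(B - \muup(y)\bigr)\intd\P_Y(y) = \int_{\R^n}\eins_C(y)\,Q(B|y)\intd\P_Y(y)\, ,
\]
which is exactly \eqref{eq:def-reg-cond-prob} since $\P_{(X,Y)}$ has $\R^n$-marginal $\P_Y$.

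Finally, by the $\P_Y$-almost sure uniqueness of regular conditional probabilities (Theorem~\ref{thm:ex-ei-reg-cond-prob}; here one uses that $\sborel^m$ is countably generated to promote the pointwise-in-$B$ a.s.\ equality to a single $\P_Y$-null set) every version $\P_{X|Y}(\mycdot|\mycdot)$ coincides with $Q$ off a $\P_Y$-null set $N\in\sborel^n$, i.e.\ $\P_{X|Y}(\mycdot|y) = \gaussm{\muup(y)}{\Kup}$ for all $y\in\R^n\setminus N$, which is the assertion. I expect the only non-routine point to be the correct handling of a possibly singular $\KY$: establishing $\ran\KYX\subseteq\ran\KY$ (equivalently $\ker\KY\subseteq\ker\KXY$), so that $A = \KXY\KY\mpinv$ genuinely acts as a matrix of regression coefficients and the Moore--Penrose identities collapse $\cov(Z)$ to $\Kup$. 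The independence of $Z$ and $Y$, the Fubini computations, and the assembly of a single exceptional null set are then standard.
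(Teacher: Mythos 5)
Your proof is correct. Note that the paper itself gives no proof of Theorem \ref{thm:grv-conditioning-fin-dim}: it is declared folklore and delegated to the cited references, and is then used as a black box in the proof of Theorem \ref{thm:grv-conditioning-y-fin-dim}. So there is no in-paper argument to compare against; what you have written is the standard decomposition $X = \muup(Y) + Z$ with $Z$ a centered Gaussian residual independent of $Y$, followed by an explicit verification of Definition \ref{def:reg-cond-prob} and an appeal to the almost-sure uniqueness in Theorem \ref{thm:ex-ei-reg-cond-prob}. You correctly identify and resolve the one genuinely non-routine point for the theorem as stated (with $\KY\mpinv$ rather than $\KY^{-1}$): the inclusion $\ker \KY \subset \ker \KXY$, which makes $\cov(Z,Y) = \KXY(I_n - \KY\mpinv\KY)$ vanish, and the Moore--Penrose identities $(\KY\mpinv)\transpose = \KY\mpinv$ and $\KY\mpinv\KY\KY\mpinv = \KY\mpinv$ that collapse $\cov(Z)$ to $\Kup$. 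Two cosmetic remarks: the measurability of $y\mapsto Q(B|y)$ uses Tonelli rather than Fubini (the integrand is a nonnegative indicator), and the parenthetical about countably generated $\s$-algebras is not needed here, since Theorem \ref{thm:ex-ei-reg-cond-prob}\,\emph{ii)} already supplies a single exceptional null set valid for all $B$ simultaneously. Neither affects correctness.
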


% \todo{Look for a \emph{good} reference and use it if available. Otherwise decide if a proof should be included!}

Finally, recall that conditional expectations $\E(X|\siC)$ and  $\E(X|Y)$
of Banach space valued random variables $X$ are defined 
analogously to the  $\R$-valued case. In addition, many properties of $\R$-valued conditional expectations 
still hold in the Banach space case. For details we refer to Definition \ref{def:con-exp} and the subsequent results in Supplement \ref{app:rcp+cond-ex}.

% Namely, if 
% % 
% % we also need
% % conditional expectations of Banach space valued random variables.
% % To recall the definition, which is a straightforward generalization from the $\R$-valued case,
% % let 
% $E$ is a  Banach space,  $(\Om,\sA,\P)$ is  a probability space,  $\siC\subset \sA$ is a
% sub-$\s$-algebra, and  $X\in \sLx 1 {\P,E}$, then a $\P$-integrable and strongly $\siC$-measurable $Z:\Om \to E$
% is called  a conditional expectation of $X$ given $\siC$, if 
% \begin{align*}
% \int_\Om \eins_C Z\intd \P  = \int_\Om \eins_C X \intd \P \, , \myqquad C\in \siC.
% \end{align*}
% Like in the $\R$-valued case, the conditional expectation of $X$ given $\siC$ exists and is $\P$-almost surely unique. Hence we write 
% $\E(X|\siC) := Z$ and say that $Z$ is a version of $\E(X|\siC)$. Moreover, if $\siC$ is generated by a random variable $Y$, that is $\siC = \s(Y)$, we write 
% $\E(X|Y) := \E(X|\s(Y)) = \E(X|\siC)$. Properties of 
% Banach space valued conditional expectations and their relationship to regular conditional probabilities are collected in Supplement \ref{app:rcp+cond-ex}.

\section{Main Results}\label{sec:main-results}

In this section we present the main results of this work that investigate the regular conditional probabilities arising from
conditioning one Banach space valued Gaussian random variable with respect to another one.

We begin by presenting a theorem
that generalizes and extends the finite dimensional Theorem \ref{thm:grv-conditioning-fin-dim} to the case in which the random variable 
$X$ is Banach space valued.  
Of course, for such $X$  we can no longer work with matrices, but apart from this difference, the first part of the following
theorem is an almost literal copy of  Theorem \ref{thm:grv-conditioning-fin-dim}.

\begin{theorem}\label{thm:grv-conditioning-y-fin-dim}
Let $(\Om,\sA,\P)$ be a probability space,  $E$ be a separable Banach space,  and $X:\Om\to E$ and $Y:\Om\to \R^n$ be
 be 
jointly Gaussian random variables. 
Moreover, let 
\begin{align}\label{thm:grv-conditioning-y-fin-dim-muup-def}
\muup(y) &:= \E X + \cov ( X,Y) (\cov Y)\mpinv(y- \E Y) \, , \myqquad y\in \R^n,\\ \label{thm:grv-conditioning-y-fin-dim-covup-def}
\covup     &:= \cov ( X) - \cov ( X,Y) (\cov Y)\mpinv \cov (Y, X) \, .
\end{align}
Then for every version
$\P_{X|Y}(\mycdot|\mycdot) :\sborelnormx E\times \R^n\to[0,1]$ of the regular conditional probability of $X$ given $Y$
there exists an $N\in \sborel^n$ with 
$\P_Y(N) = 0$ such that   
\begin{align*}
\P_{X|Y}(\mycdot|y) = \gaussm {\muup(y)}{\covup}
\end{align*}
for all  $y\in \R^n\setminus N$.
% where 
% \begin{align*}
% \muup(y) &= \E X + \cov ( X,Y) (\cov Y)\mpinv(y- \E Y) \, ,\\
% \covup     &= \cov ( X) - \cov ( X,Y) (\cov Y)\mpinv \cov (Y, X) \, .
% \end{align*}
In addition, the random variable $Z:\Om\to E$ defined by
\begin{align*}
Z:=  \muup \circ Y =  \E X + \cov ( X,Y) (\cov Y)\mpinv(Y- \E Y)
\end{align*}
is both a version of $\E(X|Y)$ and a
 Gaussian random variable with $\E Z = \E X$ and
\begin{align*}
% \E Z &= \E X\, \\
\cov (Z) &= \cov(X) - \covup\, , \\
\gaussspace {Z} &\subset  \gaussspace {Y} \, .
\end{align*}
\end{theorem}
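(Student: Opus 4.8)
The plan is to reduce everything to the finite-dimensional Theorem~\ref{thm:grv-conditioning-fin-dim} by testing against functionals $x'\in E'$, and then to assemble the $E$-valued statements from the scalar ones. First I would fix $x'\in E'$ and consider the pair $(\dualpair{x'}XE, Y):\Om\to \R\times\R^n$, which is jointly Gaussian by the definition of joint Gaussianity. Applying Theorem~\ref{thm:grv-conditioning-fin-dim} to this pair, the conditional distribution of $\dualpair{x'}XE$ given $Y=y$ is Gaussian with mean $\E\dualpair{x'}XE + K_{x'X,Y}\KY\mpinv(y-\E Y)$ and a constant variance; a short computation identifies $K_{x'X,Y} = x'\circ\cov(X,Y)$ (up to transposition conventions), so the conditional mean equals $\dualpairb{x'}{\muup(y)}E$. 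Since this holds for every $x'$ off a $\P_Y$-null set $N_{x'}$, and since $\sborelnormx E = \sborelEx E$ for separable $E$ (Theorem~\ref{thm:pettis-var}), the measure $\gaussm{\muup(y)}{\covup}$ is the unique Gaussian measure whose one-dimensional images match those of $\P_{X|Y}(\mycdot|y)$; the issue of combining uncountably many null sets $N_{x'}$ into one is handled by a countable-determining-set argument (separability of $E$ gives a countable weak$^*$-dense subset of $B_{E'}$, and Gaussian measures are determined by their characteristic functionals evaluated there, cf.\ Proposition~\ref{prop:cf-of-general-gm}).

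Next I would turn to the ``in addition'' part. Define $Z := \muup\circ Y$. That $Z$ is a Gaussian random variable follows because $\dualpair{x'}ZE = \E\dualpair{x'}XE + (x'\circ\cov(X,Y))(\cov Y)\mpinv(Y-\E Y)$ is an affine function of the Gaussian vector $Y$, hence $\R$-valued Gaussian; here one also notes $Z$ is strongly measurable since $E$ is separable and $Z$ takes values in the finite-dimensional affine subspace $\E X + \ran(\cov(X,Y))$. The identity $\E Z = \E X$ is immediate from $\E(Y-\E Y)=0$. To see that $Z$ is a version of $\E(X|Y)$: $Z$ is $\s(Y)$-measurable by construction, and for the averaging property one checks $\E(\eins_A\cdot Z) = \E(\eins_A\cdot X)$ for all $A\in\s(Y)$; by the defining property \eqref{eq:def-reg-cond-prob} of the regular conditional probability together with the first part, $\E(X|Y=y) = \int_E x\intd\P_{X|Y}(\mycdot|y) = \muup(y)$ for $\P_Y$-a.e.\ $y$, which is exactly $Z$. (Alternatively one verifies the scalar averaging identities $\E(\eins_A \dualpair{x'}ZE) = \E(\eins_A\dualpair{x'}XE)$ using the $\R$-valued conditional expectation theory and then invokes uniqueness of the Banach-space-valued conditional expectation from Supplement~\ref{app:rcp+cond-ex}.)

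For the covariance identity $\cov(Z) = \cov(X) - \covup$, I would compute $\cov(Z)(x')$ directly from the definition: writing $Z - \E Z = \cov(X,Y)(\cov Y)\mpinv(Y-\E Y)$, we get
\begin{align*}
\cov(Z)(x') = \E\Bigl[\dualpairb{x'}{\cov(X,Y)(\cov Y)\mpinv(Y-\E Y)}E\cdot \cov(X,Y)(\cov Y)\mpinv(Y-\E Y)\Bigr],
\end{align*}
and pulling the scalar through, this is $\cov(X,Y)(\cov Y)\mpinv\,\E\bigl[(Y-\E Y)(Y-\E Y)\transpose\bigr]\,((\cov Y)\mpinv)\transpose (\cov(X,Y))\transpose x'$; using $\E[(Y-\E Y)(Y-\E Y)\transpose] = \cov Y$ and the Moore--Penrose identities $(\cov Y)\mpinv \cov Y (\cov Y)\mpinv = (\cov Y)\mpinv$ together with the symmetry of $\cov Y$, this collapses to $\cov(X,Y)(\cov Y)\mpinv\cov(Y,X)x' = \cov(X)(x') - \covup(x')$. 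The only subtlety here is the careful bookkeeping of adjoints of finite-rank operators into a Banach space and checking $\ran(\cov(X,Y)) = \ran(\cov(X,Y)(\cov Y)\mpinv)$ so that the pseudo-inverse manipulations are legitimate; this is where I expect the main (though still routine) work to lie. Finally, $\gaussspace Z\subset\gaussspace Y$ is immediate: every $\dualpair{x'}{Z-\E Z}E$ is a (finite) linear combination of the coordinates $Y_j - \E Y_j$, each of which lies in $\gaussspace Y$, and $\gaussspace Y$ is a closed subspace of $\Lx2\P$, so the closure of the span of the $\dualpair{x'}{Z-\E Z}E$ is contained in it.
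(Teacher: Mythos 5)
Your proposal is correct and follows the same overall strategy as the paper: reduce to the finite dimensional Theorem \ref{thm:grv-conditioning-fin-dim} by testing against functionals, use a countable $\tauweaks$-dense subset of $B_{E'}$ to collapse the null sets, identify $\P_{X|Y}(\mycdot|y)$ through its one-dimensional images, obtain the Gaussianity of $Z$ from the affine linearity of $\muup$, and get the conditional-expectation property from the disintegration formula. Two remarks. First, the one place you genuinely diverge is the identity $\cov(Z)=\cov(X)-\covup$: you compute $\cov(Z)=A\,\cov(Y)\,A'$ for $A:=\cov(X,Y)(\cov Y)\mpinv$ and collapse it with the Moore--Penrose identity $(\cov Y)\mpinv\cov(Y)(\cov Y)\mpinv=(\cov Y)\mpinv$ together with the symmetry of $(\cov Y)\mpinv$, whereas the paper first treats the case $E=\R$ via second moments of the conditional distributions and the disintegration formula, and then lifts to general $E$ through $x'\in E'$ and Lemma \ref{lem:abstr-cov-on-diag}. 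Your route is more direct; it only needs \eqref{eq:covs-of-compositions-new} and the identification of $\cov(X,Y)'$ with $\cov(Y,X)$, which is unproblematic for finite dimensional $Y$. Second, two steps that your sketch treats as automatic carry the real technical weight in the paper: (a) the identification of the conditional distribution of $\dualpair{x'}XE$ given $Y$ with the image measure $\bigl(\P_{X|Y}(\mycdot|y)\bigr)_{x'}$, which is the content of Theorem \ref{thm:rcp-under-trafo} and is precisely what produces the null sets you then union over the countable dense set; and (b) upgrading the covariance identity $\dualpair{x'}{\cov(\P_{X|Y}(\mycdot|y))x'}E=\dualpair{x'}{\cov(X)x'}E-\dualpair{x'}{\corXY x'}E$ from the countable $\tauweaks$-dense set to all of $E'$, which requires the $\tauweaks$-continuity of both quadratic forms on $B_{E'}$ (Theorem \ref{thm:continuous-covariance-kernel} and Lemma \ref{lem:cov-cross-co}) before Lemma \ref{lem:abstr-cov-on-diag} can be invoked; your appeal to ``Gaussian measures are determined by their characteristic functionals on a dense set'' silently uses exactly this continuity. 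Neither point is a flaw in the plan, but a complete write-up must supply both.
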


As mentioned in the introduction, \cite[Corollary 3.10.3]{Bogachev98} has   shown that there is a version
$\P_{X|Y}(\mycdot|\mycdot)$, such that $\P_{X|Y}(\mycdot|y)$  is Gaussian for \emph{all} $y$.
% Later, in Theorem \ref{thm:from-fce-to-rcp} 
% we will see that this is also true for general separable Banach spaces $F$.
Moreover, for more specific cases, 
formulas for the mean and covariance have also been derived by other authors as laid out in the introduction. 
However, we could not find such formulas in the full generality presented in Theorem \ref{thm:grv-conditioning-y-fin-dim}.
Since these formulas play a crucial role for our approximation scheme in the case of infinite dimensional $Y$, we thus decided 
to present and prove Theorem \ref{thm:grv-conditioning-y-fin-dim} as a   result.

Our next and main goal of this section is to generalize Theorem \ref{thm:grv-conditioning-y-fin-dim} to infinite dimensional $Y$. Here we recall that we cannot expect a literal
generalization. To illustrate this, let us assume for simplicity that $H:= F$ is an infinite dimensional Hilbert space. Using the
canonical Riesz-Fr\'echet-identification of $H'$ with $H$, we can then view $\cov (Y)$ as a compact linear operator $\cov (Y):H\to H$.
Now recall that a bounded linear  $A:H\to H$ has a Moore-Penrose inverse if and only if its image $\ran A$ is closed, see e.g.~\cite[Theorem 2.4]{HaRoSi01}.
Moreover,  it is well-known that compact operators have a closed range, if and only if their range is actually finite dimensional, see e.g.~\cite[Proposition 3.4.6]{Megginson98}.
 Consequently,
the Moore-Penrose inverse $(\cov Y)\mpinv$ exists if and only if $\ran \cov (Y)$ is finite dimensional, and 
the latter is only possible, if the range of $Y$ is almost surely contained in a finite-dimensional subspace of $F$,
see e.g.~\cite[Theorem 2.1]{LaGatta13a} or \cite[Theorem 2.43]{Sullivan15}. In other words, even in the Hilbert space case there is no hope for a literal generalization of \eqref{thm:grv-conditioning-y-fin-dim-muup-def} and \eqref{thm:grv-conditioning-y-fin-dim-covup-def}
% Theorem \ref{thm:grv-conditioning-y-fin-dim}
to truly infinite dimensional $Y$.
 
% \todo{Look for ref that shows that this is the case if and only if $Y$ is finite dimensional.}
%Bogachev GM, Thm 3.6.1???

In the following, we will resolve this issue by considering a sequence $(Y_n)$ of suitably chosen finite dimensional ``projections''
of $Y$. For each $Y_n$ we can then apply Theorem \ref{thm:grv-conditioning-y-fin-dim}, and it remains to show that the resulting
sequence of regular conditional probabilities $\P_{X|Y_n}$ converges to the regular conditional probability $\P_{X|Y}$.
To begin with, the following definition introduces the type of ``projections'' we need for this approach.

\begin{definition}\label{def:filt-seq}
Let $F$ be a separable Banach space and $(\nfilts n)$ be a sequence of bounded linear operators $\nfilts n:F\to \R^n$. Then we say that 
$(\nfilts n)$ is a filtering sequence for $F$, if we have $\s(\nfilts n)\subset \s(\nfilts {n+1})$  for all $n\geq 1$ and
\begin{align*}
\sborelEx F =  \s\bigl( \nfilts n: n\geq 1 \bigr)\, .
\end{align*}
\end{definition}

The measure-theoretic description of filtering sequences is useful for establishing our general
result on conditioning jointly Gaussian random variables, yet it is somewhat cumbersome to verify.
For this reason we present some simple approaches for constructing filtering sequences in Section   \ref{seq:filt-seq}.
In addition, we will see in Theorem \ref{thm:dense-gives-filter-seq}
that for any separable Banach space $F$ there   exists a filtering sequence. Furthermore, we show in Section   \ref{seq:filt-seq} that
for important classes of spaces filtering sequences can be easily constructed and applied. As an illustrative example, we will 
work out the details for the spaces $C(T)$ in Section \ref{sec:examples}.

With the help of filtering sequences we can now present our main result on conditioning one Gaussian random variable with respect
to another one.

\begin{theorem}\label{thm:grv-conditioning-y-inf-dim}
Let $(\Om,\sA,\P)$ be a probability space, $E$ and $F$ be separable Banach spaces, 
and $X:\Om\to E$ and $Y:\Om\to F$ be jointly  Gaussian random variables. 
Moreover, let $(\nfilts n)$ be a filtering sequence for $F$ and
\begin{align*}
Y_n := \nfilts n\circ Y \, , \myqquad n\geq 1.
\end{align*}
Then for all versions $\P_{X|Y_n}(\mycdot|\mycdot) :\sborelnormx E\times \R^n\to[0,1]$ and
$\P_{X|Y}(\mycdot|\mycdot) :\sborelnormx E\times F\to[0,1]$ of the regular conditional probabilities of $X$ given $Y_n$, respectively $Y$,
there exists an $N\in \sborelEx F$ with 
$\P_Y(N) = 0$ such that for all $y\in F\setminus N$
the following statements hold true:
\begin{enumerate}
\item For all $n\geq 1$ we have 
\begin{align*}
\P_{X|Y_n}(\mycdot|\nfilts  n(y)) = \gaussm {\muupn(\nfilts  n(y))}{\covupn}\, ,
\end{align*}
where 
$\muupn$ and $\covupn$ are defined as in 
\eqref{thm:grv-conditioning-y-fin-dim-muup-def}, respectively
\eqref{thm:grv-conditioning-y-fin-dim-covup-def}.
% 
% \begin{align*}
% \muupn(\nfilts  n(y)) &= \E X + \cov ( X,Y_n) (\cov Y_n)\mpinv(\nfilts  n(y)- \E Y_n) \, ,\\
% \covupn     &= \cov ( X) - \cov ( X,Y_n) (\cov Y_n)\mpinv \cov (Y_n, X) \, .
% \end{align*}
\item The measure  $\P_{X|Y}(\mycdot|y)$ is a Gaussian measure on $E$.
\item The expectation $\muup(y)$ of  $\P_{X|Y}(\mycdot|y)$ is given by
\begin{align*}
\muup(y) =  \E X +  \lim_{n\to \infty}\cov ( X,Y_n) (\cov Y_n)\mpinv(\nfilts  n(y)- \E Y_n) \, ,
\end{align*}
where  the convergence is with respect to $\snorm\cdot _E$.
\item The covariance operator $\covup:E'\to E$ of $\P_{X|Y}(\mycdot|y)$ is given by
\begin{align*}
\covup =  \cov ( X) - \lim_{n\to \infty} \cov ( X,Y_n) (\cov Y_n)\mpinv \cov (Y_n, X)\, ,
\end{align*}
where the convergence is with respect to the nuclear norm $\nucnorm\cdot$ and thus also with respect to the operator norm.
\item The weak convergence  $\P_{X|Y_n}(\mycdot| \nfilts  n(y)) \to \P_{X|Y}(\mycdot|y)$ holds true.
\item If we define the random variable $Z:\Om\to E$ by $Z(\om) := 0$ for all $\om \in Y^{-1}(N)$ and 
\begin{align*}
Z(\om) :=  \E X + \lim_{n\to \infty}\cov ( X,Y_n) (\cov Y_n)\mpinv\bigl(    Y_n(\om)- \E Y_n\bigr) \, , \myqquad \om \in \Om\setminus  Y^{-1}(N),
\end{align*}
then $Z$ is both a version of $\E(X|Y)$ and 
a Gaussian random variable with  $\E Z = \E X$ and 
\begin{align*}
% \E Z &= \E X\, \\
\cov (Z) &= \cov(X) - \covup\, .
% \gaussspace {Z} &\subset  \gaussspace {Y} \, .
\end{align*}
Finally, the limit in the definition of $Z$ also holds in the $\sLx 2 {\P,E}$-sense.
\item We have $\gaussspace {Z} \subset  \gaussspace {Y}$ and
% Moreover, 
the orthogonal projection $\orthproj_Y:\gaussspace{X,Y} \to \gaussspace{X,Y}$ onto the subspace $\gaussspace Y$ is given by
\begin{align*}
\orthproj_Y f = \E(f|Y)\, , \myqquad f\in \gaussspace {X,Y}.
\end{align*}
% where $\E (\mycdot|Y)$ is viewed as an operator $\Lx 2\P\to \Lx 2 \P$.
\item For all $x'\in E'$, for which $\dualpair{x'}XE:\Om\to \R$ is $\s(Y)$-measurable, and all $\om \not \in Y^{-1}(N)$ we have
\begin{align*}
\dualpairb {x'} {\muup (Y(\om))}E &= \dualpair{x'}{X(\om)}E \\
\covup x' &= 0 \, .
\end{align*}
\end{enumerate}
\end{theorem}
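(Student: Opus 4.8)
The plan is to realise the conditional mean and covariance of $X$ given $Y$ as limits of a Banach‑space‑valued martingale. Applying Theorem~\ref{thm:grv-conditioning-y-fin-dim} to the pair $(X,Y_n)$ — which is jointly Gaussian because $Y_n=\nfilts n\circ Y$ is a bounded linear image of $Y$ — yields, for each $n$, the Gaussian random variable $Z_n:=\muupn\circ Y_n=g_n\circ Y$ with $g_n:=\muupn\circ\nfilts n:F\to E$, which is a version of $\E(X\mid Y_n)$ satisfying $\E Z_n=\E X$, $\cov(Z_n)=\cov(X)-\covupn=\cov(X,Y_n)(\cov Y_n)\mpinv\cov(Y_n,X)$, and $\gaussspace{Z_n}\subset\gaussspace{Y_n}\subset\gaussspace Y$; this already gives statement~(i), once the finite‑dimensional exceptional set of that theorem, pulled back via $\nfilts n$, is put into $N$. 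Writing $\ca F_n:=\s(Y_n)=Y^{-1}(\s(\nfilts n))$ and using that preimages under $Y$ commute with the generation of $\s$‑algebras, the defining properties of a filtering sequence give $\ca F_n\subset\ca F_{n+1}$ and $\s\bigl(\bigcup_n\ca F_n\bigr)=Y^{-1}\bigl(\s(\nfilts n:n\ge1)\bigr)=Y^{-1}(\sborelEx F)=\s(Y)$, so $(Z_n,\ca F_n)$ is a martingale closed by $X$. Since $X\in\sLx p{\P,E}$ for all $p$ by Fernique's theorem, the martingale convergence theorem for closable Banach‑space‑valued martingales (Supplement~\ref{app:rcp+cond-ex}) yields $Z_n\to\E(X\mid Y)$ both $\P$‑almost surely and in $\sLx 2{\P,E}$. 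I then take $N\in\sborelEx F$ to be the union of the (countably many, $\P_Y$‑null) exceptional sets arising below — among them the $\nfilts n$‑pullbacks just mentioned and the set on which $(g_n(y))$ fails to converge in $E$ — and define $\muup(y):=\lim_n g_n(y)$ for $y\notin N$, $Z:=\muup\circ Y$ on $\Om\setminus Y^{-1}(N)$ and $Z:=0$ on $Y^{-1}(N)$, which matches the formulas in~(iii) and~(vi).

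For~(iv) and the distributional claims~(ii) and~(v): the bound $\nucnorm{\cov(U,V)}\le\snorm{U-\E U}_{\sLx 2{\P,E}}\,\snorm{V-\E V}_{\sLx 2{\P,F}}$ (Supplement~\ref{app:covariances}) together with bilinearity of $\cov$ turns $\sLx 2{\P,E}$‑convergence $Z_n\to Z$ into nuclear‑norm convergence $\cov(Z_n)\to\cov(Z)$, hence $\covupn=\cov(X)-\cov(Z_n)\to\cov(X)-\cov(Z)=:\covup$ in nuclear, and thus operator, norm — this is the displayed formula in~(iv), once we know $\covup$ is the covariance of $\P_{X|Y}(\mycdot|y)$. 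To identify that measure and to prove~(v) I invoke the link between regular conditional probabilities and conditional expectations: for each $\p\in\sCb E$ the real martingale $\E(\p(X)\mid\ca F_n)$, which equals $\int_E\p\intd\P_{X|Y_n}(\mycdot|\nfilts n(Y(\mycdot)))$ $\P$‑a.s., converges $\P$‑a.s.\ to $\E(\p(X)\mid Y)=\int_E\p\intd\P_{X|Y}(\mycdot|Y(\mycdot))$ (Theorem~\ref{thm:limit-conditioning}, Supplement~\ref{app:rcp+cond-ex}); running this over a countable weak‑convergence‑determining family $\{\p_k\}\subset\sCb E$ — available since $E$ is separable — and adding the associated $\P_Y$‑null sets to $N$ yields the weak convergence $\P_{X|Y_n}(\mycdot|\nfilts n(y))\to\P_{X|Y}(\mycdot|y)$ of~(v). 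Finally $\P_{X|Y_n}(\mycdot|\nfilts n(y))=\gaussm{g_n(y)}{\covupn}$ has characteristic functional $x'\mapsto\exp\bigl(\imi\dualpair{x'}{g_n(y)}E-\tfrac12\dualpair{\covupn x'}{x'}{}\bigr)$, which converges pointwise on $E'$ to $\exp\bigl(\imi\dualpair{x'}{\muup(y)}E-\tfrac12\dualpair{\covup x'}{x'}{}\bigr)$; since weak convergence forces pointwise convergence of characteristic functionals, the uniqueness of Gaussian measures (Proposition~\ref{prop:cf-of-general-gm}) gives $\P_{X|Y}(\mycdot|y)=\gaussm{\muup(y)}{\covup}$, which is~(ii) and confirms~(iii)--(iv).

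Statements~(vi) and~(vii) are then short. $Z$ is a version of $\E(X\mid Y)$ since it agrees $\P$‑a.s.\ with the martingale limit; it is Gaussian as an a.s.\ (equivalently $\sLx 2{\P,E}$‑) limit of the Gaussian $Z_n$ (Supplement~\ref{app:cf}); $\E Z=\lim\E Z_n=\E X$; and $\cov(Z)=\lim\cov(Z_n)=\cov(X)-\covup$ was shown above, the $\sLx 2{\P,E}$‑limit being already in hand. For $\gaussspace Z\subset\gaussspace Y$, note that $\dualpair{x'}{Z-\E X}E$ is the $\Lx 2\P$‑limit of $\dualpair{x'}{Z_n-\E X}E\in\gaussspace{Z_n}\subset\gaussspace Y$ and $\gaussspace Y$ is closed; for the projection formula, $f\mapsto\E(f\mid Y)$ is a self‑adjoint idempotent on $\Lx 2\P$ that fixes $\gaussspace Y\subset\gaussspace{X,Y}$ and sends each generator $\dualpair{x'}{X-\E X}E+\dualpair{y'}{Y-\E Y}F$ of $\gaussspace{X,Y}$ to $\dualpair{x'}{Z-\E X}E+\dualpair{y'}{Y-\E Y}F\in\gaussspace Y$, hence maps $\gaussspace{X,Y}$ into $\gaussspace Y$, and a self‑adjoint idempotent with range exactly $\gaussspace Y$ is the orthogonal projection onto it. For~(viii), if $\dualpair{x'}XE$ is $\s(Y)$‑measurable then $\dualpair{x'}ZE=\E(\dualpair{x'}XE\mid Y)=\dualpair{x'}XE$ $\P$‑a.s., so $\dualpair{\covup x'}{x'}{}=\var(\dualpair{x'}XE)-\var(\dualpair{x'}ZE)=0$, and positivity and symmetry of $\covup$ force $\covup x'=0$; for the pointwise identity $\dualpair{x'}{\muup(Y(\om))}E=\dualpair{x'}{X(\om)}E$ on all of $\Om\setminus Y^{-1}(N)$, I observe that the set $E_Y'$ of admissible $x'$ equals $\ker\cov(X-Z)$ and is $\tauweaks$‑closed — covariance operators are bounded‑$\tauweaks$‑to‑weak sequentially continuous by Fernique‑dominated convergence and $B_{E'}$ is $\tauweaks$‑metrizable as $E$ is separable, so Krein--Smulian applies, whence $E_Y'$ is even $\tauweaks$‑separable — so I pick a countable $\tauweaks$‑dense $\{x_k'\}\subset E_Y'$, whose common kernel is the pre‑annihilator of $E_Y'$; for each $x_k'$, Doob--Dynkin gives a measurable $\r_k:F\to\R$ with $\dualpair{x_k'}XE=\r_k\circ Y$ identically on $\Om$, and combining $\dualpair{x_k'}{Z_n}E=\dualpair{x_k'}{g_n(Y)}E\to\dualpair{x_k'}{\muup(Y)}E$ off $Y^{-1}(N)$ with the a.s.\ martingale limit $\dualpair{x_k'}{Z_n}E\to\dualpair{x_k'}XE=\r_k\circ Y$ gives $\dualpair{x_k'}{\muup(\mycdot)}E=\r_k$ off a $\P_Y$‑null set that I add to $N$, so that for any $x'\in E_Y'$ and $\om\notin Y^{-1}(N)$ the vector $X(\om)-\muup(Y(\om))$ lies in $\bigcap_k\ker x_k'\subset\ker x'$.

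The part I expect to cost the most care is the bookkeeping of a \emph{single} $\P_Y$‑null $N\in\sborelEx F$ that simultaneously controls the finite‑dimensional exceptional sets of Theorem~\ref{thm:grv-conditioning-y-fin-dim} (pulled back via each $\nfilts n$), the non‑convergence set of $(g_n)$, the countable weak‑convergence‑determining sets, and the exceptional sets in~(viii); this is exactly why one needs the countable reduction in~(viii) via $\tauweaks$‑closedness of $E_Y'=\ker\cov(X-Z)$, and the separability of $E$ to produce a countable determining family. By comparison, the Banach‑space martingale convergence and the nuclear‑norm continuity of the covariance map are routine given the supplements.
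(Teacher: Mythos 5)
Your proposal is correct and follows the paper's overall strategy --- reduce to Theorem \ref{thm:grv-conditioning-y-fin-dim} via the filtering sequence, turn the conditional means into a closable Banach-space-valued martingale, and transport the martingale limits to the regular conditional probabilities through Theorem \ref{thm:limit-conditioning} --- but it deviates in two places worth recording. For part \emph{iv)} the paper applies Theorem \ref{thm:limit-conditioning} to the nuclear-operator-valued map $x\mapsto x\otimes x$ together with Lemma \ref{lem:cross-cov-as-integral}, i.e.\ it works on the side of the conditional measures; you instead work on the side of the random variables, deducing $\nucnorm{\cov(Z_n)-\cov(Z)}\to 0$ from the $\sLx 2{\P,E}$-convergence $Z_n\to Z$ via bilinearity and the bound $\nucnorm{\cov(U,V)}\le\snorm{U-\E U}_{\sLx 2 {\P,E}}\snorm{V-\E V}_{\sLx 2{\P,F}}$, which is implicit in (but not isolated as a lemma in) the proof of Lemma \ref{lem:cross-cov-as-integral}. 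This is shorter, at the price of having to identify $\cov(X)-\cov(Z)$ with the covariance of $\P_{X|Y}(\mycdot|y)$ afterwards, which you correctly do through the characteristic functionals. For parts \emph{ii)} and \emph{v)} the paper first proves Gaussianity by pushing the regular conditional probabilities forward along a countable $\tauweaks$-dense subset of $B_{E'}$ (Theorem \ref{thm:rcp-under-trafo} plus Theorem \ref{thm:test-for-gms}) and then obtains weak convergence from uniform tightness and the infinite-dimensional L\'evy theorem (Theorem \ref{thm:general-levy}); you instead get weak convergence directly by running Theorem \ref{thm:limit-conditioning} over a countable convergence-determining family in $\sCb E$ and only then read off Gaussianity from the limit of the characteristic functionals. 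That bypasses the tightness argument entirely, but it imports a standard fact not contained in the supplements (existence of a countable convergence-determining class of bounded continuous functions on a separable metric space), and you should still note that $\covup=\cov(X)-\cov(Z)$ is an abstract covariance operator (as an operator-norm limit of such) before invoking Proposition \ref{prop:cf-of-general-gm} and Lemma \ref{lem:abstr-cov-on-diag}. Finally, in \emph{viii)} your detour through $\ker\cov(X-Z)$ and Krein--\u{S}mulian is unnecessary and slightly imprecise: $\s(Y)$-measurability of $\dualpair{x'}XE$ only yields inclusion of the admissible set into $\ker\cov(X-Z)$, not equality, unless one completes $\s(Y)$; since every subset of the $\tauweaks$-metrizable $B_{E'}$ is automatically separable, one can --- as the paper does --- take a countable $\tauweaks$-dense subset of the admissible set directly. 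None of these points is a gap.
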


In a nutshell, \emph{i)} simply applies Theorem \ref{thm:grv-conditioning-y-fin-dim}
to the random variables $X$ and $Y_n$. The statements \emph{ii)} to \emph{v)} provide the Gaussianity of the 
conditional probabilities as well as the approximations announced in the introduction.  
In addition, \emph{vi)}  shows
\begin{align*}
\muup \circ Y = \E(X|Y),
\end{align*}
% 
% $$, 
which below we will investigate in more detail.
Furthermore, \emph{vii)} describes the behaviour of  the orthogonal projection $\E(\mycdot|Y):\Lx 2 \P\to \Lx 2 \P$ when it is
restricted to 
% the Gaussian Hilbert space
$\gaussspace{X,Y}$.
Here we note that in the observational model \eqref{eq:obs-model}, we have $\gaussspace {X,Y} = \gaussspace X$ and $\gaussspace Y \subset \gaussspace X$,
as one can quickly derive from Lemma \ref{lem:gaussspace-of-grv},
and therefore
the conditional expectation considered in \emph{vi)} 
becomes the orthogonal projection $\orthproj_Y:\gaussspace{X} \to \gaussspace{X}$ onto the subspace $\gaussspace Y$.
Finally, \emph{viii)} generalizes the well-known concentration of Gaussian 
posterior measures on the observations from the finite to the infinite dimensional case.

 Our next goal is to shed some more light on the structure of the conditional distributions. 
% Our next goal is to refine our analysis for both $\P_{X|Y}$ and its approximation. 
To this end,
let us assume that we are in the situation of Theorem \ref{thm:grv-conditioning-y-inf-dim}. 
Then by \eqref{thm:grv-conditioning-y-fin-dim-muup-def}
each $\muupn \circ \nfilts n:F\to E$ is continuous and thus strongly measurable, where we use the $\s$-algebra  $\sborelnormx F = \sborelEx F$ on $F$.
Let us define the map $\muup :F\to E$ by
\begin{align}\label{eq:muup-on-F}
\muup (y) := \lim_{n\to \infty} \eins_{F\setminus N}(y) \cdot \muupn(\nfilts n( y))\, , \myqquad y\in Y\, ,
\end{align}
which is possible by and consistent with  part  \emph{iii)}. Then $\muup$ is the pointwise limit of a sequence of strongly measurable functions, and therefore also (strongly)
measurable.
Moreover, the definition of $Z$ in
part \emph{vi)} gives
\begin{align*}
Z = \muup \circ Y\, ,
\end{align*}
and since $Z$ is a version of  $\E(X|Y)$, we see that $\muup$ is a version of the ``factorized'' conditional expectation $\E(X|Y=\mycdot):F\to E$.
% see also the discussion at the beginning of \cite[Chapter 8.3]{Klenke14} for  the case $E=\R$.

Now note that $\muup$ carries all information required to reconstruct the Gaussian measures $\P_{X|Y}(\mycdot|y)$ for $y\in F\setminus N$:  Namely we have 
\begin{align*}
\P_{X|Y}(\mycdot|y) = \gaussm {\muup(y)}{\cov(X) - \cov( \muup  \circ Y)  }
\end{align*}
for all $y\in F\setminus N$,
where we note that $\E \P_{X|Y}(\mycdot|y) = \muup (y)$ directly follows from \emph{iii)}, while for the covariance we used 
\emph{vi)} and 
\begin{align*}
\cov \P_{X|Y}(\mycdot|y) =  \covup = \cov(X) - \cov(Z) = \cov(X) - \cov( \muup \circ Y)\, .
\end{align*}
% for all $y\in F\setminus N$.
% In other words, we have $\P_{X|Y}(\mycdot|y) = \gaussm {\muup(y)}{\cov(X) - \cov( \muup  \circ Y)  }$ for all $y\in F\setminus N$.
% 
% Now assume that we have a version $\P_{X|Y}(\mycdot|\mycdot) :\sborelnormx E\times \R^n\to[0,1]$. 
% By Theorem \ref{thm:grv-conditioning-y-inf-dim} we can then alternatively construct 
% a version $\muup$ of $\E(X|Y=\mycdot):F\to E$ by setting
% \begin{align*}
% \muup (y) := 
% \begin{cases}
% \E \P_{X|Y}(\mycdot|y) \, ,& \mbox{  if } y\in F\setminus N \, ,\\
% 0 \, , & \mbox{ else, }
% \end{cases}
% \end{align*}
% where  $N\in \sborelEx F$ is a suitable set with
% $\P_Y(N) = 0$. Moreover, because of 
% \begin{align*}
% \covup = \cov(X) - \cov(Z) = \cov(X) - \cov( \muup \circ Y)
% \end{align*}
% and $\P_{X|Y}(\mycdot|y)$ is Gaussian for all $y\in F\setminus N$, we see that $\muup$ carries all information to reconstruct a version of
% $\P_{X|Y}(\mycdot|\mycdot)$ up to the $\P_Y$-zero set $N$. 
This observation raises the question, if
we also obtain a version of $\P_{X|Y}(\mycdot|\mycdot)$ by setting 
\begin{align}\label{eq:rcp-from-fact-cond-exp}
\P_{X|Y}(\mycdot|y) := \gaussm {\mu(y)}{\cov(X) - \cov( \mu  \circ Y)  } \, , \myqquad y\in Y
\end{align}
for an \emph{arbitrary} version $\mu$ of $\E(X|Y=\mycdot):F\to E$. Since each such $\mu$ coincides $\P_Y$-almost surely with the above $\muup$
it is not overly surprising that the construction \eqref{eq:rcp-from-fact-cond-exp} is indeed possible. Yet it turns out in the proof of the following result that some work is 
required to ensure the measurability of $y\mapsto \P_{X|Y}(B|y)$ for all $B\in \sborelnormx E$. 

% The following theorem  provides a positive answer to this question. 

\begin{theorem}\label{thm:from-fce-to-rcp}
Let $(\Om,\sA,\P)$ be a probability space, $E$ and $F$ be separable Banach spaces, 
and $X:\Om\to E$ and $Y:\Om\to F$ be jointly  Gaussian random variables.
Moreover, let $\mu:F\to E$ be a measurable  such that 
\begin{align*}
Z := \mu  \circ Y 
\end{align*}
is a version of  $\E(X|Y)$. Then 
\eqref{eq:rcp-from-fact-cond-exp} defines  a version   of the regular conditional probability of $X$ given $Y$. 
% for all  measurable $\mu:F\to E$, for which 
% \begin{align*}
% Z := \mu  \circ Y 
% \end{align*}
% is a version of  $\E(X|Y)$. 
% In particular, if $F=\R^n$, the map $\muup:\R^n\to E$ defined by \eqref{thm:grv-conditioning-y-fin-dim-muup-def}
% gives such a  version for which 
%  the assertions of Theorem \ref{thm:grv-conditioning-y-fin-dim} hold true with 
%  $N=\emptyset$.
\end{theorem}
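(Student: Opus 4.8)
The plan is to verify, for the family $\P_{X|Y}(\mycdot|y) := \gaussm{\mu(y)}{C}$ with $C := \cov(X)-\cov(\mu\circ Y)$ introduced in \eqref{eq:rcp-from-fact-cond-exp}, the three defining conditions of a regular conditional probability from Definition~\ref{def:reg-cond-prob}, and to reduce everything possible to the version already produced by Theorem~\ref{thm:grv-conditioning-y-inf-dim}. First I would note that $F$, being separable, admits a filtering sequence by Theorem~\ref{thm:dense-gives-filter-seq}, so Theorem~\ref{thm:grv-conditioning-y-inf-dim} together with \eqref{eq:muup-on-F} yields a measurable $\muup\colon F\to E$, the version $Z_0 := \muup\circ Y$ of $\E(X|Y)$, the covariance operator $\covup=\cov(X)-\cov(Z_0)$, and a $\P_Y$-null set $N\in\sborelEx F$ such that $y\mapsto\gaussm{\muup(y)}{\covup}$ agrees off $N$ with any version of the regular conditional probability of $X$ given $Y$. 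Since $\mu\circ Y$ and $Z_0$ are both versions of $\E(X|Y)$, they are $\P$-almost surely equal; hence $\mu\circ Y$ is square integrable (it agrees $\P$-a.s.\ with a Gaussian random variable, cf.\ \eqref{eq:Fernique's-theorem}) with $\cov(\mu\circ Y)=\cov(Z_0)$, so $C=\covup$ is a genuine covariance operator and $\gaussm{\mu(y)}{C}$ — the translate of the centered Gaussian $\gaussm 0{\covup}$ by $\mu(y)$ — is a well-defined Gaussian probability measure on $E$ for every $y\in F$, which settles condition~1. Moreover, as $E$ is separable metric and $\mu,\muup$ are measurable, $\{\mu\neq\muup\}\in\sborelEx F$, and its $Y$-preimage lies in the $\P$-null set $\{\mu\circ Y\neq Z_0\}$, so $\mu=\muup$ $\P_Y$-almost everywhere and therefore $\P_{X|Y}(\mycdot|y)=\gaussm{\muup(y)}{\covup}$ for $\P_Y$-a.e.\ $y$.

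The heart of the proof is condition~2: the $\sborelEx F$-measurability of $y\mapsto\gaussm{\mu(y)}{C}(B)$ for each fixed $B\in\sborelnormx E$. Writing $\gaussm{\mu(y)}{C}(B)=\gaussm 0C\bigl(B-\mu(y)\bigr)$ and using measurability of $\mu$, it suffices to show that $m\mapsto\gaussm 0C(B-m)$ is Borel measurable on $E$ for every Borel $B$. I would prove this by a functional monotone class argument: let $\mathcal H$ be the set of bounded measurable $f\colon E\to\R$ for which $m\mapsto\int_E f(x+m)\intd\gaussm 0C(x)$ is Borel measurable. Continuity of the translations $x\mapsto x+m$ on $E$ together with dominated convergence shows that $\mathcal H$ contains all bounded continuous $f$ (for such $f$ the map is even continuous) and is closed under bounded pointwise limits; it is clearly a vector space containing the constants. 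In the metric space $E$ the indicator of an open set is an increasing limit of bounded continuous functions and hence belongs to $\mathcal H$, so $\{B:\eins_B\in\mathcal H\}$ is a $\lambda$-system containing the $\pi$-system of open sets and therefore equals $\sborelnormx E$, which is the claim. Composing with $\mu$ gives condition~2. (Alternatively one may invoke that $m\mapsto\gaussm mC$ is continuous into the space of probability measures equipped with the weak topology and that evaluation at a fixed Borel set is measurable there, cf.\ the weak convergence material in Supplement~\ref{app:cf}.)

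For condition~3 I would argue by comparison. For each $B\in\sborelnormx E$ the maps $y\mapsto\gaussm{\mu(y)}{C}(B)$ and $y\mapsto\gaussm{\muup(y)}{\covup}(B)$ are both measurable — the first by condition~2 and the second by the same measurability applied to $\muup$ — and they agree off the $\P_Y$-null set $N\cup\{\mu\neq\muup\}$. Since any version $\P_{X|Y}^0(\mycdot|\mycdot)$ of the regular conditional probability satisfies the disintegration identity \eqref{eq:def-reg-cond-prob} and coincides with $y\mapsto\gaussm{\muup(y)}{\covup}(B)$ on $F\setminus N$, the identity \eqref{eq:def-reg-cond-prob} holds with $\P_{X|Y}(B|\mycdot):=\gaussm{\muup(\mycdot)}{\covup}(B)$, and as it involves only the integral of this function against $\P_Y$, it remains valid after replacing the integrand by the $\P_Y$-a.e.\ equal $y\mapsto\gaussm{\mu(y)}{C}(B)$. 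Hence all three conditions of Definition~\ref{def:reg-cond-prob} are satisfied by \eqref{eq:rcp-from-fact-cond-exp}. I expect the monotone class step in condition~2 — upgrading the transparent continuity of $m\mapsto\gaussm 0C(B-m)$ for open sets $B$ to arbitrary Borel $B$ — to be the only genuinely delicate point; the rest is bookkeeping around the almost-sure uniqueness of conditional expectations and the translation structure of Gaussian measures.
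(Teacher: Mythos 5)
Your proof is correct, and its overall skeleton --- verifying the three conditions of Definition \ref{def:reg-cond-prob}, reducing condition \emph{iii)} to the $\P_Y$-a.e.\ identity $\mu=\muup$ and a comparison with a version supplied by Theorem \ref{thm:grv-conditioning-y-inf-dim} --- matches the paper's. The one place where you genuinely diverge is the measurability of $y\mapsto\gaussm{\mu(y)}{C}(B)$, which is exactly the step the paper flags as the delicate one. The paper stays inside the cylinder $\s$-algebra: it first treats sets of the form $B=(x')^{-1}(A)$ by pushing $\gaussm{\mu(y)}{C}$ forward to a one-dimensional Gaussian $\gauss{m(y)}{\s^2}$ with $m(y)=\dualpair{x'}{\mu(y)}E$, checks measurability of $y\mapsto\gauss{m(y)}{\s^2}(A)$ directly (Dirac case, or Fubini--Tonelli applied to the Gaussian density), and then shows that the collection of good sets $\ca B$ is a $\s$-algebra containing $\bigcup_{x'\in E'}\s(x')$, hence all of $\sborelEx E=\sborelnormx E$. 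You instead exploit the translation structure, writing $\gaussm{\mu(y)}{C}(B)=\gaussm 0C\bigl(B-\mu(y)\bigr)$ and running a functional monotone class argument over bounded continuous functions to obtain Borel measurability of $m\mapsto\gaussm 0C(B-m)$ for every norm-Borel $B$; this argument is sound (the $\lambda$-$\pi$ step and the approximation of indicators of open sets by continuous functions both work in the separable metric space $E$). Your route is more generic --- it uses nothing about Gaussianity beyond the existence of $\gaussm 0C$ and the identification of $\gaussm mC$ with its translate, which follows from Proposition \ref{prop:cf-of-general-gm} and the uniqueness of characteristic functions --- while the paper's stays within the one-dimensional-projection framework used throughout and never needs to realize $\gaussm mC$ as a pushforward. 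A small point in your favour: you explicitly justify that $\gaussm{\mu(y)}{C}$ is a well-defined Gaussian measure for \emph{every} $y\in F$, not merely $\P_Y$-almost every $y$, via $C=\covup$ and translation of the centered measure; the paper passes over this with ``of course.''
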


% 
% Note that for $\muup$ defined in \eqref{eq:muup-on-F}, Theorem \ref{thm:from-fce-to-rcp} provides
% a version $\P_{X|Y}(\mycdot|\mycdot)$ of the regular conditional probability of $X$ given $Y$ such that 
% $\P_{X|Y}(\mycdot|y)$ is Gaussian for \emph{all} $y\in F$. This generalizes the result
%  \cite[Corollary 3.10.3]{Bogachev98} mentioned in the introduction from finite dimensional to infinite dimensional $Y$.

%\todo{Mention the follwoing consequence: For $\R^n$-valued $Y$ we find a version of the RCP such that Theorem \ref{thm:grv-conditioning-y-fin-dim} holds with $N=\emptyset$.}

% Theorem \ref{thm:from-fce-to-rcp} shows that every version of $\E(X|Y=\mycdot):F\to E$ can be used to construct a
% version $\P_{X|Y}(\mycdot|\mycdot)$ via \eqref{eq:rcp-from-fact-cond-exp}.
% On the other hand, 
We have seen around \eqref{eq:muup-on-F} that  a filtering sequence in combination with
Theorem \ref{thm:grv-conditioning-y-inf-dim} can be used to construct a version of $\E(X|Y=\mycdot):F\to E$. The next result shows that
we can refine this construction to have a better control over the nuisance $\P_Y$-zero set $N$.
Combining this with the construction \eqref{eq:rcp-from-fact-cond-exp} we can then also find a version $\P_{X|Y,\nfiltseq}(\mycdot|\mycdot)$
that we can control for \emph{all} $y\in F$. In particular,  this will generalize
 \cite[Corollary 3.10.3]{Bogachev98} mentioned in the introduction from finite dimensional to infinite dimensional $Y$.

\begin{theorem}\label{thm:canonical-rcp}
Let $(\Om,\sA,\P)$ be a probability space, $E$ and $F$ be separable Banach spaces,
and $X:\Om\to E$ and $Y:\Om\to F$ be jointly  Gaussian random variables.
Moreover, let $\nfiltseq:= (\nfilts n)$ be a filtering sequence for $F$ and
\begin{align*}
\Fconv := \bigl\{  y\in F: \exists  \lim_{n\to \infty}\cov ( X,Y_n) (\cov Y_n)\mpinv(\nfilts  n(y)- \E Y_n) \bigr\}\, ,
\end{align*}
where $Y_n := \nfilts n\circ Y $ for all $n\geq 1$. Then the following statements hold true:
\begin{enumerate}
\item We have $\Fconv \in \sborelEx F$ with $\P_Y(\Fconv) = 1$.
\item The set $F_{X|Y, \nfiltseq} := -\E Y + \Fconv$ is a measurable subspace of $F$ with $\ran \cov(Y,X)\subset F_{X|Y, \nfiltseq}$.
\item The map $\muupA:F\to E$ defined by
\begin{align*}
\muupA (y) := \E X +   \lim_{n\to \infty} \eins_{\Fconv}(y)   \cov ( X,Y_n) (\cov Y_n)\mpinv(\nfilts  n(y)- \E Y_n)\, , \qquad y\in Y\, ,
\end{align*}
is   measurable and $\muupA \circ Y$ is a version of $\E(X|Y)$.
\item The map $\P_{X|Y, \nfiltseq}(\mycdot|\mycdot) :\sborelnormx E\times \R^n\to[0,1]$ defined by 
\begin{align*}
\P_{X|Y, \nfiltseq}(\mycdot|y) := \gaussm {\muupA(y)}{\cov(X) - \cov( \muupA  \circ Y)  } \, , \myqquad y\in Y
\end{align*}
is a version of the regular conditional probability of $X$ given $Y$.
\end{enumerate}
\end{theorem}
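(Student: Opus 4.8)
The plan is to reduce all four assertions to Theorems~\ref{thm:grv-conditioning-y-inf-dim} and~\ref{thm:from-fce-to-rcp}, which already carry the substantial analytic content. Fix the $\P_Y$-null set $N\in\sborelEx F$ and the version $Z$ of $\E(X|Y)$ furnished by Theorem~\ref{thm:grv-conditioning-y-inf-dim} for the filtering sequence $\nfiltseq$, and abbreviate the bounded linear operator $T_n:=\cov(X,Y_n)(\cov Y_n)\mpinv\circ\nfilts n:F\to E$. Since $\nfilts n$ is bounded and linear we have $\E Y_n=\nfilts n\E Y$, hence $\cov(X,Y_n)(\cov Y_n)\mpinv(\nfilts n(y)-\E Y_n)=T_n(y-\E Y)$, so that $\Fconv=\E Y+V$ and $F_{X|Y,\nfiltseq}=V$, where $V:=\{\,z\in F:(T_nz)_{n\geq1}\text{ converges in }E\,\}$. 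For \emph{i)}, note that each $T_n$ is continuous, hence --- as $E$ is separable --- strongly measurable, and so is every scalar function $y\mapsto\snorm{T_m(y-\E Y)-T_n(y-\E Y)}_E$. Because $E$ is complete, $(T_n(y-\E Y))$ converges if and only if it is Cauchy, so
\[
\Fconv=\bigcap_{k\geq1}\ \bigcup_{M\geq1}\ \bigcap_{m,n\geq M}\bigl\{\,y\in F:\snorm{T_m(y-\E Y)-T_n(y-\E Y)}_E\leq 1/k\,\bigr\}\in\sborelEx F .
\]
By part~\emph{iii)} of Theorem~\ref{thm:grv-conditioning-y-inf-dim} the defining limit exists for every $y\in F\setminus N$, whence $F\setminus N\subseteq\Fconv$ and therefore $\P_Y(\Fconv)\geq\P_Y(F\setminus N)=1$.

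For \emph{ii)}, $F_{X|Y,\nfiltseq}=-\E Y+\Fconv$ is the image of the Borel set $\Fconv$ under the homeomorphism $z\mapsto z-\E Y$ of $F$, hence measurable, and $V$ is a linear subspace because each $T_n$ is linear and linear combinations pass to the limit. For the range inclusion I would use the identity $\nfilts n\circ\cov(Y,X)=\cov(Y_n,X)$, obtained by pulling the bounded linear map $\nfilts n$ inside the Bochner integral defining $\cov(Y,X)$. Then, for every $x'\in E'$, $T_n(\cov(Y,X)x')=\cov(X,Y_n)(\cov Y_n)\mpinv\cov(Y_n,X)x'$, and since by part~\emph{iv)} of Theorem~\ref{thm:grv-conditioning-y-inf-dim} the operators $\cov(X,Y_n)(\cov Y_n)\mpinv\cov(Y_n,X)$ converge in operator norm, the sequence $(T_n(\cov(Y,X)x'))$ converges in $E$. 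Hence $\cov(Y,X)x'\in V=F_{X|Y,\nfiltseq}$ for all $x'\in E'$, i.e.\ $\ran\cov(Y,X)\subseteq F_{X|Y,\nfiltseq}$.

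For \emph{iii)}, write $\muupA=\E X+g$ where $g:=\lim_ng_n$ and $g_n(y):=\eins_{\Fconv}(y)\,T_n(y-\E Y)$. Each $g_n$ is strongly measurable, being the product of the measurable scalar function $\eins_{\Fconv}$ with the continuous $E$-valued map $y\mapsto T_n(y-\E Y)$, and $g_n\to g$ pointwise on all of $F$: on $\Fconv$ this holds by the definition of $\Fconv$, and off $\Fconv$ both sides equal $0$. Hence $g$, and with it $\muupA$, is strongly measurable. Moreover, for $\om\notin Y^{-1}(N)$ we have $Y(\om)\in F\setminus N\subseteq\Fconv$, so $\muupA(Y(\om))=\E X+\lim_nT_n(Y(\om)-\E Y)=\E X+\lim_n\cov(X,Y_n)(\cov Y_n)\mpinv(Y_n(\om)-\E Y_n)=Z(\om)$, while $\P(Y^{-1}(N))=\P_Y(N)=0$. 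Thus $\muupA\circ Y=Z$ $\P$-almost surely, so $\muupA\circ Y$ is a version of $\E(X|Y)$.

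Finally, \emph{iv)} is immediate: by \emph{iii)} the map $\mu:=\muupA$ meets the hypotheses of Theorem~\ref{thm:from-fce-to-rcp}, whose conclusion is exactly that $y\mapsto\gaussm{\muupA(y)}{\cov(X)-\cov(\muupA\circ Y)}$ is a version of the regular conditional probability of $X$ given $Y$. I expect the main obstacle to be not a single deep step but the measurability bookkeeping --- verifying that $\Fconv$ is Borel through the Cauchy-set description and that the truncated pointwise limit defining $\muupA$ is strongly measurable --- together with the elementary but pivotal identity $\nfilts n\circ\cov(Y,X)=\cov(Y_n,X)$ driving the range inclusion in \emph{ii)}; everything genuinely hard (Gaussianity, the norm and nuclear-norm convergences, and the construction of a regular conditional probability from a factorised conditional expectation) is already supplied by Theorems~\ref{thm:grv-conditioning-y-inf-dim} and~\ref{thm:from-fce-to-rcp}.
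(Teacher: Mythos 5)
Your proposal is correct and follows essentially the same route as the paper: the Cauchy-set description of $\Fconv$ for \emph{i)}, the inclusion $F\setminus N\subset \Fconv$ from part \emph{iii)} of Theorem \ref{thm:grv-conditioning-y-inf-dim}, the identity $\nfilts n\circ\cov(Y,X)=\cov(Y_n,X)$ combined with part \emph{iv)} of that theorem for the range inclusion, the pointwise-limit argument for the measurability of $\muupA$, and the reduction of \emph{iv)} to Theorem \ref{thm:from-fce-to-rcp}. The only cosmetic difference is that you identify $\muupA\circ Y$ directly with the random variable $Z$ of Theorem \ref{thm:grv-conditioning-y-inf-dim}\emph{vi)} rather than with $\muup\circ Y$ via the set $\{\muup\neq\muupA\}\subset N$, which amounts to the same thing.
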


Note that if 
 $X$ and $Y$ are centered, then $\Fconv = F_{X|Y, \nfiltseq}$ holds, and therefore $\Fconv$ is a measurable subspace of full measure $\P_Y$.
Moreover, for all $y\in \Fconv$ we have
\begin{align*}
\muupA (y) =  \lim_{n\to \infty}   \cov ( X,Y_n) (\cov Y_n)\mpinv \nfilts  n(y) \, .
\end{align*}
Consequently, the restriction of $\muupA$  to $\Fconv$ is linear and measurable. 
% with respect to the trace $\s$-algebra of $\sborelEx F=\sborelnormx F$
% on $\Fconv$. 
Note that in the Hilbert space case such linear maps also occurred in
\cite{Mandelbaum84a}. %, where conditional Gaussian distributions in the Hilbert space case were considered.

 Let us finally discuss how the
  theorems above  do provide some frequentist consistency and contraction results
  for
the Bayesian method given by the prior distribution $\P_X$ and the conditional distributions.  
  To this end, let us assume that we have an $E$-valued Gaussian random variable $X$ on some probability space $(\Om,\sA,\P)$
  and 
%   that
% we have fixed 
a filtering sequence $\nfiltseq :=(\nfilts n)$ for $E$. 
Moreover, assume that we cannot observe $X$ but only 
$X_n := \nfilts n\circ X$. This gives us versions $\P_{X|X_n}(\mycdot|\mycdot)$ of the regular conditional
probabilities of $X$ given $X_n$ with 
\begin{align*}
\P_{X|X_n}(\mycdot|\nfilts n (x)) = \gaussm {\muupxn(x)}{\covupxn} 
\end{align*}
for all $x\in E$,
where 
\begin{align*}
\muupxn(\nfilts n (x)) &= \E X + \cov ( X,X_n) (\cov X_n)\mpinv(\nfilts n (x)- \E X_n) \, ,\\
\covupxn     &= \cov ( X) - \cov ( X,X_n) (\cov X_n)\mpinv \cov (X_n, X) \, .
\end{align*}
To describe their behavior for $n\to \infty$, we set $Y:= X$. Then our results above 
give an $N\in \sA$ with $\P(N) = 0$ such that for all $\om \in \Om\setminus N$ we have 
\begin{align}\label{eq:conc-of-mean}
\muupxn(X_n(\om))  \to \muupx((X(\om)) =   X(\om)
\end{align}
with convergence in $E$,  where the last identity follows from  \emph{viii)} of Theorem \ref{thm:grv-conditioning-y-inf-dim}
in combination with
Hahn-Banach's theorem since 
  $\dualpair{x'}XE:\Om\to \R$ is $\s(X)$-measurable for all $x'\in E'$.
% 
% 
% we used \emph{viii)} of Theorem \ref{thm:grv-conditioning-y-inf-dim} together 
% with the simple observation that 
% $\dualpair{x'}XE:\Om\to \R$ is $\s(X)$-measurable for all $x'\in E'$, and  subsequently we applied  Hahn-Banach's theorem.
% 
% 
% applied to all $x'\in \denseblo$ of a countable and $\tauweaks$-dense subset $\denseblo$ of $B_{E'}$, see Theorem \ref{thm:alaoglu-and-more},
% and a subsequent application of Hahn-Banach's theorem.
Similarly, 
part  \emph{viii)} of Theorem \ref{thm:grv-conditioning-y-inf-dim} also shows 
$\covupx x' = 0$ for all $x'\in E'$, that is $\covupx = 0$, and hence we find 
\begin{align}\label{eq:conc-of-cov}
\covupxn \to \covupx = 0
\end{align}
with respect to the nuclear norm. Part \emph{v)} of  Theorem \ref{thm:grv-conditioning-y-inf-dim}
thus yields the weak convergence 
\begin{align}\label{eq:conc-of-rcps}
\P_{X|X_n}(\mycdot|X_n(\om)) \to \gaussm {X(\om)}0
\end{align}
for all $\om \in \Om\setminus N$. In other words, with increasing $n$, the distributions 
$\P_{X|X_n}(\mycdot|X_n(\om))$ concentrate more and more around the observation $X(\om)$.
Note that these convergence results can also be interpreted from a frequentist point of view for
the Bayes method given by the posterior distributions $\P_{X|X_n}$ and the prior distribution $\P_X$, if it 
 is assumed that the ``true'' parameter is some $X(\om_0)$: Namely,  the convergence in
 \eqref{eq:conc-of-rcps} 
% \eqref{eq:conc-of-mean} and \eqref{eq:conc-of-cov}
 provides a consistency result in the sense of
\cite[Chapter 6.2]{GhVa17}, while \eqref{eq:conc-of-mean} and \eqref{eq:conc-of-cov} describe 
a contraction result for the parameters in the sense of 
 \cite[Chapter 7.3.1]{GiNi16}.

% \todo{Compare to consistency notions for nonparametric Bayes methods}

% \todo{Check, if it makes sense to pull $\nfilts n$ out of the cross covariances}

\section{Filtering Sequences}\label{seq:filt-seq}

Theorems \ref{thm:grv-conditioning-y-inf-dim} and \ref{thm:canonical-rcp} crucially depend on the existence  of
filtering sequences and our ability to construct and apply them. In this section, we therefore investigate filtering sequences in some detail. 
In particular, we show  that for separable
Banach spaces $F$ there always exists a filtering sequence. In addition, we  present
some natural and readily available filtering sequences for Hilbert spaces and reproducing kernel Hilbert spaces,
as well as for the spaces $\sC T$.
Finally, we also investigate  both $C^1([0,1])$  and general Banach spaces consisting of functions.

Our first result shows that a sufficiently rich sequence  $(y_i')\subset F'$ can be used to construct 
a filtering sequence. It thus translates
 the measure-theoretic notion
of filtering sequences into the language of functional analysis.

\begin{proposition}\label{prop:proper-filt-seq}
Let $F$ be a  separable Banach space and $(y_i')\subset F'$  with %be a sequence  with 
\begin{align}\label{prop:proper-filt-seq-h1}
\overline{\spann \{ y_i': i\geq 1  \}}^{\tauweaks} = F'\, ,
\end{align}
where on the left  side the $\tauweaks$-closure is considered. For $n\geq 1$ we
define $\nfilts n:F\to \R^n$ by
\begin{align}\label{prop:proper-filt-seq-h2}
\nfilts n(y) := \bigl(\dualpair {y_1'}{y}F,\dots, \dualpair {y_n'}{y}F\bigr)\, , \myqquad y\in F.
\end{align}
Then $(\nfilts n)$ is a filtering sequence for $F$.
\end{proposition}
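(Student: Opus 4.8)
The plan is to verify the two defining properties of a filtering sequence from Definition \ref{def:filt-seq}: the nesting $\s(\nfilts n)\subset \s(\nfilts{n+1})$ and the identity $\sborelEx F = \s(\nfilts n : n\geq 1)$. The first property is immediate: each coordinate of $\nfilts n$ is among the coordinates of $\nfilts{n+1}$, so $\nfilts n = \pi \circ \nfilts{n+1}$ for the coordinate projection $\pi:\R^{n+1}\to \R^n$, and hence $\s(\nfilts n)\subset \s(\nfilts{n+1})$. For the second property, the inclusion $\s(\nfilts n : n\geq 1)\subset \sborelEx F$ is also easy, since each $\nfilts n$ is continuous and linear, hence $\sborelEx F$-measurable (each component $\dualpair{y_i'}\mycdot F$ is a bounded linear functional, so measurable with respect to $\sborelEx F$ by definition). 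So the real content is the reverse inclusion $\sborelEx F \subset \s(\nfilts n : n\geq 1)$.

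To prove $\sborelEx F \subset \s(\nfilts n : n\geq 1)$, recall that $\sborelEx F$ is by definition the smallest $\s$-algebra making every $y'\in F'$ measurable; hence it suffices to show that each $y'\in F'$ is $\s(\nfilts n : n\geq 1)$-measurable. First, every $y_i'$ itself is $\s(\nfilts n : n\geq 1)$-measurable, since $\dualpair{y_i'}\mycdot F$ is the $i$th coordinate of $\nfilts n$ for any $n\geq i$. Consequently every element of $\spann\{y_i': i\geq 1\}$ is $\s(\nfilts n : n\geq 1)$-measurable, being a finite linear combination of measurable functions. It remains to pass from the span to its $\tauweaks$-closure, which by hypothesis \eqref{prop:proper-filt-seq-h1} is all of $F'$. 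The point is that if $y'\in F'$ lies in $\overline{\spann\{y_i'\}}^{\tauweaks}$, then since $F$ is separable, the closed unit ball $B_{F'}$ is $\tauweaks$-metrizable, so the $\tauweaks$-closure of the (norm-bounded intersections of the) span is actually the sequential $\tauweaks$-closure; thus there is a sequence $(z_k')\subset \spann\{y_i'\}$ with $z_k' \to y'$ in the $\tauweaks$-topology, i.e.\ $\dualpair{z_k'}yF \to \dualpair{y'}yF$ for every $y\in F$. Being a pointwise limit of the $\s(\nfilts n : n\geq 1)$-measurable functions $\dualpair{z_k'}\mycdot F$, the functional $\dualpair{y'}\mycdot F$ is $\s(\nfilts n : n\geq 1)$-measurable as well. (A small care is needed here: the $z_k'$ may not themselves be norm-bounded; but one can instead first reduce to the case of a norm-separable predual and use that the $\tauweaks$-closure of a subspace coincides with the $\tauweaks$-closure of its span intersected with balls, or more simply invoke Goldstine/metrizability of bounded sets together with the fact that $\overline{\spann\{y_i'\}}^{\tauweaks}$ is already a linear subspace, so scaling a bounded approximating sequence suffices.) Once every $y'\in F'$ is shown $\s(\nfilts n : n\geq 1)$-measurable, the definition of $\sborelEx F$ forces $\sborelEx F\subset \s(\nfilts n : n\geq 1)$, completing the argument.

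I expect the main obstacle to be the last step, namely turning the abstract $\tauweaks$-density hypothesis \eqref{prop:proper-filt-seq-h1} into an \emph{approximation by sequences} that is usable for a pointwise-limit measurability argument. The clean route is to exploit separability of $F$: this makes $(B_{F'},\tauweaks)$ compact metrizable, and a careful bookkeeping of norms (replacing a general element of the $\tauweaks$-closure by a suitably rescaled bounded net, then extracting a sequence via metrizability on balls) yields the required approximating sequence. An alternative that sidesteps sequences entirely is to argue directly on $\s$-algebras: the collection $\{y'\in F' : \dualpair{y'}\mycdot F \text{ is } \s(\nfilts n:n\geq1)\text{-measurable}\}$ is a norm-closed \emph{and} $\tauweaks$-closed linear subspace of $F'$ containing all $y_i'$ — norm-closedness because a norm-limit of measurable functions is measurable, and $\tauweaks$-closedness because (using separability of $F$) the pointwise-limit-of-a-sequence reasoning above applies to any element of the $\tauweaks$-closure of a norm-bounded subset — hence it contains $\overline{\spann\{y_i'\}}^{\tauweaks}=F'$. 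Either way the crux is the interplay between the weak\textsuperscript{*} topology and countable (sequential) operations, which is exactly where separability of $F$ is used.
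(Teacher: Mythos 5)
Your setup (the nesting $\s(\nfilts n)\subset\s(\nfilts{n+1})$ and the inclusion $\s(\nfilts n:n\geq 1)\subset\sborelEx F$) is fine, and you correctly locate the whole difficulty in passing from $\spann\{y_i':i\geq 1\}$ to its $\tauweaks$-closure. However, your primary route has a genuine error: you claim that every $y'\in\overline{\spann\{y_i'\}}^{\tauweaks}$ is the $\tauweaks$-limit of a \emph{sequence} $(z_k')$ from the span. This is false in general, and the parenthetical fixes do not repair it. Metrizability of $(B_{F'},\tauweaks)$ only helps once you know that $y'$ lies in the $\tauweaks$-closure of a \emph{norm-bounded} subset of the span, and there is no reason for that; classically (Mazurkiewicz, Banach) there exist total subspaces $V$ of the dual of a separable Banach space whose weak$^*$ sequential closure is a proper subspace of $\overline V^{\tauweaks}=F'$, so that the sequential closure must be iterated transfinitely before it exhausts the weak$^*$ closure. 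Hence ``pick a $\tauweaks$-convergent sequence from the span'' cannot be the argument.

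Your ``alternative that sidesteps sequences'' is the correct route, and it is in fact the paper's (Proposition \ref{prop:proper-filt-seq} reduces via Proposition \ref{prop:suff-filt-seq} to Lemma \ref{lem:sub-ws-sig-alg}): consider the subspace $W$ of all $y'\in F'$ such that $\dualpair{y'}{\mycdot}F$ is $\s(\nfilts n:n\geq 1)$-measurable; it contains every $y_i'$ and is $\tauweaks$-\emph{sequentially} closed, since measurability survives pointwise limits of sequences. The gap is the upgrade from ``$\tauweaks$-sequentially closed'' to ``$\tauweaks$-closed''. Your stated reason only shows that $W\cap rB_{F'}$ is $\tauweaks$-closed for every $r>0$ (via metrizability of balls); concluding from this that $W$ itself is $\tauweaks$-closed is precisely the Krein--\u{S}mulian theorem for convex sets in the dual of a Banach space (in the sequential form over a separable predual, see \cite[Corollary 2.7.13]{Megginson98}), which you never invoke and which is the one nontrivial ingredient of the paper's Lemma \ref{lem:sub-ws-sig-alg}. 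With that theorem cited, your second route closes: $W$ is $\tauweaks$-closed and contains the span, hence contains $\overline{\spann\{y_i'\}}^{\tauweaks}=F'$. Note that it is essential to apply this to $W$, which is a priori much larger than the span and already sequentially closed, rather than to the span itself; that is exactly what makes the argument succeed where the naive sequence extraction fails.
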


Note that \eqref{prop:proper-filt-seq-h1} is, for example, satisfied, if we have a sequence $(y_i')\subset F'$, for which \eqref{prop:proper-filt-seq-h1} even holds for the
norm-closure, that is, 
\begin{align}\label{prop:proper-filt-seq-h1-norm}
\overline{\spann \{ y_i': i\geq 1  \}}^{\snorm\mycdot_{F'}} = F'\, .
\end{align}
Unfortunately, a  sequence $(y_i')\subset F'$ with \eqref{prop:proper-filt-seq-h1-norm} exists if and only if $F'$ is separable. As a consequence, 
spaces such as $F:=\sC{[0,1]^d}$  do not enjoy \eqref{prop:proper-filt-seq-h1-norm}, and for this reason we will mostly focus on \eqref{prop:proper-filt-seq-h1}.

Now, our first main result of this section shows the existence of   filtering sequences for generic, separable $F$, where
we recall from Theorem \ref{thm:alaoglu-and-more}  that for such $F$ we always have a countable and $\tauweaks$-dense $\denseblo_{F'} \subset B_{F'}$.

\begin{theorem}\label{thm:dense-gives-filter-seq}
Let $F$ be a separable Banach space and
$\denseblo_{F'} \subset B_{F'}$ be countable and $\tauweaks$-dense with enumeration $\denseblo_{F'} = \{y_i': i\geq 1\}$.
For $n\geq 1$ we
define $\nfilts n:F\to \R^n$ by \eqref{prop:proper-filt-seq-h2}.
% \begin{align*}%\label{eq:eval-op-copy}
% \nfilts n(y) := \bigl(\dualpair {y_1'}{y}F,\dots, \dualpair {y_n'}{y}F\bigr)\, , \myqquad y\in F.
% \end{align*}
Then $(\nfilts n)$ is a filtering sequence for $F$.
\end{theorem}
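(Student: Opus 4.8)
The plan is to deduce Theorem \ref{thm:dense-gives-filter-seq} directly from Proposition \ref{prop:proper-filt-seq} by verifying that a countable $\tauweaks$-dense subset $\denseblo_{F'}$ of the unit ball $B_{F'}$ satisfies the weak-$*$ density hypothesis \eqref{prop:proper-filt-seq-h1}, namely $\overline{\spann\{y_i': i\geq 1\}}^{\tauweaks} = F'$. Once this is established, Proposition \ref{prop:proper-filt-seq} immediately gives that the operators $\nfilts n$ defined by \eqref{prop:proper-filt-seq-h2} form a filtering sequence, and there is nothing further to prove.

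So the whole task reduces to a soft functional-analytic fact: if $\denseblo_{F'}$ is $\tauweaks$-dense in $B_{F'}$, then its linear span is $\tauweaks$-dense in all of $F'$. First I would note that $\tauweaks$-density of $\denseblo_{F'}$ in $B_{F'}$ gives $B_{F'} \subset \overline{\denseblo_{F'}}^{\tauweaks} \subset \overline{\spann \denseblo_{F'}}^{\tauweaks}$, where the latter set is a $\tauweaks$-closed linear subspace of $F'$ (the $\tauweaks$-closure of a linear subspace is again a linear subspace, since addition and scalar multiplication are $\tauweaks$-continuous). A linear subspace containing the unit ball $B_{F'}$ must be all of $F'$, because every element of $F'$ is a scalar multiple of an element of $B_{F'}$. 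Hence $\overline{\spann\{y_i': i\geq 1\}}^{\tauweaks} = F'$, which is exactly \eqref{prop:proper-filt-seq-h1}.

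Combining this with Proposition \ref{prop:proper-filt-seq} then finishes the proof. I do not anticipate a genuine obstacle here: the only mild point of care is that one is working with the weak-$*$ closure rather than the norm closure, so one should be explicit that the $\tauweaks$-closure of a linear subspace remains a linear subspace and that one is genuinely allowed to invoke $\tauweaks$-density of $\denseblo_{F'}$ in $B_{F'}$ — which is furnished by Theorem \ref{thm:alaoglu-and-more} as quoted in the lead-in to the statement. Everything else is a one-line citation of Proposition \ref{prop:proper-filt-seq}.
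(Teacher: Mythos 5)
Your proposal is correct and follows essentially the same route as the paper: both reduce the theorem to Proposition \ref{prop:proper-filt-seq} and verify \eqref{prop:proper-filt-seq-h1} directly from the $\tauweaks$-density of $\denseblo_{F'}$ in $B_{F'}$. Your observation that the $\tauweaks$-closed span is a subspace containing $B_{F'}$ and hence all of $F'$ even sidesteps the paper's appeal to metrizability of $(B_{F'},\tauweaks)$ and its normalize-and-rescale step, but this is only a cosmetic difference.
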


Combining Theorem \ref{thm:dense-gives-filter-seq} with Theorem \ref{thm:grv-conditioning-y-inf-dim} we see, for example, that the regular
conditional probabilities $\P_{X|Y}$ of jointly Gaussian random variables $X$ and $Y$ are always almost surely Gaussian measures.
Moreover, we can always approximate their means and covariances by the described finite dimensional projections.

Proposition \ref{prop:proper-filt-seq} shows that sequences $(y_i')\subset F'$ 
satisfying \eqref{prop:proper-filt-seq-h1}
% whose span are $\tauweaks$-sequentially 
% dense in $F'$ 
can be used to construct filtering sequences. 
Unfortunately, working with the $\tauweaks$-closure in \eqref{prop:proper-filt-seq-h1} is often rather complicated. For this reason the 
 following result 
provides a characterization of  sequences satisfying \eqref{prop:proper-filt-seq-h1} that can be easier verified in some situations. 

\begin{proposition}\label{prop:sep-sequences}
Let $F$ be a  separable Banach space and $(y_i')\subset F'$ be a sequence. Then the following statements are equivalent:
\begin{enumerate}
\item We have $\overline{\spann \{ y_i': i\geq 1  \}}^{\tauweaks} = F'$.
\item The sequence $(y_i')\subset F'$ is separating, that is,  for all $y\in F\setminus \{0\}$   there is an $i\geq 1$ with $\dualpair{y'_i}yF \neq 0$.
\end{enumerate}
\end{proposition}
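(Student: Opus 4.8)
\textbf{Proof proposal for Proposition \ref{prop:sep-sequences}.}

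The plan is to prove the two implications separately, using the Hahn--Banach theorem and the basic duality between $F'$ equipped with the weak* topology and $F$ itself. Throughout, write $V := \spann\{y_i' : i \geq 1\} \subseteq F'$ for the linear span.

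\emph{i)} $\Rightarrow$ \emph{ii)}. I would argue by contraposition. Suppose $(y_i')$ is not separating, so there exists $y_0 \in F \setminus \{0\}$ with $\dualpair{y_i'}{y_0}F = 0$ for all $i \geq 1$. The functional $\dualpairxy{\mycdot}{y_0}{}{}:F'\to\R$, namely $y' \mapsto \dualpair{y'}{y_0}F$, is by definition $\tauweaks$-continuous on $F'$, and it vanishes on $V$; hence it vanishes on the $\tauweaks$-closure $\overline V^{\tauweaks}$. But since $E$ (here $F$) is separable, the canonical embedding of $F$ into $F''$ is injective, so $y_0 \neq 0$ means $\dualpairxy{\mycdot}{y_0}{}{} \not\equiv 0$ on $F'$. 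Therefore $\overline V^{\tauweaks} \neq F'$, i.e.\ \eqref{prop:proper-filt-seq-h1} fails.

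\emph{ii)} $\Rightarrow$ \emph{i)}. Again by contraposition: assume $\overline V^{\tauweaks} \neq F'$. Then pick $z' \in F' \setminus \overline V^{\tauweaks}$. Since $\overline V^{\tauweaks}$ is a $\tauweaks$-closed linear subspace of $F'$ and $z'$ lies outside it, a Hahn--Banach separation argument in the locally convex space $(F', \tauweaks)$ yields a $\tauweaks$-continuous linear functional on $F'$ that vanishes on $\overline V^{\tauweaks}$ but not at $z'$. The key structural fact is that every $\tauweaks$-continuous linear functional on $F'$ is of the form $y' \mapsto \dualpair{y'}{y}F$ for some $y \in F$; this is the standard identification $(F', \tauweaks)' = F$. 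Applying it, we obtain $y_0 \in F$ with $\dualpair{y'}{y_0}F = 0$ for all $y' \in \overline V^{\tauweaks} \supseteq \{y_i' : i \geq 1\}$, yet $\dualpair{z'}{y_0}F \neq 0$, which forces $y_0 \neq 0$. Thus $(y_i')$ is not separating.

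The routine parts are the Hahn--Banach separation in the weak* topology and the identification $(F',\tauweaks)' = F$; both are entirely standard, and the separability of $F$ is only used to guarantee injectivity of the canonical embedding $F \hookrightarrow F''$ (equivalently, that $F$ separates points of $F'$), which is where the hypothesis that $F$ be a separable Banach space enters. The only mild subtlety — and the step I would be most careful about — is making sure the separation theorem is invoked in the correct topological setting: one works with $F'$ carrying the weak* topology (in which $\overline V^{\tauweaks}$ is closed and convex), so that the separating functional is automatically weak*-continuous and hence represented by an element of $F$. Once that is set up correctly, both directions are short.
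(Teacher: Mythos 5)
Your proof is correct. It reaches the same duality fact as the paper but packages it differently: the paper's proof is an equational chain through annihilators --- it writes $\overline{\spann\{y_i'\}}^{\tauweaks} = \bigl(\bigl(\bigcup_n \spann \crdfct{\nfilts n}\bigr)_\annil\bigr)^\annil = \bigl(\bigcap_i \ker y_i'\bigr)^\annil$ using Lemma \ref{lem:fin-dim-ops-essentials} and the cited identity $\bigl(B_\annil\bigr)^\annil = \overline{B}^{\tauweaks}$ for subspaces $B\subset F'$, and then observes that the separating property is exactly $\bigcap_i \ker y_i' = \{0\}$, the only set whose annihilator is all of $F'$. You instead prove the two implications directly by contraposition, re-deriving that same bipolar identity on the fly via Hahn--Banach separation in $(F',\tauweaks)$ together with $(F',\tauweaks)' = F$. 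Both arguments are sound; the paper's version reuses machinery it needs anyway for Proposition \ref{prop:suff-filt-seq}, while yours is self-contained and arguably more transparent about where Hahn--Banach enters.

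One small correction: the injectivity of the canonical embedding $F\hookrightarrow F''$ (equivalently, that $F$ separates points of $F'$... rather, that $F'$ separates points of $F$, which is what you actually use in the form ``$y_0\neq 0$ implies the evaluation functional at $y_0$ is not identically zero on $F'$'') is a consequence of Hahn--Banach and holds for every normed space; separability of $F$ plays no role there, and indeed the equivalence in this proposition does not require separability at all. This does not affect the validity of your argument --- the fact you invoke is true --- but the attribution to separability is misplaced.
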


Let us now turn to more specific spaces. 
The following corollary considers
orthonormal bases (ONBs) in separable  Hilbert spaces.

\begin{corollary}\label{cor:filt-seq-HS}
Let $H$ be a separable Hilbert space with $\dim H = \infty$ and $(e_n)_{n\geq 1}$ be an ONB of $H$.
For $n\geq 1$ we
define $\nfilts n:H\to \R^n$ by
\begin{align*}%\label{eq:eval-op-copy}
\nfilts n(y) := \bigl(\skprod {e_1}{y}_H,\dots, \skprod {e_n}{y}_H\bigr)\, , \myqquad y\in H.
\end{align*}
Then $(\nfilts n)$ is a filtering sequence for $H$.
\end{corollary}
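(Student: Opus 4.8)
The plan is to deduce Corollary~\ref{cor:filt-seq-HS} directly from Proposition~\ref{prop:proper-filt-seq} (or equivalently from Proposition~\ref{prop:sep-sequences}) by identifying the Hilbert space $H$ with its own dual via the Riesz--Fr\'echet isomorphism. Concretely, for each $n$ the functional $y\mapsto \skprod{e_n}{y}_H$ is the element of $H'$ that corresponds to $e_n\in H$ under this isomorphism, so the sequence $(y_i')\subset H'$ appearing in Proposition~\ref{prop:proper-filt-seq} is, after the identification, just the orthonormal basis $(e_n)_{n\ge 1}$ itself, and the operators $\nfilts n$ defined in the corollary are exactly those defined in \eqref{prop:proper-filt-seq-h2}. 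Hence it suffices to check the hypothesis \eqref{prop:proper-filt-seq-h1}, i.e.\ that $\spann\{e_i: i\ge 1\}$ is $\tauweaks$-dense in $H'\cong H$.

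First I would record the identification step carefully: the Riesz map $R:H\to H'$, $R(h) = \skprod h\mycdot_H$, is an isometric isomorphism, and under $R$ the weak$^*$ topology on $H'$ pulls back to the weak topology on $H$ (for reflexive, in particular Hilbert, spaces weak and weak$^*$ coincide). So \eqref{prop:proper-filt-seq-h1} for $(Re_i)$ is equivalent to $\overline{\spann\{e_i:i\ge1\}}^{\tauweak} = H$.

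Second, I would verify this density. The cleanest route is via Proposition~\ref{prop:sep-sequences}: the sequence $(Re_i)\subset H'$ is separating, because if $y\in H$ satisfies $\skprod{e_i}{y}_H = 0$ for all $i\ge 1$, then $y$ is orthogonal to an ONB of $H$, hence $y = 0$ by Parseval/completeness of the ONB. Thus statement \emph{ii)} of Proposition~\ref{prop:sep-sequences} holds, so statement \emph{i)} holds, which is precisely \eqref{prop:proper-filt-seq-h1}. (Alternatively, one can argue directly: since $\spann\{e_i\}$ is norm-dense in $H$ and the norm topology is finer than the weak topology, it is a fortiori weakly dense; this is even simpler and avoids invoking Proposition~\ref{prop:sep-sequences}, though it uses that a Hilbert space is its own dual in an essential way for the reflexivity/weak-vs-weak$^*$ point.)

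Finally, applying Proposition~\ref{prop:proper-filt-seq} to the sequence $(Re_i)\subset H'$ yields that $(\nfilts n)$ is a filtering sequence for $H$, which is the claim. I do not expect any genuine obstacle here: the only point needing a little care is the bookkeeping of the Riesz identification and the attendant remark that weak$^*$ on $H'$ corresponds to weak on $H$ — but since we only need \emph{density}, and norm-density trivially implies weak$^*$-density after transport, even that subtlety is mild. The assumption $\dim H = \infty$ is not actually needed for the argument (in finite dimensions every separating sequence still works, and the $\nfilts n$ stabilize), so I would either keep it only for consistency with the infinite-dimensional applications or note in passing that it is inessential.
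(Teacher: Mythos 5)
Your proposal is correct and follows essentially the same route as the paper: identify $H$ with $H'$ via the Riesz--Fr\'echet isomorphism, observe that the span of the transported ONB is dense in $H'$ (the paper uses norm-density, i.e.\ \eqref{prop:proper-filt-seq-h1-norm}, which is exactly your ``even simpler'' alternative; your primary route via Proposition~\ref{prop:sep-sequences} and Parseval is an equally valid minor variant), and then apply Proposition~\ref{prop:proper-filt-seq}. Your closing remark that $\dim H=\infty$ is inessential is also accurate.
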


Of course, it is possible to generalize Corollary \ref{cor:filt-seq-HS} to Banach spaces $F$ whose dual $F'$ have a Schauder base.
We skip the rather obvious details.

As described in the introduction  one of our goals is to
better understand GP4MLs, which are typically viewed to generate random functions.
To capture this in a general form, we say 
that a Banach space $F$ is 
a Banach spaces consisting of functions (BSF) on some set $T$, if $F$ consists of functions $T\to \R$, and the point evaluations $\diracf t:F\to \R$ 
given by $\diracf t f = f(t)$ are continuous for all $t\in T$.  

% we now investigate filtering sequences 
% for Banach spaces consisting of functions (BSF). To this end, recall that a Banach space $F$ is 
% a BSF on some set $T$, if $F$ consists of functions $T\to \R$, and the point evaluations $\diracf t:F\to \R$ 
% given by $\diracf t f = f(t)$ are continuous for all $t\in T$. 

Before we show how these point evaluations can be used to construct filtering sequences, we note that 
 BSFs span a wide class of spaces. For example, the space $\ell_\infty(T)$ of all bounded functions $T\to \R$
equipped with the supremums norm $\inorm\cdot$ is a BSF. In addition, 
 the space $\sCb{T,\metric}$ of 
bounded, continuous functions  $T\to \R$ on some (pseudo) metric space $(T,\metric)$ is a BSF, if we again
equip $\sCb{T,\metric}$ with the supremums norm. Moreover, 
reproducing kernel Hilbert spaces (RKHSs) are covered, as they are by definition 
 both Hilbert spaces and BSFs, see 
 e.g.~\cite[Chapter 4.2]{StCh08} for details.
Furthermore, spaces of bounded, $\a$-H\"older continuous functions as well as spaces of bounded, continuously differentiable functions are  BSFs.
 In addition, Sobolev spaces can be viewed as BSFs if their smoothness is sufficiently large in the sense that Sobolev's embedding 
 theorem is satisfied.  BSFs have recently attracted some interest in the machine learning community, and we refer for more information on BSFs to e.g.~\cite{BaDVRoVi23a,ScSt25a}
 and the references mentioned therein. Finally, note that BSFs also play an important role for the PDE applications mentioned in the introduction, see e.g.~\cite{PfStHeWeXXa}.

 Now, if $F$ is a
  BSF  on $T$ and  $T_n:=\{t_1,\dots,t_n\}\subset T$, then 
% 
% 
% then it is somewhat natural to use point evaluations to construct filtering sequences.
% Namely, if we have some $S:=\{t_1,\dots,t_n\}\subset T$, then  
 $\npeval {T_n} :F\to \R^n$ defined by
\begin{align}\label{eq:eval-op-bsf}
\npeval {T_n}(f) := \bigl(\dualpair {\diracf{t_1} }{f} F,\dots, \dualpair {\diracf{t_n} }{f} F\bigr)\, , \myqquad f\in F 
\end{align}
is a bounded linear operator.
The following result shows that in certain situations these operators can be used to construct filtering sequences.

\begin{theorem}\label{thm:bsf-in-cb}
Let $(T,\metric)$ be a separable pseudo-metric space and $F$ be a separable BSF on $T$ for which all $f\in F$ are continuous. 
Furthermore, let $T_\infty\subset T$ be a countable and dense subset with enumeration $T_\infty = \{t_i: i\geq 1\}$.
 Then $(\diracf {t_i})\subset F'$ is separating and
$(\npeval {T_n})$ is a filtering sequence for $F$, where
$\npeval {T_n}$ is defined by  \eqref{eq:eval-op-bsf}
for $T_n :=\{t_1,\dots,t_n\}$.
\end{theorem}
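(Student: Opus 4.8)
The plan is to first verify the separating property of the point evaluations and then feed it into the two propositions on filtering sequences already proved in this section. Note that since $F$ is a BSF on $T$, every point evaluation $\diracf t$ is by definition a bounded linear functional, so $\diracf{t_i}\in F'$ for each $i$ and the statement is meaningful.

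For the separating property I would argue in the usual way: suppose $f\in F$ satisfies $\dualpair{\diracf{t_i}}{f}F = f(t_i) = 0$ for every $i\geq 1$. Then $f$ vanishes on the dense subset $T_\infty = \{t_i : i\geq 1\}$, and since $f$ is continuous on $T$ by hypothesis, it follows that $f(t)=0$ for all $t\in T$, i.e.\ $f=0$ in $F$. Hence every $f\in F\setminus\{0\}$ admits some $i$ with $\dualpair{\diracf{t_i}}{f}F\neq 0$, which is precisely the assertion that $(\diracf{t_i})\subset F'$ is separating.

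To obtain the filtering sequence I would then apply Proposition \ref{prop:sep-sequences}: since $F$ is separable and $(\diracf{t_i})$ is separating, we get $\overline{\spann\{\diracf{t_i}:i\geq 1\}}^{\tauweaks}=F'$, i.e.\ the sequence $y_i':=\diracf{t_i}$ satisfies hypothesis \eqref{prop:proper-filt-seq-h1} of Proposition \ref{prop:proper-filt-seq}. That proposition now yields that the operators $\nfilts n:F\to\R^n$, $\nfilts n(f)=\bigl(\dualpair{\diracf{t_1}}{f}F,\dots,\dualpair{\diracf{t_n}}{f}F\bigr)$, form a filtering sequence for $F$. Comparing with the definition \eqref{eq:eval-op-bsf}, we have $\nfilts n=\npeval{T_n}$ for $T_n=\{t_1,\dots,t_n\}$, so $(\npeval{T_n})$ is a filtering sequence, as claimed.

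The argument carries no genuine analytic difficulty; it is essentially a short composition of results already available. The only points I would be careful about are: (a) the vanishing-on-a-dense-set step works verbatim in a \emph{pseudo}-metric space, since density and continuity are purely topological notions and no separation axiom is needed; and (b) the trivial but necessary bookkeeping that the operator $\npeval{T_n}$ of \eqref{eq:eval-op-bsf} is exactly the operator $\nfilts n$ produced by Proposition \ref{prop:proper-filt-seq} for the choice $y_i'=\diracf{t_i}$. Separability of $F$ is used (and available) in both invoked propositions.
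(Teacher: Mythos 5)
Your proposal is correct and follows the same route as the paper: establish that the point evaluations at the dense set $T_\infty$ are separating via continuity (the paper argues contrapositively, fixing $f\neq 0$ and locating a $t_{i_k}$ with $f(t_{i_k})\neq 0$, which is logically the same step), and then chain Proposition \ref{prop:sep-sequences} with Proposition \ref{prop:proper-filt-seq}. Your remark that the density-plus-continuity argument needs no separation axiom and hence works in a pseudo-metric space is a correct and worthwhile observation.
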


Recall that compact metric spaces $(T,\metric)$  are separable and that the corresponding space $\sC T$
is also  separable, see e.g.~\cite[Theorem V.6.6]{Conway90} or \cite[Corollary 11.2.5]{Dudley02}.
Therefore, we obtain the following corollary of Theorem \ref{thm:bsf-in-cb} without any further proof.

\begin{corollary}\label{cor:filt-seq-CT}
Let $(T,\metric)$ be a compact metric space and $T_\infty\subset T$ be a countable, dense subset with enumeration $T_\infty = \{t_i: i\geq 1\}$.
Then 
$(\npeval {T_n})$ is a filtering sequence for $\sC T$, where we have set  $T_n :=\{t_1,\dots,t_n\}$.
\end{corollary}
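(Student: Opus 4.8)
The plan is to derive Corollary \ref{cor:filt-seq-CT} as a direct specialization of Theorem \ref{thm:bsf-in-cb}, so no genuinely new argument is needed; the task is to verify that all hypotheses of that theorem are met in the $\sC T$-setting. First I would observe that a compact metric space $(T,\metric)$ is in particular a separable pseudo-metric space, which is explicitly recalled in the statement via the references \cite{Conway90,Dudley02}; likewise $\sC T$ is separable by the same references. Thus both the ``separable pseudo-metric space'' and ``separable BSF'' requirements of Theorem \ref{thm:bsf-in-cb} are satisfied once we know $\sC T$ is a BSF on $T$.

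Next I would check that $\sC T$ is indeed a BSF on $T$ all of whose elements are continuous. This is immediate: by definition $\sC T$ consists of continuous functions $T\to \R$, so the last clause ``all $f\in F$ are continuous'' holds trivially, and for each $t\in T$ the point evaluation $\diracf t:\sC T\to \R$, $f\mapsto f(t)$, satisfies $|\diracf t f| \leq \inorm f$, hence is a bounded linear functional; therefore $\sC T$ is a BSF. Since $(T,\metric)$ is compact metric, it admits a countable dense subset $T_\infty = \{t_i:i\geq 1\}$, which is precisely the remaining ingredient needed in Theorem \ref{thm:bsf-in-cb}.

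Having matched all hypotheses, I would simply invoke Theorem \ref{thm:bsf-in-cb} to conclude that $(\diracf{t_i})\subset \sC T'$ is separating and that $(\npeval{T_n})$, with $\npeval{T_n}$ defined by \eqref{eq:eval-op-bsf} for $T_n := \{t_1,\dots,t_n\}$, is a filtering sequence for $\sC T$. The corollary claims only the filtering-sequence part, so this completes the proof. As the text itself notes, this requires ``no further proof'' beyond this verification, which is why I expect there to be no real obstacle here: the only thing to be slightly careful about is that ``pseudo-metric'' in Theorem \ref{thm:bsf-in-cb} is more general than ``metric,'' so the hypotheses genuinely do apply, and that the BSF structure of $\sC T$ is exactly the standard one (continuous functions with sup norm, bounded point evaluations). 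The main ``work,'' such as it is, has already been done inside Theorem \ref{thm:bsf-in-cb} via Proposition \ref{prop:sep-sequences} and the separability of $\sC T$.
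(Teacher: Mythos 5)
Your proposal is correct and follows exactly the route the paper takes: the paper presents this corollary as an immediate specialization of Theorem \ref{thm:bsf-in-cb}, citing the separability of compact metric spaces and of $\sC T$, and states that no further proof is needed. Your verification that $\sC T$ is a separable BSF on a separable (pseudo-)metric space with all elements continuous is precisely the intended justification.
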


Our next goal is to apply Theorem \ref{thm:bsf-in-cb} to RKHSs. 
To this end, recall that every RKHS $H$ on $T$ has a unique kernel $k:T\times T\to \R$, which by the reproducing property 
can be used to represent the point evaluations. More precisely, we have 
\begin{align*}
\dualpair {\diracf{t} }{h} H = \skprod {k(t,\mycdot)}{h}_H\, , \myqquad t\in T,\, h\in H,
\end{align*}
see  e.g.~\cite[Chapter 4.2]{StCh08}. With these preparations, the application of Theorem \ref{thm:bsf-in-cb} to RKHSs reads as follows.

\begin{corollary}\label{cor:filt-seq-rkhs}
Let $(T,\metric)$ be a separable, pseudo-metric space and $H$ be an RKHS on $T$ with continuous kernel $k:T\times T\to \R$. 
Then $H$ is separable and for all countable  and dense subsets $T_\infty  = \{t_i: i\geq 1\}$ of $T$ the operators 
 $\npeval n:H\to \R^n$ defined by
\begin{align}\label{eq:eval-op-copy-rkhs}
\npeval n(h) := \bigl(\skprod {k(t_1,\mycdot)}{h}_H,\dots, \skprod {k(t_n,\mycdot)}{h}_H\bigr)  \, , \myqquad h\in H
\end{align}
give  a filtering sequence $(\npeval n)$   for $H$.
\end{corollary}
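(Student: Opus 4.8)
The plan is to deduce Corollary \ref{cor:filt-seq-rkhs} as a direct application of Theorem \ref{thm:bsf-in-cb}, so the work splits into verifying the hypotheses of that theorem for the RKHS $H$ and then translating its conclusion into the form \eqref{eq:eval-op-copy-rkhs}. First I would recall that an RKHS $H$ on $T$ is by definition a BSF on $T$: it is a Hilbert space of functions $T\to\R$ whose point evaluations $\diracf t$ are continuous, the latter being exactly the reproducing property $\dualpair{\diracf t}{h}H = \skprod{k(t,\mycdot)}{h}_H$. So the only nontrivial hypotheses of Theorem \ref{thm:bsf-in-cb} that remain are: (a) $(T,\metric)$ is a separable pseudo-metric space — this is assumed; (b) every $f\in H$ is continuous; and (c) $H$ is separable.

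For (b), I would use that $k$ is continuous on $T\times T$. Then for $h\in H$ and $s,t\in T$ one estimates
\begin{align*}
|h(s)-h(t)| = |\skprodb{k(s,\mycdot)-k(t,\mycdot)}{h}_H| \Leq \snorm h_H \cdot \snorm{k(s,\mycdot)-k(t,\mycdot)}_H,
\end{align*}
and $\snorm{k(s,\mycdot)-k(t,\mycdot)}_H^2 = k(s,s)-2k(s,t)+k(t,t)$ tends to $0$ as $s\to t$ by continuity of $k$; hence every $h\in H$ is continuous. For (c), separability of $H$ follows from separability of $(T,\metric)$ and continuity of $k$: if $T_\infty = \{t_i:i\geq 1\}$ is a countable dense subset, then $\{k(t_i,\mycdot):i\geq 1\}$ is a countable set whose closed linear span is all of $H$. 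Indeed, if $h\in H$ is orthogonal to every $k(t_i,\mycdot)$, then $h(t_i) = \skprod{k(t_i,\mycdot)}{h}_H = 0$ for all $i$, and since $h$ is continuous and $T_\infty$ is dense, $h\equiv 0$; so $\overline{\spann\{k(t_i,\mycdot):i\geq1\}}^{\hnorm\cdot} = H$, and taking rational linear combinations gives a countable dense subset. (Alternatively one may cite \cite[Chapter 4.2]{StCh08} for separability of RKHSs with continuous kernel on separable $T$, as the statement does for the BSF claim.)

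Once (a)–(c) are in place, Theorem \ref{thm:bsf-in-cb} applies verbatim: for any countable dense $T_\infty = \{t_i:i\geq1\}$, the sequence $(\diracf{t_i})\subset H'$ is separating and $(\npeval{T_n})$ with $\npeval{T_n}(f) = (\dualpair{\diracf{t_1}}{f}H,\dots,\dualpair{\diracf{t_n}}{f}H)$ is a filtering sequence for $H$. It then only remains to observe that, by the reproducing property, $\dualpair{\diracf{t_j}}{h}H = \skprod{k(t_j,\mycdot)}{h}_H$ for each $j$, so that $\npeval{T_n}$ coincides with the operator $\npeval n$ defined in \eqref{eq:eval-op-copy-rkhs}. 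This yields the claim that $(\npeval n)$ is a filtering sequence for $H$.

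I do not expect any serious obstacle here; the corollary is essentially a dictionary translation. The only point requiring a little care is confirming that continuity of $k$ genuinely forces continuity of all $h\in H$ (needed both for hypothesis (b) and for the density argument in separability), but this is the standard two-line estimate above. Everything else is bookkeeping: checking that RKHSs are BSFs and that the evaluation functionals in \eqref{eq:eval-op-bsf} and \eqref{eq:eval-op-copy-rkhs} are literally the same map.
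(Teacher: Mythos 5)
Your proof is correct and follows essentially the same route as the paper: verify that $H$ satisfies the hypotheses of Theorem \ref{thm:bsf-in-cb} (separability of $H$ and continuity of every $h\in H$) and then translate the evaluation functionals via the reproducing property. The only difference is cosmetic — where the paper cites \cite[Lemmas 4.29 and 4.33]{StCh08} for the continuity of the feature map and the separability of $H$, you supply the short self-contained arguments (the kernel-metric estimate and the orthogonality/density argument), both of which are sound.
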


In the following, we show that for all separable BSFs $F$ on $T$ it is possible to construct 
a pseudo metric on $T$ such that the assumptions of Theorem \ref{thm:bsf-in-cb} are satisfied. 

We first consider the case of  RKHSs $H$ on some set $T$ since in this case, the construction is straightforward. 
To begin with, let 
 us recall, that
$\metric_k:T\times T\to [0,\infty)$ defined by
\begin{align*}
\metric_k(t_1,t_2)
:=
\snorm{k(t_1,\mycdot) - k(t_2,\mycdot)}_H
=
\sqrt{ k(t_1,t_1) - 2 k(t_1,t_2) + k(t_2,t_2)  } %\, , \myqquad t_1,t_2\in T
\end{align*}
for all $t_1,t_2\in T$
gives a pseudo-metric on $T$ called the kernel metric, see e.g.~\cite[Chapter 4.3]{StCh08}. With this preparation,
the RKHS case reads as follows.

\begin{proposition}\label{prop:filt-seq-RKHS}
Let $H$ be a separable RKHS on $T$ with kernel $k$. Then $(T,\metric_k)$ is a separable pseudo-metric space and 
all $h\in H$ are continuous with respect to $\metric_k$. 
\end{proposition}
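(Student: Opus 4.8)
The plan is to prove the two claims of Proposition \ref{prop:filt-seq-RKHS} separately: first that every $h\in H$ is continuous with respect to $\metric_k$, and second that $(T,\metric_k)$ is separable.

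For the continuity claim, I would argue directly from the reproducing property. Fix $h\in H$ and $t_1,t_2\in T$. Then
\begin{align*}
|h(t_1) - h(t_2)| = |\skprod{k(t_1,\mycdot) - k(t_2,\mycdot)}{h}_H| \Leq \snorm{k(t_1,\mycdot) - k(t_2,\mycdot)}_H \cdot \snorm h_H = \snorm h_H \cdot \metric_k(t_1,t_2)\, ,
\end{align*}
using the reproducing property $h(t_i) = \skprod{k(t_i,\mycdot)}{h}_H$ and the Cauchy-Schwarz inequality. Hence each $h$ is (Lipschitz) continuous with respect to $\metric_k$.

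For the separability claim, I would exploit the fact that $H$ is separable together with the isometry
\begin{align*}
\metric_k(t_1,t_2) = \snorm{k(t_1,\mycdot) - k(t_2,\mycdot)}_H\, ,
\end{align*}
which says that the map $\Phi:T\to H$, $t\mapsto k(t,\mycdot)$, is an isometry from the pseudo-metric space $(T,\metric_k)$ into the metric space $(H,\snorm\cdot_H)$. The image $\Phi(T)\subset H$ is a subset of a separable metric space, hence is itself separable; let $\{k(t_i,\mycdot): i\geq 1\}$ be a countable dense subset of $\Phi(T)$ with $t_i\in T$. Then $\{t_i: i\geq 1\}$ is $\metric_k$-dense in $T$: given $t\in T$ and $\e>0$, pick $i$ with $\snorm{k(t,\mycdot) - k(t_i,\mycdot)}_H < \e$, which means $\metric_k(t,t_i)<\e$. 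Thus $(T,\metric_k)$ is separable. (Note that $\metric_k$ being only a pseudo-metric causes no trouble here, since separability is defined via countable dense subsets regardless of whether distinct points can have distance zero.)

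I do not anticipate a genuine obstacle: both parts follow immediately from the reproducing property, the Cauchy-Schwarz inequality, and the elementary fact that a subspace of a separable metric space is separable. The only minor point to be careful about is that $\metric_k$ is a pseudo-metric rather than a metric, so one should phrase separability purely in terms of the existence of a countable dense set and not invoke any statement that implicitly assumes the Hausdorff property; but this is a cosmetic concern rather than a real difficulty.
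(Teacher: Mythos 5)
Your proposal is correct and matches the paper's proof essentially verbatim: the paper also establishes separability by noting that $\{k(t,\mycdot):t\in T\}$ is a subset of the separable space $H$ and pulling back a countable dense subset, and it proves continuity via the same reproducing-property/Cauchy--Schwarz Lipschitz estimate. No differences worth noting.
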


Combining Proposition \ref{prop:filt-seq-RKHS} with Theorem \ref{thm:bsf-in-cb} we see that \eqref{eq:eval-op-copy-rkhs} 
defines a filtering sequence if we choose a countable and $\metric_k$-dense subset  $T_\infty$  of $T$.

Our next result will generalize Proposition \ref{prop:filt-seq-RKHS} to separable BSFs $F$. Here, a natural idea is to 
pull back a metric from $F'$ to $T$ 
with the help of the ``feature map'' $\Phi:T\to F'$
given by $t\mapsto \diracf t$
 as we have done for the kernel metric. Unfortunately, however, we can  no longer take the norm of $F'$ for this approach as 
the needed separability of $T$  would
require $F'$ to be separable. Moreover, simply taking a metric describing the $\tauweaks$-topology on a (scaled) ball
of $B_{F'}$ is not possible either, as the above $\Phi$ is bounded if and only if $F\subset \ell_\infty(T)$, and the latter is  
an  assumption we would like to avoid.
Nonetheless, the general idea is possible if we consider a  modified map and a different metric, as the proof 
of the following theorem reveals.

\begin{theorem}\label{thm:good-metric-for-bsf}
Let $F$ be a separable BSF on $T$. Then there exists a   pseudo-metric $\metric$ on $T$ 
such that $(T,\metric)$ is separable  and  all $f\in F$ are continuous with respect to $\metric$. In addition, there 
exists a countable subset $T_\infty  = \{t_i: i\geq 1\}$ of $T$ such that    $(\diracf{t_i})\subset F'$ is separating. 
\end{theorem}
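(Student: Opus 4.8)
\textbf{Proof plan for Theorem \ref{thm:good-metric-for-bsf}.}
The plan is to mimic the RKHS construction from Proposition \ref{prop:filt-seq-RKHS}, but to replace the unavailable norm of $F'$ by a separable pseudo-metric built directly from a countable separating family in $F'$. First I would record the structural input: since $F$ is a separable Banach space, $B_{F'}$ is $\tauweaks$-metrizable and $\tauweaks$-compact on the closed unit ball, and by Theorem \ref{thm:alaoglu-and-more} there is a countable $\tauweaks$-dense set $\denseblo_{F'} = \{y_i': i\geq 1\}\subset B_{F'}$; moreover by separability of $F$ there is a countable subset $D = \{x_j: j\geq 1\}$ dense in $F$. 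The point evaluations $\diracf t$ live in $F'$ by definition of a BSF, so $\snorm{\diracf t}_{F'}<\infty$ for each $t$, but these norms need not be uniformly bounded, which is exactly why a naive pull-back of a metric on a ball of $B_{F'}$ fails.

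The key idea I would use is to define, for $t_1,t_2\in T$,
\begin{align*}
\metric(t_1,t_2) := \sum_{i\geq 1} 2^{-i}\, \min\bigl\{1,\, \bigl|\dualpair{\diracf{t_1}-\diracf{t_2}}{x_i}F\bigr|\bigr\} = \sum_{i\geq 1} 2^{-i}\, \min\bigl\{1,\, |x_i(t_1)-x_i(t_2)|\bigr\}\, ,
\end{align*}
i.e.\ a weighted sum over a countable norming/dense sequence in $F$ itself rather than in $F'$. (One could equally well sum over a countable $\tauweaks$-dense subset of $B_{F'}$; I expect using $D\subset F$ to be cleanest because continuity of the $f\in F$ then falls out immediately.) One checks that $\metric$ is a well-defined pseudo-metric: the series converges since each term is bounded by $2^{-i}$, symmetry and the triangle inequality are inherited termwise, and $\metric(t,t)=0$. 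Continuity of each $f\in F$ with respect to $\metric$: given $f\in F$ and $\e>0$, pick $x_{i_0}\in D$ with $\snorm{f-x_{i_0}}_F$ small; then $|f(t_1)-f(t_2)| \leq |x_{i_0}(t_1)-x_{i_0}(t_2)| + 2\snorm{\diracf{t_1}}_{F'}$-type bounds — here I need to be a little careful, because $\snorm{\diracf{t}}_{F'}$ is not uniformly bounded. The fix is to first note $|f(t_1)-f(t_2)| \leq \snorm{f-x_{i_0}}_F\cdot(\snorm{\diracf{t_1}}_{F'}+\snorm{\diracf{t_2}}_{F'}) + |x_{i_0}(t_1)-x_{i_0}(t_2)|$ and then restrict attention to $\metric$-balls of radius $<2^{-i_0}$, on which $|x_{i_0}(t_1)-x_{i_0}(t_2)|<1$; combined with an a-priori pointwise bound this still does not immediately control the first term. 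I would therefore instead prove continuity only on the countable dense set and then extend, or — more robustly — build $\metric$ from a sequence in $F'$ that is simultaneously $\tauweaks$-dense in $B_{F'}$ \emph{and} contains suitable rescaled point evaluations; see the next paragraph.

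A cleaner route, which I now think is the right one, is this. Take the countable $\tauweaks$-dense $\{y_i'\}\subset B_{F'}$ from Theorem \ref{thm:alaoglu-and-more} and set $\metric(t_1,t_2) := \sum_i 2^{-i}\min\{1,|\dualpair{y_i'}{\d_{t_1}-\d_{t_2}}{F}|\}$ — wait, this pairs a functional with functionals, which is malformed. Instead pull back via the feature map using the $x_j\in D$ as above, but prove separability and the separating property, and handle continuity by the following trick: replace each $\diracf t$ by the normalized functional $\widetilde\d_t := \diracf t/(1+\snorm{\diracf t}_{F'})$ and pull back the standard $\tauweaks$-metric on $B_{F'}$ along $t\mapsto \widetilde\d_t$; then continuity of $f$ is equivalent to continuity of $t\mapsto f(t)/(1+\snorm{\diracf t}_{F'})$ plus continuity of $t\mapsto \snorm{\diracf t}_{F'}$ in this metric, both of which follow from $\widetilde\d_t$ being $\tauweaks$-continuous in $t$ (the latter since the $\tauweaks$-topology on $B_{F'}$ is metrized and $\snorm\cdot_{F'}$ is $\tauweaks$-lower-semicontinuous, hence measurable, and one argues separability separately). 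For separability of $(T,\metric)$: the pull-back metric embeds $T$ into the metric space $(B_{F'},\metric_{\tauweaks})$, which is separable (it is compact metric), and a subspace of a separable metric space is separable, so $(T,\metric)$ is separable; choose $T_\infty=\{t_i\}$ a countable $\metric$-dense subset. Finally, for the separating property of $(\diracf{t_i})$: if $f\in F\setminus\{0\}$ then $f(t)\neq 0$ for some $t\in T$, i.e.\ $\dualpair{\diracf t}{f}F\neq 0$; since $t\mapsto f(t)$ is $\metric$-continuous and $T_\infty$ is $\metric$-dense, we get $\dualpair{\diracf{t_i}}{f}F = f(t_i)\neq 0$ for some $i$, which is exactly separation.

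\textbf{Main obstacle.} The genuine technical difficulty is engineering a single metric that is simultaneously separable on $T$ \emph{and} makes every $f\in F$ continuous, given that $t\mapsto\snorm{\diracf t}_{F'}$ can be unbounded; the naive pull-back of the weak$^*$ metric from a fixed ball of $B_{F'}$ only works when $F\hookrightarrow\ell_\infty(T)$, which we are explicitly not assuming. Resolving this requires the normalization $\widetilde\d_t = \diracf t/(1+\snorm{\diracf t}_{F'})$ (or an equivalent rescaling) so that the feature map lands in $B_{F'}$, and then recovering continuity of the unnormalized $f$ from continuity of the normalized pairings — this bookkeeping, together with checking that $t\mapsto\snorm{\diracf t}_{F'}$ is itself continuous for the constructed metric, is where the real work lies. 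Everything else (pseudo-metric axioms, separability by embedding into compact metric $B_{F'}$, the separating conclusion via density of $T_\infty$) is routine.
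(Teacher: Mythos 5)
Your overall strategy --- normalize the point evaluations so that the feature map lands in $B_{F'}$ and pull back a metric from there --- is the right one and is close to what the paper does, but the specific metric you settle on in your ``cleaner route'' does not work, and the justification you give for the critical step is incorrect. You pull back only the $\tauweaks$-metric of $B_{F'}$ along $t\mapsto \widetilde\d_t := \diracf t/(1+\snorm{\diracf t}_{F'})$ and then assert that continuity of $t\mapsto \snorm{\diracf t}_{F'}$ ``follows from $\widetilde\d_t$ being $\tauweaks$-continuous in $t$'' because the dual norm is $\tauweaks$-lower-semicontinuous. Lower semicontinuity is not continuity: if $\metric(t_n,t)\to 0$ in your metric, i.e.\ $\widetilde\d_{t_n}\to\widetilde\d_t$ in the $\tauweaks$-topology, you only obtain $\snorm{\diracf t}_{F'}\leq\liminf_n\snorm{\diracf {t_n}}_{F'}$, and the norms $\snorm{\diracf {t_n}}_{F'}$ may diverge. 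In that case $f(t_n) = (1+\snorm{\diracf {t_n}}_{F'})\cdot\dualpair{\widetilde\d_{t_n}}{f}{F}$ need not converge to $f(t)$, so continuity of the elements of $F$ fails. (The measurability of the norm, which you also invoke, is irrelevant to this point.) You correctly flag this as the place ``where the real work lies,'' but the work is not done, and with the metric as you define it, it cannot be done. Your first construction, the weighted sum over a dense sequence in $F$, fails for the reason you yourself identify, so abandoning it was right.

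The repair, which is what the paper's proof implements, is to carry the norm along as an explicit second coordinate of the feature map: with $m(t):=\snorm{\diracf t}_{F'}^{-1}$ (and $m(t):=0$ if $\diracf t=0$) one sets $\Phi(t) := \bigl(m(t)\diracf t,\ \snorm{\diracf t}_{F'}\bigr)\in B_{F'}\times[0,\infty)$ and $\metric(s,t) := \metric_{\tauweaks}\bigl(m(s)\diracf s, m(t)\diracf t\bigr) + \bigl|\snorm{\diracf s}_{F'}-\snorm{\diracf t}_{F'}\bigr|$. Then $\metric(t_n,t)\to 0$ forces \emph{both} $\dualpair{m(t_n)\diracf {t_n}}{f}{F}\to\dualpair{m(t)\diracf t}{f}{F}$ \emph{and} $\snorm{\diracf {t_n}}_{F'}\to\snorm{\diracf t}_{F'}$, and the identity $f(t) = \snorm{\diracf t}_{F'}\cdot\dualpair{m(t)\diracf t}{f}{F}$ yields $f(t_n)\to f(t)$. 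Separability survives because $B_{F'}\times[0,\infty)$ with the sum metric is separable and subspaces of separable metric spaces are separable. Your remaining steps --- choosing a countable $\metric$-dense $T_\infty$ and deducing the separating property from density together with the now-established continuity --- are fine and match the paper.
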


To illustrate  the consequences of Theorem \ref{thm:good-metric-for-bsf}
we fix a  separable BSF $F$ on $T$ and a set
$T_\infty  = \{t_i: i\geq 1\}$   as in Theorem \ref{thm:good-metric-for-bsf}. Then Proposition
\ref{prop:sep-sequences} gives 
\begin{align*}
\overline{\spann \{ \diracf{t_i}': i\geq 1  \}}^{\tauweaks} = F'\, ,
\end{align*}
and Theorem \ref{thm:bsf-in-cb} ensures that 
$(\npeval {T_n})$ is a filtering sequence for $F$, where we have set  $T_n :=\{t_1,\dots,t_n\}$.
In other words, for separable BSFs it is always possible to apply our main  Theorems \ref{thm:grv-conditioning-y-inf-dim} and \ref{thm:canonical-rcp} 
with filtering sequences based upon point evaluations.

Finally we  present an alternative to point evaluations in the BSF context.
To keep the presentation short, we   focus on the space
 $C^1([0,1])$ of continuous functions $f:[0,1]\to \R$ that are continuously differentiable on $(0,1)$
and for which there exists a continuous $g:[0,1]\to \R$ with $g_{|(0,1)} = f'$. As usual, we equip this space
with the norm
\begin{align*}
\snorm f_{C^1([0,1])} := \inorm f + \inorm{f'}\, , \myqquad f\in C^1([0,1]).
\end{align*}
It is well-known that $C^1([0,1])$ is a separable  BSF. Now, the following simple lemma
provides a separating sequence for $C^1([0,1])$ and thus also a filtering sequence via Proposition \ref{prop:sep-sequences} and
Proposition \ref{prop:proper-filt-seq}.

\begin{lemma}\label{lem:c1-sep-seq}
Let $\{t_i: i\geq 1\}\subset (0,1)$ be dense in $[0,1]$ and $y_i':C^1([0,1])\to \R$ be defined  by
$y_i'(f) := f'(t_i)$ for all $f\in C^1([0,1])$.
If we further set $y_0' := \diracf 0$, then $(y_i')_{i\geq 0}$ is a separating sequence for $C^1([0,1])$.
\end{lemma}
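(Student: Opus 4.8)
The claim is that the functionals $y_0' = \diracf 0$ and $y_i'(f) = f'(t_i)$, $i \geq 1$, form a separating sequence for $C^1([0,1])$, i.e.\ that a function $f\in C^1([0,1])$ with $f(0) = 0$ and $f'(t_i) = 0$ for all $i$ must be the zero function. The plan is a short continuity-and-density argument. First I would observe that $f'$ is, by the definition of the norm on $C^1([0,1])$, (the restriction of) a continuous function $g$ on $[0,1]$, so $t\mapsto f'(t)$ is continuous on $[0,1]$. The hypothesis gives $f'(t_i) = 0$ on the set $\{t_i : i\geq 1\}$, which is dense in $[0,1]$; by continuity of $f'$ and density, $f' \equiv 0$ on $[0,1]$.

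Next I would invoke the fundamental theorem of calculus: since $f$ is continuous on $[0,1]$, continuously differentiable on $(0,1)$, and $f'$ extends continuously to $[0,1]$ with $f' \equiv 0$, the function $f$ is constant on $[0,1]$. (Concretely, for any $t\in[0,1]$, $f(t) - f(0) = \int_0^t f'(s)\intd s = 0$.) Combining this with $y_0'(f) = f(0) = 0$ yields $f \equiv 0$. Hence no nonzero $f$ is annihilated by all the $y_i'$, which is exactly the separating property as defined in Proposition~\ref{prop:sep-sequences}.

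I should also note in passing that each $y_i'$ is indeed a bounded linear functional on $C^1([0,1])$: linearity is clear, and $|y_i'(f)| = |f'(t_i)| \leq \inorm{f'} \leq \snorm f_{C^1([0,1])}$, so $y_i' \in C^1([0,1])'$; likewise $|y_0'(f)| = |f(0)| \leq \inorm f \leq \snorm f_{C^1([0,1])}$. This is needed so that ``separating sequence'' makes sense in the sense of the definitions, and so that Proposition~\ref{prop:sep-sequences} together with Proposition~\ref{prop:proper-filt-seq} can subsequently be applied to produce a filtering sequence.

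There is no real obstacle here: the only mild subtlety is bookkeeping at the endpoints $0$ and $1$, which is handled cleanly by the definition of $C^1([0,1])$ (the derivative is required to admit a continuous extension to the closed interval), so the density argument for $f'$ and the fundamental-theorem step both go through on all of $[0,1]$ without a separate limiting argument.
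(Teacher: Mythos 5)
Your proof is correct and is essentially the paper's argument in contrapositive form: both rest on the continuity of the (extended) derivative together with the density of $\{t_i\}$, plus the evaluation $\diracf 0$ to handle constants. The paper argues directly (if $f$ is non-constant, some $f'(t)\neq 0$ propagates to some $t_i$ by continuity), while you deduce $f'\equiv 0$ from vanishing on a dense set and then apply the fundamental theorem of calculus; these are the same idea, so no further comparison is needed.
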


Note that
Lemma \ref{lem:c1-sep-seq} actually
provides an alternative to  point evaluations since for $i\geq 1$ the functionals $y_i'$ do not access $f$ but its derivative.
Moreover, generalizations of Lemma \ref{lem:c1-sep-seq} to higher dimensions and derivatives are straightforward, and therefore
 our results  of Section \ref{sec:main-results} can indeed be applied to the PDE
applications mentioned in the introduction.

% In view of   PDE applications mentioned in the introduction, however, other filtering sequences as those of 
% Lemma \ref{lem:c1-sep-seq} can also be interesting.

\section{Conditioning Continuous Gaussian Processes}\label{sec:examples}

In this section we illustrate how the general results of Section \ref{sec:main-results} can be applied to the GP4ML setting. Namely, we will 
consider the conditioning of Gaussian random variables for 
the spaces 
 $E: = \sC T$ and $F:=\sC S$, where $(T,\metric)$ is a compact metric space and   $S\subset T$ is closed.
 In addition, we will discuss how these results can be formulated from a 
 Gaussian process perspective. 

% In the following, $\sC T$ denotes the Banach space  of continuous functions $T\to \R$, where $(T,\metric)$ is a compact metric space.
To apply our general results we first note that  $\sC T$ is a separable, see  e.g.~\cite[Theorem V.6.6]{Conway90} or \cite[Corollary 11.2.5]{Dudley02}.
% and therefore the theory developed so far is applicable.
Moreover, to describe the dual of  $\sC T$, we write
$\measpace T$ for the space  of all finite, signed Borel measures equipped with the
total variation norm. Riesz' representation theorem then shows that the map $\Cdualmap \cdot \, :\measpace T  \to \sC T'$ given by 
\begin{align}%\nonumber
% \Cdualmap \cdot \, :\measpace T & \to \sC T' \\ 
\label{eq:riesz-represent}
\nu & \mapsto  \Cdualmap\nu := \biggl( f\mapsto 
% \dualpairmc \nu fT :=   
\int_T f\intd \nu  \biggr)
\end{align}
is an isometric isomorphism,
see e.g.~\cite[Theorem 7.3.6]{Cohn13} and \cite[Corollary 29.13]{Bauer01}.
%, where for the first reference we note that
%compact metric spaces are Polish and Borel measures on Polish spaces are regular, and for the second reference we note that compact metric spaces have a countable base.
In addition,  note that \eqref{eq:riesz-represent} maps the Dirac measures $\dirac t$ to the point 
% 
% Moreover, there are paricularly simple elements in its dual $\sC T'$, namely 
evaluation functionals $\diracf t$. % :\sC T\to \R$, that is $\Cdirac t = \diracf t$ for all $t\in T$.
% defined by $\diracf tf = f(t)$ for $t\in T$ and $f\in \sC T$.

To translate our main results to Gaussian processes, we also need 
a well-known, canonical one-to-one relationship between Gaussian processes with continuous paths and $\sC T$-valued Gaussian
random variables. To begin with  we fix a  probability space
$(\Om,\sA,\P)$ and a stochastic process $(X_t)_{t\in T}$ over $\Om$.
Then $(X_t)_{t\in T}$
% 
% is called
% a  stochastic process  on $(\Om, \sA, \P)$. Moreover, it 
is called a Gaussian process  if, for all
$n\geq 1$ and all  $t_1,\dots,t_n\in T$, the
random variable $(X_{t_1},\dots,X_{t_n}):\Om\to \R^n$ is   Gaussian.
Moreover, for
% 
% . Now recall that for 
%  a  stochastic process $(X_t)_{t\in T}$ over $\Om$ and 
% of random variables $X_t:\Om\to \R$ 
a fixed $\om \in \Om$, the map $\sppath X(\om): T\to \R$ given by 
\begin{align*}
% X(\om) :T &\to \R \\
t&\mapsto X_t(\om)
\end{align*}
is  called a  path.
% of the stochastic process. Moreover 
% Moreover, 
% 
% In addition,
% for a fixed $\om \in \Om$, the map $\sppath X(\om): T\to \R$ given by 
% \begin{align*}
% % X(\om) :T &\to \R \\
% t&\mapsto X_t(\om)
% \end{align*}
% is  called a  path
% of the stochastic process.
We say that the process has continuous paths, if $\sppath X(\om) \in \sC T$ for all $\om \in \Om$.
Clearly, if only $\P$-almost all paths are continuous, then we can easily construct a version of  $(X_t)_{t\in T}$ having
 continuous paths. In the following, we assume that we have already chosen such a version.

To describe the above mentioned relationship, let us now fix a 
% To recall this, we first note that given a 
map $X:\Om\to \sC T$. Then we  obtain
a family $(X_t)_{t\in T}$ of functions $X_t:\Om\to \R$ by setting
\begin{align}\label{eq:rv-gives-sp}
X_t(\om) := \dualpair  {\diracf t} {X(\om)} {\sC T} = (X(\om))(t) \, , \myqquad \om\in \Om.
\end{align}
% where $\dirac t$ denotes the Dirac probability measure at point $t$, that is, $\dirac t(B) = \eins_B(t)$ for all $B\in \sborel( T)$.
Obviously, this construction ensures that all paths $t\mapsto X_t(\om)$ are continuous.
Conversely, if we have a family $(X_t)_{t\in T}$ of functions $X_t:\Om\to \R$ such that for each $\om \in \Om$ the path $\sppath X(\om)$ is
continuous, then
\begin{align} %\nonumber
% X:\Om & \to \sC T \\
\om & \mapsto \sppath X(\om)
% \bigl(t\mapsto X_t(\om)  \bigr) 
\label{eq:sp-gives-rv}
\end{align}
defines a map $X:\Om \to \sC T $.
% $\sC T$-valued map. 
Clearly, the operations  \eqref{eq:rv-gives-sp} and \eqref{eq:sp-gives-rv} are inverse to each other.
The following folklore result shows that they map $\sC T$-valued Gaussian random variables to Gaussian processes with continuous paths and vice versa.

\begin{lemma}\label{lem:sp-gives-rv}
Let $(T,\metric)$ be a compact metric space, $(\Om, \sA, \P)$ be a probability space,
$X:\Om\to \sC T$ be a map, and
$(X_t)_{t\in T}$ be given by \eqref{eq:rv-gives-sp}. Then the following statements are equivalent:
\begin{enumerate}
\item $X$ is a $\sC T$-valued Gaussian random variable.
\item $(X_t)_{t\in T}$ is a Gaussian process with continuous paths.
\end{enumerate}
In this case, both the mean and the covariance function are continuous.
\end{lemma}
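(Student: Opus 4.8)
\textbf{Proof proposal for Lemma \ref{lem:sp-gives-rv}.}

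The plan is to prove the equivalence by using the fact that finite-dimensional distributions determine everything here, together with the duality description of $\sC T'$ via $\measpace T$ that was set up just before the lemma. First I would dispose of the easy direction \atob iii{}: if $X:\Om\to \sC T$ is a Gaussian random variable, then for every $x'\in \sC T'$ the map $\dualpair{x'}{X}{\sC T}:\Om\to \R$ is Gaussian; specializing to $x' = \Cdirac{t}$, i.e.\ to the point evaluation $\diracf t$, shows by \eqref{eq:rv-gives-sp} that each $X_t = \dualpair{\diracf t}{X}{\sC T}$ is $\R$-valued Gaussian. For a finite tuple $t_1,\dots,t_n\in T$ and scalars $a_1,\dots,a_n\in\R$ the functional $x' := \sum_{j=1}^n a_j \diracf{t_j}$ lies in $\sC T'$, so $\sum_j a_j X_{t_j} = \dualpair{x'}{X}{\sC T}$ is Gaussian; since this holds for all linear combinations, $(X_{t_1},\dots,X_{t_n})$ is a Gaussian vector in $\R^n$. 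Hence $(X_t)_{t\in T}$ is a Gaussian process, and it has continuous paths because $\sppath X(\om) = X(\om)\in\sC T$ by construction.

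For the converse \atob iiii{}: assume $(X_t)_{t\in T}$ is a Gaussian process with continuous paths, so that \eqref{eq:sp-gives-rv} genuinely defines a map $X:\Om\to\sC T$. The key point is to show $X$ is a Gaussian random variable in the sense of the Definition, i.e.\ $\dualpair{x'}{X}{\sC T}$ is $\R$-valued Gaussian for every $x'\in\sC T'$. By Riesz' representation \eqref{eq:riesz-represent} we may write $x' = \Cdualmap\nu$ for some $\nu\in\measpace T$, so that $\dualpair{x'}{X(\om)}{\sC T} = \int_T X(\om)\intd\nu = \int_T X_t(\om)\intd\nu(t)$. The idea is to approximate this integral by finite Riemann-type sums $\sum_j \nu(B_j) X_{t_j}$ over a fine measurable partition of $T$; each such sum is a Gaussian random variable (being a finite linear combination of the $X_{t_j}$), and using compactness of $T$ together with continuity of the paths one shows the sums converge pointwise in $\om$ to $\int_T X_t(\om)\intd\nu(t)$. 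A pointwise limit of Gaussian random variables whose means and variances converge is Gaussian (and if they do not converge one first truncates $T$ or invokes Fernique-type uniform integrability); this is the standard ``limits of Gaussians are Gaussian'' fact, which I would cite from Supplement \ref{app:cf}. Measurability of $X:\Om\to\sC T$ then follows since $\sC T$ is separable and weak measurability (which we have just established) is equivalent to strong measurability by Theorem \ref{thm:pettis-var}; strong measurability plus Gaussianity of all one-dimensional projections is exactly the definition of an $\sC T$-valued Gaussian random variable.

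Finally, for the last assertion: once $X$ is known to be a Gaussian random variable, Fernique's theorem \eqref{eq:Fernique's-theorem} guarantees $X\in\sLx 2{\P,\sC T}$, so $\E X\in\sC T$ and $\cov(X):\sC T'\to\sC T$ exist. The mean function is $t\mapsto \E X_t = (\E X)(t)$, which is continuous because $\E X\in\sC T$ (the point evaluation $\diracf t$ being continuous and linear commutes with the Bochner integral). For the covariance function $k(t_1,t_2) = \cov(X_{t_1},X_{t_2}) = \dualpair{\diracf{t_1}}{\cov(X)\diracf{t_2}}{\sC T}$, joint continuity in $(t_1,t_2)$ follows from the continuity of $\cov(X)\diracf{t_2}\in\sC T$ in its argument $t_1$ together with continuity of $t_2\mapsto \diracf{t_2}$ into $(\sC T', \tauweaks)$ and boundedness of $\cov(X)$; alternatively one invokes the relevant statement from Supplement \ref{app:cont-gp}. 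I expect the main obstacle to be making the Riemann-sum approximation of $\int_T X_t\intd\nu$ rigorous while controlling integrability uniformly enough to pass Gaussianity to the limit — everything else is bookkeeping with the Riesz duality and Pettis' theorem.
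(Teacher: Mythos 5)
Your proposal is correct in substance, but the converse direction takes a genuinely different route from the paper. The paper (Lemma \ref{lem:sp-gives-rv-app} in Supplement E) never integrates against a general measure $\nu\in\measpace T$: it first shows via a Krein--Milman argument (Lemma \ref{lem:krein-milman-for-mt}) that the absolutely convex hull of the point evaluations $\{\diracf t: t\in T\}$ is $\tauweaks$-dense in $B_{\sC T'}$, and then invokes Theorem \ref{thm:test-for-gms}, which says Gaussianity only needs to be checked on such a $\tauweaks$-dense subset of the dual ball. Every functional in that dense set is a \emph{finite} linear combination of Diracs, so one never leaves the realm of finite-dimensional marginals. Your route instead verifies Gaussianity of $\dualpair{\Cdualmap\nu}{X}{\sC T}$ for \emph{every} $\nu$ directly, by Riemann-type sums over a fine partition; this works, and the ``obstacle'' you anticipate is not one: uniform continuity of each path on the compact $T$ gives pointwise convergence of the sums, pointwise convergence gives convergence in distribution, and Theorem \ref{thm:levy-grv} then yields both Gaussianity of the limit \emph{and} convergence of means and variances for free --- no truncation or uniform integrability is needed. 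The paper's approach buys a cleaner separation of concerns (the limiting argument is done once, inside Theorem \ref{thm:test-for-gms}); yours is more elementary and self-contained but redoes that limit for each $\nu$. Incidentally, the paper's own proof of density in Lemma \ref{lem:krein-milman-for-mt} uses exactly your modulus-of-continuity estimate, so the two arguments are close relatives.

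One step is stated with an insufficient justification: for the joint continuity of $k(t_1,t_2)=\dualpair{\diracf{t_1}}{\cov(X)\diracf{t_2}}{\sC T}$ you appeal to ``boundedness of $\cov(X)$'' together with $\tauweaks$-continuity of $t_2\mapsto\diracf{t_2}$. Boundedness of an operator $E'\to E$ does not give $\tauweaks$-to-norm (or even $\tauweaks$-to-weak) continuity, and separate continuity in each variable does not give joint continuity. What saves the argument is that covariance operators have the special property recorded in Theorem \ref{thm:continuous-covariance-kernel}: the kernel $(x_1',x_2')\mapsto\dualpair{x_1'}{\cov(X)x_2'}E$ is jointly $\tauweaks$-continuous on $B_{E'}\times B_{E'}$ (a dominated-convergence argument using $X\in\sLx 2{\P,\sC T}$, which follows from Fernique). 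Citing that theorem, or Lemma \ref{lem:cont-mean+cov-4-cont-GP} as you suggest as an alternative, closes the step; the paper itself proves the continuity of $k$ by a direct Cauchy--Schwarz argument on the process side. Your argument for the mean function via $\E X\in\sC T$ and the Pettis property is fine and slightly slicker than the paper's.
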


It is well-known that a Gaussian process $(X_t)_{t\in T}$ with continuous paths can be described by the mean  and covariance
functions given by \eqref{eq:gp4ml-mean} and \eqref{eq:gp4ml-cov}.
Moreover, if $X$ is the Gaussian random variable   associated to the process  via \eqref{eq:sp-gives-rv}, then
the mean $\E X$ and the covariance operator $\cov (X)$ can be used to compute the mean and covariance function, namely we have 
\begin{align} \label{eq:meanfct-by-mean}
m(t) & =  \E X_t =  \dualpair {\diracf t}{\E X}{\sC T}\, , \\  \label{eq:covfct-by-cov}
k(t_1,t_2) &= \cov(X_{t_1}, X_{t_2})  = \dualpair {\diracf {t_1}}{\cov (X) \diracf {t_2}}{\sC T}
\end{align}
for all $t,t_1,t_2\in T$, where we have used \eqref{eq:bochner-is-pettis}, respectively \eqref{eq:weak-cross-cov}.
Conversely, if we have the mean function $m$, then \eqref{eq:meanfct-by-mean} shows that we know the function $\E X \in \sC T$ pointwise, i.e.~we know $\E X$.
Moreover, the covariance function determines the covariance operator by
\begin{align*}
\dualpair {\Cdualmap\mu}{\cov (X) \Cdualmap\nu}{\sC T} = \int_T \int_T k(s,t) \intd\mu(s) \intd\nu(t) \, , \myqquad \mu,\nu \in \measpace T,
\end{align*}
% holds true for all $\mu,\nu \in \measpace T$, 
see Lemma \ref{lem:cov-gp-comp} for details.
This justifies the notation
\begin{align*}
\gauss mk := \gaussm {\E X} {\cov X}
\end{align*}
for the distribution of a given Gaussian random variable $X:\Om\to \sC T$, where  $m$ and $k$ are the mean and covariance function of the associated
Gaussian process.

In summary, instead of  working with  Gaussian random variables $X:\Om\to \sC T$ we can alternatively consider
Gaussian processes $(X_t)_{t\in T}$ with continuous paths together with their mean and covariance function.
In the following we will   adopt this  point of view, which is common in the GP4ML
literature, see e.g.~\cite{Bishop06,RaWi06,KaHeSeSrXXa}.
In particular, we speak of conditioned processes, when
we condition the associated Gaussian random variables.

Let us now assume that we have a compact metric space  $(T,\metric)$, 
a Gaussian processes $(X_t)_{t\in T}$ with continuous paths, and  an observation
$(Y_s)_{s\in S}$
of this
process on a non-empty $S\subset T$. In other words, we have $Y_s(\om) = X_s(\om)$ for all $s\in S$ and $\om \in \Om$, where
$(\Om,\sA,\P)$ is the underlying probability space.
By the continuity of the paths we may assume without loss of generality 
 that $S$ is closed, and thus also a compact metric space. Now, 
to describe this setup by Gaussian random variables, we write
$E:=\sC T$ and $F:= \sC S$ and denote  the Gaussian random variables associated to the two processes above via
\eqref{eq:sp-gives-rv} by  $X:\Om\to \sC T$ and $Y:\Om\to \sC S$.
This gives
\begin{align}\label{eq:partial-observation}
Y(\om ) = (X(\om))_{|S}\, , \myqquad \om\in \Om,
\end{align}
that is $Y = \mycdot_{|S} \circ X$, where the restriction operator $\mycdot _{|S}:\sC T\to \sC S$ is bounded and linear.
% Since $(X,X)$ is jointly Gaussian, we conclude that  $X$ and $Y$ are jointly Gaussian random variables.

We first consider the case of a finite set $S = \{s_1,\dots,s_n \}$.
Here we recall from \eqref{eq:eval-op-bsf} that the 
 bounded linear operator $\npeval S:\sC T\to \R^n$  is defined by
\begin{align}\label{eq:fin-eval-op}
\npeval S(f) := f(S) := \bigl(f(s_1),\dots,f(s_n)   \bigr)\, , \myqquad f\in \sC T\, .
\end{align}
In this case,  \eqref{eq:partial-observation} reduces to $Y = \npeval S\circ X$ as $\sC S$ can be canonically identified by $\R^n$.
Finally, note that if we have another finite set $R = \{r_1,\dots,r_m \}$, then the corresponding covariance operator
$\cov(\npeval {R} \circ X , \npeval S \circ X ): \R^n \to \R^m$ can be represented by the $m\times n$-matrix
\begin{align*}
K_{R,S}:= \bigl( \cov(X_{r_i}, X_{s_j})  \bigr)_{i,j} = \bigl( k(r_i,s_j)  \bigr)_{i,j}
\end{align*}
as discussed around \eqref{eq:cross-cov-matrix}. Furthermore, in the case $R=\{r\}$, we use the shorthand $K_{r,S} := K_{R,S}$, and an analogous
simplified notation is used in the case $S=\{s\}$.

With these preparations, the application of Theorem \ref{thm:grv-conditioning-y-fin-dim} to the specific
observational model \eqref{eq:partial-observation} with finite $S$ reads as follows.

\begin{theorem}\label{thm:grv-conditioning-y-fin-dim-CT}
Let $(\Om, \sA,\P)$ be a probability space, $(T,\metric)$ be a compact metric space, and $(X_t)_{t\in T}$ be a
Gaussian processes on $(\Om, \sA,\P)$ with continuous paths.
Moreover, let $X:\Om\to \sC T$ be the Gaussian random variable associated to the process via \eqref{eq:sp-gives-rv}
and $S := \{s_1,\dots,s_n \}\subset T$. Finally, let
\begin{align}\label{eq:CT-obs-mod}
Y := \npeval S\circ X\, ,
\end{align}
where $\npeval S:\sC T\to \R^n$ is defined by \eqref{eq:fin-eval-op}.
Then for every version
$\P_{X|Y}(\mycdot|\mycdot) :\sborelnormx {\sC T}\times \R^n\to[0,1]$ of the regular conditional probability of $X$ given $Y$
there exist an $N\in \sborel^n$ with
$\P_Y(N) = 0$ such that for all $y\in \R^n\setminus N$ we have
\begin{align*}
\P_{X|Y}(\mycdot|y) = \gaussm {\meanup}{\kfctup}\, ,
\end{align*}
where the mean and covariance functions are given by
\begin{align*}
\meanup(t) &= m(t) + K_{t,S}   K_{S,S}\mpinv(y- m(S)) \, ,\\
\kfctup(t_1,t_2)    &= k(t_1,t_2) -  K_{t_1,S}   K_{S,S}\mpinv K_{S,t_2} \, .
\end{align*}
% for all $t,t_1,t_2\in T$ and $m(S) := (m(s_1),\dots,m(s_n))\in \R^n$.
\end{theorem}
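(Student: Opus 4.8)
The plan is to deduce Theorem \ref{thm:grv-conditioning-y-fin-dim-CT} directly from the abstract Theorem \ref{thm:grv-conditioning-y-fin-dim} by specializing $E := \sC T$ and translating the abstract mean vector and covariance operator into the pointwise formulas for $\meanup$ and $\kfctup$. The key observation is that $X:\Om\to\sC T$ is an $\sC T$-valued Gaussian random variable by Lemma \ref{lem:sp-gives-rv}, that $Y = \npeval S\circ X$ is $\R^n$-valued, and that $X$ and $Y$ are jointly Gaussian since $Y$ is the image of $X$ under the bounded linear operator $\npeval S$. Hence Theorem \ref{thm:grv-conditioning-y-fin-dim} applies verbatim and gives an $N\in\sborel^n$ with $\P_Y(N)=0$ such that $\P_{X|Y}(\mycdot|y) = \gaussm{\muup(y)}{\covup}$ for $y\notin N$, where $\muup(y) = \E X + \cov(X,Y)(\cov Y)\mpinv(y-\E Y)$ and $\covup = \cov(X) - \cov(X,Y)(\cov Y)\mpinv\cov(Y,X)$. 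It then remains to identify these objects, which live in $\sC T$ and in the space of nuclear operators $\sC T'\to\sC T$, with the stated functions on $T$ and $T\times T$.

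The first step is bookkeeping on the finite-dimensional side. Since $\sC S$ is canonically identified with $\R^n$ via $f\mapsto f(S)$, we have $Y = \npeval S\circ X = (X_{s_1},\dots,X_{s_n})$, so $\cov(Y)$ is represented by the matrix $K_{S,S} = (k(s_i,s_j))_{i,j}$ and $\E Y = m(S) = (m(s_1),\dots,m(s_n))$, using \eqref{eq:meanfct-by-mean} and \eqref{eq:covfct-by-cov}. Likewise, for each fixed $t\in T$ the real-valued random variable $X_t = \dualpair{\diracf t}{X}{\sC T}$ is jointly Gaussian with $Y$, and $\cov(X_t,Y)$ is the row vector $K_{t,S} = (k(t,s_j))_j$. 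The second step is to pass from $\sC T$-valued identities to pointwise ones by applying the point-evaluation functionals $\diracf t\in\sC T'$: since $\E X\in\sC T$ and $\cov(X,Y):\R^n\to\sC T$, evaluating at $t$ gives
\begin{align*}
\meanup(t) &= \dualpair{\diracf t}{\muup(y)}{\sC T} = \dualpair{\diracf t}{\E X}{\sC T} + \dualpair{\diracf t}{\cov(X,Y)}{\sC T}(\cov Y)\mpinv(y-\E Y)\\
&= m(t) + K_{t,S}\,K_{S,S}\mpinv(y-m(S))\,,
\end{align*}
where $\dualpair{\diracf t}{\cov(X,Y)(e_j)}{\sC T} = \cov(X_t,Y_j) = k(t,s_j)$ by \eqref{eq:weak-cross-cov}, so that $\diracf t\circ\cov(X,Y) = K_{t,S}$ as a row vector. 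For the covariance function, one evaluates the operator $\covup:\sC T'\to\sC T$ at $\diracf{t_2}$ and then pairs with $\diracf{t_1}$, using \eqref{eq:covfct-by-cov} for the first term, $\diracf{t_1}\circ\cov(X,Y) = K_{t_1,S}$ for the left factor, and $\cov(Y,X)\diracf{t_2} = K_{S,t_2}$ (the transpose relation $\cov(Y,X) = \cov(X,Y)'$ restricted to the point-evaluation functional) for the right factor, yielding $\kfctup(t_1,t_2) = k(t_1,t_2) - K_{t_1,S}K_{S,S}\mpinv K_{S,t_2}$.

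The last step is to confirm that the abstract Gaussian measure $\gaussm{\muup(y)}{\covup}$ on $\sC T$, whose mean is the element $\muup(y)\in\sC T$ and whose covariance operator is $\covup$, is exactly $\gaussm{\meanup}{\kfctup}$ in the notation $\gauss mk$ introduced for distributions of $\sC T$-valued Gaussian random variables; this is immediate once we know $\meanup = \muup(y)$ as elements of $\sC T$ and that $\kfctup$ is the covariance function associated to the operator $\covup$ via $\dualpair{\diracf{t_1}}{\covup\diracf{t_2}}{\sC T}$, both of which we have just established. I do not anticipate a genuine obstacle here; the only mild subtlety is keeping the identifications $\sC S\cong\R^n$, the transpose relationship between $\cov(X,Y)$ and $\cov(Y,X)$, and the matrix-versus-operator notation consistent, together with quoting the correct auxiliary identities \eqref{eq:meanfct-by-mean}, \eqref{eq:covfct-by-cov}, and \eqref{eq:weak-cross-cov} for passing between covariance operators and covariance functions. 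Everything else is a direct specialization of Theorem \ref{thm:grv-conditioning-y-fin-dim}.
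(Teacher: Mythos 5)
Your proposal is correct and follows essentially the same route as the paper: specialize Theorem \ref{thm:grv-conditioning-y-fin-dim} to $E=\sC T$ and then translate $\muup(y)$ and $\covup$ into the pointwise formulas by pairing with the evaluation functionals $\diracf t$, using \eqref{eq:meanfct-by-mean}, \eqref{eq:covfct-by-cov}, \eqref{eq:weak-cross-cov}, and \eqref{eq:covs-of-compositions-new}. The paper merely packages the two pointwise identities you derive into a separate auxiliary lemma (Lemma \ref{thm:grv-conditioning-CT-bs-2-fct}) so that they can be reused in the proof of Theorem \ref{thm:grv-conditioning-y-inf-dim-CT}.
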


Before we proceed with conditioning with respect to  observations on infinite $S$, we note that we can easily generalize the observational model
\eqref{eq:CT-obs-mod} to e.g.
\begin{align}\label{eq:obs-mod-plus-moise}
Y := \npeval S\circ X + \e\, ,
\end{align}
where $\e \sim \gauss 0 {\s^2 I_{n}}$ is independent of $X$, $\s\geq 0$, and $I_{n}$ denotes the $n$-dimensional identity matrix.
Indeed, in view of Theorem \ref{thm:grv-conditioning-y-fin-dim}, we only need to recompute the representing matrices of the operators $\cov(X,Y)$, $\cov(Y)$, and
$\cov(Y,X)$. However, we have, for example,
\begin{align*}
\cov(Y)
&= \cov\bigl( \npeval S\circ X + \e,  \npeval S\circ X + \e  \bigr) \\
&= \cov( \npeval S\circ X ,  \npeval S\circ X )  + \cov( \npeval S\circ X , \e )   + \cov( \e,  \npeval S\circ X  )   + \cov(  \e,   \e ) \\
&= \cov(Y) + \cov(\e)\, ,
\end{align*}
where in the last step we used the independence of $X$ and $\e$. Consequently, $\cov (Y)$ is represented by the matrix $K_{S,S} + \s^2 I_{n}$.
Some additional, but essentially
analogous calculations then show that the
corresponding mean and covariance functions can be computed by
\begin{align*}
\meanup(t) &= m(t) + K_{t,S}   (K_{S,S} + \s^2 I_{n})\mpinv(y- m(S)) \, ,\\
\kfctup(t_1,t_2)    &= k(t_1,t_2) -  K_{t_1,S}  (K_{S,S} + \s^2 I_{n})\mpinv K_{S,t_2}
\end{align*}
for all $t,t_1,t_2\in T$. Not surprisingly, these formulas are the well-known textbook formulas from the GP4MLs literature, see e.g.~\cite[Chapter 2.2]{RaWi06}. However, there is is subtle but important difference in their meaning: The mentioned textbook formulas do not ensure
that a Gaussian process with the above mean and covariance function has a version with continuous paths and even if this
can be derived by other means, they do not ensure that the resulting measure of such a version  equals the regular conditional probability on $\sC T$
we seek. In contrast, our approach establishes exactly these guarantees, so that the common functional interpretation of GP4MLs is indeed  justified.
This is similar in spirit to \cite[Corollary 4]{TrGi24a}, where more general functionals on $\sC T$ are considered 
in the noise free case $\e=0$.

Let us now turn to observations on infinite, closed $S\subset T$, where again $(T,\metric)$ is a compact metric space.  
Since in this case $(S,\metric)$ is also a compact metric space, there exists 
 a countable, dense $S_\infty\subset S$ with enumeration $S_\infty = \{s_i: i\geq 1\}$, and
we have seen in Corollary \ref{cor:filt-seq-CT} that 
the evaluation operators $\npeval {S_n}: \sC S\to \R^n$ for $S_n := \{s_1,\dots,s_n\}$
% defined by
% \begin{align*}
% \npeval n(g) := \bigl(g(s_1),\dots,g(s_n)   \bigr)\, , \myqquad g\in \sC S
% \end{align*}
give  a filtering sequence   for $\sC S$.
Roughly speaking, this sequence provides us with  
   a mean to  approximate the infinite dimensional
observational random variable $Y := X_{|S}$  with the help of the sequence $(Y_n)$ of finite dimensional observations $Y_n := X_{|S_n}$ taken
at the points $S_n$.  %In other words, we approximate $Y$ on an arbitrarily fine but finite set of observational points.
Applying Theorem \ref{thm:grv-conditioning-y-inf-dim} to this situation leads to:

\begin{theorem}\label{thm:grv-conditioning-y-inf-dim-CT}
Let $(\Om, \sA,\P)$ be a probability space, 
$(T,\metric)$ be a compact metric space, $(X_t)_{t\in T}$ be a
Gaussian process on  $(\Om, \sA,\P)$  with continuous paths, and 
 $X:\Om\to \sC T$ be the Gaussian random variable associated to the process via \eqref{eq:sp-gives-rv}. For  closed 
  $S \subset T$  let
\begin{align}\label{eq:CT-obs-mod-inf}
Y :=  X_{|S}\, .
\end{align}
Finally, let $S_\infty \subset S$ be countable and dense with enumeration $S_\infty = \{s_i: i\geq 1\}$. We write 
 $S_n := \{s_1,\dots,s_n\}$ for all $n\geq 1$.
Then for every version
$\P_{X|Y}(\mycdot|\mycdot) :\sborelnormx {\sC T}\times \sC S\to[0,1]$ of the regular conditional probability of $X$ given $Y$
there exist an $N\in \sborel(\sC S)$ with
$\P_Y(N) = 0$ such that for all $g\in \sC S\setminus N$ we have
\begin{align*}
\P_{X|Y}(\mycdot|g) = \gaussm {\meanup}{\kfctup}\, ,
\end{align*}
where the mean and covariance functions are given by
\begin{align}\label{thm:grv-conditioning-y-inf-dim-CT-mean}
\meanupg(t) &= m(t) + \lim_{n\to \infty}  K_{t,S_n}   K_{S_n,S_n}\mpinv(g(S_n)  - m(S_n)) \, ,\\ \label{thm:grv-conditioning-y-inf-dim-CT-cov}
\kfctup(t_1,t_2)    &= k(t_1,t_2) - \lim_{n\to \infty} K_{t_1,S_n}   K_{S_n,S_n}\mpinv K_{S_n,t_2} \,
\end{align}
% for all $t,t_1,t_2\in T$ and $g(S_n) := (g(s_1),\dots,g(s_n))\in \R^n$.
and the convergence is uniform in $t\in T$, respectively in $t_1,t_2\in T$.
Moreover, for all $g\in \sC S\setminus N$, $s\in S$, and $t\in T$ we have
\begin{align}\label{thm:grv-conditioning-y-inf-dim-CT-mean-onS}
\meanupg(s) &= g(s) \, ,\\ \label{thm:grv-conditioning-y-inf-dim-CT-cov-onS}
\kfctup(s,t) & = 0 \, .
\end{align}
% \todo{Add result on mean according to part \emph{vii)} of Theorem  \ref{thm:grv-conditioning-y-inf-dim}}
\end{theorem}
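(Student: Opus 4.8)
The plan is to deduce Theorem~\ref{thm:grv-conditioning-y-inf-dim-CT} by applying the general Theorem~\ref{thm:grv-conditioning-y-inf-dim} to the spaces $E:=\sC T$ and $F:=\sC S$, the observational operator $\obsop=\mycdot_{|S}$, and the filtering sequence $(\npeval{S_n})$ supplied by Corollary~\ref{cor:filt-seq-CT}. First I would record that $\sC T$ and $\sC S$ are separable (compact metric domains), that by Lemma~\ref{lem:sp-gives-rv} the associated $X:\Om\to\sC T$ is a Gaussian random variable, and that $Y:=X_{|S}$, being a bounded linear image of $X$, is jointly Gaussian with $X$; hence the hypotheses of Theorem~\ref{thm:grv-conditioning-y-inf-dim} are met. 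Setting $Y_n:=\npeval{S_n}\circ Y=X_{|S_n}$, which under the canonical identification $\sC{S_n}\cong\R^n$ is just $(X_{s_1},\dots,X_{s_n})$, I obtain an exceptional set $N\in\sborel(\sC S)$ with $\P_Y(N)=0$ such that parts \emph{i)}--\emph{viii)} hold for every $g\in\sC S\setminus N$.

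The second step is to translate the abstract formulas into the kernel-matrix form. By part \emph{ii)}, $\P_{X|Y}(\mycdot|g)$ is Gaussian on $\sC T$; by \eqref{eq:meanfct-by-mean}--\eqref{eq:covfct-by-cov} (i.e.\ Lemma~\ref{lem:cov-gp-comp}), its mean and covariance \emph{functions} are obtained by evaluating $\E\P_{X|Y}(\mycdot|g)\in\sC T$ and $\cov\P_{X|Y}(\mycdot|g)$ at points, so it suffices to read off parts \emph{iii)} and \emph{iv)}. For the finite-dimensional objects $\cov(X,Y_n)$ and $\cov(Y_n)$, the discussion around \eqref{eq:cross-cov-matrix} and the paragraph preceding Theorem~\ref{thm:grv-conditioning-y-fin-dim-CT} identify their representing matrices as $K_{\cdot,S_n}$ and $K_{S_n,S_n}$ with entries $k(\cdot,s_j)$; applying $\diracf t$ to the vector-valued limit in \emph{iii)} and $\diracf{t_1}\otimes\diracf{t_2}$ to the nuclear-norm limit in \emph{iv)} then yields \eqref{thm:grv-conditioning-y-inf-dim-CT-mean} and \eqref{thm:grv-conditioning-y-inf-dim-CT-cov}. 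The key upgrade here is the \emph{uniformity} of the convergence in $t$ (resp.\ $t_1,t_2$): $\snorm\cdot_\infty$-convergence in $\sC T$ of the mean is exactly uniform convergence of the functions, and nuclear-norm (hence operator-norm) convergence of $\covupn$ to $\covup$ as operators $\sC T'\to\sC T$ gives $\sup_{t_1,t_2}|K_{t_1,S_n}K_{S_n,S_n}\mpinv K_{S_n,t_2}-(\cdots)|\to 0$ because $\snorm{\diracf t}_{\sC T'}=1$; this is where I would spell out that evaluating a norm-convergent sequence of operators at unit functionals gives uniform convergence of the resulting scalar functions.

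The final step is \eqref{thm:grv-conditioning-y-inf-dim-CT-mean-onS}--\eqref{thm:grv-conditioning-y-inf-dim-CT-cov-onS}, the concentration-on-the-observation identities. These come from part \emph{viii)} of Theorem~\ref{thm:grv-conditioning-y-inf-dim}: for $x'=\diracf s$ with $s\in S$, the map $\dualpair{\diracf s}{X}{\sC T}=X_s=Y_s$ is $\s(Y)$-measurable, so \emph{viii)} gives $\dualpairb{\diracf s}{\muup(Y(\om))}{\sC T}=\dualpair{\diracf s}{X(\om)}{\sC T}$ and $\covup\diracf s=0$ for all $\om\notin Y^{-1}(N)$; pushing this through the identification $\P_{X|Y}(\mycdot|g)=\gaussm{\meanupg}{\kfctup}$ and the evaluation formulas \eqref{eq:meanfct-by-mean}--\eqref{eq:covfct-by-cov} yields $\meanupg(s)=g(s)$ and $\kfctup(s,t)=\dualpair{\diracf t}{\covup\diracf s}{\sC T}=0$ for all $t\in T$. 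One point requiring a little care is that \emph{viii)} is phrased $\om$-wise on $\Om$ while the theorem's conclusion is $g$-wise on $\sC S\setminus N$; I would bridge this via the factorized conditional mean $\muup:\sC S\to\sC T$ of \eqref{eq:muup-on-F} (so $\muup(g)(s)=g(s)$ holds for $g$ in the full-measure set where the limit exists, which may be arranged to lie outside $N$), and note $\kfctup$ does not depend on $g$ at all.

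I expect the main obstacle to be purely bookkeeping rather than conceptual: carefully matching the several exceptional null sets (the $N$ from Theorem~\ref{thm:grv-conditioning-y-inf-dim}, the $Y^{-1}(N)$ in part \emph{viii)}, and the convergence set $\Fconv$ from Theorem~\ref{thm:canonical-rcp}) so that a single $N\in\sborel(\sC S)$ with $\P_Y(N)=0$ works simultaneously for all of \eqref{thm:grv-conditioning-y-inf-dim-CT-mean}--\eqref{thm:grv-conditioning-y-inf-dim-CT-cov-onS}, and confirming that the canonical identification $\sC{S_n}\cong\R^n$ is compatible with the filtering-sequence structure so that $Y_n=\npeval{S_n}\circ X$ really is the $\R^n$-valued variable Theorem~\ref{thm:grv-conditioning-y-inf-dim} expects.
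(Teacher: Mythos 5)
Your proposal is correct and follows essentially the same route as the paper: apply Theorem \ref{thm:grv-conditioning-y-inf-dim} with $E=\sC T$, $F=\sC S$ and the filtering sequence of Corollary \ref{cor:filt-seq-CT}, translate the abstract mean/covariance formulas into the kernel-matrix form by evaluating at point functionals (the paper isolates this computation in Lemma \ref{thm:grv-conditioning-CT-bs-2-fct}), obtain uniformity from $\snorm\cdot_{\sC T}$-convergence of the mean and from operator-norm convergence tested against the unit-norm functionals $\diracf t$, and derive \eqref{thm:grv-conditioning-y-inf-dim-CT-mean-onS}--\eqref{thm:grv-conditioning-y-inf-dim-CT-cov-onS} from part \emph{viii)} with $x'=\diracf s$. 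The only cosmetic difference is at the very end, where the paper bridges the $\om$-wise statement of part \emph{viii)} to the $g$-wise claim by choosing $\om\in Y^{-1}(\{g\})$ rather than via the factorized mean, which amounts to the same bookkeeping.
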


Of course, the remaining assertions of Theorem \ref{thm:grv-conditioning-y-inf-dim} as well as Theorems \ref{thm:from-fce-to-rcp} and \ref{thm:canonical-rcp} 
also hold in the   setup considered in
Theorem \ref{thm:grv-conditioning-y-inf-dim-CT}. 
For example,  we have the weak convergence  
\begin{align}\label{thm:grv-conditioning-y-inf-dim-CT-weakconv}
\P_{X| (X_{|S_n}) }(\mycdot| g(S_n)) \to \P_{X|(X_{|S})}(\mycdot|g)\, .
\end{align}
In addition, combining the approximations 
 \eqref{thm:grv-conditioning-y-inf-dim-CT-mean} and
\eqref{thm:grv-conditioning-y-inf-dim-CT-cov}   with  \eqref{thm:grv-conditioning-y-inf-dim-CT-mean-onS} and
\eqref{thm:grv-conditioning-y-inf-dim-CT-cov-onS} shows
%  
%  In addition,  Theorems \ref{thm:from-fce-to-rcp} and \ref{thm:canonical-rcp}  can also be quickly translated to the situation of
%  GP4MLs.
% % Also note that 
% % the convergence of the covariance operators
% % in Theorem \ref{thm:grv-conditioning-y-inf-dim} are with respect to the nuclear norm, while
% % the uniform convergence of the covariance functions stated in Theorem \ref{thm:grv-conditioning-y-inf-dim-CT} is a consequence of the weaker
% % operator norm convergence.  
% % Finally,
% % it is, of course, possible to translate Theorems \ref{thm:from-fce-to-rcp} and \ref{thm:canonical-rcp}
% % to the $\sC T$-valued case, too.
% 
% Besides Theorems \ref{thm:from-fce-to-rcp} and \ref{thm:canonical-rcp} also 
% the concentration phenomenon for $Y=X$ discussed around \eqref{eq:conc-of-rcps}  holds true in the $\sC T$-valued case.
% % However, a bit more can be said in the situation of Theorem \ref{thm:grv-conditioning-y-inf-dim-CT}.
% To illustrate the latter in a more general form, let $S\subset T$ and $S_\infty = \{s_i: i\geq 1\}$ be a countable dense subset of $S$.
% Moreover, assume that 
% % 
% % assume that we have a countable set $S_\infty = \{s_i: i\geq 1\}$ with $S = \overline{S_\infty}$ and 
% we wish to
% approximate $\meanupg$ and $\kfctup$.
% Then Theorem \ref{thm:grv-conditioning-y-inf-dim-CT} tells us that \eqref{thm:grv-conditioning-y-inf-dim-CT-mean} and
% \eqref{thm:grv-conditioning-y-inf-dim-CT-cov} give us a uniform approximation of $\meanupg$, respectively $\kfctup$, and that these
% approximations additionally satisfy
\begin{align*}
 m(s) +   K_{s,S_n}   K_{S_n,S_n}\mpinv(g(S_n)  - m(S_n))  & \to g(s) \, , \\
 k(s,t) -   K_{s,S_n}   K_{S_n,S_n}\mpinv K_{S_n,t} &\to 0
\end{align*}
for $\P_Y$-almost all  observed paths  $g\in \sC S$ and
all $s\in S$ and $t\in T$, where again the convergence is  uniform in $s$, respectively $s$ and $t$.
Note that these findings substantially generalize the state-of-the-art contractions results established in \cite{KoPf21a}
 in the sense that neither $S=T$ nor additional assumptions on the observational model are needed.
Finally, to the best of our knowledge there is no known result ensuring \eqref{thm:grv-conditioning-y-inf-dim-CT-weakconv}
even under restricted assumptions.

\section{Proofs of the Results of  Section \ref{sec:main-results}}\label{sec:proofs}

\subsection{Proof of Theorem \ref{thm:grv-conditioning-y-fin-dim}}

% \subsection{Some Auxiliary Results for the Proof of Theorem \ref{thm:grv-conditioning-y-fin-dim}}

Before we can present the proof of Theorem \ref{thm:grv-conditioning-y-fin-dim} we need an auxiliary result. 
To motivate it, let $E$ be a Banach space, $T:E'\to E$ be bounded and linear, and  $x'\in E'$. If    $(x')':\R'\to E'$ denotes the dual of $x'$, then 
we can consider the 
bounded linear map $x' \circ T \circ (x')':\R'\to \R$. Since both $\R'$ and $\R$ are one-dimensional, this map 
can be described by a $1\times 1$-matrix, which is determined in the next lemma.

\begin{lemma}\label{lem:double-dual}
Let $E$ be a Banach space, $T:E'\to E$ be a bounded, linear operator, and $x'\in E'$. Then  we have
\begin{align*}
\bigl(x' \circ T \circ (x')'\bigr) (\skprod t\mycdot_\R)
=\dualpair {x'}{Tx'}E \cdot t\, , \myqquad   t\in \R,
\end{align*}
where $(x')':\R'\to E'$ denotes the dual of $x':E\to \R$ and
$\skprod \mycdot\mycdot_\R$ denotes the inner product of the Hilbert space $\R$.
\end{lemma}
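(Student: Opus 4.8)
The plan is to compute the composition $x' \circ T \circ (x')'$ directly by unwinding each of the three maps in turn, using only the definition of the adjoint operator together with the canonical identification of $\R'$ with $\R$; no estimates are involved, so this is essentially a bookkeeping exercise.

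First I would fix $t \in \R$ and recall that the element $\skprod t\mycdot_\R \in \R'$ is the linear functional $s \mapsto ts$. By the definition of the dual map, $(x')':\R'\to E'$ sends a functional $\ell \in \R'$ to $\ell \circ x' \in E'$; hence $(x')'(\skprod t\mycdot_\R)$ is the functional $E \to \R$ given by $x \mapsto t\, x'(x) = t\,\dualpair{x'}xE$. In other words, as an element of $E'$ this is simply the scalar multiple $t x'$. Next, applying $T$ and using its linearity gives $T(t x') = t\, Tx' \in E$. Finally, applying $x'$ and using linearity once more yields
\begin{align*}
\bigl(x' \circ T \circ (x')'\bigr)(\skprod t\mycdot_\R) = x'(t\, Tx') = t\, x'(Tx') = \dualpair{x'}{Tx'}E \cdot t\, ,
\end{align*}
which is exactly the claimed formula.

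The only point that requires a modicum of care — and the nearest thing to an obstacle here — is to keep the identifications straight: to distinguish the real number $t$ from the functional $\skprod t\mycdot_\R$ that represents it under $\R' \cong \R$, and to verify that the adjoint of a scalar functional acts by precomposition. Boundedness of $T$ and $x'$ enters only to guarantee that all three maps, and hence their composition, are well defined between the stated spaces; the identity itself is purely algebraic.
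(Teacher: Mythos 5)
Your proof is correct and follows essentially the same route as the paper's: both arguments reduce to showing $(x')'(\skprod t\mycdot_\R) = t x'$ via the definition of the adjoint as precomposition, and then apply $T$ and $x'$ using linearity. The only cosmetic difference is that the paper computes the case $t=1$ first and scales, while you carry $t$ through from the start.
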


\begin{proof}[Proof of Lemma \ref{lem:double-dual}]
Recall that 
the elements in $\R'$ can be represented  by $\R$ via
the canonical isomorphism $t\mapsto \skprod t\mycdot_\R$.
Now observe that for $t\in \R$ and $x\in E$ we have
\begin{align*}
\dualpairb {(x')'(\skprod t\mycdot_\R)} xE
= \dualpairb {\skprod t\mycdot_\R} {x'(x)}\R
= \skprod t{x'(x)}_\R
= t \cdot x'(x)
=   \dualpair {tx'}xE \, .
\end{align*}
In other words, we have
$(x')'(\skprod t\mycdot_\R) = tx'$ for all $t\in \R$. This in turn yields
\begin{align*}
\bigl(x' \circ T \circ (x')'\bigr) (\skprod 1\mycdot_\R) = (x' \circ T)\bigl((x')'(\skprod 1\mycdot_\R)\bigr) = (x'\circ T)(x') =   \dualpair {x'}{Tx'}E \, ,
\end{align*}
and by multiplying both sides with $t\in \R$ we obtain the assertion.
\end{proof}

\begin{proof}[Proof of Theorem \ref{thm:grv-conditioning-y-fin-dim}]
We begin by some preparations. To this end, we  fix a   version $\P_{X|Y}(\mycdot|\mycdot)$ of the regular conditional probability of   $X$ given $Y$.
Moreover, 
we choose a countable, $\tauweaks$-dense $\denseblo \subset B_{E'}$
with the help of Theorem \ref{thm:alaoglu-and-more}.
% Without loss of generality we may assume that $0\in \denseblo$.
Finally, we fix an enumeration $(x'_i)$ of $\denseblo$, that is, $\denseblo = \{x_i': i\geq 1\}$.

For $i\geq 1$ we now define $X_i := \dualpair{x'_i}XE$.
Then $X_i:\Om\to \R$ and $Y:\Om\to \R^n$
are jointly Gaussian random variables with joint distribution $\P_{(X_i,Y)}$ on $\R\times \R^n$.
We 
fix a version $\P_{X_i|Y}(\mycdot|\mycdot)$ of the
regular
conditional probability of $X_i$ given $Y$.
By Theorem \ref{thm:grv-conditioning-fin-dim} there then exists 
 an $N_i'\in \sborel^n$ with 
$\P_Y(N_i') = 0$ such that for all $y\in \R^n\setminus N_i'$ we have 
\begin{align*}
\P_{X_i|Y}(\mycdot|y) = \gaussm {\muupi(y)}{\Kupi}\, ,
\end{align*}
where the expectation and covariance are given by 
\begin{align*}
\muupi(y) &:= \E X_i + \KXiY \KY\mpinv(y- \E Y) \, ,\\
\Kupi     &:= \KXi -\KXiY \KY\mpinv \KYXi \, .
\end{align*}
Using the definition of $X_i$, the expression for the expectation reads as  
\begin{align}\nonumber
\muupi(y) 
&=  \E X_i + \KXiY \KY\mpinv(y- \E Y) \\ \nonumber
&= \dualpair {x'_i} {\E X}E + \cov(x'_i\circ  X, Y)   \cov(Y)\mpinv (y- \E Y) \\ \nonumber
&= \dualpair {x'_i} {\E X}E +  \dualpairb {x'_i}  {\cov(X, Y)   \cov(Y)\mpinv (y- \E Y)}E \\ \label{thm:grv-conditioning-y-fin-dim-exp}
&= \dualpairb {x'_i} {\E X + \cov(X, Y)  \cov(Y)\mpinv (y- \E Y)}E  
\, ,
\end{align}
where we used the permutability of integration and operator application \eqref{eq:bochner-is-pettis}
and the cross covariance formula for linearly transformed random variables \eqref{eq:covs-of-compositions-new}.
To investigate $\Kupi$ in a similar fashion, we consider the bounded linear operator $\corXY:E'\to E$ defined by
 \begin{align*}
\corXY := \cov ( X,Y) (\cov Y)\mpinv \cov (Y, X) \, .
\end{align*}
By  $X_i = x'_i\circ X$ and \eqref{eq:covs-of-compositions-new} we then find 
\begin{align*}
\cov(X_i,Y)(\cov Y)\mpinv \cov(Y,X_i) 
% = \cov (x'_i\circ X,Y) (\cov Y)\mpinv \cov (Y, x'_i\circ X) 
= x'_i \circ  \corXY  \circ  (x'_i)' \, .
\end{align*}
Moreover, Lemma  \ref{lem:double-dual} shows that the operator $x'_i \circ  \corXY  \circ  (x'_i)':\R'\to \R$ is described by 
the $1\times 1$-matrix $\dualpair {x_i'} {\corXY x_i'}E$, while the definition of the covariance matrices $\KXiY$,  $\KY$, and $\KYXi$ 
shows that 
$\cov(X_i,Y)(\cov Y)\mpinv \cov(Y,X_i):\R'\to \R$ is described by the $1\times 1$-matrix $\KXiY \KY\mpinv \KYXi$.
Consequently, we have 
\begin{align}\label{thm:grv-conditioning-y-fin-dim-cov-half-new-first}
\KXiY \KY\mpinv \KYXi = \dualpair {x_i'} {\corXY x_i'}E \, .
\end{align}
In addition, \eqref{eq:weak-cov-pos} shows 
\begin{align*}
\KXi   = \var(X_i) = \var(\dualpair{x_i'}XE) = \dualpair {x_i'} {\cov (X) x_i'}E \, .
\end{align*}
Combining both considerations, we thus find 
\begin{align}\label{thm:grv-conditioning-y-fin-dim-cov-half-new}
\Kupi  
= \KXi -\KXiY \KY\mpinv \KYXi 
= \dualpair {x_i'} {\cov (X) x_i'}E -  \dualpair {x_i'} {\corXY x_i'}E \, .
\end{align}

% 
% After these preparations 
% we now fix a  regular
% conditional probability $\P_{X|Y}(\mycdot|\mycdot)$.
Our next goal is to describe the relationship between 
$\P_{X|Y}(\mycdot|\mycdot)$ and the just considered $\P_{X_i|Y}(\mycdot|\mycdot)$ with the help of 
Theorem \ref{thm:rcp-under-trafo}, which describes regular conditional probabilities under transformations.
To this end, we set $T_0 := E$ and $T_1 := \R$, as well as  $U_0 := U_1 := \R^n$ and $\sA_0 := \sA_1 := \sborel^n$.
Moreover, we consider the maps $\Phi := x_i'$ and $\Psi := \id_{\R^n}$ as well as the measures
  $\P\hochkl 0 := \P_{(X,Y)}$ and $\P\hochkl 1 :=   \P_{(X_i,Y)} = \P\hochkl 1_{(\Phi,\Psi)}$.
By Theorem \ref{thm:rcp-under-trafo} there then exists
 an $N_i''\in \sborel^n$ with 
$\P_Y(N_i'') = \P_{\R^n}\hochkl 0 (N_i'')=  0$ such that for all $y\in \R^n\setminus N_i''$ we have
\begin{align*}
\P_{X|Y}\bigl( (x_i')^{-1}(B)|y\bigr) =    \P_{X_i|Y}( B|y)    \, , \myqquad B\in \sborel.
\end{align*}  
Consequently,  we find
\begin{align}\label{thm:grv-conditioning-h1}
\bigl(\P_{X|Y}( \mycdot  |y)\bigr)_{x'_i} = \P_{X_i|Y}(\mycdot |y) = \gaussm {\muupi(y)}{\Kupi}
\end{align}
for all   $y\in \R^n\setminus (N_i' \cup N_i'')$.
We define 
\begin{align*}
N :=  \bigcup_{i\geq 1} (N_i' \cup N_i'')\, .
\end{align*}
Obviously, this gives $N\in \sborel^n$ with $\P_Y(N) = 0$, and by construction 
\eqref{thm:grv-conditioning-h1} holds for all  $y\in \R^n\setminus N$ and  $x'_i\in \denseblo$.

Let us now fix a $y\in \R^n\setminus N$. Then we  have just seen in \eqref{thm:grv-conditioning-h1} that 
for all $x_i'\in \denseblo$ the 
image measure
$(\P_{X|Y}( \mycdot  |y))_{x'_i}$ of $\P_{X|Y}( \mycdot  |y)$ under $x_i'$ is
Gaussian, and by 
 Theorem \ref{thm:test-for-gms} we can thus conclude that $\P_{X|Y}( \mycdot  |y)$ is Gaussian.

Next we verify the formulas for the expectation and covariance.
Let us first consider the expectation:
For all $y\in \R^n\setminus N$ and  $x'_i\in\denseblo$, the identity \eqref{thm:grv-conditioning-h1}  shows 
\begin{align} \nonumber
\muupi(y)
=
\E \bigl(\bigl(\P_{X|Y}( \mycdot  |y)\bigr)_{x'_i}\bigr)
=
\int_\R \id_\R  \intd \bigl(\P_{X|Y}( \mycdot  |y)\bigr)_{x'_i}  %\\ \nonumber
&=
\int_E (\id_\R \circ x_i') (x) \, \P_{X|Y}( \ind x  |y) \\ \nonumber
&=
\int_E \dualpair {x_i'} xE \, \P_{X|Y}( \ind x  |y) \\ \label{thm:grv-conditioning-y-fin-dim-exp-final}
% &=
% \dualpairbb {x_i'} {\int_E  x  \, \P_{X|Y}( \ind x  |y)}E \\ 
&= 
\dualpairb {x_i'}{\muup(y)}E \, ,
\end{align}
where in the third step we used the transformation formula for image measures, and in the last step 
we used that $\id_E$ is Bochner-integrable with respect to $\P_{X|Y}( \mycdot  |y)$ since 
$\P_{X|Y}( \mycdot  |y)$ is a Gaussian measure, and thus \eqref{eq:Fernique's-theorem} holds true.
Using \eqref{thm:grv-conditioning-y-fin-dim-exp} we thus find 
\begin{align*}
\dualpairb {x_i'}{\muup(y)}E =  \dualpairb {x'_i} {\E X + \cov(X, Y)  \cov(Y)\mpinv (y- \E Y)}E  
\end{align*}
for all $x_i'\in \denseblo$. Since $\denseblo$ is $\tauweaks$-dense in $B_{E'}$, a simple limit argument then
shows that the latter identity  not only holds for all $x_i'$ but actually for all $x'\in B_{E'}$.
By Hahn-Banach's theorem we thus find 
\begin{align*}
\muup(y) = \E X + \cov(X, Y)  \cov(Y)\mpinv (y- \E Y)\, , \myqquad y\in \R^n\setminus N.
\end{align*}

To verify the formula for the covariance, we again fix an $y\in \R^n\setminus N$. For $x_i'\in \denseblo$,
the Identity \eqref{thm:grv-conditioning-h1} and the fact that $\bigl(\P_{X|Y}( \mycdot  |y)\bigr)_{x'_i}$ is a distribution on $\R$   then yield
\begin{align*}
\Kupi
% = \cov \Bigl(  \bigl(\P_{X|Y}( \mycdot  |y)\bigr)_{x'_i} \Bigr)
= \var \Bigl(  \bigl(\P_{X|Y}( \mycdot  |y)\bigr)_{x'_i} \Bigr) %\\
&= \int_\R \bigl( \id_\R(t) - \muupi(y)  \bigr)^2 \intd \bigl(\P_{X|Y}( \mycdot  |y)\bigr)_{x'_i}(t) \\
&= \int_E \bigl(\id_\R \circ x_i' - \muupi(y) \bigr)^2 (x) \, \P_{X|Y}( \ind x  |y) \\
&= \int_E \bigl(\dualpair {x_i'} xE - \muupi(y) \bigr)^2   \, \P_{X|Y}( \ind x  |y) \\
&= \int_E \dualpairb {x_i'}  {x - \muup(y)}E^2   \, \P_{X|Y}( \ind x  |y) \\
& = \dualpairb {x_i'}{\cov \bigl(\P_{X|Y}( \mycdot  |y)\bigr) x_i' }E \, ,
\end{align*}
where we have also used a projection formula for covariances \eqref{eq:weak-cov-pos}, the basic transformation formula for $\R$-valued integrals, 
 \eqref{thm:grv-conditioning-y-fin-dim-exp-final}, and \eqref{eq:weak-cross-cov}.
Combining this with \eqref{thm:grv-conditioning-y-fin-dim-cov-half-new} we thus find
\begin{align*}
\dualpairb {x_i'}{\cov \bigl(\P_{X|Y}( \mycdot  |y)\bigr) x_i' }E
=
\dualpair {x_i'} {\cov (X) x_i'}E - \dualpairb {x_i'} {\corXY x_i'}E
\end{align*}
for all $x_i'\in \denseblo$ and $y\in \R^n\setminus N$.
Now the left hand side of this identity is $\tauweaks$-continuous on $B_{E'}$ by
Theorem \ref{thm:continuous-covariance-kernel}.
Moreover, $(\cov Y)\mpinv:\R^n\to (\R^n)'$ is symmetric and non-negative, see the discussion around \eqref{eq:psd-of-mpinv}.
Consequently, 
 the right-hand side  
is also $\tauweaks$-continuous on $B_{E'}$ by 
Theorem \ref{thm:continuous-covariance-kernel} and Lemma \ref{lem:cov-cross-co}. Since 
$\denseblo$ is $\tauweaks$-dense in $B_{E'}$, we thus find 
\begin{align*}
\dualpairb {x'}{\cov \bigl(\P_{X|Y}( \mycdot  |y)\bigr) x' }E
&=
\dualpair {x'} {\cov (X) x'}E - \dualpairb {x'} {\corXY x'}E \, .
\end{align*}
for all $x'\in B_{E'}$. By scaling this implies
\begin{align}\label{thm:grv-conditioning-y-fin-dim-hhh}
\dualpairb {x'}{\bigl(  \cov (\P_{X|Y}( \mycdot  |y) ) + \corXY\bigr)  x' }E
=
\dualpair {x'} {\cov (X) x'}E
\end{align}
for all $x'\in E'$. Moreover, since $(\cov Y)\mpinv:\R^n\to (\R^n)'$ is symmetric and non-negative, Lemma \ref{lem:cov-cross-co} shows 
that $\corXY$ is an abstract covariance operator, and this implies that
% 
% $(\cov Y)\mpinv:\R^n\to (\R^n)'$ is symmetric and non-negative, see the discussion around \eqref{eq:psd-of-mpinv},
% and therefore 
% Lemma \ref{lem:cov-cross-co} shows that
% $\corXY$ is an abstract covariance operator. This in turn shows that
\begin{align*}
\cov (\P_{X|Y}( \mycdot  |y))  + \corXY :E'\to E
\end{align*}
is an abstract covariance operator.
In addition, $\cov (X):E'\to E$ is also an abstract covariance operator, and \eqref{thm:grv-conditioning-y-fin-dim-hhh} shows that
both abstract covariance operators coincide on the diagonal. Lemma \ref{lem:abstr-cov-on-diag}
thus yields $\cov (\P_{X|Y}( \mycdot  |y))  + \corXY = \cov (X)$, that is, we have found 
the formula for the covariance.

Let us now  prove the claims made for $Z$. 
To this end, we write $\coordproj E:E\times \R^n\to E$ and $\coordproj {\R^n}:E\times \R^n\to \R^n$ for  the coordinate 
projections onto $E$ and $\R^n$.
Moreover,
we define $N_0:= Y^{-1}(N)$, which
yields $N_0\in \sA$ with $\P(N_0) = 0$.
For $\om \in \Om\setminus N_0$ and $y:= Y(\om)$ we then have $y\in \R^n\setminus N$ and therefore
$\muup(y)$ is the mean of $\P_{X|Y}(\mycdot|y)$. 
Moreover, since $\P_{X|Y}(\mycdot|y)$ is Gaussian, we have $\id_E \in \sLx 1 {\P_{X|Y}(\mycdot|y),E}$.
For $g:= \id_E$
this gives $g\circ \coordproj E (\mycdot ,y) = g \in \sLx 1 {\P_{X|Y}(\mycdot|y),E}$ for all $y\in \R^n\setminus N$,
and therefore,  the random variable $Z_g:\R^n\to E$ defined by
\begin{align*}
Z_g(y)
:=
 \int_E \eins_{\R^n\setminus N} (y) \cdot  \bigl(  (g\circ \coordproj E) (x,y) \bigr) \intd \P_{X|Y}(\intd x|y) \, , \myqquad y\in \R^n,
\end{align*}
provides
  a version $Z_g\circ \coordproj {\R^n}$  of $\E( g\circ \coordproj E | \coordproj {\R^n}^{-1}(\sborel^n)) = \E( g\circ \coordproj E | E\times \sborel^n)$ 
  by Theorem \ref{thm:disintegration-bs} applied to the probability measure
$\P_{(X,Y)}$ on $\sborelnormx E \otimes \sborel^n = \sborelEx E \otimes \sborel^n$.
Hence, we have
\begin{align}\label{thm:grv-conditioning-y-fin-dim-h00}
 \int_{E\times \R^n}  \eins_{E\times A} \cdot  (Z_g\circ \coordproj {\R^n})  \intd \P_{(X,Y)}
 =
\int_{E\times \R^n}  \eins_{E\times A} \cdot (g\circ \coordproj E) \intd \P_{(X,Y)}
\end{align}
for all $A\in \sborel^n$.
Moreover, for $\om \in \Om\setminus N_0$ we find 
% $y\in \R^n\setminus N$ we find
\begin{align*}
Z(\om) 
=
\muup(Y(\om))
&= \int_E x\,  \P_{X|Y}(\intd x|Y(\om)) \\
&= \int_E \eins_{\R^n\setminus N} (Y(\om)) \cdot \bigl(( g\circ \coordproj E) (x,Y(\om))\bigr) \intd \P_{X|Y}(\intd x|Y(\om)) \\
&= Z_g(Y(\om))\, ,
\end{align*}
which shows $\P(\{Z\neq Z_g\circ Y\}) = 0$. 
Since $Z$ is $\s(Y)$-measurable by construction, it thus 
suffices to show that
$Z_g\circ Y$ is a version of
 $\E(X|Y)$. Here the $\s(Y)$-measurability of $Z_g\circ Y$ is ensured by construction.
Moreover, for $A_0\in \s(Y)$ there exists an $A\in \sborel^n$ with $A_0 = Y^{-1}(A)$, and with this we obtain
\begin{align*}
\int_\Om \!\eins_{A_0} \cdot  (Z_g\circ Y) \intd \P
=
\int_\Om \!(\eins_{A}  \circ Y) \cdot  (Z_g\circ Y) \intd \P
&=
\int_{\R^n} \!\eins_A  Z_g \intd \P_Y \\
&=
\int_{E\times \R^n} \!\! (\eins_A \circ \coordproj {\R^n} ) \cdot (Z_g\circ \coordproj {\R^n}) \intd \P_{(X,Y)} \\
&=
\int_{E\times \R^n}  \eins_{E\times A}   \cdot (Z_g\circ \coordproj {\R^n}) \intd \P_{(X,Y)}
\end{align*}
as well as
\begin{align*}
\int_\Om \eins_{A_0} \cdot X\intd \P
=
\int_\Om (\eins_{A}  \circ Y) \cdot  X \intd \P
&=
\int_\Om \bigl(\eins_{A} \circ \coordproj {\R^n}(X,Y) \bigr)\cdot \bigl( g\circ \coordproj E(X,Y)  \bigr) \intd \P \\
&=
\int_{E\times \R^n}  \eins_{E\times A} \cdot  (g\circ \coordproj E) \intd \P_{(X,Y)} \, .
\end{align*}
Combining the latter two calculations with \eqref{thm:grv-conditioning-y-fin-dim-h00} we obtain
\begin{align*}
\int_\Om \eins_{A_0} \cdot  (Z_g\circ Y) \intd \P = \int_\Om \eins_{A_0} \cdot X\intd \P
\end{align*}
for all $A_0\in \s(Y)$. This shows that $Z_g\circ Y$ is a version of
 $\E(X|Y)$ and hence so is $Z$.
Moreover,  the map $T:\R^n\to E$ defined by
\begin{align}\label{thm:grv-conditioning-y-fin-dim-defT}
Ty :=  \E X + \cov ( X,Y) (\cov Y)\mpinv(y- \E Y)\, , \myqquad y\in \R^n,
\end{align}
 is affine linear and continuous
with $Z = T\circ Y$. Since $Y$ is Gaussian random variable, we then see that $Z$ is also a Gaussian random variable.
In addition, we have $\E Z = \E (\E(X|Y)) = \E X$ by Theorem \ref{thm:prop-cond-ex}.

To verify the formula for $\cov (Z)$, we first consider the case $E= \R$, in which we only need to determine $\var Z$.
Since we already know $\E Z  = \E X$, we then obtain
\begin{align}\label{thm:grv-conditioning-y-fin-dim-h0000}
\var Z = \E Z^2 - (\E Z)^2 = \E Z^2 - (\E X)^2\, .
\end{align}
Moreover, since for $y\in \R^n\setminus N$ we have $\muup(y) = Ty$ we find
\begin{align}\label{thm:grv-conditioning-y-fin-dim-h000}
\E Z^2
= \E \bigl( T\circ Y)^2
= \int_{\R^n} (Ty)^2 \intd \P_Y (y)
= \int_{\R^n} (\muup(y))^2 \intd \P_Y (y)  
\end{align}
by the definition \eqref{thm:grv-conditioning-y-fin-dim-defT} of $T$ and the already established formula for $\muup(y)$.
Since $E=\R$ we further have $\Kup = \var (\P_{X|Y}(\mycdot|y))$ for all $y\in \R^n\setminus N$ and combining this with
\eqref{thm:grv-conditioning-y-fin-dim-h000} and \eqref{thm:grv-conditioning-y-fin-dim-h0000} leads to
\begin{align*}
\Kup
= \int_{\R^n} \Kup \intd \P_Y(y)
&= \int_{\R^n} \biggl(  \int_\R x^2  \, \P_{X|Y}(\ind x|y) - \bigl(\muup(y)\bigr)^2   \biggr) \intd \P_Y (y) \\
&=  \int_{\R^n}   \int_\R x^2  \, \P_{X|Y}(\ind x|y)   \intd \P_Y (y) - \E Z^2 \\
&= \int_{\R\times \R^n} (\coordproj E)^2 \intd \P_{(X,Y)} -  \var Z -  (\E X)^2 \\
&=  \int_{\R }x^2 \intd \P_{X} -   \var Z -  (\E X)^2  \\
& = \var X  - \var Z\, ,
\end{align*}
where in the third to last step we used the disintegration formula \eqref{eq:disintegration}. % in the case of non-negative functions.
In other words, we have found the desired covariance formula in the case $E=\R$.

To verify the formula for $\cov (Z)$ in the case of general $E$, we pick an $x'\in E'$
and define $X_0 := \dualpair{x'} XE$ as well as
\begin{align*}
Z_0:=  \E X_0 + \cov ( X_0,Y) (\cov Y)\mpinv(Y- \E Y) \, .
\end{align*}
Using \eqref{eq:covs-of-compositions-new} this definition can also be expressed as
\begin{align*}
\dualpair {x'}ZE
&= \dualpairb {x'}{ \E X + \cov ( X,Y) (\cov Y)\mpinv(Y- \E Y)}E \\
&= \E X_0 + \bigl(x' \circ \cov ( X,Y) \circ (\cov Y)\mpinv \bigr) (Y- \E Y) \\
&= \E X_0 + \cov ( X_0,Y) (\cov Y)\mpinv(Y- \E Y) \\
&= Z_0 \, .
\end{align*}
Since $X_0$ and $Y$ are jointly Gaussian random variables, the already treated
case $E=\R$ thus shows
\begin{align*}
\dualpair {x'} {\cov (Z) x'}E
=
\var(\dualpair{x'}ZE )
=
\var Z_0 
= \var (X_0) -  \KupO
&= \KXOY \KY\mpinv \KYXO  \\
&= \dualpair {x'} {\corXY x'}E
\, ,
\end{align*}
where in first step we used \eqref{eq:weak-cov-pos} and in 
the last step we repeated the calculations of \eqref{thm:grv-conditioning-y-fin-dim-cov-half-new-first}.
Since  $\cov (Z)$ is an abstract
covariance operator and we have seen around \eqref{thm:grv-conditioning-y-fin-dim-hhh} that  $\corXY$ is also an abstract
covariance operator,  Lemma \ref{lem:abstr-cov-on-diag}
 then  yields $\cov (Z) =  \corXY$.

Finally, for the proof of $\gaussspace Z \subset \gaussspace Y$, we consider the bounded linear operator  $A:=\cov ( X,Y) (\cov Y)\mpinv:\R^n\to E$.
The definition of $Z$ then gives
\begin{align*}
Z-\E Z = Z-\E X = \cov ( X,Y) (\cov Y)\mpinv(Y- \E Y) = A(Y- \E Y) \, .
\end{align*}
By Lemma \ref{lem:gaussspace-of-grv} we thus conclude $\gaussspace Z = \gaussspace{Z-\E Z} \subset  \gaussspace{Y-\E Y} = \gaussspace Y$.
%  
% Finally, note that our definition of $Z$  gives $Z-\E Z = Z-\E X = \cov ( X,Y) (\cov Y)\mpinv(Y- \E Y)$. Moreover, 
% $Y-\E Y$ is a centered Gaussian random variable and 
% $\cov ( X,Y) (\cov Y)\mpinv:\R^n\to E$ is bounded and linear.
% For $x'\in E'$
% we then have $y':= x' \circ \cov ( X,Y) \circ (\cov Y)\mpinv \in (\R^n)'$, and this gives 
% \begin{align*}
% \dualpair{x'}{Z-\E Z}E = \bigl(x' \circ  \cov ( X,Y)\circ  (\cov Y)\mpinv \bigr)(Y- \E Y) = \dualpairxy{y'}{Y-\E Y}{(\R^n)'}{\R^n} \in \gaussspace {Y} \, .
% \end{align*}
% Taking the $\Lx 2 \P$-closure then yields the assertion.
\end{proof}

\subsection{Proof of Theorem \ref{thm:grv-conditioning-y-inf-dim}}
% \subsection{A Banach Space Martingale Theorem for Regular Conditional Probabilities}

Before we can prove Theorem \ref{thm:grv-conditioning-y-inf-dim}, we again need some 
additional machinery.  

% needed for the 
%  proof of Theorem \ref{thm:grv-conditioning-y-inf-dim}. 

 We begin with a result showing that filtering sequences induce filtrations in a general setup.

\begin{lemma}\label{lem:filt-seq-gives-filtration}
Let $F$ be a separable Banach space 
and $(\nfilts n)$ be a filtering sequence for $F$. 
Moreover, let $(\Om, \sA, \P)$ be a probability space, $Y:\Om\to F$ be $\sborelEx F$- measurable, and $Y_n := \nfilts n\circ Y$. Then
$(\s(Y_n) )$ is a filtration in $\sA$ and
we have
\begin{align*}
\s(Y) = \s\biggl( \bigcup_{n\geq 1} \s(Y_n)   \biggr)\, .
\end{align*}
\end{lemma}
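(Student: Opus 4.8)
The plan is to reduce the statement to two ingredients: the elementary fact that preimages commute with the generation of $\sigma$-algebras, $Y^{-1}\bigl(\s(\mathcal E)\bigr)=\s\bigl(Y^{-1}(\mathcal E)\bigr)$ for any map $Y:\Om\to F$ and any family $\mathcal E\subset 2^F$; and the defining properties of a filtering sequence (Definition \ref{def:filt-seq}).

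First I would settle the filtration claim. Each coordinate of $\nfilts n:F\to\R^n$ is a bounded linear functional, so $\nfilts n$ is $(\sborelEx F,\sborel^n)$-measurable; hence $Y_n=\nfilts n\circ Y$ is $(\sA,\sborel^n)$-measurable, which gives $\s(Y_n)\subset\sA$. Writing $\s(\nfilts n):=\nfilts n^{-1}(\sborel^n)$ for the corresponding sub-$\sigma$-algebra of $\sborelEx F$, the identity $(\nfilts n\circ Y)^{-1}=Y^{-1}\circ\nfilts n^{-1}$ yields $\s(Y_n)=Y^{-1}\bigl(\s(\nfilts n)\bigr)$. Since $\s(\nfilts n)\subset\s(\nfilts{n+1})$ by Definition \ref{def:filt-seq} and preimages are monotone, $\s(Y_n)\subset\s(Y_{n+1})$, so $(\s(Y_n))$ is a filtration in $\sA$.

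Next I would prove the displayed identity. Put $\mathcal E:=\bigcup_{n\geq1}\s(\nfilts n)$. Since $\s(\nfilts n:n\geq1)$ is, by definition, the smallest $\sigma$-algebra on $F$ making every $\nfilts n$ measurable, it coincides with $\s(\mathcal E)$; combined with the filtering property of $(\nfilts n)$ this gives $\s(\mathcal E)=\sborelEx F$. Using $\sborelnormx F=\sborelEx F$ for separable $F$ (Theorem \ref{thm:pettis-var}), the commutation fact applied to $\mathcal E$, and that $Y^{-1}$ preserves unions, I obtain
\begin{align*}
\s(Y)
&= Y^{-1}\bigl(\sborelnormx F\bigr)
= Y^{-1}\bigl(\s(\mathcal E)\bigr)
= \s\bigl(Y^{-1}(\mathcal E)\bigr)\\
&= \s\Bigl(\bigcup_{n\geq1} Y^{-1}\bigl(\s(\nfilts n)\bigr)\Bigr)
= \s\Bigl(\bigcup_{n\geq1}\s(Y_n)\Bigr),
\end{align*}
which is the asserted equality.

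The whole argument is bookkeeping, so I do not expect a genuine obstacle. The only point worth a line of care is the commutation of preimages with $\sigma$-generation; I would either cite it or prove its nontrivial inclusion by the good-sets principle, observing that $\{B\subset F:\,Y^{-1}(B)\in\s(Y^{-1}(\mathcal E))\}$ is a $\sigma$-algebra containing $\mathcal E$ and hence all of $\s(\mathcal E)$.
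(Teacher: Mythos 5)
Your proof is correct and follows essentially the same route as the paper's: both reduce to $\s(Y_n)=Y^{-1}(\s(\nfilts n))$, set $\mathcal E=\bigcup_n\s(\nfilts n)$, and use the commutation of preimages with $\s$-generation together with $\s(\mathcal E)=\sborelEx F$ from Definition \ref{def:filt-seq}. The only cosmetic difference is that the paper cites the commutation fact from the literature where you sketch the good-sets argument.
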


\begin{proof}[Proof of Lemma \ref{lem:filt-seq-gives-filtration}]
  We first note that we have
\begin{align*}
\s(Y_n) = \s(\nfilts n \circ Y) = (\nfilts n \circ Y)^{-1}(\sborel^n) = Y^{-1}(\nfilts n^{-1}(\sborel^n))
= Y^{-1}(\s(\nfilts n))\, ,
\end{align*}
where the second and the last step uses an elementary formula for  the $\s$-algebra generated by a single map,
see e.g.~\cite[Theorem 1.78]{Klenke14}. Since $(\s(\nfilts n))$ is a filtration, so is $(\s(Y_n))$.

To establish the formula for $\s(Y)$, we write
  $\ca E:= \bigcup_{n\geq 1} \s(\nfilts n)$. By assumption we know that
 $\s(\ca E ) = \sborelEx F$,  and hence we  obtain
\begin{align*}
\s(Y)
% =
% Y^{-1}(\sborelEx F) 
= Y^{-1}(\s(\ca E)) 
= \s(Y^{-1}(\ca E)) 
= \s  \biggl( \bigcup_{n\geq 1} Y^{-1}(\s(\nfilts n))  \biggr)
= \s  \biggl( \bigcup_{n\geq 1} \s(Y_n)  \biggr) \, ,
\end{align*}
where in the second step we used another well-known fact on generated $\s$-algebras, see e.g.~\cite[Theorem 1.81]{Klenke14} and its proof.
\end{proof}

The next result  translates martingale convergence to 
convergence of integrals taken with respect to regular conditional probabilities. Readers unfamiliar with the notion of uniform tightness are referred to 
Supplement \ref{app:cf}.

\begin{theorem}\label{thm:limit-conditioning}
Let $(\Om,\sA,\P)$ be a probability space, $E$ and $F$ be separable Banach spaces, 
and $X:\Om\to E$ and $Y:\Om\to F$ be random variables. 
Moreover, 
let $(\nfilts n)$ be a filtering sequence for $F$ and 
\begin{align*}
Y_n := \nfilts n\circ Y \, , \myqquad n\geq 1.
\end{align*}
Then for all versions  $\P_{X|Y_n}(\mycdot|\mycdot) :\sborelnormx E\times \R^n\to[0,1]$ of the regular conditional probabilities of $X$ given $Y_n$
there 
exists an $N\in \sborelEx F$ with $\P_Y(N) = 0$ such that  
\begin{align*}
\bigl\{\P_{X|Y_n}(\mycdot| \nfilts n(y)): n\geq 1\bigr\}
\end{align*}
 is uniformly tight for all $y\in F\setminus N$.

In addition, for all Banach spaces $G$ and 
all $\PX$-Bochner integrable $g:E\to G$ there exists an $N_g\in \sborelEx F$ with $\P_Y(N_g) = 0$ such that for all $y\in F\setminus N_g$ and $n\geq 1$ we have 
% $g\in \sLx 1 {\P_{X|Y}(\mycdot|y),G}$ and 
\begin{align}\label{thm:limit-conditioning-int}
g\in \sLx 1 {\P_{X|Y}(\mycdot|y),G} \myqquad \mbox{ and } \myqquad 
g\in \sLx 1 {\P_{X|Y_n}(\mycdot| \nfilts n(y)),G} %\, , \myqquad n\geq 1,
\end{align}
as well as
\begin{align}\label{thm:limit-conditioning-cong}
\int_E  g(x) \, \P_{X|Y_n}(\ind x| \nfilts n(y)) \to  \int_E  g(x) \, \P_{X|Y}(\ind x|y) \, ,
\end{align}
where the convergence is with respect to $\snorm \cdot_G$.
\end{theorem}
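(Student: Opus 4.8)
The plan is to reduce the statement to a Banach-space martingale convergence theorem applied to conditional expectations, using the identification of $\E(X|Y)$-type objects with integrals against regular conditional probabilities (Theorem~\ref{thm:disintegration-bs}, or the disintegration formula \eqref{eq:disintegration}). The key observation is that for a filtering sequence, Lemma~\ref{lem:filt-seq-gives-filtration} gives $\s(Y_n)\uparrow$ and $\s(Y)=\s\bigl(\bigcup_n\s(Y_n)\bigr)$, so that the sequence of conditional expectations $\E\bigl(g\circ X\,\big|\,\s(Y_n)\bigr)$ is a martingale with respect to the filtration $(\s(Y_n))$ converging to $\E\bigl(g\circ X\,\big|\,\s(Y)\bigr)$, both in the $\sLx 1{\P,G}$-sense and $\P$-almost surely, provided $g\circ X$ is Bochner integrable. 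The latter holds since $g$ is $\PX$-Bochner integrable and $\PX$ is the image of $\P$ under $X$. This is the standard Banach-space martingale convergence theorem, which applies to every Bochner-integrable $G$-valued random variable without any geometric (e.g.\ Radon--Nikod\'ym) assumption on $G$, precisely because the limit is known a priori.

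First I would fix $g$ and recall that, by the disintegration theorem (Theorem~\ref{thm:disintegration-bs}) applied to $\P_{(X,Y_n)}$ on $\sborelnormx E\otimes\sborel^n$, the map $y_n\mapsto \int_E g(x)\,\P_{X|Y_n}(\ind x|y_n)$ is well-defined for $\P_{Y_n}$-a.e.\ $y_n$ and, composed with $Y_n$, yields a version of $\E\bigl(g\circ X\,\big|\,\s(Y_n)\bigr)$; likewise for $Y$. Write $h_n:=\E\bigl(g\circ X\,\big|\,\s(Y_n)\bigr)$ and $h_\infty:=\E\bigl(g\circ X\,\big|\,\s(Y)\bigr)$. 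The martingale convergence theorem then gives $h_n\to h_\infty$ both $\P$-a.s.\ and in $\sLx 1{\P,G}$. Now I transport this back through $Y$: since each $h_n$ equals, $\P$-a.s., a measurable function of $Y_n=\nfilts n\circ Y$, hence of $Y$, there is a measurable $G$-valued map $\tilde h_n$ on $F$ with $h_n=\tilde h_n\circ Y$ $\P$-a.s., namely $\tilde h_n(y):=\int_E g(x)\,\P_{X|Y_n}(\ind x|\nfilts n(y))$ for $\nfilts n(y)$ outside the $\P_{Y_n}$-null exceptional set, and similarly $\tilde h_\infty(y):=\int_E g(x)\,\P_{X|Y}(\ind x|y)$. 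Pushing the a.s.\ and $\sLx 1$ convergence of $h_n$ forward under $Y$ onto $F$, and collecting the countably many $\P_Y$-null exceptional sets (one per $n$ for the defining identities, plus one for the a.s.\ convergence of $\tilde h_n(Y)\to\tilde h_\infty(Y)$) into a single $N_g\in\sborelEx F$ with $\P_Y(N_g)=0$, yields \eqref{thm:limit-conditioning-cong}, while the integrability claims \eqref{thm:limit-conditioning-int} for all $y\notin N_g$ follow from the disintegration theorem (which guarantees $g\in\sLx 1{\P_{X|Y}(\mycdot|y),G}$ and $g\in\sLx 1{\P_{X|Y_n}(\mycdot|\nfilts n(y)),G}$ off $\P_Y$-null sets), again absorbed into $N_g$.

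For the uniform tightness claim, I would take $G:=\R$ and $g:=\eins_E$ — or more precisely I would apply the preceding machinery to suitable test functions to control the measures $\P_{X|Y_n}(\mycdot|\nfilts n(y))$ directly. The cleanest route is: each $\P_{X|Y_n}(\mycdot|\nfilts n(y))$ arises as a conditional distribution of $X$, and the measures $\P_{X|Y_n}(\mycdot|\nfilts n(Y(\cdot)))$, $n\ge1$, together with $\PX$ form, via disintegration, a coherent family; tightness then follows from Prokhorov's theorem once one shows that $\sup_n \int_E \snorm x_E\,\P_{X|Y_n}(\ind x|\nfilts n(y))<\infty$ for $\P_Y$-a.e.\ $y$, which is exactly \eqref{thm:limit-conditioning-cong} applied to $g(x):=\snorm x_E$ (Bochner integrable against $\PX$ by Fernique's theorem \eqref{eq:Fernique's-theorem}), giving a convergent, hence bounded, sequence of norm-integrals, combined with the a.s.-finiteness of $\E\bigl(\snorm X_E\,\big|\,\s(Y_n)\bigr)$. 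Inner regularity on the separable Banach space $E$ upgrades uniform boundedness of first moments to uniform tightness in the standard way. I would merge this exceptional $\P_Y$-null set with the one from the $G$-valued part to get the single $N$ in the statement.

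The main obstacle I anticipate is the \textbf{bookkeeping of null sets and versions}: the martingale convergence theorem delivers convergence $\P$-a.s.\ and in $\sLx 1{\P,G}$ for the random variables $h_n$ on $\Om$, but the statement demands convergence of honest integrals $\int_E g\,\ind\P_{X|Y_n}(\cdot|\nfilts n(y))$ for every $y$ outside one fixed $\P_Y$-null set $N_g\subset F$, uniformly in the sense that the \emph{same} $N_g$ works for all $n$. Reconciling the "for $\P_{Y_n}$-a.e.\ $y_n$" quantifiers (one null set per $n$, living in $\sborel^n$) with a single "for $\P_Y$-a.e.\ $y\in F$" quantifier requires pulling each $\sborel^n$-null set back to $F$ via $\nfilts n$ — which is legitimate since $\P_{Y_n}=(\nfilts n)_*\P_Y$ — and then taking a countable union; one must also check that the a.s.\ convergence statement transfers correctly from $\Om$ to $F$, using that $\P_Y=(Y)_*\P$ and that $h_n=\tilde h_n\circ Y$ $\P$-a.s. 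None of these steps is deep, but they must be arranged carefully so that the final $N$ (resp.\ $N_g$) is genuinely a single element of $\sborelEx F$ independent of $n$; the a.s.\ convergence $h_n\to h_\infty$ is what makes this possible, since it lets one fix the exceptional set \emph{before} quantifying over $n$.
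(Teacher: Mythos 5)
The main body of your argument --- martingale convergence of $\E(g\circ X\,|\,\s(Y_n))$ along the filtration $(\s(Y_n))$ supplied by Lemma \ref{lem:filt-seq-gives-filtration}, identification of these conditional expectations with $y\mapsto\int_E g(x)\,\P_{X|Y_n}(\ind x|\nfilts n(y))$ via the disintegration Theorem \ref{thm:disintegration-bs}, and the transport of the almost sure convergence from $\Om$ to $F$ through $Y$ by pulling back and collecting countably many null sets --- is sound and is essentially the paper's proof. The only cosmetic difference is that the paper runs the martingale argument on the product space $E\times F$ with the filtration $\coordproj F^{-1}(\s(\nfilts n))$ and the restricted measures $(\P_{(X,Y)})_{|\sborelnorm E\otimes\s(\nfilts n)}$, rather than on $\Om$ with $(\s(Y_n))$; this changes nothing substantive, and your version of the null-set bookkeeping is correct.

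The uniform tightness step, however, contains a genuine gap. You propose to take $g(x)=\snorm x_E$ and to ``upgrade uniform boundedness of first moments to uniform tightness in the standard way'' via inner regularity. In an infinite-dimensional Banach space this upgrade fails: Markov's inequality only confines the measures $\P_{X|Y_n}(\mycdot|\nfilts n(y))$ to balls of large radius, and closed balls in $E$ are not compact, so no tightness follows. (A secondary issue: the theorem assumes only that $X$ is a random variable, not a Gaussian one, so Fernique's theorem does not give $\snorm\mycdot_E\in\sLx 1 \PX$ here; but even granting integrability, the compactness obstruction remains.) The missing ingredient is Lemma \ref{lem:uni-tight}: since the single measure $\PX$ is tight by Ulam's theorem on the Polish space $E$, there exists a measurable $V:E\to[0,\infty]$ with \emph{compact} sublevel sets $\{V\leq c\}$ and $V\in\sLx 1 \PX$. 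One then applies exactly your convergence machinery to $g:=\eins_{\{V<\infty\}}V$ to obtain $\sup_{n\geq 1}\int_E V(x)\,\P_{X|Y_n}(\ind x|\nfilts n(y))<\infty$ off a single $\P_Y$-null set (after also checking that $\{V=\infty\}$ is a null set for each conditional measure, which requires its own disintegration argument and its own null sets), and Lemma \ref{lem:uni-tight} then converts this uniform bound back into uniform tightness. This is how the paper closes the argument, and without the passage through a function with compact sublevel sets the tightness claim does not follow.
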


\begin{proof}[Proof of Theorem \ref{thm:limit-conditioning}]
We begin with some preparations needed for both assertions. 
To this end, we write 
$\sA_n := \s(\nfilts n) \subset \s(F')$
and $\sA_\infty := \s(F')$. We further define $\nfilts \infty := \id_F$ and $Y_\infty := Y$, which gives $Y_\infty = \nfilts \infty \circ Y$. 
Since $(\nfilts n)$ is a filtering sequence we then have 
% Lemma \ref{lem:Y-sigma-algebras} shows 
\begin{align*}
 \s(\nfilts \infty) = \sA_\infty =  \s\bigl( \nfilts n: n\geq 1 \bigr)\, .
\end{align*}
% $\sA_\infty = \s(\npeval \infty)$.
In addition, we denote 
the restriction of $\P_{(X,Y)}$ onto $\sborelnorm E\otimes \sA_n$ by $\Qm\hochkl n$, that is
\begin{align*}
\Qm\hochkl n := \bigl( \P_{(X,Y)}\bigr)_{|\sborelnorm E\otimes \sA_n} 
\end{align*}
for all $n\in \N\cup\{\infty\}$. For $n=\infty$, this gives $\Qm\hochkl \infty = \P_{(X,Y)}$ and for $n\in \N$, the identity
 $Y_n = \nfilts n\circ Y$ implies 
\begin{align*}
\P_{(X,Y_n)} = \bigl(  \P_{(X,Y)} \bigr)_{(\id_E, \nfilts n)} = \Qm\hochkl n_{(\id_E, \nfilts n)} \, .
\end{align*}
With the help of Theorem \ref{thm:ex-ei-reg-cond-prob}
we now fix a regular conditional probability $\P_{X|Y}(\mycdot|\mycdot)$ and write
 $\Qm\hochkl \infty (\mycdot|\mycdot):=\P_{X|Y}(\mycdot|\mycdot)$.
 Moreover, for $n\in \N$, 
we analogously fix a regular conditional probability
\begin{align*}
\Qm\hochkl n(\mycdot|\mycdot):\sborelnorm E\times F\to [0,1]
\end{align*}
 of $\Qm\hochkl n$.
Theorem \ref{thm:rcp-under-trafo} applied to $\Phi:= \id_E:E\to E$ and $\Psi:= \nfilts n:F\to \R^n$ and the probability measures $\P\hochkl 0 := \Qm\hochkl n$ and 
$\P\hochkl 1 := \P_{(X,Y_n)}$ 
then gives an $N\hochkl 1_n\in \sA_n$
with $\Qm\hochkl n_F(N\hochkl 1_n)) = 0$ such that for all $y\in F\setminus N\hochkl 1_n$ and all $B\in \sborelnorm E$
we have
\begin{align}\label{thm:limit-conditioning-h1}
\Qm\hochkl n(B|y) = \P_{X|Y_n}\bigl(B| \nfilts n(y)\bigr) \, .
\end{align}
Here we note that in the case $n=\infty$  our construction gives $\Qm\hochkl \infty = \P_{(X,Y)}$ as noted above 
and therefore     \eqref{thm:limit-conditioning-h1} holds in the case $n=\infty$  for all $y\in F$.
Let us now define
\begin{align*}
N_{\mathrm{trafo}} := \bigcup_{n\geq 1} N\hochkl 1_n \, .
\end{align*}
Since for all $n\geq 1$ we have $N\hochkl 1_n\in \sA_n\subset \sA_\infty = \s(F')$ with 
\begin{align}\label{thm:limit-conditioning-h666}
\P_Y(N_n\hochkl 1) = \P_{(X,Y)}(E\times N_n\hochkl 1) = \Qm\hochkl n (E\times N_n\hochkl 1)  = \Qm_F\hochkl n ( N_n\hochkl 1) = 0\, ,
\end{align}
we then find $N_{\mathrm{trafo}}\in   \sborelEx F$ with
$\P_Y(N_{\mathrm{trafo}}) = 0$. In addition, 
 \eqref{thm:limit-conditioning-h1} holds for all $y\in F\setminus N_{\mathrm{trafo}}$ and all $n\in \N\cup\{\infty\}$ by our construction.
Finally, our construction also ensures $\Qm\hochkl n_E = \P_{X}$ for all $n\in \N\cup\{\infty\}$.

 Our first goal is to establish \eqref{thm:limit-conditioning-int} and  \eqref{thm:limit-conditioning-cong}. To this end,
we fix a Banach space $G$ and 
a $\PX$-Bochner integrable $g:E\to G$. For $ n\in \N\cup\{\infty\}$
our construction then ensures 
\begin{align}\label{thm:martingale-convergence-int-cond}
\int_{E\times F} \snorm{ g\circ \coordproj E}_{G} \intd \Qm\hochkl n
= \int_{E} \snorm{  g}_{G}  \intd \Qm\hochkl n_E
= \int_{E} \snorm{  g}_{G}  \intd \P_{X}
< \infty\, . % \myqquad n\in \N\cup\{\infty\}.
\end{align}
In other words, we have 
 $g\circ \coordproj E\in  \sLx 1 {\Qm\hochkl  n,G}$ for  all $n\in \N\cup\{\infty\}$.
 In the next step we will 
 apply the martingale convergence  Theorem \ref{thm:martingale-convergence}. To this end, 
we define the $\s$-algebras
\begin{align*}
\siC_n:= \coordproj F^{-1} (\sA_n) = \{E\times A : A \in \sA_n    \}\, , \myqquad  n\geq 1
\end{align*}
and we further write
\begin{align*}
\siC_\infty := \coordproj F^{-1} (\sborelEx F) = \{E\times A : A \in  \sborelEx F   \} =  \{E\times A : A \in \sA_\infty  \}\, .
\end{align*}
% By Lemma \ref{lem:Y-sigma-algebras}  we know that
Since
$(\sA_n)$ is a filtration
 with $\sA_\infty = \s(\sA_n: n\geq 1)$, we quickly check 
% from this we conclude 
that
$(\siC_n)$ is a filtration
 with $\siC_\infty = \s(\siC_n: n\geq 1)$.
For every fixed choice of versions $g_n$ of  $\E_{\P_{(X,Y)}} (g\circ \coordproj E |\siC_n)$
and $g_\infty$ of $\E_{\P_{(X,Y)}} (g\circ \coordproj E |\siC_\infty)$
Theorem \ref{thm:martingale-convergence}
applied to the probability space $(E\times F, \sborelnorm E\otimes \sborelEx F, \P_{(X,Y)})$
then gives an $N'\in \siC_\infty$ with $\P_{(X,Y)}(N') = 0$ and
\begin{align}\label{thm:limit-conditioning-hxx}
\snorm{g_n (x,y) - g_\infty (x,y)}_G\to 0   \, , \myqquad (x,y) \in (E\times F)\setminus N'\, .
\end{align}
Moreover,  note that $N'$ is of the form $N'= E\times N_{\mathrm{mart}}$ with $N_{\mathrm{mart}}\in \sborelEx F$ since $N'\in\siC_\infty$. In addition, we have
$\P_Y(N_{\mathrm{mart}}) = \P_{(X,Y)}(N') = 0$.

Let us now translate this convergence to regular conditional probabilities. To this end, we apply
Theorem \ref{thm:disintegration-bs} to $g\circ \coordproj E$ and the measures $\Qm\hochkl n$ defined on $\sborelnorm E\otimes \sA_n$:
Because of \eqref{thm:martingale-convergence-int-cond} we then obtain
an $N\hochkl 2_{n}\in \sA_n$ with $\Qm\hochkl n_F (N\hochkl 2_{n}) = 0$ and
\begin{align}\label{thm:limit-conditioning-h9999}
g = g\circ \coordproj E(\mycdot ,y) \in \sLx 1 {\Qm\hochkl n(\mycdot| y), G}  \, , \myqquad y\in F\setminus N\hochkl 2_n,\, n\in \N\cup\{\infty\}
\end{align}
such that 
$Z_n:F\to G$ defined by
\begin{align*}
Z_n(y)
:=
\int_E \eins_{F\setminus N\hochkl 2_{n}}(y) \cdot (g \circ \coordproj E)  (x,y) \, \Qm\hochkl n(\ind x|y)
=
\int_E \eins_{F\setminus N\hochkl 2_{n}}(y) \cdot  g(x) \, \Qm\hochkl n(\ind x|y)
% \, , \myqquad y\in F
\end{align*}
for all $y\in F$
is strongly  measurable, the disintegration formula \eqref{eq:disintegration} holds true, and
$Z_n\circ \coordproj F:E\times F\to G$ is a version of
$\E_{\Qm\hochkl n}(g\circ \coordproj E|\siC_n)$, that is, we have
\begin{align}\label{thm:martingale-convergence-conde-trans-1}
\int_{E\times F} \eins_C \cdot(Z_n\circ \coordproj F) \intd \Qm\hochkl n
=
\int_{E\times F} \eins_C \cdot(g\circ \coordproj E) \intd \Qm\hochkl n
\end{align}
for all $C\in \siC_n$.
Our next intermediate goal is to verify that
$Z_n\circ \coordproj F$ is also a version of $\E_{\P_{(X,Y)}} (g\circ \coordproj E |\siC_n)$.
Here, the strong $\siC_n$-measurability is obviously satisfied since $Z_n\circ \coordproj F$ is a version of $\E_{\Qm\hochkl n}(g\circ \coordproj E|\siC_n)$.
Moreover, for $C\in \siC_n$ we have
\begin{align}\label{thm:martingale-convergence-conde-trans-2}
\int_{E\times F} \eins_C \cdot(Z_n\circ \coordproj F) \intd \P_{(X,Y)}
=
\int_{E\times F} \eins_C \cdot(Z_n\circ \coordproj F) \intd \Qm\hochkl n\, ,
\end{align}
where  we used that $\eins_C \cdot(Z_n\circ \coordproj F)$ is strongly $\siC_n$-measurable and thus also strong
$\sborelnorm E\otimes \sA_n$-measurable
and that  $\Qm\hochkl n$ is the restriction of $\P_{(X,Y)}$ onto $\sborelnorm E\otimes \sA_n$.
In addition, $g\circ  \coordproj E$ is strongly $\{B\times F: B\in \sborelnorm E\}$-measurable
and thus also strongly $\sborelnorm E\otimes \sA_n$-measurable.
Consequently,
$\eins_C \cdot(g\circ \coordproj E)$ is also strongly $\sborelnorm E\otimes \sA_n$-measurable, and since
$\Qm\hochkl n$ is the restriction of $\P_{(X,Y)}$ onto $\sborelnorm E\otimes \sA_n$ we find
\begin{align}\label{thm:martingale-convergence-conde-trans-3}
\int_{E\times F} \eins_C \cdot(g\circ \coordproj E) \intd \Qm\hochkl n
=
\int_{E\times F} \eins_C \cdot(g\circ \coordproj E) \intd  \P_{(X,Y)}\, .
\end{align}
Combining \eqref{thm:martingale-convergence-conde-trans-2} with  \eqref{thm:martingale-convergence-conde-trans-1}   and
\eqref{thm:martingale-convergence-conde-trans-3}, we now find
\begin{align*}
\int_{E\times F} \eins_C \cdot(Z_n\circ \coordproj F) \intd \P_{(X,Y)}
=
\int_{E\times F} \eins_C \cdot(g\circ \coordproj E) \intd  \P_{(X,Y)} 
\end{align*}
for all $C\in \siC_n$. In other words, $Z_n\circ \coordproj F$ is indeed a version of $\E_{\P_{(X,Y)}} (g\circ \coordproj E |\siC_n)$.
For all $x\in E$ and 
% Moreover, these versions are independent of $x\in E$, and
% for all 
$y\in F\setminus N_{\mathrm{mart}}$ the martingale convergence \eqref{thm:limit-conditioning-hxx} thus shows
\begin{align*}
Z_n(y) = Z_n\circ \coordproj F(x,y)  \to  Z_\infty\circ \coordproj F(x,y) =     Z_\infty(y) \, .
\end{align*}
Let us define $N_g := N_{\mathrm{trafo}} \cup N_{\mathrm{mart}}\cup N\hochkl 2_\infty \cup \bigcup_{n\geq 1} N\hochkl 2_n$.
Clearly, this definition gives $N_g\in \sborelEx F$ with $\P_Y(N_g) = 0$, see the calculation \eqref{thm:limit-conditioning-h666}.
For $y\in F\setminus N_g$ our considerations \eqref{thm:limit-conditioning-h9999}   and  \eqref{thm:limit-conditioning-h1}   further give
\begin{align*}
g  \in \sLx 1 {\Qm\hochkl n(\mycdot| y), G} = \sLx 1 {\P_{X|Y_n}(\mycdot| \nfilts n(y)),G}
\end{align*}
for all $n\in \N\cup\{\infty\}$ as well as
\begin{align*} 
\int_E \! g(x) \, \P_{X|Y_n}\bigl(\ind x| \nfilts n(y)\bigr)
= \int_E \! g(x) \, \Qm\hochkl n(\ind x|y)
= Z_n(y)
\to
Z_\infty(y)
&=
\int_E \! g(x) \, \Qm\hochkl \infty(\ind x|y) \\
&= \int_E \! g(x) \, \P_{X|Y}(\ind x|y)
\end{align*}
for $n\to \infty$, where  the convergence is with respect to the norm $\snorm\mycdot_G$.
In other words we have shown both \eqref{thm:limit-conditioning-int} and  \eqref{thm:limit-conditioning-cong}.

Let us finally establish the uniform tightness. To this end, we first note that $\{\PX\}$ is uniformly tight
as discussed around \eqref{def:radon-by-inner-reg},
and hence Lemma \ref{lem:uni-tight}
gives a $\sborelnorm E$-measurable $V:E\to [0,\infty]$ such that $\{V\leq c\}$ is compact for all $c\in [0,\infty)$ and $V\in \sLx 1 \PX$.
Consequently,   $N_{\mathrm{tight}}:= \{V=\infty \}\in \sborelnormx E$ satisfies $\PX(N_{\mathrm{tight}}) = 0$.
We write $G:= \R$ and $g:= \eins_{E\setminus N_{\mathrm{tight}}} V$, where we use the convention $0\cdot \infty := 0$.
Then we have $g\in \sLx 1 {\PX, \R}$, and therefore we already know that there 
exists an $N_g\in \sborelEx F$ with $\P_Y(N_g) = 0$ such that for all $y\in F\setminus N_g$ we have 
$g\in \sLx 1 {\P_{X|Y}(\mycdot|y),\R}$ and both \eqref{thm:limit-conditioning-int} and \eqref{thm:limit-conditioning-cong} hold true.
Consequently, there exists an $n_0\geq 1$ such that
\begin{align*}
\int_E  g(x) \, \P_{X|Y_n}(\ind x| \nfilts n(y))  \leq 1 +   \int_E  g(x) \, \P_{X|Y}(\ind x|y)  
\end{align*}
for all $n\geq n_0$ and all $y\in F\setminus N_g$. By applying \eqref{thm:limit-conditioning-int}  we conclude that
\begin{align}\label{thm:limit-conditioning-h999}
\sup_{n\geq 1} \int_E  g(x) \, \P_{X|Y_n}(\ind x| \nfilts n(y)) < \infty
\end{align}
for all $y\in F\setminus N_g$. Now note that
 $\{g\leq c\}$ may not be compact, and therefore we cannot apply
 Lemma \ref{lem:uni-tight} directly. For this reason, we
 translate \eqref{thm:limit-conditioning-h999} back to the function $V$. To this end, we note that we have
\begin{align*}
0 =  \PX(N_{\mathrm{tight}}) =   \P_{(X,Y)}(N_{\mathrm{tight}} \times F) = \Qm\hochkl n (N_{\mathrm{tight}}\times F) = \int_F \Qm\hochkl n (N_{\mathrm{tight}}| y) \intd \Qm\hochkl n _F(y)\, .
\end{align*}
Consequently, there exists an $N_n\hochkl 3 \in \sA_n$ with $\Qm\hochkl n_F (N_n\hochkl 3) = 0$ and 
\begin{align*}
\Qm\hochkl n (N_{\mathrm{tight}}| y)  = 0 \, , \myqquad y\in F\setminus N_n\hochkl 3.
\end{align*}
Repeating the calculation \eqref{thm:limit-conditioning-h666} then shows $\P_Y(N_n\hochkl 3)=0$. We define 
\begin{align*}
N:= N_g \cup \bigcup_{n\geq 1} (N_n\hochkl 1 \cup N_n\hochkl 3)\, .
\end{align*}
This gives $N\in \sborelEx F$ with $\P_Y(N) = 0$ and for $y\in F\setminus N$ we have both \eqref{thm:limit-conditioning-h999}
and 
\begin{align*}
\P_{X|Y_n}(N_{\mathrm{tight}}| \nfilts n(y)) = \Qm\hochkl n (N_{\mathrm{tight}}| y)  = 0 \, .
\end{align*}
% due to \eqref{thm:limit-conditioning-h1}.
Moreover, we have $N_{\mathrm{tight}}= \{V=\infty \} = \{V\neq g\}$ by the definition of $g$.
The previous identity thus implies
\begin{align*}
\int_E  V(x) \, \P_{X|Y_n}(\ind x| \nfilts n(y))  = \int_E  g(x) \, \P_{X|Y_n}(\ind x| \nfilts n(y))
\end{align*}
for all $y\in F\setminus N$.
Inserting this into  \eqref{thm:limit-conditioning-h999} then gives the uniform tightness by Lemma \ref{lem:uni-tight}.
\end{proof}

%------------------------------------------------------------------------------------------------------
%------------------------------------------------------------------------------------------------------
%------------------------------------------------------------------------------------------------------

% 
% 
% With the help of the
%  Banach space martingale theorem for regular conditional probabilities presented in Theorem \ref{thm:limit-conditioning} we can now prove 
%  Theorem \ref{thm:grv-conditioning-y-inf-dim}.

\begin{proof}[Proof of  Theorem \ref{thm:grv-conditioning-y-inf-dim}]
In the following, we sequentially prove all parts of Theorem \ref{thm:grv-conditioning-y-inf-dim}. To this end,
we construct for each part \emph{k)} an 
$N\hochkls k\in \sborelEx F$ with 
$\P_Y(N\hochkls k) = 0$ such that for all $y\in F\setminus N\hochkls k$ the claims of the parts \emph{i)} to \emph{k)} hold true. 
The desired $\P_Y$-zero set $N$ is then given by $N:= N\hochkls {viii}$.

\ada i Since $(\id_E,\nfilts n):E\times F\to E\times \R^n$ is a bounded and linear operator we quickly see that  
 $(X,Y_n)$ is a Gaussian random variable. For each $n\geq 1$,
Theorem \ref{thm:grv-conditioning-y-fin-dim} then gives an $N_n\hochkl 1\in \sborel^n$ with $\P_{Y_n}(N_n\hochkl 1) = 0$ such for all
$y_n\in \R^n\setminus N_n\hochkl 1$ we have
\begin{align}\label{thm:grv-conditioning-y-inf-dim-h0}
\P_{X|Y_n}(\mycdot|y_n) = \gaussm {\muupn(y_n)}{\covupn}\, ,
\end{align}
where $\muupn(y_n)\in E$ and $\covupn:E'\to E$ are described in Theorem \ref{thm:grv-conditioning-y-fin-dim}.
We define 
\begin{align*}
N\hochkls i := \bigcup_{n\geq 1} \nfilts n^{-1}(N_n\hochkl 1)\, .
\end{align*}
Note that we have $\nfilts n^{-1}(N_n\hochkl 1)\in \s(\nfilts n)\subset \sborelEx F$, and  $Y_n = \nfilts n\circ Y$ shows 
$\P_Y(\nfilts n^{-1}(N_n\hochkl 1)) = \P_{Y_n}(N_n\hochkl 1) = 0$. Consequently, we have $N\hochkls i\in \sborelEx F$ with $\P_Y(N\hochkls i) = 0$.
Moreover, for $y\in F\setminus N\hochkls i$ we have $y_n := \nfilts n(y) \in \R^n\setminus  N_n\hochkl 1$ and therefore, the claims follow from 
\eqref{thm:grv-conditioning-y-inf-dim-h0}.

\ada {ii}
We first consider the case $E=\R$. Here, we fix an enumeration $(t_m)_{m\geq 3}$ of $\Q$ and consider the functions
$g_m :  \R\to \Cfield$ defined by
\begin{align*}
g_m (x) := \exp(\imi t_m x) \, , \myqquad x\in \R
\end{align*}
as well as the two functions $g_1,g_2:\R\to \R$ defined by $g_i(x) := x^i$ for $i=1,2$.
Let $G:= \Cfield$ with $\Cfield$ being interpreted as the $\R$-vector space $\R^2$. Since for  $m\geq 3$
the function $g_m$ is bounded and continuous, it is $\PX$-Bochner integrable. 
For each $m\geq 3$ Theorem \ref{thm:limit-conditioning} thus
gives an $N_m\hochkl 2\in \sborelEx F$ with
$\P_Y(N_m\hochkl 2) = 0$ such that for all $y\in F\setminus N_m\hochkl 2$ we have
\begin{align*}
\p_{\P_{X|Y_n}(\mycdot|\nfilts n(y))}(t_m)
= \int_E  g_m(x) \,  \P_{X|Y_n}(\ind x| \nfilts n(y))
&\to
\int_E  g_m(x) \, \P_{X|Y}(\ind x|y) \\
&=
\p_{\P_{X|Y}(\mycdot|y)} (t_m) \, .
\end{align*}
In addition, for $i=1,2$ we have
\begin{align*}
\int_{\R } |g_i | \intd \P_{X} = \int_\R |x|^i \intd \P_X(x) < \infty
\end{align*}
since $\P_X$ is a Gaussian measure on $\R$. Consequently, $g_1$ and $g_2$ are also $\PX$-Bochner integrable, and therefore  Theorem \ref{thm:limit-conditioning}
 shows that there exist   $N_1\hochkl 2,N_2\hochkl 2\in \sborelEx F$ with
$\P_Y(N_i\hochkl 2) = 0$ such that for $i=1,2$  and all $y\in F\setminus N_i\hochkl 2$ we have
\begin{align*}
\mu_{n,y}\hochkl i :=  \int_E  x^i \, \P_{X|Y_n}(\ind x| \nfilts n(y))
\to
\int_E  x^i \, \P_{X|Y}(\ind x|y) =: \mu_{\infty,y}\hochkl i   \, .
\end{align*}
In other words, the first two raw moments $\mu_{n,y}\hochkl i$  of $\P_{X|Y_n}(\mycdot|\nfilts n(y))$
converge to the first two raw moments $\mu_{\infty,y}\hochkl i$ of $\P_{X|Y}(\mycdot|y)$. 
Now, the first raw moments are, of course, the expectations. Since the variances can be computed by the first and second raw moments, we further obtain 
\begin{align*}
\s_{n,y}^2:= \var \P_{X|Y_n}(\mycdot|\nfilts n(y)) &\to  \var  \P_{X|Y}(\mycdot|y) =: \s_{\infty,y}^2 \, .  
\end{align*}
Let us define  $N':= N\hochkls i \cup \bigcup_{m\geq 1}N_m\hochkl 2$. This implies  $N'\in \sborelEx F$ with $\P_Y(N') = 0$ and
\begin{align*}
\p_{\P_{X|Y_n}(\mycdot|\nfilts n(y))}(t)
\to
\p_{\P_{X|Y}(\mycdot|y)} (t)
\end{align*}
for all $t\in \Q$ and $y\in F\setminus N'$, and since $\P_{X|Y_n}(\mycdot|\nfilts n(y)) = \gauss {\mu_{n,y}\hochkl 1}{\s_{n,y}^2 }$ for such $y$ we   find
\begin{align*}
\p_{\P_{X|Y_n}(\mycdot|\nfilts n(y))}(t)
=  \eul^{\imi   t\mu_{n,y}\hochkl 1} \cdot \exp\biggl( -\frac{ t^2 \s_{n,y}^2 }{2} \biggr)
\to  
% \eul^{\imi   t\mu_{\infty,y}\hochkl 1} \cdot \exp\biggl( -\frac{ t^2 \s_{\infty,y}^2 }{2} \biggr)
% = 
\p_{\gauss {\mu_{\infty,y}} {\s_{\infty,y}^2 } } (t)
\end{align*}
for all $t\in \R$ and $y\in F\setminus N'$. Consequently, for each fixed $y\in F\setminus N'$, we have
\begin{align}\label{thm:grv-conditioning-y-inf-dim-h4}
\p_{\P_{X|Y}(\mycdot|y)} (t) = \p_{\gauss {\mu_{\infty,y}} {\s_{\infty,y}^2 } } (t)
\end{align}
for all $t\in \Q$, and since both characteristic functions are continuous, we conclude that \eqref{thm:grv-conditioning-y-inf-dim-h4}
actually holds for all $t\in \R$. This shows that $\P_{X|Y}(\mycdot|y)$ is a Gaussian measure.

Let us now consider the case of a general separable Banach space $E$. Here we fix a countable
$\denseblo_{E'} \subset B_{E'}$ that is  $\tauweaks$-dense  in $B_{E'}$ and an enumeration $\denseblo_{E'} = \{x_i': i\geq 1\}$.
We write $X_i := x_i'\circ X$.
Our considerations in the case $E=\R$ 
then show that for each $i\geq 1$ there exists an $N_i'\in \sborelEx F$ with $\P_Y(N_i') = 0$ such that
$\P_{X_i|Y}(\mycdot|y)$ is a Gaussian measure
for all $y\in F\setminus N_i'$.

As in the proof of Theorem \ref{thm:grv-conditioning-y-fin-dim} we now translate this result to 
$\P_{X|Y}(\mycdot|y)$ with the help of Theorem \ref{thm:rcp-under-trafo}. To this end, 
we set $T_0 := E$ and $T_1 := \R$, as well as  $U_0 := U_1 := F$ and $\sA_0 := \sA_1 := \sborelEx F$.
Moreover, we consider the maps $\Phi := x_i'$ and $\Psi := \id_{F}$ as well as the measures
  $\P\hochkl 0 := \P_{(X,Y)}$ and $\P\hochkl 1 :=   \P_{(X_i,Y)} = \P\hochkl 1_{(\Phi,\Psi)}$.
By Theorem \ref{thm:rcp-under-trafo} there then 
exists an $N_i''\in \sborelEx F$ with 
$\P_Y(N_i'') = 0$ such that for all $y\in  F\setminus N_i''$ we have
\begin{align*}
\P_{X|Y}\bigl( (x_i')^{-1}(B)|y\bigr) =    \P_{X_i|Y}( B|y)    \, , \myqquad B\in \sborel.
\end{align*}  
Consequently,  we obtain
\begin{align*} 
\bigl(\P_{X|Y}( \mycdot  |y)\bigr)_{x'_i} = \P_{X_i|Y}(\mycdot |y)  
\end{align*}
for all  $y\in F\setminus N_i''$.
 Setting 
\begin{align*}
N\hochkls {ii}:= \bigcup_{i\geq 1} (N_i' \cup N_i'')
\end{align*}
we  find
$N\hochkls {ii}\in \sborelEx F$ with $\P_Y(N\hochkls {ii}) = 0$ and our construction ensures that 
$(\P_{X|Y}( \mycdot  |y))_{x'_i}$ is a Gaussian measure for all $i\geq 1$ and all $y\in F\setminus N\hochkls {ii}$.
Theorem \ref{thm:test-for-gms} thus shows  that $\P_{X|Y}( \mycdot  |y)$ is Gaussian for $y\in F\setminus N\hochkls {ii}$.

\ada {iii} Let us consider the map $g:= \id_E$. Since $E$ is separable, $g$ is strongly measurable. Moreover, 
since $X$ is a Gaussian random variable, Fernique's theorem, see \eqref{eq:Fernique's-theorem}, ensures
\begin{align*}
\int_{E } \snorm{g }_E \intd \P_{X} = \int_E \snorm x_E \intd \P_X(x) < \infty \, ,
\end{align*}
that is, $g:E\to E$ is $\PX$-Bochner integrable.
Therefore,  Theorem \ref{thm:limit-conditioning} shows that there exists an $N_g\in \sborelEx F$ with $\P_Y(N_g) = 0$
such that for all $y\in F\setminus (N\hochkls {ii}\cup N_g)$ we have
\begin{align*}
\muupn (\nfilts n(y)) = \int_E  x \, \P_{X|Y_n}(\ind x| \nfilts n(y)) \to  \int_E  x \, \P_{X|Y}(\ind x|y) =  \muup(y) \, .
\end{align*}
Setting $N\hochkls {iii} := N\hochkls {ii}\cup N_g$ thus shows \emph{iii)}.
% where in the first step we also used \eqref{thm:grv-conditioning-y-inf-dim-h3}. The definition of $\Psi_n$, see
% \eqref{eq:def-spin}, then yields the assertion.

\ada {iv} 
Let us consider the maps $h:E\to [0,\infty)$ and  $g:E \to \blonspace {E'}E$ defined by
\begin{align*}
h(x) := \snorm x_E^2\, , \\
g(x) := x\otimes x\, ,
\end{align*}
where 
the tensor products as well as the space of nuclear operators $\blonspace {E'}E$
are discussed  in front of Lemma \ref{lem:cross-cov-as-integral} and Lemma \ref{lem:elem-tens-conv}, respectively.
In addition, we consider 
  the probability space
$(E, \sborelnormx E, \P_X)$ and the function
$\tilde X:=   \id_E$. Because $E$ is separable, $\tilde X$  is
 strongly measurable. In addition, since $\P_X$ is a Gaussian measure, Fernique's theorem, see \eqref{eq:Fernique's-theorem},  shows
\begin{align*}
\int_{E } \snorm{\id_E}^2_E \intd \P_X
=
\int_E \snorm x_E^2 \intd \P_X(x) < \infty \, ,
\end{align*}
and hence we find $\tilde X\in \sLx 2 {\P_X, E}$.
Consequently, Lemma \ref{lem:cross-cov-as-integral} shows that the $\blonspace EE$-valued function
$g= \tilde X\otimes \tilde X$ is also strongly measurable
with $g\in \sLx 1 {\P_X, \blonspace{E'}E}$.
Therefore,  Theorem \ref{thm:limit-conditioning} applied with $G:= \blonspace {E'}E$ 
shows that there exists an $N_g\in \sborelEx F$ with $\P_Y(N_g) = 0$
such that for all $y\in F\setminus N_g$ we have 
\begin{align*}
g \in \sLx 1 {\P_{X|Y}(\mycdot|y),G} \myqquad \mbox{ and } \myqquad 
g \in \sLx 1 {\P_{X|Y_n}(\mycdot| \nfilts n(y)),G} 
\end{align*}
for all $n\geq 1$ as well as the $\blonspace {E'}E$-convergence
\begin{align}\label{thm:grv-conditioning-y-inf-dim-nuce-conv}
\int_E x \otimes x\, \P_{X|Y_n}(\ind x| \nfilts n(y)) 
 &\to  \int_E  x \otimes x \, \P_{X|Y}(\ind x|y)\, .
\end{align}
Moreover, Theorem \ref{thm:limit-conditioning} applied to $h = \snorm{\tilde X}_E^2$ with $G:= \R$ 
gives an $N_h\in \sborelEx F$ with $\P_Y(N_h) = 0$
such that for all $y\in F\setminus  N_h$ we have 
\begin{align*}
\tilde X \in \sLx 2 {\P_{X|Y}(\mycdot|y)} \myqquad \mbox{ and } \myqquad 
\tilde X \in \sLx 2 {\P_{X|Y_n}(\mycdot| \nfilts n(y))} 
\end{align*}
for all $n\geq 1$. 
We define $N\hochkls {iv} := N\hochkls {iii}\cup N_g \cup N_h$.
For $y\in F\setminus  N\hochkls {iv}$,  Lemma \ref{lem:cross-cov-as-integral} applied to $\tilde X\in  \sLx 2 {\P_{X|Y_n}(\mycdot| \nfilts n(y))} $ thus shows 
\begin{align*}
\covupn 
& = 
 \cov \P_{X|Y_n}(\mycdot|\nfilts n(y))\\
 &= \int_E (\tilde X \otimes \tilde X)(x) \, \P_{X|Y_n}(\ind x| \nfilts n(y)) \\
 &\quad - \biggl(\int_E  \tilde X(x) \, \P_{X|Y_n}(\ind x| \nfilts n(y)) \biggr) \otimes \biggl(\int_E  \tilde X(x) \, \P_{X|Y_n}(\ind x| \nfilts n(y)) \biggr) \\
 & = \int_E x \otimes x\, \P_{X|Y_n}(\ind x| \nfilts n(y))  - \muupn (\nfilts n(y)) \otimes \muupn (\nfilts n(y)) \\
 & \to  \int_E  x \otimes x \, \P_{X|Y}(\ind x|y) - \muup (y) \otimes \muup (y)
\end{align*}
with convergence in $\blonspace {E'}E$, where in the last step we used \eqref{thm:grv-conditioning-y-inf-dim-nuce-conv} and part \emph{iii)}
in combination with Lemma  \ref{lem:elem-tens-conv}.
Moreover,  Lemma \ref{lem:cross-cov-as-integral} applied to $\tilde X \in \sLx 2 {\P_{X|Y}(\mycdot|y)}$ also shows 
\begin{align*}
\cov \P_{X|Y}(\mycdot|y) =  \int_E  x \otimes x \, \P_{X|Y}(\ind x|y) - \muup (y) \otimes \muup (y)  
\end{align*}
for all $y\in F\setminus  N\hochkls {iv}$. Combining these considerations, we obtain the claimed  $\covupn \to \cov \P_{X|Y}(\mycdot|y)$ with  convergence in $\blonspace {E'}E$.
Furthermore, all $\covupn$ are independent of $y$, and hence so is their limit $\cov \P_{X|Y}(\mycdot|y)$, that is, the notation $\covup$ is indeed justified.

\ada {v} Let us fix a $y\in F\setminus N\hochkls {iv}$.
We then know from \emph{i)} and Proposition \ref{prop:cf-of-general-gm} that 
\begin{align}\label{thm:grv-conditioning-y-inf-dim-h00}
\p_{\P_{X|Y_n}(\mycdot| \nfilts n(y))}(x') = \eul^{\imi \dualpair{x'}{\muupn(\nfilts n(y))} E} \cdot \exp\biggl( -\frac{\dualpair {x'}{\covupn x'}E}{2} \biggr) 
\end{align}
holds for all $ x'\in E'$.
Moreover, note that  \emph{iv)}  implies 
\begin{align*}
\bigl| \dualpair {x'}{\covupn  x'}E  - \dualpair {x'}{ \covup x'}E \  \bigr| 
&= 
\bigl| \dualpair {x'}{(\covupn  - \covup)x'}E  \bigr| \\
&\leq 
 \snorm{\covupn  - \covup} \cdot \snorm{x'}_{E'}^2 \\
& \to 0\, .
\end{align*}
% since the nuclear norm dominates the operator norm.
Combining this with 
 \emph{iii)} and \eqref{thm:grv-conditioning-y-inf-dim-h00} we thus obtain for all $x'\in E'$
 \begin{align}\label{thm:grv-conditioning-y-inf-dim-weak-conv-h1}
 \p_{\P_{X|Y_n}(\mycdot| \nfilts n(y))}(x') \to  \eul^{\imi \dualpair{x'}{\muup(y)} E} \cdot \exp\biggl( -\frac{\dualpair {x'}{\covup x'}E}{2} \biggr)\, .
 \end{align}
%  for all $x'\in E'$.
Furthermore, by Theorem \ref{thm:limit-conditioning}  there exists an $N^\star\in \sborelEx F$ with $\P_Y(N^\star) = 0$ such that  
\begin{align}\label{thm:grv-conditioning-y-inf-dim-weak-conv-h2}
\bigl\{\P_{X|Y_n}(\mycdot| \nfilts n(y)): n\geq 1\bigr\}
\end{align}
 is uniformly tight for all $y\in F\setminus N^\star$. We define $N\hochkls v := N\hochkls{iv}\cup N^\star$.
 For $y\in F\setminus N\hochkls v$ we then have both \eqref{thm:grv-conditioning-y-inf-dim-weak-conv-h1}
 and \eqref{thm:grv-conditioning-y-inf-dim-weak-conv-h2} and therefore
   Theorem \ref{thm:general-levy} shows that there then exists a probability measure $\nu_y$ on $X$ such that 
 $\P_{X|Y_n}(\mycdot| \nfilts n(y)) \to \nu_y$ weakly and 
 \begin{align*}
 \p_{\nu_y} (x')= \eul^{\imi \dualpair{x'}{\muup(y)} E} \cdot \exp\biggl( -\frac{\dualpair {x'}{\covup x'}E}{2} \biggr) \, , \myqquad x'\in E'\, . 
 \end{align*}
 Proposition \ref{prop:cf-of-general-gm} then shows that $\nu_y$ is a Gaussian measure with mean $\muup(y)$ and covariance operator $\covup$.
 By \emph{ii)}, \emph{iii)}, and \emph{iv)} we further know that $\P_{X|Y}(\mycdot |y)$ is also a 
 Gaussian measure with mean $\muup(y)$ and covariance operator $\covup$. As discussed in front of 
   \eqref{eq:gauss-meas-ex-cov} 
 we can then conclude that $\nu_y = \P_{X|Y}(\mycdot |y)$.

 \ada {vi}  We write $N:= N\hochkls {vi} := N\hochkls v$. Then
  $N_\Om := Y^{-1}(N)\in \sA$ satisfies $\P(N_\Om)=0$ since $N$ is a $\P_Y$-zero set.
 For $n\geq 1$ we now define the random variable $Z_n:\Om\to E$ by 
 \begin{align*}
 Z_n :=  \E X +  \cov ( X,Y_n) (\cov Y_n)\mpinv(Y_n- \E Y_n)  \, .
 \end{align*}
 Since for $\om \in \Om$ and $y:= Y(\om)$ we have $\om \in N_\Om$ if and only if $y\in N$, part \emph{iii)}
 together with $\nfilts n (y) = \nfilts n( Y(\om)) = Y_n(\om)$
 then shows that 
 the definition of $Z$ can be written as  
 \begin{align*}
 Z(\om) = 
 \begin{cases}
 \lim_{n\to \infty} Z_n(\om) \, ,& \mbox{ if } \om \in \Om\setminus N_\Om \\
 0 & \mbox { else.}
 \end{cases}
 \end{align*}
 Consequently, $Z$ as defined in \emph{vi)} is indeed a random variable and 
 we have the $\P$-almost sure convergence $Z_n\to Z$. Moreover, each $Z_n$ is $\s(Y_n) = \s(\nfilts n \circ Y)$-measurable and since 
 $N_\Om\in \s(Y)$, we conclude that $Z$ is actually $\s(Y)$-measurable.
 In addition,
  by Theorem \ref{thm:grv-conditioning-y-fin-dim}
 each $Z_n$ 
 is a 
 version of $\E(X|Y_n)$ and a
 Gaussian random variable with $\E Z_n = \E X$ and 
\begin{align}\nonumber
\cov Z_n & =  \cov ( X,Y_n) (\cov Y_n)\mpinv \cov (Y_n, X) \, , \\ \label{thm:grv-conditioning-y-inf-dim-cond-exp-0}
\gaussspace {Z_n} & \subset \gaussspace {Y_n}\, .
 \end{align}
% and $\E(X|Y_n) - \E X \in \gaussspace {Y_n-\E Y_n}$. 
Now, given an $x'\in E'$, we have   $\dualpair {x'}{Z_n(\om)}E \to \dualpair {x'}{Z(\om)}E$ for all $\om \in \Om\setminus N_\Om$, and 
therefore Theorem \ref{thm:levy-grv} shows that $\dualpair {x'}ZE$ is an $\R$-valued Gaussian random variable with 
\begin{align}\label{thm:grv-conditioning-y-inf-dim-cond-exp-1}
\E \dualpair {x'} {  Z_n}E &\to \E \dualpair {x'} {  Z}E \, , \\ \label{thm:grv-conditioning-y-inf-dim-cond-exp-2}
\var  \dualpair {x'} {  Z_n}E &\to \var\dualpair {x'} {  Z}E \, .
\end{align}
Now, since all $\dualpair {x'}ZE$ are  Gaussian,  $Z$ is  a Gaussian random variable.
Moreover, we have $\E \dualpair {x'} {  Z_n}E =  \dualpair {x'} { \E Z_n}E = \dualpair {x'} { \E X}E$, and hence we obtain 
\begin{align*}
 \dualpair {x'} { \E X}E =  \lim_{n\to \infty}  \dualpair {x'} { \E Z_n}E = \lim_{n\to \infty}  \E  \dualpair {x'} { Z_n}E =   \E  \dualpair {x'} { Z}E 
 = \dualpair {x'} { \E Z}E \, .
\end{align*}
% 
% 
% 
% Moreover, we have $\E \dualpair {x'} {  Z_n}E =  \dualpair {x'} { \E Z_n}E = \dualpair {x'} { \E X}E$, and 
% hence \eqref{thm:grv-conditioning-y-inf-dim-cond-exp-1} gives
% \begin{align*}
%  \dualpair {x'} { \E X}E = \E \dualpair {x'} {  Z}E = \dualpair {x'} { \E Z}E \, .
% \end{align*}
Since this holds for all $x'\in E'$ we obtain 
  $\E X = \E Z$.
Similarly,  \eqref{eq:weak-cov-pos} yields 
\begin{align*}
\var  \dualpair {x'} {  Z_n}E 
% =
% x' \circ \cov (Z_n) \circ (x')'
=
\dualpair {x'}{\cov(Z_n) x'}E 
&= 
\dualpair {x'}{\cov ( X,Y_n) (\cov Y_n)\mpinv \cov (Y_n, X)  x'}E \\
&\to 
\dualpair {x'}{(\cov(X) - \covup) x'}E\, ,
\end{align*}
where in the last step we used that \emph{iv)}. % implies $\snorm{\cov ( X,Y_n) (\cov Y_n)\mpinv \cov (Y_n, X) - (\cov(X) - \covup)}\to 0$. 
Together with  \eqref{eq:weak-cov-pos} and \eqref{thm:grv-conditioning-y-inf-dim-cond-exp-2}
this gives 
\begin{align*}
\dualpair {x'}{\cov(Z) x'}E
= 
\var  \dualpair {x'} {  Z}E
= 
\dualpair {x'}{(\cov(X) - \covup) x'}E\, ,
\end{align*}
and since this  holds for all $x'\in E'$, Lemma \ref{lem:abstr-cov-on-diag} shows $\cov (Z) = \cov(X) - \covup$.

Our next step is to show that $Z$ is a version of $\E(X|Y)$. To this end,  
we write $\ca F_n := \s(Y_n)$ and $\ca F_\infty := \s(Y)$. Then we already know that $Z_n$ is a version of  $\E(X|\ca F_n)$.
Moreover,
Lemma \ref{lem:filt-seq-gives-filtration} shows that
$(\ca F_n)$ is   a filtration with
\begin{align*}
\ca F_\infty
= \s  \biggl( \bigcup_{n\geq 1} \ca F_n  \biggr)\, .
\end{align*}
By the martingale convergence Theorem
\ref{thm:martingale-convergence} we thus find
\begin{align*}
Z_n   \to \E(X|Y)
\end{align*}
in $\sLx 2 {\P, E}$ and $\P$-almost surely. Since we already have the $\P$-almost sure convergence $Z_n\to Z$, we conclude that
$\P(\{Z= \E(X|Y)\}) = 1$, and this in turn implies that $Z$ is indeed a version of $\E(X|Y)$ and 
$Z_n\to Z$ in $\sLx 2 {\P, E}$.

\ada {vii} We set $N\hochkls {vii} := N\hochkls {vi}$ and fix
 an $x'\in E'$. 
Since $Y_n - \E Y_n = \nfilts n\circ Y - \E (\nfilts n\circ Y) = \nfilts n\circ (Y-\E Y)$ and $\nfilts n$ is bounded and linear, 
Lemma \ref{lem:gaussspace-of-grv}  shows $ \gaussspace {Y_n  } \subset \gaussspace {Y  }$.
Combining this with  \eqref{thm:grv-conditioning-y-inf-dim-cond-exp-0} yields
\begin{align*}
\dualpair {x'}  {Z_n - \E Z_n}E \in  \gaussspace {Z_n} \subset    \gaussspace {Y_n  } \subset \gaussspace {Y  } \, .
\end{align*}
Moreover, $Z_n\to Z$ in $\sLx 2 {\P, E}$ implies 
$\dualpair {x'}  {Z_n}E\to \dualpair {x'}  {Z}E$ in $\Lx 2 \P$. By $\E Z_n = \E X = \E Z$ we conclude 
$\dualpair {x'}  {Z_n - \E Z_n}E\to \dualpair {x'}  {Z - \E Z}E$ in $\Lx 2 \P$.
Since $\gaussspace {Y  }$
is a closed subspace of $\Lx 2 \P$ by definition, we thus find $\dualpair {x'}  {Z-\E Z}E \in \gaussspace {Y}$.
Taking the $\Lx 2 \P$-closure then yields $\gaussspace {Z} \subset  \gaussspace {Y}$.

To verify the formula for the orthogonal projection, we
recall that a bounded linear operator on a Hilbert space
is an orthogonal projection if and only if it is idempotent and self-adjoint, see e.g.~\cite[Proposition II.3.3]{Conway90}.
Moreover, $\E(\mycdot |Y):\Lx 2\P\to \Lx 2 \P$ is known to be an orthogonal projection, where we note that this result is rarely stated
in the formulation above, so we quickly prove it here using well-known properties of conditional expectations, see e.g.~\cite[Theorem 8.14]{Klenke14}.
First, $\E(\mycdot |Y)$ is idempotent since for $f\in \Lx 2 \P$ we have 
\begin{align}\label{eq:idempot-cond-exp}
 \E (\E (f|Y)| Y) = \E(f|Y)\, .
\end{align}
% since $\E( f|Y)$ is $\s(Y)$-measurable by definition. In other words, $\E(\mycdot |Y)$ is idempotent. 
Moreover, for $f,g\in \Lx 2 \P$ we have
\begin{align*}
\skprod f {\E(g|Y)}_{\Lx 2 \P}
= \E \bigl( f\cdot {\E(g|Y)}  \bigr)
= \E \bigl( \E(  f\cdot {\E(g|Y)}|Y)  \bigr)
&= \E \bigl( {\E(f|Y)}  \cdot   {\E(g|Y)}  \bigr) \, ,
% & = \skprodb  {\E(f|Y)} {\E(g|Y)}_{\Lx 2 \P} \, .
\end{align*}
and since we analogously obtain $\skprod g {\E(f|Y)}_{\Lx 2 \P} =   \E \bigl( {\E(f|Y)}  \cdot   {\E(g|Y)}  \bigr) $, we  find 
% By symmetry we then find
\begin{align} \label{eq:selfadj-cond-exp}
\skprod f {\E(g|Y)}_{\Lx 2 \P} = \skprod  {\E(f|Y)} {g}_{\Lx 2 \P}\, ,
\end{align}
that is, $\E(\mycdot |Y)$ is also self-adjoint. %In summary, $\E(\mycdot |Y):\Lx 2\P\to \Lx 2 \P$ is  an orthogonal projection.  
Our next goal is to show
\begin{align}\label{eq:restr-cond-exp}
\E(f|Y) \in \gaussspace Y\, , \myqquad f \in \gaussspace{X,Y}.
\end{align}
To this end, we fix 
 an $f\in \gaussspace {X,Y}$. By the definition of $\gaussspace {X,Y}$, there
 then exists a sequence $(f_n)$ of the form $f_n = \dualpair {x_n'}XE + \dualpair{y_n'}Y E$
with $f_n \to f$ in $\Lx 2 \P$, where $x_n'\in E'$ and $y_n'\in F'$ are suitable functionals.
Now, Theorem \ref{thm:prop-cond-ex} together with $\gaussspace Z \subset \gaussspace Y$ implies 
\begin{align*}
\E\bigl(\dualpair {x_n'}XE|Y\bigr) =  \dualpairb {x_n'}{\E(X|Y)}E = \dualpair{x_n'}ZE \in \gaussspace Y\, .
\end{align*}
% where in the first step we used Theorem \ref{thm:prop-cond-ex}.
Moreover, since $\dualpair {y_n'}YE$ is $\s(Y)$-measurable, we have
\begin{align*}
\E\bigl(\dualpair {y_n'}YE|Y\bigr) = \dualpair {y_n'}YE \in \gaussspace Y\, .
\end{align*}
Combining both considerations shows $\E(f_n|Y)\in \gaussspace Y$ for all $n\geq 1$. Since 
$\E(\mycdot |Y):\Lx 2\P\to \Lx 2 \P$ is continuous, we further have the $\Lx 2 \P$-convergence
\begin{align*}
\E(f_n|Y) \to \E(f|Y)\, ,
\end{align*}
and since $\gaussspace Y$ is a closed subspace of $\Lx 2 \P$ we conclude $\E(f|Y)\in \gaussspace Y$.
This finishes the proof of \eqref{eq:restr-cond-exp}. Since $\gaussspace Y\subset \gaussspace{X,Y}$ we 
can now consider the operator 
\begin{align*}
\orthproj :\gaussspace{X,Y} &\to \gaussspace{X,Y} \\
f&\mapsto \E(f|Y)\, .
\end{align*}
The operator $\orthproj$ is bounded and linear as it is the restriction of $\E(\mycdot|Y)$ to $\gaussspace{X,Y}$
% , we see that $\orthproj$
% is bounded and linear. 
Moreover, \eqref{eq:idempot-cond-exp} shows that $\orthproj$ is idempotent
and since \eqref{eq:selfadj-cond-exp} holds for all $f,g\in \Lx 2 \P$ it also holds for all $f,g\in \gaussspace{X,Y}$,
and hence $\orthproj$ is self-adjoint. In other words, $\orthproj$ is an orthogonal projection.

It remains to show $\ran \orthproj = \gaussspace Y$. Here, we note that the inclusion ``$\subset$''
has already been established in \eqref{eq:restr-cond-exp}. For the proof of the converse inclusion, we pick
an $f\in \gaussspace Y$. By Lemma \ref{lem:gaussspace-of-grv} we know that the $\P$-equivalence class of $f$ contains
a $\s(Y)$-measurable representative, so we may assume without loss of generality that $f$ is $\s(Y)$-measurable. This leads to
\begin{align*}
f = \E(f|Y)  = \orthproj f  \in  \ran \orthproj\, .
% \orthproj f = \E(f|Y) = f\, ,
\end{align*}
% that is we have found $f\in \ran \orthproj$.

\ada {viii}
We fix an $x'\in E'$, for which
 $\dualpair{x'}XE$ is $\s(Y)$-measurable. Using Theorem \ref{thm:prop-cond-ex} we then obtain
\begin{align*}
\dualpairb{x'} {\muup \circ Y}E = \dualpair{x'} {Z}E = \dualpairb{x'} {\E(X|Y)}E = \E\bigl(\dualpair {x'}XE\bigl|Y\bigr) = \dualpair {x'}XE\, ,
\end{align*}
where the first identity holds up to the $\P$-zero set $Y^{-1}(N\hochkls {vii})\in \sA$
and
in the second identity we simply considered the version $Z$ of $\E(X|Y)$. Moreover,
 the third and forth identities are meant in the sense that
both $\dualpair{x'} {\E(X|Y)}E$ and $\dualpair {x'}XE$ are versions of $\E(\dualpair {x'}XE|Y)$.
Consequently, the set
\begin{align*}
N_{x'} := \bigl\{\dualpair{x'} {\muup \circ Y}E \neq  \dualpair {x'}XE\bigr\}
\end{align*}
is a $\P$-zero set with $N_{x'}\in \s(Y)$. Let us now consider the set 
\begin{align*}
\ca F := \bigl\{x'\in B_{E'}: \dualpair{x'}XE:\Om\to \R \mbox{ is $\s(Y)$-measurable }\bigr\}
\end{align*}
as well as a metric $\metric_{\tauweaks}$ describing the relative $\tauweaks$-topology of $B_{E'}$. Since
subsets of separable metric spaces are separable, and $(B_{E'},\metric_{\tauweaks})$ is separable by Theorem \ref{thm:alaoglu-and-more},
we conclude that there exists a countable $\ca F_0\subset \ca F$ that is $\metric_{\tauweaks}$-dense in $\ca F$.
We define 
\begin{align*}
N_{\ca F_0} := \bigcup_{x'\in \ca F_0} N_{x'}\, .
\end{align*}
By construction, we have $N_{\ca F_0}\in \s(Y)$ with $\P(N_{\ca F_0}) = 0$. Moreover, $N_{\ca F_0}\in \s(Y)$ gives an 
$N_{\ca F_0}'\in   \sborelEx F$ with $Y^{-1}(N_{\ca F_0}') = N_{\ca F_0}$ and $\P_Y(N_{\ca F_0}') = 0$. We define 
$N\hochkls {viii} := N\hochkls {vii} \cup N_{\ca F_0}'$. By construction, we then have 
\begin{align*}
\dualpairb {x'} {\muup (Y(\om))}E = \dualpair{x'}{X(\om)}E\, , \myqquad x'\in \ca F_0, \om \not \in Y^{-1}(N\hochkls {viii})\, .
\end{align*}
Since $\ca F_0$ is $\metric_{\tauweaks}$-dense in $\ca F$, we then find the first assertion for all $x'\in \ca F$, and a simple rescaling argument
then gives the desired identity in the general case.

% Therefore, there exists an $N'_{x'}\in \sborelEx F$ with $Y^{-1}(N'_{x'}) = N_{x'}$, and in particular we have
% $\P_Y(N') = 0$.
%  Setting
% $N\hochkls {viii} := N\hochkls {vii} \cup N'$ then gives the first assertion.

Finally,  by \eqref{eq:weak-cov-pos}, part \emph{vi)}, Theorem \ref{thm:prop-cond-ex}, and the assumed $\s$-measurability we obtain
\begin{align*}
\dualpair {x'}{\cov(Z)x'}E 
= 
\var(\dualpair {x'}{Z}E )
=
\var\bigl(\dualpair {x'}{\E(X|Y)}E \bigr)
&=
\var\bigl(\E(\dualpair {x'} XE|Y)\bigr) \\
&=
\var\bigl(\dualpair {x'} XE\bigr) \\
&=
\dualpair {x'}{\cov(X)  x'}E\, .
\end{align*}
Using the identity $\covup = \cov(X) - \cov(Z)$ established in  \emph{vi)}, we then find
\begin{align*}
\dualpair {x'}{\covup x'}E 
% =
% \dualpairb {x'}{(\cov(X) - \cov(Z)) x'}E 
=
\dualpair {x'}{\cov(X)  x'}E - \dualpairb {x'}{ \cov(Z) x'}E 
= 0 \, .
\end{align*}
By \eqref{eq:csu-for-abstr-cov} this implies  $\covup x' = 0$ as discussed there.
\end{proof}

%------------------------------------------------------------------------------------------------------
%------------------------------------------------------------------------------------------------------
%------------------------------------------------------------------------------------------------------

\subsection{Proofs of Theorems \ref{thm:from-fce-to-rcp} and \ref{thm:canonical-rcp}}

In this subsection we present the proofs of the remaining two theorems of Section \ref{sec:main-results}.

\begin{proof}[Proof of Theorem \ref{thm:from-fce-to-rcp}]
In the following we write  $\covup :=  \cov(X) - \cov( \mu \circ Y)$ and 
 verify the three conditions of Definition \ref{def:reg-cond-prob}.

We begin by observing that
$\P_{X|Y}(\mycdot|y)$ is, of course,  a probability measure on $\sborel(E)$ for every $y\in F$ by our construction  \eqref{eq:rcp-from-fact-cond-exp}.

To verify the measurability condition \emph{ii)}, we define
\begin{align*}
\ca E &:= \bigcup_{x'\in E'} \s(x') \, , \\
\ca B &:= \bigl\{ B\in \sborelEx E:  y\mapsto \P_{X|Y}(B|y)  \mbox{ is measurable }\bigr\} \, .
\end{align*}
We first show that $\ca E \subset \ca B$. To this end, we choose a $B\in \ca E$. Then there exists an $x'\in E'$ and an $A\in \sborel$ such that $B=(x')^{-1}(A)$.
For fixed $y\in F$ we then obtain
\begin{align*}
\P_{X|Y}(B|y)
=
\gaussm {\mu(y)}{\covup} \bigl((x')^{-1}(A)\bigr)
=
\Bigl(\gaussm {\mu(y)}{\covup} \Bigr)_{x'}(A)
=
\gauss{m(y)}{\s^2}(A)\, ,
% \gaussm { \dualpair{x'}{\mu(y)}E}{ x'\circ \covup \circ (x')'}(A)
\end{align*}
where we have set $m(y) := \dualpair{x'}{\mu(y)}E$ and $\s^2$ denotes the $1\times 1$-matrix representing the linear operator $x'\circ \covup \circ (x')':\R'\to \R$.
Note the  map $y\mapsto m(y)$ is $(\sborelEx F,\sborel)$-measurable  since $\mu$ is $(\sborelEx F, \sborelEx E)$-measurable and $x'$ is $(\sborelEx E,\sborel)$-measurable.

It remains to verify the measurability of the map $y\mapsto \gauss{m(y)}{\s^2}(A)$. In the case $\s^2= 0$ this measurability immediately  follows from
\begin{align*}
 \gaussm{m(y)}{\s^2}(A) = \dirac{m(y)}(A) = \eins_{A}(m(y))
\end{align*}
and the measurability of $m$ and $A$. Moreover, in the case $\s^2 >0$ we have
\begin{align*}
\gauss{m(y)}{\s^2}(A)
= \frac 1{\sqrt{2\pi \s^2}} \int_\R \eins_A(t) \exp \Bigl( -\frac{(t-m(y))^2}{2\s^2} \Bigr) \intd t \, .
\end{align*}
Now, the integrand $f(t,y) :=  \eins_A(t) \exp \bigl( -\frac{(t-m(y))^2}{2\s^2} \bigr)$ is a non-negative $(\sborel\otimes \sborelEx F, \sborel)$-measurable function,
and therefore Fubini-Tonelli's theorem, see e.g.~\cite[Theorem 14.16]{Klenke14}, yields the measurability of $y\mapsto \gauss{m(y)}{\s^2}(A)$.
In both cases we have thus found $B\in \ca B$, and hence we have established  $\ca E \subset \ca B$.

Let us now show that $\ca B$ is a $\s$-algebra. Here, $\emptyset \in  \ca B$ is obvious since $y\mapsto \P_{X|Y}(\emptyset|y) = 0$ for all $y\in F$.
Moreover, if we have a $B\in \ca B$, then
\begin{align*}
\P_{X|Y}(E\setminus B|y) = 1 - \P_{X|Y}(B|y) \, , \myqquad y\in F
\end{align*}
quickly shows that $E\setminus B\in \ca B$. Similarly, if we have mutually disjoint $B_1,B_2,\dots\in \ca B$ and we set $B:= B_1\cup B_2\cup \dots$, then we have
\begin{align*}
\P_{X|Y}(B|y) = \sum_{i=1}^\infty \P_{X|Y}(B_i|y) \, , \myqquad y\in F
\end{align*}
and since each $y\mapsto \P_{X|Y}(B_i|y)$ is measurable, we conclude that $B\in \ca B$.

With these preparations we obtain $\s(\ca E) \subset \s(\ca B) = \ca B \subset \sborelEx E$, and since we also have $\s(\ca E) = \sborelEx E$, we conclude 
$\ca B = \sborelEx E = \sborelnormx E$. This shows condition \emph{ii)} of  Definition \ref{def:reg-cond-prob}.

Let us finally verify condition \emph{iii)} of  Definition \ref{def:reg-cond-prob}. To this end, we fix an arbitrary version
$\tilde \P_{X|Y}(\mycdot|\mycdot)$ of the regular conditional probability of $X$ given $Y$ and an $N\in \sborelEx F$ with $\P_Y(N)=0$ according to
Theorem \ref{thm:grv-conditioning-y-inf-dim}. We define $\muup :F\to E$ by \eqref{eq:muup-on-F} and write
 \begin{align*}
 \tilde Z := \muup \circ Y\, .
 \end{align*}
Then part \emph{vi)} of Theorem \ref{thm:grv-conditioning-y-inf-dim} shows that $\tilde Z$ is a version of $\E(X|Y)$, and hence we have
\begin{align*}
0 = \P (\{ Z \neq \tilde Z\}) = \P(\{ \mu \circ Y \neq \muup \circ Y\})  = \P\bigl (Y^{-1}(\{\mu\neq \muup\})     \bigr) = \P_Y(\{\mu\neq \muup\}) \, ,
\end{align*}
where in the first step we used the almost sure uniqueness of conditional expectations in the Banach space valued case, which is discussed
after Definition \ref{def:con-exp}.
In other words,  the set $N_1 := \{\mu\neq \muup\}$ satisfies
$N_1\in \sborelEx F = \sborel(F)$ and $\P_Y(N_1) = 0$.  For the $\P_Y$-zero set $N_2 := N \cup N_1$ and $y\in F\setminus N_2$ we then have
\begin{align*}
\P_{X|Y}(\mycdot|y) = \gaussm {\mu(y)}{\covup  } = \gaussm {\muup(y)}{\covup  } = \tilde \P_{X|Y}(\mycdot|y) \, .
\end{align*}
For $B\in \sborel(E)$ and $A\in \sborel(F)$ we thus find
\begin{align*}
\P_{(X,Y)}(B\times A) = \int_F \eins_A(y) \tilde \P(B|y) \intd \P_Y(y) = \int_F \eins_A(y) \P(B|y) \intd \P_Y(y)\, ,
\end{align*}
i.e.~we have verified  condition \emph{iii)} of  Definition \ref{def:reg-cond-prob}.
\end{proof}

\begin{proof}[Proof of Theorem \ref{thm:canonical-rcp}]
Let $B_n :F\to E$ be the bounded linear operator given by
\begin{align*}
B_n := \cov ( X,Y_n) (\cov Y_n)\mpinv \nfilts n\, .
\end{align*}
Since $\E Y_n = \nfilts n(\E Y)$, we then find
\begin{align*}
\Fconv = \bigl\{  y\in F: \exists  \lim_{n\to \infty} B_n(y-\E Y)\bigr\}\, .
\end{align*}
We now pick a version $\P_{X|Y}(\mycdot|\mycdot)$
of the regular conditional probability of $X$ given $Y$ and a $\P_Y$-zero set $N\in \sborelEx F$
according to Theorem \ref{thm:grv-conditioning-y-inf-dim}. In addition, we define $\muup:F\to E$
with the help of part \emph{iii)} of Theorem \ref{thm:grv-conditioning-y-inf-dim} and \eqref{eq:muup-on-F}.
Note that part \emph{iii)} of Theorem \ref{thm:grv-conditioning-y-inf-dim} shows $F\setminus N \subset \Fconv$.

\ada i Since $E$ is complete, we have
\begin{align*}
\Fconv
&=
\bigl\{  y\in F: (B_n(y-\E Y)) \mbox{ is Cauchy sequence in $E$}\bigr\} \\
&=
\bigcap_{k\geq 1} \bigcup_{n_0\geq 1} \bigcap_{n,m\geq n_0}\bigl\{ y\in F: \snorm{B_n(y-\E Y) - B_m(y-\E Y)}_E \leq 1/k  \bigr\} \, .
\end{align*}
By the continuity of the map $y\mapsto \snorm{B_n(y-\E Y) - B_m(y-\E Y)}_E$, we further see that
\begin{align*}
\bigl\{ y\in F: \snorm{B_n(y-\E Y) - B_m(y-\E Y)}_E \leq 1/k  \bigr\} \in \sborelnormx F
\end{align*}
for all $k,n,m\in \N$. Combining both observations yields $\Fconv \in \sborelnormx F = \sborelEx F$.

For the proof of $\P_Y(\Fconv) = 1$ we simply combine the already observed inclusion
$F\setminus N \subset \Fconv$ with $\P_Y(F\setminus N)= 1$.

\ada{ii} We first note that $B_n(0) = 0$ shows  $\E Y \in \Fconv$, and hence $0\in F_{X|Y, \nfiltseq}$.
Moreover, for $y_1,y_2\in F_{X|Y, \nfiltseq}$ and $\a_1,\a_2\in \R$
we have $\tilde y_i := y_i + \E Y \in \Fconv$ and
% we write $\tilde y_i := -\E Y + y_i$
\begin{align*}
B_n\bigl((\a_1 y_1+\a_2 y_2 + \E Y) - \E Y \bigr)
&= \a_1 B_n y_1 + \a_2 B_n y_2 \\
&=  \a_1 B_n  (\tilde y_1  - \E Y )  + \a_2 B_n  (\tilde y_2  - \E Y ) \, .
\end{align*}
From this we easily conclude that $\a_1 y_1+\a_2 y_2 + \E Y \in \Fconv$, that is $\a_1 y_1+\a_2 y_2\in F_{X|Y, \nfiltseq}$.
Finally, $\Fconv \in \sborelEx F = \sborelnormx F$ implies $F_{X|Y, \nfiltseq}\in\sborelnormx F  = \sborelEx F$ since  translations $F\to F$
are continuous and thus $(\sborelnormx F, \sborelnormx F)$-measurable.

Let us now show $\ran \cov(Y,X)\subset F_{X|Y, \nfiltseq}$.
To this end, we fix an $x'\in E'$ and set $y:= \cov(Y,X)(x')$.
This gives $\nfilts n y = (\nfilts n \circ \cov(Y,X))(x') = \cov (Y_n, X) (x')$ and
by part \emph{iv)} of Theorem \ref{thm:grv-conditioning-y-inf-dim}
we   know that
\begin{align*}
\lim_{n\to \infty} \bigl( \cov ( X,Y_n) (\cov Y_n)\mpinv \cov (Y_n, X) \bigr) (x')
&=
\lim_{n\to \infty} \bigl( \cov ( X,Y_n) (\cov Y_n)\mpinv   \bigr) (\nfilts n y) \\
&=
\lim_{n\to \infty} B_n y
\end{align*}
exists. In other words, we have  $y + \E Y  \in \Fconv$, and hence $y \in -\E Y + \Fconv =  F_{X|Y, \nfiltseq}$.
%
% However, we have $(\cov ( X,Y_n) (\cov Y_n)\mpinv \cov (Y_n, X)) (x') = (\cov ( X,Y_n) (\cov Y_n)\mpinv)(y)$

\ada{iii} The definition of $\muupA:F\to E$ can be rewritten as
\begin{align*}
\muupA (y) = \E X + \lim_{n\to \infty} \eins_{\Fconv} (y)\cdot  B_n(y-\E Y)\, .
\end{align*}
In particular, $\muupA $ is the pointwise limit of a sequence of strongly measurable functions $F\to E$, and hence also strongly measurable.
Moreover, our construction ensures $\Fconv  \cap F\setminus N \subset \{\muup = \muupA\}$, and since we already know 
$F\setminus N \subset \Fconv$ we conclude that $\{\muup \neq  \muupA\} \subset N$.
% 
% 
% $\{\muup \neq \muupA\} = \Fconv \setminus (F\setminus N)$ since we already know $F\setminus N \subset \Fconv$.
This shows
\begin{align*}
\P\bigl(\{ \muup\circ Y \neq \muupA \circ Y \}   \bigr)
=
\P_Y\bigl( \{\muup \neq \muupA\}  \bigr)
\leq  \P_Y(N)  = 0\, .
\end{align*}
Since $\muup\circ Y$ is a version of
$\E(X|Y)$ by part \emph{iv)} of Theorem \ref{thm:grv-conditioning-y-inf-dim}, we conclude that $\muupA \circ Y$ is also a version of  $\E(X|Y)$.

\ada{iv} This is a direct consequence of \emph{iii)} and Theorem \ref{thm:from-fce-to-rcp}.
\end{proof}

\section{Proofs Related to Filtering Sequences in Section \ref{seq:filt-seq}}\label{sec:proof-filt-subeqnarray}

This section contains all proofs for the results of Section \ref{seq:filt-seq} as well as some 
auxiliary results.

We begin by investigating 
the $\s$-algebra generated by some bounded linear  $A:F\to \R^n$.
To this end,
let $e_1,\dots,e_n$ be the standard ONB of $\R^n$.
We then write
\begin{align*}
\crdfct A := \bigl\{ a_i:F\to \R \,\bigl|\,  a_i := \skprod {e_i}A , i=1,\dots, n\bigr\}
\end{align*}
for the set of its coordinate functions $a_1,\dots,a_n$. Clearly, we have $\crdfct A\subset F'$.
To relate $\s(A)$ with $\crdfct A$ we further need to recall that 
 the annihilator of a non-empty $B\subset F$ is defined by
\begin{align*}
B^\annil &:= \bigl\{ y'\in F': \dualpair {y'}yF = 0 \mbox{ for all } y\in B   \bigr\}\, .
\end{align*}
Analogously, the  annihilator
  of a non-empty set $B\subset F'$ is defined by
  \begin{align*}
  B_\annil :=  \bigl\{ y\in F: \dualpair {y'}yF = 0 \mbox{ for all } y'\in B   \bigr\}\, .
  \end{align*}
With these preparations, our next intermediate result reads as follows.

\begin{lemma}\label{lem:fin-dim-ops-essentials}
Let $F$ be a Banach space and $A:F\to \R^n$ be a bounded linear operator. Then the following statements hold true:
\begin{enumerate}
\item We have $\s(A) = \s(\crdfct A) = \s(\spann \crdfct A)$.
\item We have $\spann \crdfct A = \ran A' = (\ker A)^\annil$.
\end{enumerate}
\end{lemma}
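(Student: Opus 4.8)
\textbf{Proof plan for Lemma \ref{lem:fin-dim-ops-essentials}.}

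The plan is to treat the two parts essentially independently, since \emph{i)} is a statement about generated $\sigma$-algebras while \emph{ii)} is a purely linear-algebraic/functional-analytic identity about the image of $A'$.

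For part \emph{i)}, I would first recall the elementary fact that for any map $A:F\to \R^n$ one has $\s(A) = A^{-1}(\sborel^n)$. Writing $A = (a_1,\dots,a_n)$ in coordinates with $a_i = \skprod{e_i}{A} \in F'$, the map $A$ is the ``joint map'' of the $a_i$, and a standard result (e.g.~\cite[Theorem 1.81]{Klenke14} and its proof, already invoked in the paper) gives $\s(A) = \s(a_1,\dots,a_n) = \s(\crdfct A)$. For the equality $\s(\crdfct A) = \s(\spann\crdfct A)$, the inclusion ``$\subset$'' is trivial since $\crdfct A \subset \spann\crdfct A$. For ``$\supset$'', observe that every $y' \in \spann\crdfct A$ is of the form $y' = \sum_i c_i a_i$, hence $y'$ is a Borel-measurable function of the finite family $(a_1,\dots,a_n)$, so $y'$ is $\s(\crdfct A)$-measurable; taking the $\sigma$-algebra generated over all such $y'$ yields $\s(\spann\crdfct A)\subset \s(\crdfct A)$. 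This part is routine and I expect no obstacle.

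For part \emph{ii)}, the first equality $\spann\crdfct A = \ran A'$ is the heart of the matter. I would identify $(\R^n)'$ with $\R^n$ via the standard basis, so that $A':(\R^n)'\to F'$ sends $e_i^* \mapsto A'e_i^* = \dualpair{e_i^*}{A\cdot}{} = a_i$; hence $\ran A' = \spann\{a_1,\dots,a_n\} = \spann\crdfct A$. The only care needed here is the bookkeeping of the canonical identification of $(\R^n)'$ with $\R^n$ and checking $A'e_i^* = a_i$ from the definition of the adjoint, which is a one-line computation: $\dualpair{A'e_i^*}{y}{F} = \dualpair{e_i^*}{Ay}{} = (Ay)_i = a_i(y)$ for all $y\in F$.

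The second equality $\ran A' = (\ker A)^\annil$ is the step I expect to require the most thought, since in infinite-dimensional Banach spaces $\overline{\ran A'} = (\ker A)^\annil$ holds in general but the range need not be closed --- however, here $\ran A'$ is finite dimensional (spanned by the $n$ functionals $a_i$), hence automatically closed, so the general duality identity applies without the closure. Concretely: the inclusion $\ran A' \subset (\ker A)^\annil$ is immediate, since for $y\in\ker A$ and any $\eta\in(\R^n)'$ we have $\dualpair{A'\eta}{y}{F} = \dualpair{\eta}{Ay}{} = 0$. For the reverse inclusion I would argue directly in coordinates: if $y'\in(\ker A)^\annil$, then $\ker A \subset \ker y'$, and since $\ker A = \bigcap_{i=1}^n \ker a_i$, a standard finite-dimensional linear-algebra lemma (a functional vanishing on the common kernel of finitely many functionals is a linear combination of them) gives $y' \in \spann\{a_1,\dots,a_n\} = \ran A'$. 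Thus the main obstacle --- closedness of the range --- dissolves because of finite-dimensionality, and the rest is the classical ``annihilator of the kernel'' computation.
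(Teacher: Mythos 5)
Your proposal is correct. Part \emph{i)} matches the paper's argument in substance: the paper cites \cite[Theorem 1.90]{Klenke14} for $\s(A)=\s(\crdfct A)$ and then delegates $\s(\crdfct A)=\s(\spann \crdfct A)$ to Lemma \ref{lem:sub-ws-sig-alg}, whose relevant half is exactly the ``linear combinations of measurable functionals are measurable'' argument you give inline. The first identity of part \emph{ii)} is also the same computation of $A'$ on the standard basis. Where you genuinely diverge is the second identity $\ran A'=(\ker A)^\annil$: the paper invokes the general duality chain $(\ker A)^\annil=\bigl((\ran A')_\annil\bigr)^\annil=\overline{\ran A'}^{\tauweaks}$ and then removes the closure by observing that finite-dimensional subspaces of $F'$ are $\tauweaks$-closed, whereas you prove the nontrivial inclusion directly via the purely algebraic lemma that a linear functional vanishing on $\bigcap_{i=1}^n\ker a_i$ is a linear combination of $a_1,\dots,a_n$ (Rudin's Lemma 3.9, valid on any vector space with no topology). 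Your route is more elementary and self-contained, avoiding the annihilator duality machinery and the weak-$*$ closedness of finite-dimensional subspaces; the paper's route is shorter given that those standard facts are cited anyway elsewhere (e.g.~in the proof of Proposition \ref{prop:sep-sequences}) and makes transparent exactly which general identity is being specialized. Both are complete proofs.
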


\begin{proof}[Proof of Lemma \ref{lem:fin-dim-ops-essentials}]
\ada i We first note that in view of Lemma \ref{lem:sub-ws-sig-alg} we only need to show $\s(A) = \s(\crdfct A)$.
To establish this identity, let
 $a_1',\dots,a_n':F\to \R$ be  the coordinate functions of $A$, that is $\crdfct A = \{a_1',\dots,a_n'\}$.
Then, \cite[Theorem 1.90]{Klenke14}
shows that for a given $\s$-algebra $\sA'$ on $F$,
the map $A:F\to \R^n$ is $(\sA', \sborel^n)$-measurable if and only if all $a_1',\dots, a_n'$ are
$(\sA', \sborel)$-measurable.

If we now consider $\sA':= \s(A)$, then $A$ is
$(\sA', \sborel^n)$-measurable, and hence
we conclude that $\s(a_i') \subset  \sA'= \s(A)$ for all $i=1,\dots,n$. This gives the inclusion
 $\s(a_1',\dots,a_n') \subset   \s(A)$.
Conversely, for $\sA' :=\s(a_1',\dots,a_n')$, we know that
all $a_1',\dots, a_n'$ are
$(\sA', \sborel)$-measurable, and hence $A$ is $\sA'$-measurable, that is
 $\s(A) \subset \s(a_1',\dots,a_n')$.

 \ada {ii} We first note that $A$ is given by $A= \sum_{i=1}^n \dualpair{a_i'}\mycdot F e_i =  \sum_{i=1}^n a_i' \otimes e_i$, 
 see \eqref{eq:def-tens-prod}, where  $e_1,\dots,e_n$ is the standard ONB of $\R^n$.
By
 \eqref{eq:adjoint-elem-tens}  its adjoint $A'$ is thus given by
 \begin{align*}
 A' t = \biggl(\sum_{i=1}^n e_i \otimes a_i' \biggr) (t) = \sum_{i=1}^n \skprod {e_i}t \cdot a_i' = \sum_{i=1}^n t_i a_i' \, ,
 \myqquad t=(t_1,\dots,t_n)\in \R^n,
 \end{align*}
 where we use the canonical identification of $\R^n$ with both $(\R^n)'$ and $(\R^n)''$. This shows $\spann \crdfct A = \ran A'$.

 For the proof of  the second identity, we   note that 
the linearity and continuity of $A$ gives
 \begin{align}\label{lem:fin-dim-ops-essentials-h1}
(\ker A)^\annil =  \bigl( (\ran A')_\annil \bigr)^\annil    = \overline{\ran A'}^{\tauweaks}\, ,
 \end{align}
 where the first identity can be found in e.g.~\cite[Lemma 3.1.16]{Megginson98} or \cite[Proposition VI.1.8]{Conway90},
 and the second identity can be found in
 e.g.~\cite[Proposition 2.6.6]{Megginson98}. Moreover, since the subspace $\ran A'$ of $F'$ is finite dimensional, it is $\tauweaks$-closed, see e.g.~\cite[Corollary 2.2.32]{Megginson98} or \cite[Theorem 1.21]{Rudin91}. Combining the latter with
 \eqref{lem:fin-dim-ops-essentials-h1} yields $(\ker A)^\annil = \ran A'$.
\end{proof}

The next proposition provides a general sufficient condition for filtering sequences. It will be at the core of the proof of Proposition \ref{prop:proper-filt-seq}.

\begin{proposition}\label{prop:suff-filt-seq}
Let $F$ be a separable Banach space and $(\nfilts n)$ be a sequence of bounded linear operators $\nfilts n:F\to \R^n$.
Then, $(\nfilts n)$ is a filtering sequence for $F$ if we have both $\spann \crdfct {\nfilts n} \subset \spann \crdfct {A_{n+1}}$ for all $n\geq 1$ and
\begin{align}\label{prop:suff-filt-seq-h1}
\overline{\bigcup_{n\geq 1} \spann \crdfct {\nfilts n}}^{\tauweaks} = F'\, .
\end{align}
\end{proposition}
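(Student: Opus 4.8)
\textbf{Proof plan for Proposition \ref{prop:suff-filt-seq}.}
The plan is to verify the two defining conditions of a filtering sequence in Definition \ref{def:filt-seq}: the nesting $\s(\nfilts n) \subset \s(\nfilts {n+1})$ and the identity $\sborelEx F = \s(\nfilts n : n\geq 1)$. Throughout I would lean on Lemma \ref{lem:fin-dim-ops-essentials}, which translates statements about the $\s$-algebras $\s(\nfilts n)$ into statements about the finite-dimensional subspaces $\spann \crdfct{\nfilts n} \subset F'$.

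First, for the nesting: by part \emph{i)} of Lemma \ref{lem:fin-dim-ops-essentials} we have $\s(\nfilts n) = \s(\spann \crdfct{\nfilts n})$, so the hypothesis $\spann \crdfct{\nfilts n} \subset \spann \crdfct{\nfilts{n+1}}$ immediately gives $\s(\nfilts n) = \s(\spann \crdfct{\nfilts n}) \subset \s(\spann \crdfct{\nfilts{n+1}}) = \s(\nfilts {n+1})$. This step is routine. Second, for the $\s$-algebra identity, set $V := \bigcup_{n\geq 1} \spann \crdfct{\nfilts n}$, which is a linear subspace of $F'$ since the spans are nested. Using Lemma \ref{lem:fin-dim-ops-essentials}\emph{i)} again (both forms $\s(A) = \s(\crdfct A)$ and $\s(A) = \s(\spann \crdfct A)$) together with the fact that the $\s$-algebra generated by a union is the $\s$-algebra generated by the union of the generating sets, I get $\s(\nfilts n : n\geq 1) = \s\bigl(\bigcup_n \crdfct{\nfilts n}\bigr) = \s(V)$. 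So it remains to show $\s(V) = \sborelEx F$. The inclusion $\s(V) \subset \sborelEx F$ is clear because every element of $V$ lies in $F'$ and $\sborelEx F$ is by definition the smallest $\s$-algebra making all of $F'$ measurable.

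The substantive direction is $\sborelEx F \subset \s(V)$, and this is where hypothesis \eqref{prop:suff-filt-seq-h1}, namely $\overline{V}^{\tauweaks} = F'$, must be used. The key observation is that $\sborelEx F = \s(F')$, so it suffices to show that every $y' \in F'$ is $\s(V)$-measurable. Fix $y' \in F'$ and, using $\tauweaks$-density, pick a net — or, since $F$ is separable and hence $B_{F'}$ is $\tauweaks$-metrizable on bounded sets by Theorem \ref{thm:alaoglu-and-more}, a \emph{sequence} $(y_k') \subset V$ with $y_k' \to y'$ in the $\tauweaks$-topology; one has to be mildly careful that $\tauweaks$-density of $V$ as a whole gives sequential approximation on $F'$, but after rescaling one can reduce to bounded sets where metrizability applies, or alternatively invoke that $V$ is a subspace so $\tauweaks$-density plus the metrizability of $(B_{F'},\metric_{\tauweaks})$ yields sequences. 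Then $\tauweaks$-convergence means exactly pointwise convergence $\dualpair{y_k'}yF \to \dualpair{y'}yF$ for every $y\in F$, i.e.\ $y'$ is the pointwise limit of the $\s(V)$-measurable functions $y_k'$, hence $y'$ is $\s(V)$-measurable. Since $y'\in F'$ was arbitrary, $F'$ consists of $\s(V)$-measurable functions, so $\sborelEx F = \s(F') \subset \s(V)$, completing the argument. The main obstacle I anticipate is the passage from $\tauweaks$-density to \emph{sequential} approximation, since the $\tauweaks$-topology on all of $F'$ is generally not metrizable; the clean fix is to exploit separability of $F$ (via Theorem \ref{thm:alaoglu-and-more}) to get metrizability on balls and then rescale, using that $V$ is a subspace.
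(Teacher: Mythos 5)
The nesting step and the reduction to showing that every $y'\in F'$ is $\s(V)$-measurable, where $V:=\bigcup_{n\geq 1}\spann \crdfct{\nfilts n}$, are both correct and agree with the paper. The gap is in the final, substantive step: you assume that $\overline{V}^{\tauweaks}=F'$ lets you produce, for each $y'\in F'$, a \emph{sequence} $(y_k')\subset V$ with $y_k'\to y'$ pointwise on $F$. This is false in general. A $\tauweaks$-dense subspace of the dual of a separable Banach space need not be $\tauweaks$-sequentially dense, even when it is the linear span of a countable set — exactly the situation here. The classical examples (Mazurkiewicz, Banach; see also Ostrovskii's work on weak-* derived sets) produce total, countably spanned subspaces $V\subset \ell_1=(c_0)'$ whose weak-* sequential closure is a proper subspace of $\ell_1$; one must iterate the sequential closure (possibly transfinitely) to exhaust $\overline{V}^{\tauweaks}$. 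Your proposed rescaling fix does not repair this: the obstruction is not that $y'$ is unbounded but that the approximating elements of $V$ need not lie in any fixed ball $nB_{F'}$. Metrizability of $(B_{F'},\metric_{\tauweaks})$ only yields sequences for points of $\overline{V\cap nB_{F'}}^{\tauweaks}$, and $y'\in\overline{V}^{\tauweaks}$ does not place $y'$ in $\bigcup_n \overline{V\cap nB_{F'}}^{\tauweaks}$ (that union is precisely the weak-* sequential closure, which can be proper).

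The repair is the paper's Lemma \ref{lem:sub-ws-sig-alg}, and it is worth seeing why it sidesteps the problem. Instead of trying to reach $y'$ by a single sequence from $V$, consider
\begin{align*}
W:=\bigl\{ y'\in F': y':F\to \R \mbox{ is } \s(V)\mbox{-measurable}\bigr\}\, .
\end{align*}
This $W$ is a subspace containing $V$, and it is $\tauweaks$-\emph{sequentially} closed because pointwise limits of measurable functions are measurable — crucially, $W$ is automatically stable under arbitrarily iterated sequential limits, so it is a priori much larger than the sequential closure of $V$. The consequence of Krein--\u{S}mulian recorded in \cite[Corollary 2.7.13]{Megginson98} (valid for convex subsets of the dual of a separable space) then upgrades sequential closedness of $W$ to $\tauweaks$-closedness, giving $W\supset\overline{V}^{\tauweaks}=F'$, which is exactly the measurability statement you need. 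So your overall architecture is right, but the density-to-sequence step must be replaced by this closedness argument applied to the set of measurable functionals rather than to $V$ itself.
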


\begin{proof}[Proof of Proposition \ref{prop:suff-filt-seq}]
By Lemma \ref{lem:fin-dim-ops-essentials} we find 
\begin{align*}
\s(\nfilts n) =  \s(\spann \crdfct {\nfilts n}) \subset  \s(\spann \crdfct {A_{n+1}}) = \s(\nfilts {n+1})\, .
\end{align*}
To verify the condition $\sborelEx F =  \s\bigl( \nfilts n: n\geq 1 \bigr)$, we write $F_n' : = \spann \crdfct {\nfilts n}$
and
\begin{align*}
F_\infty' := \bigcup_{n\geq 1} \spann \crdfct {\nfilts n} = \bigcup_{n\geq 1} F_n'\, .
\end{align*}
Since $F_n'\subset F_{n+1}'$ for all $n\geq 1$, we quickly see that $F_\infty'$ is a subspace of $F'$. Using our assumption and
Lemma
\ref{lem:sub-ws-sig-alg} we further have
\begin{align*}
\sborelEx F=   \s(\overline{F_\infty'}^{\tauweaks})  = \s(F_\infty')
= \s\biggl( \bigcup_{y'\in F_\infty'} \!\s(y') \!\biggr)
= \s\biggl( \bigcup_{n\geq 1}\bigcup_{y'\in F_n'} \!\s(y') \!\biggr)
&\subset
\s\biggl( \bigcup_{n\geq 1} \s(  F_n') \!\biggr) \\
&=
\s\biggl( \bigcup_{n\geq 1} \s(\nfilts n) \!\biggr) ,
\end{align*}
where in the last step we used the already observed $\s(  F_n') =  \s(\spann \crdfct {\nfilts n}) = \s(\nfilts n)$.
Hence we have found $\sborelEx F \subset \s\bigl( \nfilts n: n\geq 1 \bigr)$. The converse inclusion is trivial.
\end{proof}

\begin{proof}[Proof of Proposition \ref{prop:proper-filt-seq}]
Our construction gives $\crdfct {\nfilts n} = \{y_1',\dots,y_n'\}$. 
This identity implies $\spann \crdfct {\nfilts n} \subset \spann \crdfct {\nfilts{n+1}}$ for all $n\geq 1$ as well as 
\begin{align*}
\spann \{ y_i': i\geq 1  \} = \bigcup_{n\geq 1} \spann \crdfct {\nfilts n}\, .
\end{align*}
Applying  Proposition \ref{prop:suff-filt-seq} then yields the assertion.
\end{proof}

\begin{proof}[Proof of Theorem \ref{thm:dense-gives-filter-seq}]
We will apply Proposition \ref{prop:proper-filt-seq}. 
To verify \eqref{prop:proper-filt-seq-h1} we pick a $y'\in F'$. Without loss of generality we may assume $y'\neq 0$.
We define $y_\infty' := \snorm{y'}^{-1} y'$.
Since $y_\infty'\in B_{F'}$  and the relative  $\tauweaks$-topology of $B_{E'}$ is metrizable, see Theorem \ref{thm:alaoglu-and-more}, there then exists a sequence
$(y_{i_k}')\subset \denseblo_{F'}$ with $y_{i_k}' \to y_\infty'$ in the $\tauweaks$-topology. This shows
\begin{align*}
y_\infty' \in \overline{\spann \{ y_i': i\geq 1  \}}^{\tauweaks}\, ,
\end{align*}
and rescaling gives the inclusion ``$\supset$'' in \eqref{prop:proper-filt-seq-h1}. The converse inclusion 
is trivially satisfied. 
\end{proof}

\begin{proof}[Proof of Proposition \ref{prop:sep-sequences}]
For $n\geq 1$ we
define $\nfilts n:F\to \R^n$ by \eqref{prop:proper-filt-seq-h2}. This definition ensures 
% 
% We will apply Proposition \ref{prop:suff-filt-seq}. To this end, we
% we first note that our construction gives $\crdfct {\nfilts n} = \{y_1',\dots,y_n'\}$.  
% This implies 
$\spann \crdfct {\nfilts n} \subset \spann \crdfct {\nfilts{n+1}}$ for all $n\geq 1$ and hence we have 
\begin{align*}
\spann \{ y_i': i\geq 1  \} = \bigcup_{n\geq 1} \spann \crdfct {\nfilts n}\, .
\end{align*}
Moreover, by Lemma \ref{lem:fin-dim-ops-essentials} we know $\spann \crdfct {\nfilts n} =  (\ker  {\nfilts n})^\annil$, and hence we find 
\begin{align*}
 \biggl(\bigcup_{n\geq 1}\spann \crdfct {\nfilts n} \biggr)_{\!\annil} 
 = \bigcap_{n\geq 1} (\spann \crdfct {\nfilts n})_\annil
 = \bigcap_{n\geq 1} \bigl((\ker \nfilts n )^\annil\bigr)_\annil 
 =
 \bigcap_{n\geq 1}  \overline{\ker \nfilts n}^{\snorm \cdot_F}
 = \bigcap_{n\geq 1} \ker \nfilts n  ,
\end{align*}
where the first identity almost directly follows from the definition of  annihilators, and the third identity can be found in e.g.~\cite[Proposition 1.10.15]{Megginson98}.
We thus obtain
\begin{align*}
\overline{\spann \{ y_i': i\geq 1  \}}^{\tauweaks}
=
\overline{\bigcup_{n\geq 1} \spann \crdfct {\nfilts n}}^{\tauweaks} 
=  \biggl( \biggl(\bigcup_{n\geq 1}\spann \crdfct {\nfilts n} \biggr)_\annil \biggr)^\annil 
= \biggl( \bigcap_{n\geq 1} \ker \nfilts n\biggr)^\annil \, ,
% = \{0\}^\annil = F'\, ,
\end{align*}
where 
the second identity can be found in e.g.~\cite[Proposition 2.6.6]{Megginson98}. 
Moreover, we have $\nfilts n(y) = 0$ if and only if $\dualpair {y_1'}{y}F=\dots=\dualpair {y_n'}{y}F = 0$. In other words,
the identity $\ker \nfilts n = \ker y_1' \cap \dots \cap \ker y_n'$ holds. Combining this with our previous considerations yields
\begin{align*}
\overline{\spann \{ y_i': i\geq 1  \}}^{\tauweaks} = \biggl( \bigcap_{i\geq 1} \ker y_i'\biggr)^\annil \, .
\end{align*}
Finally note that  
$\{y_i': i\geq 1\}\subset F'$ is separating if and only if we have 
\begin{align*}
\bigcap_{i\geq 1} \ker y_i' = \{0\}\, .
\end{align*}
Now the equivalence follows from the well-known fact that $\{0\}\subset F$ is the only non-empty subset $B\subset F$
with $B^\annil = F'$.
\end{proof}

\begin{proof}[Proof of Corollary \ref{cor:filt-seq-HS}]
Let us identify $H$ with $H'$ via the canonical and isometric Fr\'echet-Riesz
isomorphism $h\mapsto \hat h := \skprod h\mycdot_H$. Then $(\hat e_n)_{n\geq 1}$ is an ONB of $H'$, and hence the assertion  follows from 
the discussion around \eqref{prop:proper-filt-seq-h1-norm} and 
Proposition \ref{prop:proper-filt-seq}.
\end{proof}

\begin{proof}[Proof of Theorem \ref{thm:bsf-in-cb}]
We begin by verifying that $(\diracf{t_i}) \subset F'$ is separating. To this end, let us fix an $f\in F$ with $f\neq 0$.
Then there exists a $t\in T$ with $f(t) \neq 0$ and then  a  sequence $(t_{i_k})\subset T_\infty$ with $\metric(t_{i_k}, t)\to 0$.
This gives $f(t_{i_k})\to f(t)$ and hence there exists an $i_k$ with $\dualpair {\diracf {t_{i_k}}} fF =  f(t_{i_k}) \neq 0$.
In other words, $(\diracf{t_i}) \subset F'$ is separating.
Applying Proposition  
\ref{prop:sep-sequences}
and then Proposition \ref{prop:proper-filt-seq} thus gives the assertions.
\end{proof}

\begin{proof}[Proof of Corollary \ref{cor:filt-seq-rkhs}]
The separability can be found in e.g.~\cite[Lemma 4.33]{StCh08}. % Moreover, $H$ is a BSF since it is an RKHS.

Our next goal is to show that each $h\in H$ is continuous. To this end, let 
$\Phi:T\to H$ be the ``canonical feature map''  given by $\Phi(t) := k(t,\mycdot)$. Since $k$ is continuous, $\Phi$ is also continuous, see
e.g.~\cite[Lemma 4.29]{StCh08}. For $t\in T$ and $(t_n)\subset T$ with $t_n\to t$, we thus find 
$h(t_n) = \skprod {\Phi(t_n)} h_H \to  \skprod {\Phi(t)} h_H = h(t)$ by the reproducing property of $k$. 

Now,  Theorem \ref{thm:bsf-in-cb} in combination with the reproducing property    gives the assertion.
\end{proof}

\begin{proof}[Proof of Proposition \ref{prop:filt-seq-RKHS}]
We first show the existence of a countable $\metric_k$-dense   $T_\infty\subset T$. % of $T$.
To this end,   recall that subsets of separable metric spaces are separable, and hence the set
\begin{align*}
K_T := \bigl\{ k(t,\mycdot): t\in T  \bigr\}
\end{align*}
is $\snorm\cdot_H$-separable. Consequently, there exists a countable $T_\infty \subset T$ such that
\begin{align*}
K_0 := \bigl\{ k(t,\mycdot): t\in T_\infty  \bigr\}
\end{align*}
is $\snorm\cdot_H$-dense in $K_T$. For a fixed $t\in T$ we thus find a sequence
$(t_n)\subset T_\infty$ with $k(t_n,\mycdot) \to k(t,\mycdot)$ in $H$. The definition of $\metric_k$ then gives $\metric_k(t,t_n) \to 0$,
i.e.~$T_\infty$ is $\metric_k$-dense in $T$.

For the proof of the second assertion, we first note that  for $h\in H$ and $t,t'\in T$ the reproducing property of $k$ and the Cauchy-Schwarz inequality yield
\begin{align*}
|h(t)-h(t')| = \bigl|\skprod h {k(t,\mycdot) - k(t',\mycdot)}_H\bigr| \leq \snorm h_H \cdot \snorm{k(t,\mycdot) - k(t',\mycdot)}_H = \snorm h_H \cdot \metric_k(t,t')\, .
\end{align*}
Consequently, every $h\in H$ is $\metric_k$-continuous. 
\end{proof}

\begin{proof}[Proof of Theorem \ref{thm:good-metric-for-bsf}]
Since $F$ is separable, Theorem \ref{thm:alaoglu-and-more} shows that 
there exists a metric $\metric_{\tauweaks}$ on $B_{F'}$ that metrizes the relative $\tauweaks$-topology of $B_{F'}$. In the following, we fix such a metric 
$\metric_{\tauweaks}$. Moreover, we consider the    function $m:T\to [0,\infty)$ given by 
 \begin{align*}
 m(t) 
 := 
 \begin{cases}
 \snorm{\diracf t}_{F'}^{-1}\, , & \mbox{ if } \diracf t \neq 0\, , \\
 0 \, , &\mbox{ else. }
 \end{cases}
 \end{align*}
 Our first goal is to show that for all $f\in F$ and $t\in T$ we have 
 \begin{align}\label{thm:good-metric-for-bsf-h1}
 \dualpair {\diracf t} fF = \snorm{\diracf t}_{F'}\cdot m(t) \cdot  \dualpair {\diracf t} fF\, .
 \end{align}
Obviously,  if  $\diracf t \neq 0$ this identity directly follows from the definition of $m$. Moreover, in the case 
 $\diracf t = 0$ we have $ \dualpair {\diracf t} fF = 0$ for all $f\in F$, and therefore both sides of \eqref{thm:good-metric-for-bsf-h1} equal $0$.
 
 Now, from \eqref{thm:good-metric-for-bsf-h1} we conclude that $\snorm {\diracf t}_{F'} =  \snorm{\diracf t}_{F'} \cdot \snorm{m(t) \diracf t}_{F'}$,
 which in turn implies $\snorm{m(t) \diracf t}_{F'} \leq 1$ for all $t\in T$.
 This makes it possible to consider the map 
 \begin{align*}
 \Phi: T & \to B_{F'} \times [0,\infty) \\
 t & \mapsto \bigl(m(t) \diracf t, \snorm{\diracf t}_{F'}\bigr)\, .
 \end{align*}
 In the following, we equip $B_{F'} \times [0,\infty)$ with the metric 
 \begin{align*}
 \metric_{\tauweaks}^+\bigl( y_1',r_1), (y_2',r_2) \bigr) := \metric_{\tauweaks}(y_1',y_2') + |r_1-r_2|\, , \myqquad y_1',y_2'\in B_{F'}, r_1,r_2\in [0,\infty).
 \end{align*}
 Since $(B_{F'},  \metric_{\tauweaks})$ is a separable metric space, we quickly verify that $(B_{F'} \times [0,\infty),  \metric_{\tauweaks}^+)$ 
 is also a separable metric space. Since subsets of separable metric spaces are separable metric spaces, we then see that $(\Phi(T),  \metric_{\tauweaks}^+)$
 is also a separable metric space. Consequently, if we define $\metric :T\times T\to [0,\infty)$ by 
 \begin{align*}
 \metric(s,t) :=  \metric_{\tauweaks}^+(\Phi(s), \Phi(t))\, , \myqquad s,t\in T,
 \end{align*}
 then $(T, \metric)$ becomes a separable pseudo-metric space.
 
 It remains to show that all $f\in F$ are continuous with respect to $\metric$. To this end, we fix some $f\in F$ and  $t\in T$, as well as a sequence $(t_n)\subset T$ with 
 $\metric (t_n,t)\to 0$. By the definition of $\metric$, $ \metric_{\tauweaks}^+$, and $\Phi$ the latter ensures 
 \begin{align*}
 \metric_{\tauweaks}\bigl( m(t_n) \diracf {t_n}, m(t) \diracf t  \bigr) + \bigl| \snorm{\diracf {t_n}}_{F'} - \snorm{\diracf t}_{F'}  \bigr|  
 = 
  \metric_{\tauweaks}^+(\Phi(t_n),\Phi(t))
 \to 0\, .
 \end{align*}
 This in turn ensures $\snorm{\diracf {t_n}}_{F'} \to \snorm{\diracf t}_{F'}$ and 
 \begin{align*}
 \dualpairb {m(t_n)\diracf {t_n}} fF \to \dualpairb {m(t)\diracf {t}} fF \, .
 \end{align*}
 Combining these two limits with \eqref{thm:good-metric-for-bsf-h1} we   find $f(t_n) =  \dualpair {\diracf {t_n}} fF \to  \dualpair { \diracf {t}} fF = f(t)$.
\end{proof}

\begin{proof}[Proof of Lemma of \ref{lem:c1-sep-seq}]
Let us fix an $f\in C^1([0,1])$ with $f\neq 0$. Then, if $f$ is constant, we find $y_0'(f) = f(0) \neq 0$, and  if $f$ is not constant,
there is an $t\in (0,1)$ with $f'(t) \neq 0$. Since $f'$ is continuous, we then find a $t_i$ with $y_i'(f) = f'(t_i) \neq 0$
as in the proof of Theorem \ref{thm:bsf-in-cb}.
\end{proof}

\section{Proofs for Section \ref{sec:examples}}\label{sec:example-proofs}

In this section we present the proofs related to the results of  Section \ref{sec:examples}.

\begin{proof}[Proof of Lemma \ref{lem:sp-gives-rv}]
The equivalence directly follows from Lemma \ref{lem:sp-gives-rv-app}, and the continuity of the mean and covariance functions can be found in 
Lemma \ref{lem:cont-mean+cov-4-cont-GP}.
\end{proof}

For the proof of Theorem \ref{thm:grv-conditioning-y-fin-dim-CT} and Theorem \ref{thm:grv-conditioning-y-inf-dim-CT} we need the following 
intermediate lemma that computes the update formulas for finite $S\subset T$.

\begin{lemma}\label{thm:grv-conditioning-CT-bs-2-fct}
Let $(\Om, \sA,\P)$ be a probability space, $(T,\metric)$ be a compact metric space, and $(X_t)_{t\in T}$ be a
Gaussian processes on $(\Om, \sA,\P)$ with continuous paths, mean function $m$, and covariance function $k$.
Moreover, let $X:\Om\to \sC T$ be the Gaussian random variable associated to the process via \eqref{eq:sp-gives-rv}
and $S := \{s_1,\dots,s_n \}\subset T$. Finally, let 
\begin{align*}
Y := \npeval S\circ X 
\end{align*}
and $m(S) := (m(s_1),\dots,m(s_n))$,
where $\npeval S:\sC T\to \R^n$ is defined by \eqref{eq:fin-eval-op}.
Then for every $y\in \R^n$ and all $t,t_1,t_2\in T$  we have 
\begin{align*}
\dualpairb {\diracf t}{\E X + \cov ( X,Y) (\cov Y)\mpinv(y- \E Y)}{\sC T}  = m(t) + K_{t,S}   K_{S,S}\mpinv(y- m(S))
\end {align*}
as well as
\begin{align*}
\dualpairb {\diracf {t_1}}{ (\cov ( X) \!-\! \cov ( X,Y) (\cov Y)\mpinv \cov (Y, X) ) \diracf  {t_2}}{\sC T} = k(t_1,t_2) \!- \! K_{t_1,S}   K_{S,S}\mpinv K_{S,t_2} .
\end{align*}
\end{lemma}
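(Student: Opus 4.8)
The plan is to reduce the lemma to the finite-dimensional conditioning formula from Theorem~\ref{thm:grv-conditioning-fin-dim} (or rather its operator-valued companion Theorem~\ref{thm:grv-conditioning-y-fin-dim}) by evaluating everything at point-evaluation functionals. The key observation is that, under the identification \eqref{eq:riesz-represent}, the point evaluation $\diracf t\in \sC T'$ corresponds to the Dirac measure $\dirac t\in \measpace T$, and that the evaluation operator $\npeval S:\sC T\to\R^n$ has a particularly simple adjoint: since $\npeval S f = (f(s_1),\dots,f(s_n)) = (\dualpair{\diracf{s_1}}f{\sC T},\dots,\dualpair{\diracf{s_n}}f{\sC T})$, we get $(\npeval S)' e_j = \diracf{s_j}$ for the standard ONB $(e_j)$ of $\R^n$, i.e.\ $(\npeval S)'$ maps $\R^n$ (identified with its dual) onto $\spann\{\diracf{s_1},\dots,\diracf{s_n}\}\subset \sC T'$.

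First I would record, using \eqref{eq:covs-of-compositions-new} (covariances of linearly transformed random variables) together with $Y = \npeval S\circ X$, that
\begin{align*}
\cov(X,Y) &= \cov(X)\circ (\npeval S)'\, ,\\
\cov(Y) &= \npeval S\circ \cov(X)\circ (\npeval S)'\, ,\\
\cov(Y,X) &= \npeval S\circ \cov(X)\, .
\end{align*}
Then I would identify the matrix representations: applying $\diracf t$ on the left and $e_j$ on the right to $\cov(X,Y)$, using $(\npeval S)'e_j = \diracf{s_j}$ and \eqref{eq:covfct-by-cov}, gives $\dualpair{\diracf t}{\cov(X,Y)e_j}{\sC T} = \dualpair{\diracf t}{\cov(X)\diracf{s_j}}{\sC T} = k(t,s_j)$, so $\cov(X,Y)$ is represented (in the sense of writing its action through $\diracf t$) by the row vector $K_{t,S}$; similarly $\cov(Y)$ is represented by the matrix $K_{S,S}$ and $\cov(Y,X)$ acts as $\diracf{s_i}\mapsto k(s_i,t_2)$, i.e.\ the column $K_{S,t_2}$. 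The crucial point here is that the Moore--Penrose inverse commutes with this identification: $(\cov Y)\mpinv$, viewed as a map $\R^n\to(\R^n)'$, is exactly $K_{S,S}\mpinv$, because the Moore--Penrose inverse of an operator between finite-dimensional inner product spaces is given by the Moore--Penrose inverse of its matrix (this is where I would cite the discussion around \eqref{eq:psd-of-mpinv} and the fact that $\R^n$ carries its standard inner product).

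Putting these together, for the mean formula I would write, using \eqref{eq:meanfct-by-mean} and the permutability of $\diracf t$ with the Bochner integral \eqref{eq:bochner-is-pettis},
\begin{align*}
\dualpairb{\diracf t}{\E X + \cov(X,Y)(\cov Y)\mpinv(y-\E Y)}{\sC T}
&= m(t) + \dualpairb{\diracf t}{\cov(X,Y)(\cov Y)\mpinv(y-\E Y)}{\sC T}\\
&= m(t) + K_{t,S}\,K_{S,S}\mpinv\,(y - m(S))\, ,
\end{align*}
where $\E Y = \npeval S(\E X) = m(S)$ by \eqref{eq:meanfct-by-mean}. For the covariance formula the same bookkeeping gives
\begin{align*}
\dualpairb{\diracf{t_1}}{(\cov(X) - \cov(X,Y)(\cov Y)\mpinv\cov(Y,X))\diracf{t_2}}{\sC T}
&= k(t_1,t_2) - \dualpairb{\diracf{t_1}}{\cov(X,Y)(\cov Y)\mpinv\cov(Y,X)\diracf{t_2}}{\sC T}\\
&= k(t_1,t_2) - K_{t_1,S}\,K_{S,S}\mpinv\,K_{S,t_2}\, ,
\end{align*}
using \eqref{eq:covfct-by-cov} for the leading term and the three covariance identities plus the matrix representations for the subtracted term. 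I expect the main obstacle to be purely notational: carefully and honestly tracking how the abstract operators $\cov(X,Y):(\R^n)'\to\sC T$ etc.\ correspond to the matrices $K_{t,S}$, $K_{S,S}$, $K_{S,t_2}$ once one fixes the canonical identification $\R^n\cong(\R^n)'\cong(\R^n)''$ and checks that the Moore--Penrose inverse survives this identification unchanged; no genuinely hard analysis is involved, but getting the adjoints and the placement of the pseudo-inverse exactly right is the part that needs care.
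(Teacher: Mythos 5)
Your proposal is correct and follows essentially the same route as the paper: both proofs reduce the claim to matrix bookkeeping by applying the transformation rule \eqref{eq:covs-of-compositions-new} for cross covariances, identifying $\cov(X,Y)$, $\cov(Y)$, $\cov(Y,X)$ with $K_{t,S}$, $K_{S,S}$, $K_{S,t_2}$ via \eqref{eq:covfct-by-cov}, and noting $\E Y = m(S)$. The only cosmetic difference is that you factor the operators through the adjoint $(\npeval S)'$ while the paper composes with $\diracf t$ on the left and evaluates $\cov(Y,X)\diracf{t_2}$ directly from the integral definition; the substance is identical.
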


\begin{proof}[Proof of Lemma \ref{thm:grv-conditioning-CT-bs-2-fct}]
Note that our notations give $\E Y = (\E X_{s_1},\dots,\E X_{s_n}) = m(S)$.
Using  \eqref{eq:covs-of-compositions-new}
the first identity then follows from
\begin{align*}
&\dualpairb {\diracf {t}}{ \E X + \cov ( X,Y) (\cov Y)\mpinv(y- \E Y)}{\sC T}\\
&=\E X_t + \bigl( \diracf {t} \circ \cov ( X,Y) \circ (\cov Y)\mpinv \bigr) (y- \E Y) \\
& = m(t) + \bigl(\cov (  \diracf {t} \circ X,Y) \circ (\cov Y)\mpinv \bigr) (y- m(S)) \\
& = m(t) +  \cov (  \diracf {t} \circ X,Y) \bigl(K_{S,S}\mpinv(y- m(S)) \bigr) \\
& =  m(t) + K_{t,S}   K_{S,S}\mpinv(y- m(S)) \, .
\end{align*}
For the proof of the second identity, we first observe that
% Similarly,
% combining \eqref{eq:covfct-by-cov} with the formula for $\covup$ provided by Theorem \ref{thm:grv-conditioning-y-fin-dim} we obtain
\begin{align*}
% \kfctup (t_1,t_2)
% &= \dualpair {\diracf {t_1}}{\covup \diracf {t_2}}{\sC T} \\
& \dualpairB {\diracf {t_1}}{\bigl(\cov ( X) - \cov ( X,Y) (\cov Y)\mpinv \cov (Y, X) \bigr) (\diracf {t_2})}{\sC T} \\
&= \dualpairb {\diracf {t_1}}{\cov ( X) (\diracf {t_2})}{\sC T}  - \dualpairb {\diracf {t_1}}{ \cov ( X,Y) (\cov Y)\mpinv \cov (Y, X)   (\diracf {t_2})}{\sC T} \, .
\end{align*}
Moreover, by \eqref{eq:weak-cross-cov} and $\dualpair {\diracf {t_i}}{X}{\sC T} = X_{t_i}$  we obtain
\begin{align*}
\dualpairb {\diracf {t_1}}{\cov ( X) (\diracf {t_2})}{\sC T}
=
\cov\bigl( \dualpair {\diracf {t_1}}{X}{\sC T}, \dualpair {\diracf {t_2}}{X}{\sC T} \bigr)
% =
% \cov(X_{t_1},X_{t_2})
=
k(t_1,t_2) \, .
\end{align*}
In addition, the definition of cross covariance operators gives
\begin{align*}
 \cov (Y, X) (\diracf {t_2} )= \int_\Om (X_{t_2}-\E X_{t_2}) (Y-\E Y) \intd \P = \cov(Y, X_{t_2})(1) = K_{S,t_2} \cdot 1 = K_{S,t_2}\, ,
\end{align*}
and hence we find
\begin{align*}
&
\dualpairb {\diracf {t_1}}{ \cov ( X,Y) (\cov Y)\mpinv \cov (Y, X)   (\diracf {t_2})}{\sC T} \\
%&=
%\bigl( \diracf {t_1} \circ \cov ( X,Y) \circ  (\cov Y)\mpinv \circ \cov (Y, X)  \bigr) (\diracf {t_2}) \\
&= \bigl(  \cov ( \diracf {t_1} \circ X,Y) \circ  (\cov Y)\mpinv    \bigr) \bigl(\cov (Y, X) (\diracf {t_2})\bigr) \\
& = \bigl(  \cov ( \diracf {t_1} \circ X,Y) \circ  (\cov Y)\mpinv   \bigr) (K_{S,t_2}) \\
&=  \cov ( \diracf {t_1} \circ X,Y) \bigl( K_{S,S}\mpinv  K_{S,t_2} \bigr) \\
&=  K_{t_1,S}   K_{S,S}\mpinv K_{S,t_2}\, .
\end{align*}
Combining these considerations yields the second identity.
\end{proof}

\begin{proof}[Proof of Theorem \ref{thm:grv-conditioning-y-fin-dim-CT}]
In view of Theorem \ref{thm:grv-conditioning-y-fin-dim} it suffices to translate the formulas for $\muup$ and   $\covup$ to the claimed formulas
for the mean and covariance function via \eqref{eq:meanfct-by-mean}, respectively \eqref{eq:covfct-by-cov}.
This, however, is straightforward by using Lemma \ref{thm:grv-conditioning-CT-bs-2-fct}.
\end{proof}

% \begin{proofof}{Proposition \ref{prop:filt-seq-CT}}
% Since $T_\infty$ is dense in $T$, every $f\in \sC T$ is uniquely determined by its behavior on $T_\infty$.
% Consequently, the sequence $(\diracf{t_i})$ is separating. Now the assertion follows from 
% Propositions \ref{prop:proper-filt-seq} and \ref{prop:sep-sequences}
% % Proposition \ref{prop:sep-gives-filter-seq}.
% % 
% % 
% % We will apply Proposition \ref{prop:suff-filt-seq}. To this end, we
% % we first note that our construction gives $\crdfct {\npeval n} = \{\diracf {t_1},\dots,\diracf {t_n}\}$.
% % This implies $\spann \crdfct {\npeval n} \subset \spann \crdfct {\npeval{n+1}}$ for all $n\geq 1$.
% % To verify \eqref{prop:suff-filt-seq-h1} we pick a $\hat \mu \in \sC T'$.
% % Without loss of generality we may assume $\hat \mu\neq 0$. We define $\hat \nu := \snorm{\hat \mu}^{-1} \cdot \hat \mu \in B_{\sC T'}$.
% % Then Lemma \ref{lem:krein-milman-for-mt} shows
% % \begin{align*}
% % \hat \nu
% % \in \overline {\aco  \{   \diracf t: t\in  T_0  \}}^{\tauweaksseq}
% % \subset
% % \overline{\bigcup_{n\geq 1} \spann \crdfct {\npeval n}}^{\tauweaksseq}
% % \end{align*}
% % Rescaling gives the inclusion ``$\supset$'' in \eqref{prop:suff-filt-seq-h1}, and the converse inclusion ``$\subset$'' is trivially satisfied.
% % % Now the assertion follows from  Proposition \ref{prop:suff-filt-seq}.
% \end{proofof}

\begin{proof}[Proof of Theorem \ref{thm:grv-conditioning-y-inf-dim-CT}]
By Corollary \ref{cor:filt-seq-CT}  the operators $\npeval {S_n}: \sC S\to \R^n$
% $\npeval n:\sC S\to \R$ defined by
% \begin{align*}
% \npeval n(g) := \bigl(g(s_1),\dots,g(s_n)   \bigr) = g(S_n)\, , \myqquad g\in \sC S,
% \end{align*}
give  a filtering sequence   for $F:= \sC S$.
%We define $Y_n := \npeval n \circ Y$. 
% Since $S_n\subset S$ we can consider
% \begin{align*}
% Y_n :  =  \npeval {S_n} \circ Y = (\npeval {S_n} \circ  \mycdot_{|S}) \circ X \, ,
% \end{align*}
In addition, the definition of $Y$ ensures that  $(X,Y)$ are jointly Gaussian.
% where $\npeval{S} :\sC T \to \R^n$ is defined by \eqref{eq:fin-eval-op}.
% Let us now  apply Theorem \ref{thm:grv-conditioning-y-inf-dim} with the spaces $E:= \sC T$ and $F:=\sC S$. Then
Applying 
Theorem \ref{thm:grv-conditioning-y-inf-dim}  with  $E:= \sC T$ and $Y_n :  =  \npeval {S_n} \circ Y$ gives us a $\P_Y$-zero set $N\in \sborel(\sC S)$ such 
that the assertions of parts \emph{i)} to \emph{viii)} hold true.
In particular, we know that
 $\P_{X|Y}(\mycdot|g)$ is a Gaussian measure on $\sC T$ for all $g\in \sC S\setminus N$.

To compute its mean and covariance function we pick a 
% % we already know by Proposition \ref{prop:filt-seq-CT} that the operators
% % $\npeval n:\sC S\to \R$ defined by
% % \begin{align*}
% % \npeval n(g) := \bigl(g(s_1),\dots,g(s_n)   \bigr) = g(S_n)\, , \myqquad g\in \sC S,
% % \end{align*}
% % give us a filtering sequence $(\npeval n)$ for $F:= \sC S$. We define $Y_n := \npeval n \circ Y$. By $S_n\subset S$ we then obtain
% % \begin{align*}
% % Y_n = \npeval n \circ Y =  \npeval {S_n} \circ X \, ,
% % \end{align*}
% % where $\npeval{S_n} :\sC T \to \R^n$.
 $g\in \sC S\setminus N$ and write $\npeval n := \npeval {S_n}$. Then,   part \emph{i)} of Theorem \ref{thm:grv-conditioning-y-inf-dim}  together with Lemma \ref{thm:grv-conditioning-CT-bs-2-fct}
 shows
\begin{align*}
&
\dualpairb {\diracf t}{\E X +   \cov ( X,Y_n) (\cov Y_n)\mpinv(\npeval n(g)- \E Y_n)}{\sC T} \\
&= m(t) +  K_{t,S_n}   K_{S_n,S_n}\mpinv(g(S_n)  - m(S_n))
\end{align*}
and
\begin{align*}
&
\dualpairb {\diracf {t_1}}{\bigr(\cov ( X) - \cov ( X,Y_n) (\cov Y_n)\mpinv \cov (Y_n, X)\bigr)( \diracf  {t_2})}{\sC T} \\
&=  k(t_1,t_2)-  K_{t_1,S_n}   K_{S_n,S_n}\mpinv K_{S_n,t_2}
\end{align*}
for all $t,t_1,t_2\in T$.
Combining part \emph{iii)} of Theorem \ref{thm:grv-conditioning-y-inf-dim} with \eqref{eq:meanfct-by-mean} we thus obtain
\begin{align*}
&
\sup_{t\in T} \Bigl|   m(t) +  K_{t,S_n}   K_{S_n,S_n}\mpinv\bigl(g(S_n)  - m(S_n)\bigr)   -  \meanupg(t)  \Bigr| \\
&=
\sup_{t\in T} \Bigl| \dualpairb {\diracf t}{\E X \!+ \!  \cov ( X,Y_n) (\cov Y_n)\mpinv(\npeval n(g)- \E Y_n)}{\sC T}  -  \dualpair  {\diracf {t}}{  \muup(g)}{\sC T} \Bigr| \\
&=
\sup_{t\in T} \Bigl| \dualpairb {\diracf t}{\E X +   \cov ( X,Y_n) (\cov Y_n)\mpinv(\npeval n(g)- \E Y_n) -  \muup(g) }{\sC T} \Bigr| \\
& = \mnorm{\E X +   \cov ( X,Y_n) (\cov Y_n)\mpinv(\npeval n(g)- \E Y_n) -  \muup(g) }_{\sC T} \\
& \to 0\, ,
\end{align*}
where in the second to last step we used that the norm of $\sC T$ is the supremums norm.

The formula for the covariance function can be derived similarly: Indeed, combining
part \emph{iv)} of Theorem \ref{thm:grv-conditioning-y-inf-dim} with \eqref{eq:covfct-by-cov}, we obtain
\begin{align*}
&
\sup_{t_1,t_2\in T} \Bigl|  k(t_1,t_2) -  K_{t_1,S_n}   K_{S_n,S_n}\mpinv K_{S_n,t_2}  - \kfctup (t_1,t_2)    \Bigr| \\
&=
\sup_{t_1,t_2\in T} \Bigl| \dualpairB {\diracf {t_1}}{\bigl(\cov ( X) -\cov ( X,Y_n) (\cov Y_n)\mpinv \cov (Y_n, X) \bigr) (\diracf {t_2})}{\sC T}  \\
&\qquad \qquad\qquad
-  \dualpair {\diracf {t_1}}{\covup \diracf {t_2}}{\sC T}   \Bigr| \\
&=
\sup_{t_1,t_2\in T} \Bigl| \dualpairB {\diracf {t_1}}{\bigl(\cov ( X) -\cov ( X,Y_n) (\cov Y_n)\mpinv \cov (Y_n, X) - \covup\bigr) (\diracf {t_2})}{\sC T}    \Bigr| \\
& =
\sup_{t_2\in T}  \mnorm{\bigl(\cov ( X) -\cov ( X,Y_n) (\cov Y_n)\mpinv \cov (Y_n, X) - \covup\bigr) (\diracf {t_2})}_{\sC T} \\
&\leq
\mnorm{\cov ( X) -\cov ( X,Y_n) (\cov Y_n)\mpinv \cov (Y_n, X) - \covup} \\
% &\leq \nucnorm{\cov ( X) -\cov ( X,Y_n) (\cov Y_n)\mpinv \cov (Y_n, X) - \covup} \\
&\to 0\, ,
\end{align*}
where we used that
% the norm of $\sC T$ is the supremums norm and that
$\diracf {t_2}\in B_{\sC T'}$ for all $t_2\in T$.

To prove the last assertion, we note that $\dualpair {\diracf s}X{\sC T} = X_s = \dualpair {\diracf s}Y{\sC S}$.
Consequently, 
the random variable $\dualpair {\diracf s}X{\sC T}$ is $\s(Y)$-measurable.
If we now  pick an $\om \in Y^{-1}(\{g\})$, then we have $\om \not \in Y^{-1}(N)$ and $g = Y(\om) = X_{|S}(\om)$, and therefore
part \emph{viii)} of Theorem \ref{thm:grv-conditioning-y-inf-dim}
gives
\begin{align*}
\meanupg(s)
% =
% \dualpair {\diracf s}{ \muup(g)}{\sC T}
=
\dualpairb {\diracf s}{ \muup(Y(\om))}{\sC T}
=
\dualpairb {\diracf s}{X(\om) }{\sC T}
= g(s)\, .
\end{align*}
Analogously,
part \emph{viii)} of Theorem \ref{thm:grv-conditioning-y-inf-dim} shows
\begin{align*}
\kfctup(s,t)  =  \kfctup(t,s) = \dualpair {\diracf t}{\covup \diracf s}{\sC T} = 0
\end{align*}
 for all $t\in T$.
\end{proof}

\numberwithin{equation}{section}

\newcounter{appendix}
\setcounter{appendix}{0}
\renewcommand\thesection{\Alph{appendix}}

\stepcounter{appendix}

\section{Measurability in Banach spaces and Banach Space Valued Integration}\label{app:bs-integration}

In this supplement we recall some basic measurability results in Banach spaces $E$ as well as the Bochner integral.

We begin, however, with a result on the
% to work with sequences 
% rather than nets and this in turn opens the door for limit theorems of integrals. In addition, it allows for a better control of 
% sets with measure zero.
% For its formulation, recall that the
$\tauweaks$-topology on $E'$ for separable Banach spaces $E$. %, which is the smallest topology for which all $\dualpair \mycdot xE:E'\to \R$
% are continuous.
% It urns out that for separable Banach spaces,
% the $\tauweaks$-topology has some highly useful properties, which are summarized in the next result.
Its first two assertions can be found
in \cite[Theorem 2.6.18 and Theorem 2.6.23]{Megginson98}
or \cite[Theorem V.3.1 and Theorem V.5.1]{Conway90}, and the last assertion follows
from the fact that compact metric spaces are separable.

\begin{theorem}\label{thm:alaoglu-and-more} 
Let $E$ be a separable Banach space. Then $B_{E'}$ is $\tauweaks$-compact,
the relative $\tauweaks$-topology of $B_{E'}$ is metrizable, and
% Finally,  
there exists a countable $\denseblo \subset B_{E'}$
that is 
$\tauweaks$-dense in $B_{E'}$.
\end{theorem}

Note that for the proofs of our main results we need the existence of such sets $\denseblo$, and
another key result, namely Theorem \ref{thm:dense-gives-filter-seq} that ensures the existence of our approximation scheme, 
also needs such sets in its formulation.
In addition,
the fact that the $\tauweaks$-topology of $B_{E'}$ is metrizable is often used in the the proofs as it allows to work with sequences 
rather than nets and this in turn opens the door for limit theorems of integrals. Finally, working with $\denseblo$ often makes it possible to 
 better control unions of
sets with measure zero.

The following theorem, which can be found in e.g.~\cite[Proposition 1.1.1]{HyvNVeWe16} and which also follows from a version of
 Pettis' measurability theorem \cite[Theorem E.9]{Cohn13}, shows that for separable Banach spaces the $\s$-algebras 
 $\sborelEx E$ and $\sborelnorm E$ coincide.

% the following version of Pettis' measurability theorem, which is a consequence of e.g.~\cite[Theorem E.9]{Cohn13} and which can be also found in 
% \cite[Proposition 1.1.1]{HyvNVeWe16},
% shows that for separable $E$  both $\s$-algebras coincide.

\begin{theorem}\label{thm:pettis-var}
Let $E$ be a separable Banach space.  Then we have $\sborelEx E =  \sborelnorm E$.
\end{theorem}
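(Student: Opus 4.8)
The plan is to establish the two inclusions separately; only the second requires any work.

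The inclusion $\sborelEx E \subseteq \sborelnorm E$ is immediate: every bounded linear $x':E\to\R$ is norm-continuous, hence $(\sborelnorm E,\sborel)$-measurable, so $\sborelnorm E$ is \emph{one} of the $\s$-algebras on $E$ making all such $x'$ measurable; since $\sborelEx E$ is by definition the smallest such $\s$-algebra, the inclusion follows. For the reverse inclusion $\sborelnorm E \subseteq \sborelEx E$, the key step is to produce a \emph{countable} norming family in $B_{E'}$. Since $E$ is separable, I would fix a countable dense subset $\{x_k:k\geq 1\}\subseteq E$ and, using the Hahn--Banach theorem, choose for each $k$ a functional $x_k'\in B_{E'}$ with $\dualpair{x_k'}{x_k}E = \snorm{x_k}_E$. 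A routine approximation argument then shows
\begin{align}\label{eq:norming}
\snorm{x}_E = \sup_{k\geq 1} \bigl|\dualpair{x_k'}{x}E\bigr|\,, \qquad\qquad x\in E.
\end{align}
Indeed, ``$\geq$'' holds because each $x_k'\in B_{E'}$, and for ``$\leq$'' one picks, given $x\in E$ and $\e>0$, an index $k$ with $\snorm{x-x_k}_E<\e$, so that $|\dualpair{x_k'}{x}E|\geq |\dualpair{x_k'}{x_k}E| - \e = \snorm{x_k}_E - \e \geq \snorm{x}_E - 2\e$; letting $\e\to 0$ gives the claim. Note that this uses only separability of $E$, not of $E'$.

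With \eqref{eq:norming} in hand, for every $x_0\in E$ the map $x\mapsto \snorm{x-x_0}_E = \sup_{k\geq 1}\bigl|\dualpair{x_k'}{x}E - \dualpair{x_k'}{x_0}E\bigr|$ is a countable supremum of $\sborelEx E$-measurable functions, hence itself $\sborelEx E$-measurable. Consequently each open ball $\{x\in E:\snorm{x-x_0}_E<r\}$ lies in $\sborelEx E$. Since $E$ is separable, its norm topology is second countable, so every norm-open subset of $E$ is a countable union of open balls and therefore belongs to $\sborelEx E$; hence $\sborelnorm E\subseteq\sborelEx E$, which completes the proof.

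I do not expect a genuine obstacle here: the only point needing a moment of care is that a countable dense set together with Hahn--Banach really does yield a countable norming family, and that second countability of the norm topology then reduces the verification $\sborelnorm E\subseteq\sborelEx E$ to open balls.
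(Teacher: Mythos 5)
Your proof is correct, and since the paper does not prove this theorem but only cites it (to \cite[Proposition 1.1.1]{HyvNVeWe16} and Pettis' measurability theorem), your argument is exactly the standard one given in those references: the countable norming family obtained from a dense sequence via Hahn--Banach, measurability of $x\mapsto\snorm{x-x_0}_E$ as a countable supremum, and second countability to reduce to open balls. No gaps.
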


The next
lemma  considers the behavior of $\s$-algebras on $F$ that are generated by some $B\subset F'$. A somewhat similar yet less general result can be found in \cite[Proposition 1.1.1]{HyvNVeWe16}.

\begin{lemma}\label{lem:sub-ws-sig-alg}
Let $F$ be a Banach space and $B\subset F'$ be non-empty. Then we have
\begin{align*}
\s(B) = \s(\spann B) = \s(\overline B^{\tauweaks}) = \s(\overline {\spann B}^{\tauweaks}) \, .
\end{align*}
\end{lemma}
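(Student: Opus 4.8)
\textbf{Proof plan for Lemma \ref{lem:sub-ws-sig-alg}.}

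The plan is to prove the chain of equalities $\s(B) = \s(\spann B) = \s(\overline B^{\tauweaks}) = \s(\overline{\spann B}^{\tauweaks})$ by establishing a sequence of inclusions, using in each step only that $\s(C)$ is monotone in $C$ together with the observation that each larger set consists of functionals that are already $\s$-measurable for the smaller one. First I would record the trivial inclusions coming from set inclusion and the monotonicity of the generated-$\s$-algebra operation: since $B \subset \spann B \subset \overline{\spann B}^{\tauweaks}$ and $B \subset \overline B^{\tauweaks} \subset \overline{\spann B}^{\tauweaks}$, we immediately get $\s(B) \subset \s(\spann B) \subset \s(\overline{\spann B}^{\tauweaks})$ and $\s(B)\subset \s(\overline B^{\tauweaks})\subset \s(\overline{\spann B}^{\tauweaks})$. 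So it remains to prove the single reverse inclusion $\s(\overline{\spann B}^{\tauweaks}) \subset \s(B)$, which then forces all four $\s$-algebras to coincide.

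For the reverse inclusion I would argue that it suffices to show every $y' \in \overline{\spann B}^{\tauweaks}$ is $\s(B)$-measurable, i.e.\ $\s(y')\subset \s(B)$; then $\s(\overline{\spann B}^{\tauweaks}) = \s\bigl(\bigcup_{y'\in \overline{\spann B}^{\tauweaks}} \s(y')\bigr) \subset \s(B)$. This is done in two sub-steps. First, linear combinations: if $y'_1,\dots,y'_k \in B$ and $y' = \sum_{j} \a_j y'_j$, then $y'$ is a continuous (hence Borel) function of $(y'_1,\dots,y'_k):F \to \R^k$, and the latter is $\s(B)$-measurable by \cite[Theorem 1.90]{Klenke14} (each coordinate being one of the generators of $\s(B)$); thus $\s(y')\subset \s(B)$ for all $y'\in \spann B$, so $\spann B$ is a family of $\s(B)$-measurable functions. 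Second, $\tauweaks$-limits: here I would use that $F$ is separable, so by Theorem \ref{thm:alaoglu-and-more} the relative $\tauweaks$-topology on each ball $rB_{F'}$ is metrizable. Given $y' \in \overline{\spann B}^{\tauweaks}$, pick $r$ with $y' \in rB_{F'}$; since $\overline{\spann B}^{\tauweaks}\cap rB_{F'}$ is metrizable and $\spann B$ is $\tauweaks$-dense in it, there is a \emph{sequence} $(z'_m)\subset \spann B$ (which may be taken in $2rB_{F'}$, say) with $z'_m \to y'$ in the $\tauweaks$-topology, i.e.\ $\dualpair{z'_m}yF \to \dualpair{y'}yF$ for every $y\in F$. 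Hence $\dualpair{y'}\mycdot F$ is a pointwise limit of the $\s(B)$-measurable functions $\dualpair{z'_m}\mycdot F$ and is therefore $\s(B)$-measurable, giving $\s(y')\subset \s(B)$.

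The main obstacle is the $\tauweaks$-closure step: one cannot in general replace a net by a sequence, so separability of $F$ (to get metrizability of $\tauweaks$ on bounded sets via Theorem \ref{thm:alaoglu-and-more}) is essential, and one must be slightly careful to stay inside a fixed ball when extracting the approximating sequence. Everything else — monotonicity of $\s(\cdot)$, the handling of finite linear combinations via \cite[Theorem 1.90]{Klenke14}, and the fact that pointwise limits of measurable functions are measurable — is routine. I would close by noting that combining $\s(\overline{\spann B}^{\tauweaks})\subset\s(B)$ with the trivial inclusions of the first paragraph yields the asserted four-fold equality.
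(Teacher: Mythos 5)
Your first two steps (the monotonicity inclusions and the reduction of $\s(\spann B)$ to $\s(B)$ via continuous functions of finitely many generators) are fine and agree with the paper. The genuine gap is in the $\tauweaks$-closure step. You claim that a given $y'\in \overline{\spann B}^{\tauweaks}$ can be approximated by a \emph{sequence} $(z_m')\subset \spann B$ lying in some fixed ball $2rB_{F'}$. Nothing justifies this: $\tauweaks$-neighbourhoods of $y'$ are norm-unbounded, so the density of $\spann B$ in its $\tauweaks$-closure only produces nets, and there is no reason these nets (or the elements of $\spann B$ near $y'$) can be chosen with uniformly bounded norm; metrizability of $\tauweaks$ holds only on bounded sets, so you cannot extract a sequence. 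Worse, the claim is actually false in general, not merely unproven: for separable $F$ there exist subspaces $V\subset F'$ (classical examples live in $\ell_1=c_0'$) whose $\tauweaks$-sequential closure is a proper subset of $\overline V^{\tauweaks}$, and one may have to iterate the sequential closure transfinitely before reaching the full closure. So the functional $y'$ you want to handle need not be a $\tauweaks$-sequential limit of elements of $\spann B$ at all.

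The paper avoids this by running the sequential argument on a different, a priori larger set: it puts $V:=\spann B$ and $W:=\{y'\in F': y' \mbox{ is } \s(V)\mbox{-measurable}\}$, observes that $W$ is a subspace containing $V$ and is $\tauweaks$-\emph{sequentially} closed (pointwise limits of measurable functions are measurable), and then invokes a consequence of the Krein--\u{S}mulian theorem (\cite[Corollary 2.7.13]{Megginson98}) stating that a $\tauweaks$-sequentially closed convex subset of the dual of a separable space is $\tauweaks$-closed; hence $\overline V^{\tauweaks}\subset W$, which is exactly the measurability you need. The point is that one never has to approximate a given element of the closure by a sequence from $\spann B$; one only has to know that the collection of measurable functionals is closed under sequential limits, and then let Krein--\u{S}mulian upgrade sequential closedness to closedness. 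If you want to repair your proof, this is the missing ingredient. (A shared caveat: both your argument and the paper's use the separability of $F$, via Theorem \ref{thm:alaoglu-and-more} respectively \cite[Corollary 2.7.13]{Megginson98}, even though the lemma is stated for an arbitrary Banach space; this is not specific to your proposal.)
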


\begin{proof}[Proof of Lemma \ref{lem:sub-ws-sig-alg}]
We first prove the equation $\s(B) = \s(\spann B)$. Here, the inclusion ``$\subset$'' is obvious. To establish the converse, we fix an
$y'\in \spann B$. Then there exist an $n\geq 1$ as well as $\a_1,\dots,\a_n\in \R$ and $y_1',\dots,y_n'\in B$ with
$y' = \a_1 y_1'+\dots+\a_n y_n'$. Clearly, each $y_i':F\to \R$ is $(\s(B),\sborel)$-measurable, and hence $y'$ is also
$(\s(B),\sborel)$-measurable. %Klenke, Theorem 1.19
This gives $\s(y') \subset \s(B)$, and since $y'\in \spann B$ was chosen arbitrarily,  we find $\s(\spann B) \subset \s(B)$. %Klenke theorem 1.81

For the proof of $\s(\spann B) = \s(\overline {\spann B}^{\tauweaks})$ we write $V:= \spann B$ and 
\begin{align*}
W:= \bigl\{ y'\in F': y':F\to \R \mbox{ is $\s(V)$-measurable} \bigr\}\, .
\end{align*}
Then $W$ is a subspace of $F'$ with $V\subset W$. Moreover, if we have a sequence $(y_i')\subset W$ that converges to an $y'\in E'$ in
 the  $\tauweaks$-topology, then we have 
\begin{align*}
 \dualpair{ y'}yF =  \lim_{i\to \infty} \dualpair{y_{i}'}yF \, , \myqquad y\in F.
\end{align*}
Since each $y_i':F\to \R$ is  $(\s(V),\sborel)$-measurable, we then conclude that their pointwise limit $y'$
is also $(\s(v),\sborel)$-measurable, that is $y'\in W$. In other words, $W$ is $\tauweaks$-sequentially closed.
Now a consequence of Krein-\u{S}mulian's theorem, see e.g.~\cite[Corollary 2.7.13]{Megginson98}, shows that $W$ is even $\tauweaks$-closed,
and hence we find 
\begin{align*}
\overline V^{\tauweaks} \subset \overline W^{\tauweaks} = W\, .
\end{align*}
By the definition of $W$ we conclude that every $y'\in \overline V^{\tauweaks}$ is $\s(V)$-measurable, and this in turn shows 
$\s(\overline {\spann B}^{\tauweaks}) \subset \s(\spann B) $. The converse inclusion is trivial.

The proof of $\s(B) = \s(\overline B^{\tauweaks})$ follows from the already established identities and trivial inclusions, namely
$\s(B) \subset \s(\overline B^{\tauweaks}) \subset \s(\overline {\spann B}^{\tauweaks}) = \s(\spann B) = \s(B)$.
% 
% For the proof of  $\s(B) =   \s(\overline B^{\tauweaksseq})$, we only need to show ``$\supset$''. To this end, we
% fix a $y'\in \overline B^{\tauweaksseq}$.
% Then we find a sequence
% $(y_{i}')\subset B$ with $y_{i}'\to y'$ in the  $\tauweaks$-topology. This gives
% \begin{align*}
%  \dualpair{ y'}yF =  \lim_{i\to \infty} \dualpair{y_{i}'}yF \, , \myqquad y\in F.
% \end{align*}
% % for all $y\in F$.
% Since each $y_i':F\to \R$ is  $(\s(B),\sborel)$-measurable, we then conclude that their pointwise limit $y'$
% is also $(\s(B),\sborel)$-measurable. % Klenke, Theorem 1.92
% This shows $\s(y') \subset \s(B)$, and again, we can then conclude that $ \s(\overline B^{\tauweaksseq})\subset \s(B)$.
% 
% The third equation follows from applying 
% % is a direct consequence of 
% the first two equations   to $B$ and $\spann B$.
\end{proof}

Let us now turn to the Bochner integral. To this end, 
let $(\Om,\sA)$ be a measurable space, $E$ be a Banach space, and $X:\Om\to E$.
% We say that an $X:\Om\to E$ is weakly measurable, if it is
% $(\sA, \sborelEx E)$-measurable. Moreover, $X$ is called Borel measurable, if it is $(\sA, \sborelnormx E)$-measurable, and
% strongly measurable, if it is Borel measurable and its image $X(\Om)$ is separable.
It is well known, that $X$ is strongly measurable if and only if it is weakly measurable and
$X(\Om)$ is separable, see e.g.~\cite[Theorem E.9]{Cohn13} or \cite[Theorem 1.1.6]{HyvNVeWe16}.
% In particular, if $E$ itself is separable, then all 3 notions of
% measurability coincide, and for this reason we sometimes simply speak of measurable $X:\Om\to E$. If $E$ is separable,
%  we also have $\sborelnorm E = \sborelEx E$, see e.g.~\cite[Proposition 1.1.1]{HyvNVeWe16}.
In addition,
%
% Since
% $\sborelnorm E = \sborelEx E$ and $E$ is separable, such an $X$ is strong measurable, that is, $X$ is
% $(\sA, \sborelnorm E)$-measurable and it has a separable image. Since strongly measurable $X$ are also weakly measurable,
% we simply speak  of measurable $X$ in the following. Now for measurable
if   $X$ is strongly measurable, then
 there exists a sequence $(X_n)$ of (strongly) measurable step functions $X_n:\Om\to E$ with
\begin{align}\label{eq:step-fct-approx-1}
\lim_{n\to \infty} X_n(\om) &= X(\om)   \, , \\ \label{eq:step-fct-approx-2}
\snorm{X_n(\om)}_E &\leq \snorm {X(\om)}_E
\end{align}
for all all $\om \in \Om$ and $n\geq 1$, see e.g.~\cite[Proposition E.2]{Cohn13}.
Here we note that step functions are also called simple functions. In addition,
for step functions  all three notions of measurability coincide, and for this reason, we usually
speak of measurable step functions.
Finally, every pointwise limit $X$ of a sequence $(X_n)$ of strongly measurable functions is strongly measurable,
see e.g.~\cite[Proposition E.1]{Cohn13}. Consequently, we can characterize strong measurability by pointwise approximations 
with measurable step functions in the sense of \eqref{eq:step-fct-approx-1}. In particular, the finite sum of strongly measurable functions is strongly
measurable and if $f:\Om\to \R$ is measurable and $X:\Om\to E$ is strongly measurable, then $f\cdot X:\Om\to E$ is 
strongly measurable.

If we have a probability measure $\P$ on $(\Om,\sA)$ and a
measurable step function $X:\Om\to E$ with some fixed representation $X= \sum_{i=1}^m \eins_{A_i} x_i$ with $A_i\in \sA$ and $x_i \in E$, then
\begin{align}\label{def:step-fct-bochner-int}
\int_\Om X\intd \P := \sum_{i=1}^m \P(A_i) x_i
\end{align}
is actually independent of the chosen representation, where the proof is analogous to the $\R$-valued case. To extend this integral
we say that a strongly measurable $X:\Om\to E$ is Bochner integrable with respect to  $\P$, if
\begin{align*}
\int_\Om \snorm {X(\om)}_E  \intd \P(\om) < \infty\, .
\end{align*}
In this case,
using   approximating sequences described in \eqref{eq:step-fct-approx-1} and \eqref{eq:step-fct-approx-2}, one can define the Bochner integral by
\begin{align}\label{def:bochner-integral}
\E X := \int_\Om X\intd \P := \lim_{n\to \infty} \int_\Om X_n \intd \P\, ,
\end{align}
where  the integral in the middle is independent of the chosen approximation $(X_n)$, see  e.g.~\cite[page 399]{Cohn13}.
Moreover, the limit is, of course, with respect to the norm $\snorm\cdot_E$.
The set $\sLx 1 {\P,E}$ of all Bochner integrable $X:\Om\to E$ is a vector space, see e.g.~\cite[Proposition E.4]{Cohn13},
which obviously contains all measurable step functions. 
Moreover, it is easy to see that
\begin{align}\label{eq:bochner-1-norm}
\snorm X_{\sLx 1 {\P,E}} :=  \int_\Om \snorm{X(\om )}_E \intd \P(\om)
\end{align}
defines a semi-norm on $\sLx 1 {\P,E}$.
% and that all constant functions are contained in $\sLx 1 {\P,E}$. 
In addition, the Bochner integral defines
a linear map $\sLx 1 {\P,E} \to E$ that is also bounded since we have
\begin{align}\label{eq:bochner-int-cont}
\bnorm {\int_\Om X\intd \P }_E \leq \int_\Om \snorm{X(\om )}_E \intd \P(\om) \, , \myqquad X\in \sLx 1 {\P,E}\, ,
\end{align}
see e.g.~\cite[Proposition E.5]{Cohn13}.
As usual, we write $\Lx 1 {\P,E}$ for the space of $\P$-equivalence classes, which becomes a Banach space
if we modify the norm above in the usual way, see e.g.~\cite[page 401]{Cohn13}.

The Bochner integral also enjoys a form of the dominated convergence theorem. Namely, if we have a sequence $(X_n)$ of strongly
measurable $X_n:\Om\to E$ for which there exist both an $X:\Om\to E$ and a $\P$-integrable $g:\Om\to [0,\infty]$ such that
\begin{align} \label{eq:meas-fct-approx-1}
 \lim_{n\to \infty} X_n(\om) &= X(\om)\, ,\\ \label{eq:meas-fct-approx-2}
\snorm{X_n(\om)}_E &\leq \snorm {g(\om)}_E
\end{align}
for all all $\om \in \Om$ and $n\geq 1$, then $X$ and all $X_n$ are Bochner $\P$-integrable, and we have
\begin{align}\label{eq:dominated-conv-bnochner}
 \lim_{n\to \infty} \int_\Om X_n \intd \P = \int_\Om X\intd \P \, .
\end{align}
For a proof, we refer to e.g.~\cite[Proposition E.6]{Cohn13}, where we note that the strong measurability of $X$ follows from
 e.g.~\cite[Proposition E.1]{Cohn13}.

Moreover, if $T:E\to F$ is a bounded linear operator mapping into another Banach space $F$ and $X\in \sLx 1 {\P,E}$, then the map $T\circ X:\Om\to F$ satisfies  both
$T\circ X \in \sLx 1 {\P,F}$ and
\begin{align}\label{eq:transform-bochner}
\int_\Om T\circ X\intd \P = T\biggl(   \int_\Om X\intd \P  \biggr)\, ,
\end{align}
see e.g.~\cite[Theorem 6 in Chapter II.2]{DiUh77} or \cite[Theorem 1.2.4]{HyvNVeWe16}
for a stronger result in this direction that goes back to E.~Hille.
In particular,  for all  $X\in \sLx 1 {\P,E}$, we have both $\dualpair {x'}XE \in \sLx 1 \P$ and the ``Pettis property''
\begin{align}\label{eq:bochner-is-pettis}
\int_\Om  \dualpair {x'}XE  \intd \P = \dualpairbb {x'}  {\int_\Om X\intd \P}E\, , \myqquad x'\in E'\, .
\end{align}
This shows that for $X\in \sLx 1 {\P,E}$ the Bochner integral coincides with the so-called Pettis integral, which is
discussed in e.g.~\cite[Chapter 1.2.c]{HyvNVeWe16}.

Similarly to $\sLx 1 {\P,E}$ one can define, for $p\in (1,\infty)$, the vector space
$\sLx p {\P,E}$ as the set of all strongly measurable
$X:\Om\to E$   with
\begin{align*}
\snorm X_{\sLx p {\P,E}}^p :=  \int_\Om \snorm{X(\om)}_E^p \intd \P(\om) < \infty.
\end{align*}
Again this defines  a semi-norm on $\sLx p {\P,E}$ and $\Lx p {\P,E}$ denotes the Banach space of corresponding  $\P$-equivalence classes.

\stepcounter{appendix}

\section{Covariance Operators}\label{app:covariances}

In this supplement properties of covariance operators of Banach space valued random variables are collected from the literature. In addition, some 
basic and probably well-known results we could not find the literature are presented and proved.

To begin with, 
let us fix a probability space $(\Om,\sA,\P)$ and Banach spaces $E$ and $F$. 
Moreover, let 
  us  fix some $X\in \sLx 2 {\P,E}$ and $Y\in \sLx 2 {\P,F}$, as well as a $y'\in F'$. 
 Since $X$ and $Y$ are strongly measurable, the map 
\begin{align*}
\dualpair {y'}{Y-\E Y} F \cdot (X-\E X):\Om \to E
\end{align*}
  is also strongly measurable, see the discussion following \eqref{eq:step-fct-approx-2}.
  In addition,
the Cauchy-Schwarz inequality shows that it is also Bochner integrable.
Moreover, the resulting linear operator $\cov(X,Y):F'\to E$ defined by
\begin{align*}
\cov(X,Y)y'= \int_\Om \dualpair {y'} {Y-\E Y} F \, (X-\E X) \intd \P \, , \myqquad y'\in F'
\end{align*}
is bounded by virtue of the norm-inequality for Bochner integrals  \eqref{eq:bochner-int-cont} and another application of the Cauchy-Schwarz inequality.
Applying the ``Pettis property'' \eqref{eq:bochner-is-pettis} then gives
\begin{align}\label{eq:weak-cross-cov}
\dualpair {x'}{\cov(X,Y)y'}E
&= \int_\Om \dualpair {y'} {Y-\E Y} F \, \dualpair{x'}{X-\E X}E \intd \P \\ \nonumber
&= \cov\bigl(  \dualpair {y'} {Y} F , \dualpair{x'}{X}E \bigr) \, .
\end{align}
In other words, the cross covariance operator provides the covariances between all pairs of one-dimensional projections of $X$ and $Y$.
In particular, if $X$ and $Y$ are independent, then all pairs $\dualpair{x'}{X}E$ and $\dualpair {y'} {Y} F$ are independent, and hence we find 
$\cov(X,Y) = 0$.
Another routine check shows that the dual $\cov (X, Y)':E'\to F''$ of $\cov(X,Y)$ is given by 
\begin{align}\label{eq:dual-of-cross-cov}
\cov (X, Y)' = \bidualmap_F \circ  \cov(Y,X) \, ,
\end{align}
where $\bidualmap_F:F\to F''$ denotes the canonical embedding of $F$ into its bidual $F''$.
In addition, if $S:E\to \tilde E$ and $T:F\to \tilde F$ are bounded linear 
 operators and we have $x\in   E$ and $ y\in F$, then the following identities hold true:
 \begin{align}\label{eq:covs-of-compositions-new}
 \cov( S\circ X,  T\circ Y  ) &= S \circ \cov(X,Y) \circ T' \, , \\ \label{eq:covs-of-trala}
  \cov( x + X, y+  Y  )  &=  \cov(X,Y)  \, . 
 \end{align}
 Indeed,  \eqref{eq:covs-of-compositions-new} follows by applying \eqref{eq:transform-bochner} with the operator $S$ and using the definition of $T'$. Moreover, 
 \eqref{eq:covs-of-trala} is a direct consequence of the definition of $\cov(X,Y)$.

In the case $X=Y$, the covariance operator $\cov(X) := \cov(X,X)$ is an operator $\cov (X):E'\to E$ and with \eqref{eq:weak-cross-cov} we find 
\begin{align}\label{eq:weak-cov-sym}
\dualpair {x'_1}{\cov(X)x'_2}E &= \dualpair {x'_2}{\cov(X)x'_1}E  \, , \\ \label{eq:weak-cov-pos}
\dualpair {x'}{\cov(X)x'}E &= \var \bigl(   \dualpair{x'}{X}E \bigr) \geq 0
\end{align}
for all $x',x_1',x_2'\in E'$, where $\var(Z)$ denotes the variance of an $\R$-valued random variable $Z\in \sLx 2 \P$.
This motivates the following definition.

\begin{definition}\label{def:abstr-cov}
Let $E$ be a Banach space. A bounded linear operator $T:E'\to E$ is called an abstract covariance operator if it is
symmetric and non-negative, that is,
if
 for all $x',x_1',x_2'\in E'$, we have
\begin{align*}
\dualpair {x_1'} {Tx_2'}E = \dualpair {x_2'} {Tx_1'}E  \myqquad \mbox{ and } \myqquad \dualpair {x'} {Tx'}E \geq 0\, .
\end{align*}
Analogously,   a bounded linear operator $S:E\to E'$ is symmetric and non-negative, if for all $x,x_1,x_2\in E$ we have
\begin{align*}
\dualpair {Sx_1}{x_2}E = \dualpair {Sx_2}{x_1}E  \myqquad \mbox{ and } \myqquad \dualpair {Sx} {x}E \geq 0\, .
\end{align*}
\end{definition}

As discussed above, every covariance operator $\cov (X)$ is an abstract covariance operator.
In addition, an elementary calculation shows that 
the sum $T_1+T_2:E'\to E$ of two abstract covariance operators $T_1,T_2:E'\to E$ is an abstract covariance operator.
Moreover,   every abstract covariance operator $T:E'\to E$ satisfies
\begin{align*}
\dualpair{x_1' + x_2'}{T (x_1' + x_2')}E
=
\dualpair{x_1'}{T x_1'}E
+
2\dualpair{x_1'}{T x_2'}E
 +
\dualpair{x_2'}{T x_2'}E\, ,
\end{align*}
which in turn implies
\begin{align}\label{eq:cov-kernel-diagonal-new}
\dualpair{x_1'}{T x_2'}E
=
\frac{\dualpair{x_1' + x_2'}{T (x_1' + x_2')}E - \dualpair{x_1'}{T x_1'}E - \dualpair{x_2'}{T x_2'}E} 2
\end{align}
for all $x_1',x_2'\in E'$.
The following lemma uses this observation to show
that abstract covariance operators are determined by the behavior of $x'\mapsto \dualpair {x'} {Tx'}E$, that is, on the diagonal.

\begin{lemma}\label{lem:abstr-cov-on-diag}
Let $E$ be a Banach space and $S,T:E'\to E$ be two abstract covariance operators with
\begin{align*}
\dualpair{x'}{T x'}E = \dualpair{x'}{S x'}E\, , \myqquad x'\in E'.
\end{align*}
Then we have $S=T$.
\end{lemma}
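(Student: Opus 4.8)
The plan is to use the polarization identity \eqref{eq:cov-kernel-diagonal-new}, which is available for any abstract covariance operator. Both $S$ and $T$ are abstract covariance operators, so for each of them the off-diagonal values are completely determined by the diagonal values via
\begin{align*}
\dualpair{x_1'}{T x_2'}E
&=
\frac{\dualpair{x_1' + x_2'}{T (x_1' + x_2')}E - \dualpair{x_1'}{T x_1'}E - \dualpair{x_2'}{T x_2'}E} 2\, ,\\
\dualpair{x_1'}{S x_2'}E
&=
\frac{\dualpair{x_1' + x_2'}{S (x_1' + x_2')}E - \dualpair{x_1'}{S x_1'}E - \dualpair{x_2'}{S x_2'}E} 2\, .
\end{align*}
First I would invoke the hypothesis $\dualpair{x'}{T x'}E = \dualpair{x'}{S x'}E$ for \emph{all} $x'\in E'$, applied to the three arguments $x_1'$, $x_2'$, and $x_1'+x_2'$. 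Substituting these three equalities into the two displayed formulas above gives $\dualpair{x_1'}{T x_2'}E = \dualpair{x_1'}{S x_2'}E$ for all $x_1',x_2'\in E'$.

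It then remains to pass from this ``weak'' equality of the two operators to the genuine operator equality $S=T$. Fix $x_2'\in E'$ and set $z := Tx_2' - Sx_2' \in E$. The previous step shows $\dualpair{x_1'}{z}E = 0$ for every $x_1'\in E'$, and by the Hahn--Banach theorem this forces $z = 0$, i.e.\ $Tx_2' = Sx_2'$. Since $x_2'\in E'$ was arbitrary, we conclude $S=T$, which finishes the proof.

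There is no real obstacle here: the only two ingredients are the polarization identity \eqref{eq:cov-kernel-diagonal-new}, which the excerpt has already derived for abstract covariance operators, and the standard Hahn--Banach separation fact that a vector annihilated by all of $E'$ must be zero. The mildly delicate point — and the only thing worth being careful about — is that one must genuinely use the diagonal hypothesis at the \emph{sum} $x_1'+x_2'$ as well as at $x_1'$ and $x_2'$ separately; using it only at $x_1'$ and $x_2'$ would not suffice.
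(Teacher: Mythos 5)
Your proposal is correct and follows exactly the paper's argument: polarization via \eqref{eq:cov-kernel-diagonal-new} combined with the diagonal hypothesis at $x_1'$, $x_2'$, and $x_1'+x_2'$ yields $\dualpair{x_1'}{Sx_2'}E = \dualpair{x_1'}{Tx_2'}E$, and Hahn--Banach then gives $Sx_2'=Tx_2'$ for all $x_2'$. Nothing is missing.
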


\begin{proof}[Proof of Lemma \ref{lem:abstr-cov-on-diag}]
Let us fix some $x_1',x_2'\in E'$. By \eqref{eq:cov-kernel-diagonal-new} and the assumed identity on the diagonal we then know
 $\dualpair{x_1'}{S x_2'}E   = \dualpair{x_1'}{T x_2'}E$.
This implies $Sx_2' = Tx_2'$ for all $x'_2\in E'$ by Hahn-Banach's Theorem.
Consequently we have $S= T$.
\end{proof}

In the finite dimensional case $E=\R^n$, one can quickly check that a linear operator $T:(\R^n)'\to \R^n$ is an abstract covariance operator, if and only if 
its representing $n\times n$-matrix $A$ with respect to the canonical bases of $(\R^n)'$ and $\R^n$ is symmetric and positive semi-definite.
In this case, the spectral theorem for matrices shows $A=S\Lambda S\transpose$, where 
$\Lambda$ is a diagonal $n\times n$-matrix given by the eigenvalues $\lb_1,\dots,\lb_n$ of $A$
and $S$ is an orthogonal 
$n\times n$-matrix $S$. Moreover, we have 
\begin{align}\label{eq:psd-of-mpinv}
A\mpinv = (S\Lambda S\transpose)\mpinv = S\Lambda\mpinv S\transpose\, , 
\end{align}
where the second identity can be found in e.g.~\cite[Theorem 1.1.2]{WaWeQi18}. Moreover, a straightforward calculation shows that 
$\Lambda\mpinv$ is the diagonal $n\times n$-matrix having the entries $\lb_1^{-1},\dots,\lb_n^{-1}$ with $0^{-1} := 0$. 
Consequently, $A\mpinv$ is also symmetric and positive semi-definite. Moreover, this matrix describes 
the operator $T\mpinv:\R^n\to (\R^n)'$, and therefore $T\mpinv$ is symmetric and non-negative.

% ---------------------------------------------------------------------------------------------------

It can be quickly verified that an abstract covariance operator $T:E'\to E$ defines a (reproducing) kernel $k_T:E'\times E'\to \R$  by
\begin{align*}
k_T(x_1',x_2') := \dualpair {x_1'} {Tx_2'}E\, , \myqquad x_1',x_2'\in E'\, .
\end{align*}
Lemma \ref{lem:abstr-cov-on-diag} shows that this kernel is determined by its behaviour on the diagonal. Moreover, the 
Cauchy-Schwarz inequality for kernels, see e.g.~\cite[(4.14)]{StCh08} yields
\begin{align}\nonumber
|\dualpair {x_1'} {Tx_2'}E|^2
= |k_T(x_1',x_2')|^2 
&\leq k_T(x_1',x_1') \cdot k_T(x_2',x_2') \\ \label{eq:csu-for-abstr-cov}
&= \dualpair {x_1'} {Tx_1'}E \cdot \dualpair {x_2'} {Tx_2'}E \, .
\end{align}
Consequently, for every $x_2'\in E'$ with $\dualpair {x_2'} {Tx_2'}E = 0$ we find 
 $Tx_2' = 0$ by a simple application of Hahn-Banach's theorem.

The next lemma collects some additional information
about this kernel $k_T$ in the case of  covariance operators $T=\cov(X)$.

\begin{theorem}\label{thm:continuous-covariance-kernel}
Let $(\Om, \sA, \P)$ be a probability space, $E$ be a separable Banach space,
$X\in \sLx 2 {\P,E}$, and $k_X := k_{\cov (X)}:E'\times E'\to \R$ be the resulting kernel. Then the
restriction $(k_X)_{|B_{E'}\times B_{E'}}$ of $k_X$ onto $B_{E'}\times B_{E'}$ is $\tauweaks$-continuous.
\end{theorem}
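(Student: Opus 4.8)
The statement to prove is that for $X\in\sLx 2{\P,E}$ with $E$ separable, the kernel $k_X(x_1',x_2')=\dualpair{x_1'}{\cov(X)x_2'}E$ restricted to $B_{E'}\times B_{E'}$ is $\tauweaks$-continuous. Since $B_{E'}$ equipped with the relative $\tauweaks$-topology is metrizable by Theorem \ref{thm:alaoglu-and-more}, I would reduce the problem to sequential continuity: it suffices to show that whenever $(x_{1,n}')\to x_1'$ and $(x_{2,n}')\to x_2'$ in the $\tauweaks$-topology with all terms in $B_{E'}$, we have $k_X(x_{1,n}',x_{2,n}')\to k_X(x_1',x_2')$.

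The plan is to use the integral representation \eqref{eq:weak-cross-cov}, which gives
\begin{align*}
k_X(x_1',x_2') = \cov\bigl(\dualpair{x_1'}XE,\dualpair{x_2'}XE\bigr)
= \int_\Om \dualpair{x_1'}{X-\E X}E\cdot\dualpair{x_2'}{X-\E X}E\intd\P\, .
\end{align*}
First I would note that by the definition of the $\tauweaks$-topology, $\tauweaks$-convergence $x_{i,n}'\to x_i'$ means pointwise convergence $\dualpair{x_{i,n}'}{x}E\to\dualpair{x_i'}{x}E$ for every $x\in E$; in particular, applying this at $x=X(\om)$ and $x=\E X$ gives $\dualpair{x_{i,n}'}{X(\om)-\E X}E\to\dualpair{x_i'}{X(\om)-\E X}E$ for every $\om\in\Om$. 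Hence the integrands converge pointwise on $\Om$. To pass the limit through the integral, I would invoke the dominated convergence theorem: since all $x_{i,n}'\in B_{E'}$, we have the bound $|\dualpair{x_{i,n}'}{X(\om)-\E X}E|\le\snorm{X(\om)-\E X}_E$, so the integrand is dominated by $\snorm{X(\om)-\E X}_E^2$, which is $\P$-integrable because $X\in\sLx 2{\P,E}$ (and hence $X-\E X\in\sLx 2{\P,E}$, using that $\sLx 2{\P,E}$ is a vector space containing constants by Fernique-type integrability, or simply directly). Therefore $k_X(x_{1,n}',x_{2,n}')\to k_X(x_1',x_2')$, establishing sequential $\tauweaks$-continuity, and by metrizability, $\tauweaks$-continuity of $(k_X)_{|B_{E'}\times B_{E'}}$.

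I do not expect any serious obstacle here; the argument is essentially a routine dominated convergence argument once one observes that $\tauweaks$-convergence is exactly pointwise convergence on $E$ and that the unit-ball restriction supplies the uniform domination. The one small subtlety worth stating carefully is the reduction from joint $\tauweaks$-continuity on the product $B_{E'}\times B_{E'}$ to sequential joint continuity: this is legitimate precisely because the product of two metrizable spaces is metrizable, so that the product topology on $B_{E'}\times B_{E'}$ is again metrizable and sequential continuity implies continuity. Also one should remark that $\E X\in E$ is a fixed element (well-defined since $\snorm X_E\le 1+\snorm X_E^2$ gives $X\in\sLx 1{\P,E}$), so the translation by $\E X$ causes no difficulty; alternatively one can write $k_X(x_1',x_2')=\int_\Om\dualpair{x_1'}XE\dualpair{x_2'}XE\intd\P-\dualpair{x_1'}{\E X}E\dualpair{x_2'}{\E X}E$ and handle the two terms separately, the second being manifestly $\tauweaks$-continuous as a product of two $\tauweaks$-continuous coordinate functionals and the first by dominated convergence with the same domination $\snorm X_E^2$.
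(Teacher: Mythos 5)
Your proof is correct and follows essentially the same route as the paper: metrizability of $(B_{E'},\tauweaks)$ reduces the claim to sequential continuity, and the limit is passed through the covariance integral by dominated convergence with the dominating function $\snorm{X(\om)-\E X}_E^2\in\sLx 1\P$. The only (harmless) difference is that you apply dominated convergence directly to the two-variable integrand, whereas the paper first reduces joint continuity to continuity on the diagonal via the polarization identity \eqref{eq:cov-kernel-diagonal-new} and then treats only sequences $x_n'\to x_0'$; both are fine.
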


\begin{proof}[Proof of Theorem \ref{thm:continuous-covariance-kernel}]
Since $(B_{E'}, \tauweaks)$ is metrizable, see Theorem \ref{thm:alaoglu-and-more},
it suffices to show that the restricted kernel is $\tauweaks$-sequentially continuous.
Moreover, by \eqref{eq:cov-kernel-diagonal-new} it suffices to show that the kernel restricted to $2B_{E'}$  is
$\tauweaks$-sequentially continuous on the diagonal.
Finally, by a simple scaling argument, it actually suffices to verify that
  $x'\mapsto k_X(x',x')$ is $\tauweaks$-sequentially continuous on $B_{E'}$.

To prove the latter, we fix some $x'_0 \in B_{E'}$ as well as a sequence $(x_n') \subset B_{E'}$ such that
$x_n'\to x'_0$  with respect to $\tauweaks$.
Moreover,   we write    $Z:= X-\E X$. By \eqref{eq:covs-of-trala} and \eqref{eq:weak-cross-cov}
we then find
\begin{align}
\scriptonlyraw{&}
\dualpair {x_n'}{\cov (X) x_n'}E \scriptonlyraw{\\ \nonumber}
=  \dualpair  {x_n' }{\cov (Z) x_n'}E
&=
 \int_\Om  \dualpair {x_n'}ZE \cdot   \dualpair {x_n'}ZE    \intd\P        \label{thm:continuous-covariance-kernel-h1}
\end{align}
for all $n\geq 0$.
For $n\geq 0$ we define $g_n:\Om \to [0,\infty)$ by
\begin{align*}
g_n(\om) :=   \dualpair {x_n'}{Z(\om)}E^2\, , \qquad \qquad \om \in \Om.
\end{align*}
By the assumed $\tauweaks$-convergence $x_n'\to x'_0$ we then see that $g_n(\om)\to g_0(\om)$ for all $\om \in \Om$.
Moreover, a simple estimate yields
\begin{align*}
|g_n(\om)|
\leq \snorm{x_n'}_{E'}^2 \cdot  \snorm{Z(\om)}_E^2
\leq  \snorm{Z(\om)}^2_E \, , \myqquad \om \in \Om.
\end{align*}
Using $Z\in \sLx 2 {\P,E}$,
Lebesgue's dominated convergence theorem, and \eqref{thm:continuous-covariance-kernel-h1}, we then find
\begin{align*}
\dualpair {x_n' }{\cov (X) x_n' }E    \to \dualpair {x_0' }{\cov (X) x_0' }E \, ,
\end{align*}
that is, we have shown the assertion.
\end{proof}

% -----------------------------------------------------------------------------------------------------------------

\begin{lemma}\label{lem:cov-cross-co}
Let $E$ and $F$ be separable Banach spaces, $(\Om, \sA, \P)$ be a probability space, and  
$X\in \sLx 2 {\P,E}$ and $Y\in \sLx 2 {\P,F}$ be centered.
Moreover, let 
% 
% 
% $X:\Om\to E$ and $Y:\Om\to F$ be centered
% Gaussian random variables, and 
$S:F\to F'$ be a bounded,  linear,  symmetric, and non-negative operator.
Then the operator $K:E'\to E$ defined by
\begin{align*}
K := \cov(X,Y) S \cov(Y,X)\,.
\end{align*}
is an abstract covariance operator, and for all $x_1',x_2'\in E'$ we have
\begin{align*}
\dualpair {x_2'}{Kx_1'}E
\scriptonlyraw{&}= \!\int\limits_{\Om\times \Om} \!\!\!\dualpair{x_1'}{X(\om_1)}E \cdot \dualpair{x_2'}{X(\om_2)}E \scriptonlyraw{\\}
\scriptonlyraw{&\qquad\qquad}
\cdot \dualpair{SY(\om_1)}{Y(\om_2)}F \intd \P^2(\om_1,\om_2) .
\end{align*}
Moreover, the map $B_{E'}\to \R$ defined by $x'\mapsto  \dualpair{ x'} {Kx'}E$ is $\tauweaks$-continuous.
\end{lemma}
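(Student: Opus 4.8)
\textbf{Proof proposal for Lemma \ref{lem:cov-cross-co}.}

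The plan is to establish the three assertions in order, exploiting the fact that $S$ is symmetric and non-negative and that $X,Y$ are centered, so that all the cross-covariance identities from Supplement \ref{app:covariances} are available. First I would verify that $K$ is an abstract covariance operator. Symmetry follows from \eqref{eq:dual-of-cross-cov}: since $X$ and $Y$ are centered, $\cov(X,Y)' = \iota_F \circ \cov(Y,X)$, and using $S' = S$ (after the canonical identifications) one computes $\dualpair{x_1'}{Kx_2'}E = \dualpair{x_1'}{\cov(X,Y)S\cov(Y,X)x_2'}E = \dualpair{\cov(Y,X)x_1'}{S\cov(Y,X)x_2'}F$, which is symmetric in $x_1',x_2'$ because $S$ is symmetric. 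Non-negativity is the special case $x_1' = x_2' = x'$: the same identity gives $\dualpair{x'}{Kx'}E = \dualpair{\cov(Y,X)x'}{S\cov(Y,X)x'}F \geq 0$ since $S$ is non-negative.

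Next I would prove the integral formula. Starting from $\dualpair{x_2'}{Kx_1'}E = \dualpair{S\cov(Y,X)x_1'}{\cov(Y,X)x_2'}F$ (obtained as above), I would write each factor $\cov(Y,X)x_i' \in F$ as a Bochner integral: by definition $\cov(Y,X)x_i' = \int_\Om \dualpair{x_i'}{X(\om_i)}E\, Y(\om_i)\intd\P(\om_i)$ since $X$ is centered. Because $S$ is bounded linear, I can pull it inside the first Bochner integral using \eqref{eq:transform-bochner}, and then apply the Pettis property \eqref{eq:bochner-is-pettis} with the functional $\cov(Y,X)x_2'$ paired via $SY(\om_1) \mapsto \dualpair{SY(\om_1)}{\cdot}F$ — more precisely, I would first write the outer pairing as an integral over $\om_1$ using \eqref{eq:bochner-is-pettis}, then expand the remaining $\cov(Y,X)x_2'$ as an integral over $\om_2$ and pull the scalar $\dualpair{x_1'}{X(\om_1)}E$ out. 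The joint integrability needed to write this as a single integral over $\Om\times\Om$ with respect to $\P^2$ follows from Cauchy–Schwarz together with $X\in\sLx 2{\P,E}$, $Y\in\sLx 2{\P,F}$, and the boundedness of $S$ (so the integrand is dominated by $\snorm{x_1'}_{E'}\snorm{x_2'}_{E'}\snorm S \cdot \snorm{X(\om_1)}_E\snorm{X(\om_2)}_E\snorm{Y(\om_1)}_F\snorm{Y(\om_2)}_F$); this legitimizes the use of Fubini's theorem to reassemble the iterated integral into the claimed double integral.

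Finally, for the $\tauweaks$-continuity of $x'\mapsto\dualpair{x'}{Kx'}E$ on $B_{E'}$, I would mimic the proof of Theorem \ref{thm:continuous-covariance-kernel}. Since $(B_{E'},\tauweaks)$ is metrizable by Theorem \ref{thm:alaoglu-and-more}, it suffices to check sequential continuity: given $x_n'\to x_0'$ in $\tauweaks$ with all $x_n'\in B_{E'}$, I use the integral formula with $x_1'=x_2'=x_n'$, namely $\dualpair{x_n'}{Kx_n'}E = \int_{\Om\times\Om}\dualpair{x_n'}{X(\om_1)}E\dualpair{x_n'}{X(\om_2)}E\dualpair{SY(\om_1)}{Y(\om_2)}F\intd\P^2$. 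The integrand converges pointwise on $\Om\times\Om$ because $\dualpair{x_n'}{X(\om_i)}E\to\dualpair{x_0'}{X(\om_i)}E$ for each fixed $\om_i$, and it is dominated by the $\P^2$-integrable function $\snorm S\cdot\snorm{X(\om_1)}_E\snorm{X(\om_2)}_E\snorm{Y(\om_1)}_F\snorm{Y(\om_2)}_F$ uniformly in $n$ (using $\snorm{x_n'}_{E'}\leq 1$). Lebesgue's dominated convergence theorem on the product space then yields $\dualpair{x_n'}{Kx_n'}E\to\dualpair{x_0'}{Kx_0'}E$. The main obstacle, such as it is, is the careful bookkeeping in the middle step — justifying the interchange of the two Bochner integrals with the scalar pairings and invoking Fubini — but this is routine given the $\sLx 2$ hypotheses; everything else is a direct application of the cited machinery.
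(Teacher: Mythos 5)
Your proposal is correct and follows essentially the same route as the paper's proof: the identity $\dualpair{x_1'}{Kx_2'}E = \dualpair{S\cov(Y,X)x_2'}{\cov(Y,X)x_1'}F$ via \eqref{eq:dual-of-cross-cov} for symmetry and non-negativity, expansion of the cross covariances as Bochner integrals with \eqref{eq:transform-bochner} and \eqref{eq:bochner-is-pettis} plus the Cauchy--Schwarz domination and Fubini for the double-integral formula, and metrizability of $(B_{E'},\tauweaks)$ with dominated convergence on $\Om\times\Om$ for the continuity. Only a cosmetic remark: since $S$ maps $F\to F'$, the pairing should be written $\dualpair{S\cov(Y,X)x_2'}{\cov(Y,X)x_1'}F$ rather than with the arguments interchanged, and the symmetry used is that of Definition \ref{def:abstr-cov} rather than an operator identity $S'=S$.
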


\begin{proof}[Proof of Lemma \ref{lem:cov-cross-co}]
Obviously, $K$ is bounded and linear. To verify that $K$ is an abstract covariance operator, we fix some
$x_1',x_2'\in E'$ and write $y_i := \cov(Y,X)x_i' \in F$. With the help of \eqref{eq:dual-of-cross-cov} we then obtain 
\begin{align*}
\dualpair {x_1'} {Kx_2'}E 
= \dualpair {x_1'} {\cov(X,Y) S y_2}E 
&= \dualpair {\cov(X,Y)'x_1'}{Sy_2}{F'}\\
&=\dualpair {\bidualmap_F\cov(Y,X)x_1'}{Sy_2}{F'} \\
&= \dualpair {Sy_2}{y_1}F \, .
\end{align*}
Since we assumed that $S$ is symmetric and non-negative, we can now quickly verify that this is also true for $K$.
% i.e.~$K$ is an abstract covariance operator.

To establish the integral formula, we again
 fix some  $x_1',x_2'\in E'$.
Then the definition of $ \cov(Y,X)$ gives
\begin{align*}
y'_1:=
S \cov(Y,X) x'_1
\scriptonlyraw{&}= S \biggl( \int_\Om  \dualpair{x_1'}XE \cdot Y \intd \P \biggr) \scriptonlyraw{\\}
\scriptonlyraw{&}= \int_\Om  \dualpair{x_1'}XE \cdot (S\circ  Y) \intd \P\, ,
\end{align*}
where in the last step we used  \eqref{eq:transform-bochner}.
Another routine calculation for $y\in F$ thus yields
\begin{align*}
\dualpair {y'_1} yF
=
\dualpair{ \bidualmap_F y}{y'_1}{F'}
&=
\dualpairbb{  \bidualmap_F y}{ \int_\Om  \dualpair{x_1'}XE \cdot (S\circ  Y) \intd \P }{F'} \\
&=
\int_\Om  \dualpair{x_1'}XE \cdot \dualpair{\bidualmap_F y}{S\circ  Y}{F'} \intd \P \\
&=
\int_\Om  \dualpair{x_1'}{X(\om_1)}E \cdot \dualpair{S\circ  Y(\om_1)}{y}F \intd \P(\om_1)\, ,
\end{align*}
where we used the ``Pettis property'' \eqref{eq:bochner-is-pettis} in the third step.
Combining the last identity with the definition of $\cov(X,Y)$ and \eqref{eq:bochner-is-pettis}, we then find
\begin{align} \nonumber
&\dualpair {x_2'}{Kx_1'}E \\ \nonumber
&=
\dualpair {x_2'}{\cov(X,Y)y_1'}E \\ \nonumber
&=
\dualpairbb{ x_2'}{ \int_\Om  \dualpair{y'_1}{Y(\om_2)}F \cdot X(\om_2) \intd \P(\om_2)}E \\ \nonumber
&=
\int_\Om  \dualpair{y'_1}{Y(\om_2)}F \cdot \dualpair{x_2'}{X(\om_2)}E \intd \P(\om_2)  \\ \label{lem:cov-cross-co-h666}
 &=
  \int_\Om  \int_\Om  \dualpair{x_1'}{X(\om_1)}E \cdot \dualpair{ S\circ  Y(\om_1)}{Y(\om_2)}F   
  \cdot \dualpair{x_2'}{X(\om_2)}E \intd \P(\om_1)   \intd \P(\om_2)\, .
\end{align}
Moreover, we have
\begin{align}\nonumber
g(\om_1,\om_2) &:= \Bigl| \dualpair{x_1'}{X(\om_1)}E \cdot \dualpair{ S\circ  Y(\om_1)}{Y(\om_2)}F \cdot \dualpair{x_2'}{X(\om_2)}E  \Bigr| \\ \label{lem:cov-cross-co-h1}
&\leq
\snorm{x_1'}\cdot \snorm{x_2'} \cdot \snorm S \cdot \snorm{X(\om_1)}_E \cdot \snorm{Y(\om_1)}_F \scriptonlyraw{\\ \nonumber}
\scriptonlyraw{&\qquad\qquad}   \cdot \snorm{Y(\om_2)}_F \cdot \snorm{X(\om_2)}_E 
\end{align}
for all $\om_1,\om_2\in \Om$. Now, by assumption and the Cauchy-Schwarz inequality 
we know %   Fernique's theorem ensures $\snorm X_E, \snorm Y_F \in \sLx 2 \P$, and therefore we find
$\snorm{X}_E\cdot\snorm {Y}_F \in \sLx 1 \P$. % by the Cauchy-Schwarz inequality. 
This gives
\begin{align} \nonumber 
&\int_\Om \int_\Om |g(\om_1,\om_2) | \intd \P(\om_1)   \intd \P(\om_2)  \\ \nonumber
&\leq 
\snorm{x_1'}\cdot \snorm{x_2'} \cdot \snorm S  \cdot \int_\Om   \snorm{X(\om_1)}_E \cdot \snorm{Y(\om_1)}_F  \intd \P(\om_1)  
\\ \nonumber  & \qquad \qquad
\cdot \int_\Om    \snorm{X(\om_2)}_E \cdot \snorm{Y(\om_2)}_F \intd \P(\om_2) \\ \label{lem:cov-cross-co-h2}
&< \infty \, ,
\end{align}
and therefore 
% 
% Consequently, the integrand in the
% double integral above is $\P\otimes \P$-integrable.
Fubini's theorem applied to \eqref{lem:cov-cross-co-h666} yields the formula for $\dualpair {x_2'}{Kx_1'}E$.

Let us now recall from   Theorem \ref{thm:alaoglu-and-more}  that $(B_{E'}, \tauweaks)$ is a compact metrizable space since we assumed that
$E$ is separable.
Consequently, it suffices to show that $x'\mapsto  \dualpair{ x'}{ Kx'}E$ is $\tauweaks$-sequentially continuous. To this end,
we fix an $x'_0 \in B_{E'}$ and a 
sequence $(x_n')\subset B_{E'}$ that converges to  $x'_0$ with respect to $\tauweaks$, that is,
$\dualpair {x_n'}xE \to \dualpair {x_0'}xE$ for all $x\in E$. 
We write
\begin{align*}
\scriptonlyraw{&}
f_n(\om_1,\om_2) \scriptonlyraw{\\}
\scriptonlyraw{&}:=  \dualpair{x_n'}{X(\om_1)}E \cdot \dualpair{ S\circ  Y(\om_1)}{Y(\om_2)}F \cdot \dualpair{x_n'}{X(\om_2)}E
\end{align*}
for all $n\geq 0$ and $\om_1,\om_2\in \Om$. Then we have $f_n \to f_0$ pointwise and \eqref{lem:cov-cross-co-h1}
shows
\begin{align*}
\scriptonlyraw{&}|f_n(\om_1,\om_2)| \scriptonlyraw{\\}
\scriptonlyraw{&}\leq \snorm S \cdot \snorm{X(\om_1)} \cdot \snorm{Y(\om_1)} \cdot \snorm{Y(\om_2)}  \cdot \snorm{X(\om_2)}  
\end{align*}
for all $n\geq 0$ and $\om_1,\om_2\in \Om$.
Since this upper bound of $f_n$ is $\P\otimes\P$-integrable by \eqref{lem:cov-cross-co-h2} and Fubini's theorem, 
we then find 
\begin{align*}
\dualpair {x_n'}{Kx_n'}E  = \int_{\Om\times \Om} f_n\intd \P^2 \to \int_{\Om\times \Om} f_0\intd \P^2 =  \dualpair {x_0'}{Kx_0'}E
\end{align*}
% 
% $\langle x_n', Kx_n'\rangle \to   \langle x_0', Kx_0'\rangle$
by the already established formula  for $\langle x_n', Kx_n'\rangle$ and Lebesgue's dominated convergence theorem.
\end{proof}

%------------------------------------------------------------------------------------------------------

So far, we have defined the cross covariance operator $\cov(Y,X):E'\to F$ with the help of an
$F$-valued Bochner integral that is defined pointwise, i.e.~for all $x'\in E'$.
Our next goal is to show that the cross covariance operator can also be expressed by an operator valued Bochner integral.
Here we note that in the Hilbert space case such a representation can sometimes be found in the literature, while in the general Banach space 
case we could not find such representation. For this reason, we explain its construction in some detail.

Let us begin by  quickly recalling some basic facts about tensor products:
% \footnote{In functional analysis one usually considers tensor products $F'\otimes E$ instead. For this reason we provide a bit more of information.}
Let us fix
some Banach spaces $E$ and $F$. For $x'\in E'$ and $y\in F$, we  define
the bounded linear operator $x'\otimes y:E\to F$ by
\begin{align}\label{eq:def-tens-prod}
(x'\otimes y)(x) := \dualpair{x'}xE \cdot  y\, , \myqquad x\in E\, .
\end{align}
A simple calculation then shows that
\begin{align}\label{eq:norm-elem-tensor}
\snorm{x'\otimes y} = \sup_{x\in B_E} \snorm{\dualpair{x'}xE \cdot  y}_F = \sup_{x\in B_E}  |\dualpair{x'}xE| \cdot \snorm{y}_F
= \snorm  {x'}_{E'} \cdot \snorm y_F\, .
\end{align}
As usual, $x'\otimes y$ is called an elementary tensor. Elementary tensors can be added and a 
quick calculation  shows bilinearity, or more precisely
\begin{align*}
x'_1\otimes y + x'_2\otimes y &= (x'_1+x'_2)\otimes y \,, &  \a \cdot (x'\otimes y) &= (\a x')\otimes y \, ,\\
x'\otimes y_1 + x'\otimes y_2 &= x'\otimes (y_1+y_2) \, , &  \b \cdot (x'\otimes y) &= x'\otimes (\b y)
\end{align*}
for all $x',x'_1,x'_2\in E'$,  $y,y_1,y_2\in F$,  and $\a,\b\in \R$, where, as usual, we assumed that $\otimes$ has a higher priority than $+$.
We define the algebraic tensor product space by 
\begin{align*}
E'\otimes F := \spann\{ x'\otimes y: x'\in E' , y\in F \} \, .
\end{align*}
It is easy to see that
this space equals the space of all bounded linear operators $A:E\to F$ with finite rank.
Moreover, for $x'\otimes y:E\to F$, its adjoint $(x'\otimes y)':F'\to E'$ is given by
\begin{align}\label{eq:adjoint-elem-tens}
(x'\otimes y)' = (\bidualmap_F y)\otimes x' \, .
\end{align}
Indeed, for $x\in E$ and $y'\in F'$ we have
$((\bidualmap_F y)\otimes x')(y') = \dualpair{\bidualmap_F y}{ y'}{F'} \cdot x' = \dualpair{y'}yF \cdot x'$, and hence we find
\begin{align*}
\dualpairb {((\bidualmap_F y)\otimes x')(y')} x E
=
\dualpairb { \dualpair{y'}yF \cdot x'} x E
&=
\dualpair {y'}yF \cdot \dualpair {x'}xE\\
&=
\dualpairb {y'} {\dualpair {x'}xE \cdot y}F \\
&=
\dualpair{y'}{(x'\otimes y)(x)}F \, .
\end{align*}
A bounded linear operator $A:E\to F$ is called nuclear, if there exist sequences $(x_n')\subset E'$ and $(y_n)\subset F$ with
\begin{align}\label{def:nuc-op}
\sum_{n=1}^\infty \snorm{x_n'}_{E'} \cdot \snorm{y_n}_F < \infty
\myqquad \mbox{ and } \myqquad
Tx = \sum_{n=1}^\infty \dualpair {x_n'}{x}{E} \cdot y_n
\end{align}
for all $x\in E$, where the convergence in the second series is in $F$. 
Note that by \eqref{eq:norm-elem-tensor} the first condition is equivalent to $(\snorm{x'_n\otimes y_n})\in \ell_1$, and therefore we have
$T = \sum_{n=1}^\infty x_n'\otimes y_n$, where the series is absolutely convergent with respect to the operator norm.
Finally, the nuclear norm of $T$ is defined by
\begin{align}\label{eq:nuc-norm}
\nucnorm T := \inf\biggl\{  \sum_{n=1}^\infty \snorm{x_n'}_{E'} \cdot \snorm{y_n}_F : (x_n')\subset E' \mbox{ and } (y_n)\subset F \mbox{ satisfy }
\eqref{def:nuc-op}\biggr\} \, .
\end{align}
It is well-known that the set $\blonspace EF$ of nuclear operators $T:E\to F$ equipped with $\nucnorm\mycdot$ is a Banach space
and
we have $\snorm T\leq \nucnorm T$ for all $T\in \blonspace EF$, see e.g.~\cite[Satz VI.5.3]{Werner11} or \cite[Theorem 5.25]{DiJaTo95}.
Moreover,
 every finite rank operator is nuclear, and in particular all elementary tensors  are nuclear.
In addition, for $x'\in E'$ and $y\in F$ we have
 \begin{align*}
\snorm  {x'}_{E'} \cdot \snorm y_F = \snorm{x'\otimes y} \leq \nucnorm{x'\otimes y} \leq \snorm  {x'}_{E'} \cdot \snorm y_F
 \end{align*}
where in the last step we used that $x'\otimes y$ is a possible representation  \eqref{def:nuc-op} of $T:= x'\otimes y$. Consequently, we have
\begin{align}\label{eq:elem-tens-nuc-norm}
\snorm{x'\otimes y} = \nucnorm{x'\otimes y} = \snorm  {x'}_{E'} \cdot \snorm y_F
\end{align}
for all $x'\in E'$ and $y\in F$. In particular, for $x_1',x_2'\in E'$ and $y_1,y_2\in F$ with $\snorm{x_1'}_{E'}\leq \snorm{x_2'}_{E'}$
and $\snorm{y_1}_F\leq \snorm{y_2}_F$ we have
\begin{align}\label{eq:elem-tens-dom-conv}
\nucnorm{x_1'\otimes y_1}  \leq \nucnorm{x_2'\otimes y_2} \, .
\end{align}
The next lemma establishes a simple yet useful convergence result for elementary tensors.

\begin{lemma}\label{lem:elem-tens-conv}
Let $E$ and $F$ be Banach spaces, $x'\in E'$, $y\in F$, and $(x_n')\subset E'$ and $(y_n)\subset F$ be sequences
with $x_n'\to x'$ and $y_n\to y$. % as well as $\snorm{x_n'}_{E'} \leq \snorm{x'}_{E'}$ and $\snorm{y_n}_F\leq \snorm y_F$ for all $n\geq 1$.
Then we have
\begin{align*}
\nucnorm{x'_n\otimes y_n - x'\otimes y} \to 0 \, . % \myqquad \mbox{ and } \nucnorm{x_n'\otimes y_n} \leq \nucnorm{x'\otimes y}
\end{align*}
% for all $n\geq 1$.
\end{lemma}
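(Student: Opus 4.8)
The plan is to reduce the convergence $\nucnorm{x_n'\otimes y_n - x'\otimes y}\to 0$ to elementary estimates using the triangle inequality for the nuclear norm together with the identity \eqref{eq:elem-tens-nuc-norm}, which gives $\nucnorm{u'\otimes v} = \snorm{u'}_{E'}\cdot\snorm{v}_F$ for all $u'\in E'$, $v\in F$. First I would split the difference by inserting a mixed term:
\begin{align*}
x_n'\otimes y_n - x'\otimes y
= (x_n' - x')\otimes y_n + x'\otimes(y_n - y)\, ,
\end{align*}
which is valid by the bilinearity of $\otimes$ recalled in the excerpt. Applying the triangle inequality for $\nucnorm\cdot$ (valid since $\blonspace EF$ with $\nucnorm\cdot$ is a Banach space) then yields
\begin{align*}
\nucnorm{x_n'\otimes y_n - x'\otimes y}
\leq \nucnorm{(x_n' - x')\otimes y_n} + \nucnorm{x'\otimes(y_n - y)}\, .
\end{align*}

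Next I would evaluate each summand via \eqref{eq:elem-tens-nuc-norm}: the first equals $\snorm{x_n' - x'}_{E'}\cdot\snorm{y_n}_F$ and the second equals $\snorm{x'}_{E'}\cdot\snorm{y_n - y}_F$. For the first term, $\snorm{x_n' - x'}_{E'}\to 0$ by assumption, while $(\snorm{y_n}_F)$ is bounded — being a convergent sequence in $F$ — so the product tends to $0$. For the second term, $\snorm{x'}_{E'}$ is a fixed constant and $\snorm{y_n - y}_F\to 0$ by assumption, so this product also tends to $0$. Combining, the right-hand side tends to $0$, hence so does $\nucnorm{x_n'\otimes y_n - x'\otimes y}$, which is the claim.

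There is no real obstacle here; the only minor point of care is to justify the boundedness of $(\snorm{y_n}_F)$, which follows immediately from $y_n\to y$ in norm (so $\snorm{y_n}_F\leq \snorm{y}_F + 1$ for $n$ large, and the finitely many remaining terms are trivially bounded). Everything else is a direct application of results already stated in the excerpt, namely the bilinearity of the tensor product, the norm formula \eqref{eq:elem-tens-nuc-norm}, and the fact that $\nucnorm\cdot$ is a norm on the space of nuclear operators.
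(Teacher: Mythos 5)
Your proposal is correct and uses the same decomposition $x_n'\otimes y_n - x'\otimes y = (x_n'-x')\otimes y_n + x'\otimes(y_n-y)$ as the paper; the paper bounds the nuclear norm directly from this two-term representation via the definition \eqref{eq:nuc-norm}, while you invoke the triangle inequality together with \eqref{eq:elem-tens-nuc-norm}, which amounts to the same estimate. No gaps.
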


\begin{proof}[Proof of Lemma \ref{lem:elem-tens-conv}]
For the nuclear operator $T_n:= x'_n\otimes y_n - x'\otimes y$ we have the alternative representation
$T_n= (x'_n-x')\otimes y_n - x'\otimes (y-y_n)$ in the sense of \eqref{def:nuc-op}. By the definition of the nuclear norm we hence obtain
\begin{align*}
\nucnorm{x'_n\otimes y_n - x'\otimes y} \leq \snorm{x'_n-x'}_{E'} \cdot \snorm {y_n}_F + \snorm{x'}_{E'}\cdot \snorm{y-y_n}_F \to 0\, ,
\end{align*}
since the convergence of $(y_n)$ ensures that the sequence $(\snorm{y_n}_F)$ is bounded.
\end{proof}

Given two functions  $X':\Om\to E'$ and $Y:\Om\to  F$ we define the $E'\otimes F$-valued function $X'\otimes Y:\Om\to E'\otimes F$  pointwise by elementary tensors, that is 
\begin{align*}
(X'\otimes Y)(\om) := (X'(\om))\otimes (Y(\om))\, , \myqquad \om \in \Om.
\end{align*}
Under suitable assumptions, it turns out that the Bochner integral of tensor space valued functions can be used to compute cross covariances.
To establish this result we need the following elementary lemma, which investigates such functions in the case of $X$ or $Y$ being constant.

\begin{lemma}\label{lem:bochner-int-for-tensors}
Let $(\Om,\sA,\P)$ be a probability space, $E$ and $F$ be   Banach spaces, and $X'\in \sLx 1 {\P,E'}$ and $Y\in \sLx 1 {\P,F}$. 
Then for all $x'\in E'$ and $y\in F$ we have $X'\otimes y\in \sLx 1 {\P,\blonspace {E}F}$ and $x'\otimes Y\in \sLx 1 {\P,\blonspace {E}F}$ 
with 
\begin{align*}
\int_\Om X'\otimes y \intd \P =\biggr(\int_\Om X'\intd \P \biggl) \otimes y 
\myqquad \mbox{and} \myqquad 
\int_\Om x'\otimes Y \intd \P =  x' \otimes \biggr(\int_\Om Y\intd \P \biggl) \, .
\end{align*}
Here, the integrals of $X'\otimes y$ and $x'\otimes Y$ are $\blonspace {E}F$-valued Bochner integrals and the integrals of $X'$ and $Y$ are $E'$-valued, respectively $F$-valued Bochner integrals.
\end{lemma}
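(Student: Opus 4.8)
The plan is to realize each of the two tensor-valued functions as the image of $X'$, respectively $Y$, under a single fixed bounded linear operator into the Banach space $\blonspace E F$, and then to invoke the Hille-type transformation property \eqref{eq:transform-bochner} of the Bochner integral. No delicate estimate is needed; the only thing to check is that the two operators are well defined into the nuclear operators and bounded with respect to $\nucnorm\cdot$.

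First I would fix $y \in F$ and define $T_y : E' \to \blonspace E F$ by $T_y(x'') := x'' \otimes y$. The bilinearity of the tensor product recorded right after \eqref{eq:norm-elem-tensor} shows that $T_y$ is linear, and \eqref{eq:elem-tens-nuc-norm} gives $\nucnorm{T_y(x'')} = \snorm{x''}_{E'}\cdot \snorm{y}_F$, so that $T_y$ indeed takes values in $\blonspace E F$ and is bounded with $\snorm{T_y} \leq \snorm{y}_F$. Since $\blonspace E F$ equipped with $\nucnorm\cdot$ is a Banach space and $X' \in \sLx 1 {\P,E'}$, the transformation property \eqref{eq:transform-bochner} applied to $T_y$ immediately yields $X' \otimes y = T_y \circ X' \in \sLx 1 {\P,\blonspace E F}$ together with $\int_\Om X' \otimes y \intd \P = T_y\bigl(\int_\Om X' \intd \P\bigr) = \bigl(\int_\Om X' \intd \P\bigr) \otimes y$.

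The second identity is handled symmetrically: for fixed $x' \in E'$ I would introduce $S_{x'} : F \to \blonspace E F$, $S_{x'}(y'') := x' \otimes y''$, which is again linear by bilinearity and, by \eqref{eq:elem-tens-nuc-norm}, bounded with $\snorm{S_{x'}} \leq \snorm{x'}_{E'}$. Applying \eqref{eq:transform-bochner} to $S_{x'}$ and $Y \in \sLx 1 {\P,F}$ gives $x' \otimes Y = S_{x'} \circ Y \in \sLx 1 {\P,\blonspace E F}$ and $\int_\Om x' \otimes Y \intd \P = x' \otimes \bigl(\int_\Om Y \intd \P\bigr)$. As already noted, there is essentially no obstacle here; the one point deserving a moment's care is the boundedness of $T_y$ and $S_{x'}$ into the nuclear operator space, and this is immediate from \eqref{eq:elem-tens-nuc-norm}, after which strong measurability, Bochner integrability, and both integral formulas are delivered directly by \eqref{eq:transform-bochner}.
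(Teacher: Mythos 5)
Your proof is correct, and it takes a genuinely different and shorter route than the paper's. You reduce everything to the Hille-type transformation property \eqref{eq:transform-bochner} by observing that, with one tensor factor frozen, the map $x''\mapsto x''\otimes y$ (respectively $y''\mapsto x'\otimes y''$) is a single bounded linear operator from $E'$ (respectively $F$) into $\bigl(\blonspace EF, \nucnorm\cdot\bigr)$; linearity comes from the bilinearity of elementary tensors and boundedness from \eqref{eq:elem-tens-nuc-norm}, and then strong measurability, integrability, and both integral identities all fall out of \eqref{eq:transform-bochner} at once. The paper instead runs the full step-function machinery: it first verifies the formula for measurable step functions $X'$ by direct computation with \eqref{def:step-fct-bochner-int}, then passes to general $X'$ via an approximating sequence in the sense of \eqref{eq:step-fct-approx-1} and \eqref{eq:step-fct-approx-2}, using Lemma \ref{lem:elem-tens-conv} for the pointwise nuclear-norm convergence and \eqref{eq:elem-tens-dom-conv} for domination. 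Your argument is strictly more economical for this lemma; the paper's heavier route essentially rehearses the approximation scheme that becomes unavoidable in Lemma \ref{lem:cross-cov-as-integral}, where both tensor factors vary and the map $(X,Y)\mapsto X\otimes Y$ is genuinely bilinear, so that no single composition with a bounded linear operator is available. In short: nothing is missing from your proof, and it is the cleaner one here, though it would not extend to the subsequent bilinear statement.
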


\begin{proof}[Proof of Lemma \ref{lem:bochner-int-for-tensors}]
We only prove the assertion for $X'\otimes y$, since the proof for $x'\otimes Y$ is analogous.

Let us first assume that  $X'$ is a measurable step function. We fix a representation
$X=\sum_{i=1}^m \eins_{A_i} x_i'$, where $A_i \in \sA$ and $x_i'\in E'$.
For $\om \in \Om$ the bilinearity of elementary tensors then gives
\begin{align*}
(X'\otimes y)(\om)
=  (X'(\om))\otimes y
= \biggl(  \sum_{i=1}^m \eins_{A_i}(\om) x_i' \biggr)  \otimes  y
=  \sum_{i=1}^m  \eins_{A_i}(\om) (x_i'\otimes y) \, .
\end{align*}
Consequently, $X'\otimes y:\Om \to \blonspace {E}F$
is a measurable step function, and therefore we  have  $X'\otimes y \in \sLx 1 {\P, \blonspace {E}F}$, see also the discussion in front of \eqref{eq:bochner-1-norm}.
Moreover, the construction of the Bochner integral for step functions \eqref{def:step-fct-bochner-int} shows
\begin{align*}
 \int_\Om X'\otimes y \intd \P
=  \sum_{i=1}^m   \P({A_i}) \cdot   (x_i'\otimes y)   
=  \biggl(\sum_{i=1}^m   \P({A_i}) \cdot   x_i' \biggr) \otimes y
=\biggr(\int_\Om X'\intd \P \biggl) \otimes y \, .
\end{align*}
Let us now consider the general case $X'\in \sLx 1 {\P,E'}$. Since $X'$ is strongly measurable, 
 there exists a sequence $(X'_n)$  of measurable step functions approximating
$X'$ in the sense of \eqref{eq:step-fct-approx-1} and \eqref{eq:step-fct-approx-2}.  
 For each fixed $\om\in \Om$, Lemma \ref{lem:elem-tens-conv} in combination with \eqref{eq:step-fct-approx-1} then
 shows
\begin{align}\label{lem:bochner-int-for-tensors-h666}
\nucnorm{(X_n'\otimes y)(\om)   -(X'\otimes y)(\om)} \to 0 \, .
\end{align}
Furthermore, \eqref{eq:step-fct-approx-2} and \eqref{eq:elem-tens-dom-conv} give $\nucnorm{(X'_n\otimes y)(\om)}\leq   \nucnorm{(X'\otimes y)(\om)}$, and  
 we already know that each $X'_n\otimes y:\Om\to \blonspace {E}F$ is a measurable step function.
 In summary, $(X_n'\otimes y)$ is a sequence approximating $X'\otimes y$ in $\blonspace EF$
 in the sense of \eqref{eq:step-fct-approx-1} and \eqref{eq:step-fct-approx-2}.
Consequently, $X'\otimes y:\Om\to \blonspace {E}F$ is strongly measurable as discussed around \eqref{eq:step-fct-approx-1} and \eqref{eq:step-fct-approx-2}.
Moreover, with the help of \eqref{eq:elem-tens-nuc-norm} we obtain
\begin{align*}
\int_\Om \nucnorm{(X'\otimes y)(\om)} \intd \P(\om)
=  \int_\Om \snorm{X'(\om)}_{E} \cdot \snorm {y}_F \intd \P(\om) < \infty\, .
\end{align*}
In other words we have found $X'\otimes y \in \sLx 1 {\P, \blonspace {E}F}$.
Finally, with convergence in  $\blonspace {E}F$  we have 
\begin{align*} 
\int_\Om X'\otimes y \intd \P  
=
\lim_{n\to \infty}  \int_\Om X'_n\otimes y \intd \P 
=
\lim_{n\to \infty}  \biggr(  \biggr(\int_\Om X'_n\intd \P \biggr) \otimes y \biggl) 
&=
\biggr(\lim_{n\to \infty}   \int_\Om X'_n\intd \P \biggr) \otimes y \\
&=
\biggr(\int_\Om X'\intd \P \biggl) \otimes y \, ,
\end{align*}
where the first and last step   used the construction of Bochner integrals \eqref{def:bochner-integral},
the second step used the already verified assertion in the case of step functions, and the third step used 
Lemma \ref{lem:elem-tens-conv}.
\end{proof}

% With the notation above we can also consider elementary tensors of the form $X\otimes y$, where $x\in E$ and $y\in F$.
% To this end, let $\bidualmap_E:E\to E''$ be the canonical embedding of $E$ into its bidual. For $x\in E$ we then have
% $\bidualmap_E x \in (E')'$, and this yields $

Let us now turn to the announced integral representation of cross covariance operators. 
To this end, we fix two Banach spaces $E$ and $F$, as well as $x\in E$ and $y\in F$. For $x'' := \bidualmap_E x\in E''$, where 
$\bidualmap_E:E\to E''$ denotes the canonical, isometric embedding of $E$ into its bidual $E''$, 
we then write 
\begin{align*}
x\otimes y := x''\otimes y = (\bidualmap_E x)\otimes y\, .
\end{align*}
Obviously,
$x\otimes y\in E''\otimes F$ is the bounded linear map $E'\to F$ given by
\begin{align*}
(x\otimes y)(x') = \dualpair {x'}xE \cdot y\, , \myqquad x'\in E'.
\end{align*}
By construction the bilinearity is preserved, and since $\snorm x_E = \snorm{\bidualmap_E x}_{E''} = \snorm{x''}_{E''}$ holds true,
we quickly verify that \eqref{eq:elem-tens-nuc-norm} and \eqref{eq:elem-tens-dom-conv} read as 
\begin{align}\label{eq:elem-tens-nuc-norm-EF}
\snorm{x\otimes y} & = \nucnorm{x\otimes y} = \snorm  {x}_{E} \cdot \snorm y_F \, , \\ \label{eq:elem-tens-dom-conv-EF}
\nucnorm{x_1\otimes y_1}  & \leq \nucnorm{x_2\otimes y_2} 
\end{align}
for all $x,x_1,x_2\in E$ and $y,y_1,y_2\in F$ with $\snorm{x_1}_{E}\leq \snorm{x_2}_{E}$
and $\snorm{y_1}_F\leq \snorm{y_2}_F$. Moreover, Lemma \ref{lem:elem-tens-conv} also holds true, since $x_n\to x$ in $E$ implies $\bidualmap_E x_n\to \bidualmap_E x$ in $E''$.

Finally, Lemma \ref{lem:bochner-int-for-tensors} remains true: 
Indeed, if we have for example an $X\in   \sLx 1 {\P,E}$, then $X'':= \bidualmap_E \circ X\in \sLx 1 {\P,E''}$ as discussed around \eqref{eq:transform-bochner},
and therefore Lemma 
 \ref{lem:bochner-int-for-tensors} gives $X\otimes y = X''\otimes y \in \sLx 1 {\P,\blonspace {E'}F}$ with 
 \begin{align} \nonumber
 \int_\Om X\otimes y \intd \P 
 =
 \int_\Om X''\otimes y \intd \P 
 =
 \biggr(\int_\Om X''\intd \P \biggl) \otimes y 
  &=
 \biggr(\int_\Om \bidualmap_E \circ X \intd \P \biggl) \otimes y \\ \nonumber
  & =
 \biggl(\bidualmap_E\biggr(\int_\Om X \intd \P \biggl)\biggr) \otimes y \\ \label{lem:bochner-int-for-tensors-EF}
 & =\biggl( \int_\Om X \intd \P  \biggr) \otimes y
 \end{align}
by an  application of \eqref{eq:transform-bochner} and our nomenclature for tensors of the form $x\otimes y$. In particular, the Bochner integral of $X$ 
in \eqref{lem:bochner-int-for-tensors-EF}
is an $E$-valued 
Bochner integral and not just an $E''$-valued integral as a na\"ive application of  Lemma \ref{lem:bochner-int-for-tensors}  would suggest.

With these preparation we can now present an alternative way to compute the cross covariance operator.

\begin{lemma}\label{lem:cross-cov-as-integral}
Let $(\Om,\sA,\P)$ be a probability space, $E$ and $F$ be   Banach spaces, and $X\in \sLx 2 {\P,E}$ and $Y\in \sLx 2 {\P,F}$.
% with
% $\E X= 0$ and $\E Y= 0$.
Then the map $X\otimes Y:   \Om\to \blonspace {E'}F$ 
% defined with the help of   pointwise elementary tensors in $E''\otimes F$ by
% \begin{align*}
% (X\otimes Y)(\om) := ((\bidualmap_E X)(\om)) \otimes (Y(\om)) \, , \myqquad \om \in \Om
% \end{align*}
is strongly measurable with $X\otimes Y \in \sLx 1 {\P, \blonspace {E'}F}$. Moreover, $\cov (Y,X)$ can be computed by the
$\blonspace {E'}F$-valued Bochner integral
\begin{align}\label{lem:cross-cov-as-integral-form}
\cov (Y,X) = \int_\Om X\otimes Y \intd \P  - (\E X)\otimes (\E Y) \, .
\end{align}
\end{lemma}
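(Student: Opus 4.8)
The plan is to reduce the claim to the pointwise (scalar) definition of the cross covariance operator already set up before \eqref{eq:weak-cross-cov}, by testing the purported identity \eqref{lem:cross-cov-as-integral-form} with functionals on the space $\blonspace{E'}F$ of nuclear operators, namely with dual pairings against elementary tensors $y'\otimes x'$ with $x'\in E'$ and $y'\in F'$. First I would establish the strong measurability and integrability of $X\otimes Y:\Om\to\blonspace{E'}F$. For measurability, since $X$ and $Y$ are strongly measurable, I pick sequences of measurable step functions $(X_n)$ and $(Y_n)$ approximating $X$ and $Y$ in the sense of \eqref{eq:step-fct-approx-1} and \eqref{eq:step-fct-approx-2}; then each $X_n\otimes Y_n$ is a measurable step function with values in $\blonspace{E'}F$ (a finite sum of elementary tensors, using bilinearity as in the proof of Lemma \ref{lem:bochner-int-for-tensors}), and by Lemma \ref{lem:elem-tens-conv} (in the $E\otimes F$ variant discussed around \eqref{lem:bochner-int-for-tensors-EF}) we get $\nucnorm{(X_n\otimes Y_n)(\om)-(X\otimes Y)(\om)}\to 0$ for every $\om$, so $X\otimes Y$ is a pointwise limit of strongly measurable functions and hence strongly measurable. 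For integrability, \eqref{eq:elem-tens-nuc-norm-EF} gives $\nucnorm{(X\otimes Y)(\om)} = \snorm{X(\om)}_E\cdot\snorm{Y(\om)}_F$, and the Cauchy–Schwarz inequality together with $X\in\sLx 2{\P,E}$ and $Y\in\sLx 2{\P,F}$ shows $\snorm X_E\cdot\snorm Y_F\in\sLx 1\P$; thus $X\otimes Y\in\sLx 1{\P,\blonspace{E'}F}$ and the Bochner integral on the right of \eqref{lem:cross-cov-as-integral-form} is well-defined. Note also $(\E X)\otimes(\E Y)\in\blonspace{E'}F$ since it is an elementary tensor, so the right-hand side is a genuine element of $\blonspace{E'}F$.

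Next I would verify the identity by testing against $y'\in F'$ and then against $x'\in E'$. Fix $y'\in F'$. Applying both sides of \eqref{lem:cross-cov-as-integral-form} as operators $E'\to F$ and then evaluating the $F$-valued result against $y'$ reduces, via the Pettis property \eqref{eq:bochner-is-pettis} for the $\blonspace{E'}F$-valued Bochner integral (the evaluation map $T\mapsto \dualpair{y'}{Tx'}F$ is a bounded linear functional on $\blonspace{E'}F$ for each fixed $x'$), to checking
\begin{align*}
\dualpairb{y'}{\cov(Y,X)x'}F
= \int_\Om \dualpairb{y'}{(X\otimes Y)(\om)\,x'}F\intd\P(\om) - \dualpairb{y'}{((\E X)\otimes(\E Y))\,x'}F
\end{align*}
for all $x'\in E'$. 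Now $(X\otimes Y)(\om)\,x' = \dualpair{x'}{X(\om)}E\cdot Y(\om)$ by the definition of the tensor $x\otimes y$ acting on $E'$, so the integrand equals $\dualpair{x'}{X(\om)}E\cdot\dualpair{y'}{Y(\om)}F$; likewise $((\E X)\otimes(\E Y))\,x' = \dualpair{x'}{\E X}E\cdot\E Y$, whose pairing with $y'$ is $\dualpair{x'}{\E X}E\cdot\dualpair{y'}{\E Y}F = \E\dualpair{x'}XE\cdot\E\dualpair{y'}YF$ by \eqref{eq:bochner-is-pettis} for the $E$- and $F$-valued integrals. Hence the right-hand side becomes $\E\bigl(\dualpair{x'}XE\,\dualpair{y'}YF\bigr) - \E\dualpair{x'}XE\cdot\E\dualpair{y'}YF = \cov\bigl(\dualpair{x'}XE,\dualpair{y'}YF\bigr)$, which is exactly $\dualpair{y'}{\cov(Y,X)x'}F$ by \eqref{eq:weak-cross-cov} (with the roles of $X$ and $Y$ interchanged). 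Since this holds for all $x'\in E'$ and $y'\in F'$, Hahn–Banach (applied first in $F$, then noting the resulting vectors in $F$ agree for every $x'$) yields $\cov(Y,X) = \int_\Om X\otimes Y\intd\P - (\E X)\otimes(\E Y)$ as operators $E'\to F$.

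I do not expect a serious obstacle here; the only slightly delicate points are bookkeeping ones. The first is making sure the $\blonspace{E'}F$-valued Bochner integral genuinely lands in $\blonspace{E'}F$ and that the evaluation functionals $T\mapsto \dualpair{y'}{Tx'}F$ are bounded on $(\blonspace{E'}F,\nucnorm\cdot)$ — this follows from $\snorm T\le\nucnorm T$ together with the boundedness of $T$. The second is the tensor-space convention: the elementary tensor $X(\om)\otimes Y(\om)$ must be read as $(\bidualmap_E X(\om))\otimes Y(\om)\in E''\otimes F$, i.e.\ as an operator $E'\to F$, and correspondingly the constant-factor version of Lemma \ref{lem:bochner-int-for-tensors} valid in this $E\otimes F$ setting (as spelled out around \eqref{lem:bochner-int-for-tensors-EF}) is what guarantees that pulling the Bochner integral through the tensor keeps the $E$-valued — not merely $E''$-valued — integral $\E X$. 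With those conventions fixed, the computation above is routine.
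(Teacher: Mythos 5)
Your proposal is correct, and while the measurability and integrability part coincides with the paper's argument (step-function approximation of $X\otimes Y$ via Lemma \ref{lem:elem-tens-conv}, plus \eqref{eq:elem-tens-nuc-norm-EF} and Cauchy--Schwarz), your verification of the identity \eqref{lem:cross-cov-as-integral-form} takes a genuinely different and shorter route. The paper first reduces to centered $X$ and $Y$, then proves the identity by hand for measurable step functions using explicit representations $X=\sum_i\eins_{A_i}x_i$, $Y=\sum_j\eins_{B_j}y_j$, and finally passes to the limit on both sides of the identity via the dominated convergence theorem for Bochner integrals. You instead observe that $T\mapsto\dualpair{y'}{Tx'}F$ is a bounded linear functional on $(\blonspace{E'}F,\nucnorm\cdot)$ because $\snorm T\le\nucnorm T$, push the $\blonspace{E'}F$-valued Bochner integral through it by \eqref{eq:bochner-is-pettis} (or equivalently by \eqref{eq:transform-bochner} applied to the evaluation map $T\mapsto Tx'$ followed by \eqref{eq:bochner-is-pettis} in $F$), and reduce the claim to the scalar identity $\E(UV)-\E U\,\E V=\cov(U,V)$ for $U=\dualpair{x'}XE$, $V=\dualpair{y'}YF$, which is precisely \eqref{eq:weak-cross-cov} with the roles of $X$ and $Y$ interchanged; Hahn--Banach in $F$ then upgrades the weak equality to equality of operators. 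This avoids both the reduction to the centered case and the second limiting argument, at the cost of relying on the Hille/Pettis property of the Bochner integral rather than re-deriving everything from the step-function definition. Your two flagged bookkeeping points (that the integral lands in $\blonspace{E'}F$ and that elementary tensors must be read as $(\bidualmap_E x)\otimes y\in E''\otimes F$ so that $\E X$ remains an $E$-valued integral) are exactly the right ones, and both are handled by the material around \eqref{lem:bochner-int-for-tensors-EF}.
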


\begin{proof}[Proof of Lemma \ref{lem:cross-cov-as-integral}]
Our first goal is to show that it suffices to consider centered $X$ and $Y$. 
To this end, we pick some $X\in \sLx 2 {\P,E}$ and $Y\in \sLx 2 {\P,F}$ and define 
$X_0:= X-\E X$ and $Y_0 := Y-\E Y$. Furthermore, we assume that the assertion is true for $X_0$ and $Y_0$.
Our goal is therefore to show that the assertion is also true for $X$ and $Y$.
To this end, we first observe that the 
% In the last step we consider the general case  $X\in \sLx 2 {\P,E}$ and $Y\in \sLx 2 {\P,F}$. 
% For  the centered 
% $X_0:= X-\E X$ and $Y_0 := Y-\E Y$ the 
bilinearity of tensors   gives
\begin{align*}
X_0 \otimes Y_0 = (X-\E X) \otimes (Y-\E Y) = X\otimes Y - X\otimes (\E Y)  - (\E X) \otimes Y + (\E X)\otimes (\E Y) \, .
\end{align*}
Now, our assumption guarantees $X_0\otimes Y_0 \in \sLx 1 {\P, \blonspace {E'}F}$, and Lemma \ref{lem:bochner-int-for-tensors} 
ensures  both 
$X\otimes (\E Y) \in \sLx 1 {\P, \blonspace {E'}F}$ and $(\E X) \otimes Y \in \sLx 1 {\P, \blonspace {E'}F}$.
Finally, $ (\E X)\otimes (\E Y)\in \blonspace {E'}F$ does not depend on $\om$. In summary, we thus have 
$X\otimes Y \in \sLx 1 {\P, \blonspace {E'}F}$.
Moreover, our calculation above yields 
\begin{align*} 
\cov (Y,X)
% =
% \cov (Y_0,X_0)
&=
\int_\Om X_0\otimes Y_0 \intd \P   \\
&= 
\int_\Om  X\otimes Y - X\otimes (\E Y)  - (\E X) \otimes Y + (\E X)\otimes (\E Y) \intd \P   \\
&=
\int_\Om  X\otimes Y  \intd \P - \int_\Om   X\otimes (\E Y)  \intd \P - \int_\Om (\E X) \otimes Y \intd \P + (\E X)\otimes (\E Y) \\
&= 
\int_\Om  X\otimes Y  \intd \P - (\E X)\otimes (\E Y)  - (\E X)\otimes (\E Y) + (\E X)\otimes (\E Y) \\
&= 
\int_\Om  X\otimes Y  \intd \P - (\E X)\otimes (\E Y) \, ,
\end{align*}
where in the second to last step we used Lemma \ref{lem:bochner-int-for-tensors} in the form of \eqref{lem:bochner-int-for-tensors-EF} twice.

Let us now turn to the actual proof. 
% In the first step we assume that $\E X = 0$ and $\E Y = 0$.
Here, we begin by considering the 
 case, in which $X$ and $Y$ are measurable step functions. As discussed above,   we may assume without loss of generality that 
 $\E X = 0$ and $\E Y = 0$.
 We now fix some    representations
$X=\sum_{i=1}^m \eins_{A_i} x_i$ and $Y=\sum_{j=1}^n \eins_{B_j} y_j$, where $A_i,B_j\in \sA$, $x_i\in E$, and $y_j\in F$.
For $\om \in \Om$, 
% and $x_i'' := \bidualmap_E x_i$ 
the bilinearity of elementary tensors then gives
\begin{align*}
(X\otimes Y)(\om)
% =  X''(\om)\otimes Y(\om)
= \biggl(  \sum_{i=1}^m \eins_{A_i}(\om) x_i \biggr)  \otimes  \biggl(  \sum_{j=1}^n \eins_{B_j}(\om) y_j \biggr)
=  \sum_{i=1}^m \sum_{j=1}^n \eins_{A_i\cap B_j}(\om) (x_i\otimes y_j) \, .
\end{align*}
% where we have used the shorthands $x_i'' := \bidualmap_E x_i$. 
Consequently, $X\otimes Y:\Om \to \blonspace {E'}F$
is a measurable step function, and therefore we  have  $X\otimes Y \in \sLx 1 {\P, \blonspace {E'}F}$, see also the discussion in front of \eqref{eq:bochner-1-norm}.
Finally, for $x'\in E'$ the construction of the Bochner integral for step functions \eqref{def:step-fct-bochner-int} shows
\begin{align*}
\biggl( \int_\Om X\otimes Y \intd \P \biggr) (x')
&= \biggl( \sum_{i=1}^m \sum_{j=1}^n \P({A_i\cap B_j}) \cdot   (x_i\otimes y_j)   \biggr) (x') \\
&=  \sum_{i=1}^m \sum_{j=1}^n \P({A_i\cap B_j}) \cdot   (x_i\otimes y_j)  (x') \\
% &=  \sum_{i=1}^m \sum_{j=1}^n \P({A_i\cap B_j}) \cdot   \dualpair {\bidualmap_E x_i}{x'}{E'} \cdot y_j \\
&=  \sum_{i=1}^m \sum_{j=1}^n \P({A_i\cap B_j}) \cdot   \dualpair {x'}{x_i}E \cdot y_j\, ,
\end{align*}
as well as
\begin{align*}
\cov(Y,X)(x')
= \int_\Om \dualpair {x'} {X } E \, Y \intd \P 
&= \int_\Om \biggl(\sum_{i=1}^m \eins_{A_i}\dualpair {x'} {x_i } E\biggr) \, \biggr(\sum_{j=1}^n \eins_{B_j} y_j\biggl) \intd \P \\
&= \sum_{i=1}^m \sum_{j=1}^n \int_\Om \eins_{A_i\cap B_j} \dualpair {x'} {x_i } E \cdot y_j \intd \P \\
&=  \sum_{i=1}^m \sum_{j=1}^n \P({A_i\cap B_j}) \cdot   \dualpair {x'}{x_i}E \cdot y_j \, .
\end{align*}
Combining both calculations yields \eqref{lem:cross-cov-as-integral-form}.

Let us finally consider the   case of centered $X\in \sLx 2 {\P,E}$ and $Y\in \sLx 2 {\P,F}$.
Since $X$ and $Y$ are strongly measurable,  there then  exist sequences $(X_n)$ and $(Y_n)$ of measurable step functions approximating
$X$, respectively $Y$ in the sense of \eqref{eq:step-fct-approx-1} and \eqref{eq:step-fct-approx-2}. 
 For each fixed $\om\in \Om$, Lemma \ref{lem:elem-tens-conv} in combination with \eqref{eq:step-fct-approx-1} for $X_n$ and $Y_n$ then
 shows
\begin{align*}
\nucnorm{(X_n\otimes Y_n)(\om)   -(X\otimes Y)(\om)} \to 0 \, .
\end{align*}
Furthermore, \eqref{eq:step-fct-approx-2} and \eqref{eq:elem-tens-dom-conv-EF} gives $\nucnorm{(X_n\otimes Y_n)(\om)}\leq   \nucnorm{(X\otimes Y)(\om)}$ 
% since $\bidualmap_E$ is isometric.
and we already know that each $X_n\otimes Y_n:\Om\to \blonspace {E'}F$ is a measurable step function.
In summary, $(X_n'\otimes Y_n)$ is therefore a sequence approximating $X'\otimes Y$ in $\blonspace EF$
 in the sense of \eqref{eq:step-fct-approx-1} and \eqref{eq:step-fct-approx-2}.
Consequently, $X\otimes Y$ is strongly measurable as discussed around \eqref{eq:step-fct-approx-1} and \eqref{eq:step-fct-approx-2}.
Moreover, with the help of \eqref{eq:elem-tens-nuc-norm-EF}   we obtain
\begin{align*}
\int_\Om \nucnorm{(X\otimes Y)(\om)} \intd \P(\om)
% = \int_\Om \snorm{X''(\om)}_{E''} \cdot \snorm {Y(\om)}_F \intd \P(\om)
=  \int_\Om \snorm{X(\om)}_{E} \cdot \snorm {Y(\om)}_F \intd \P(\om) < \infty\, ,
\end{align*}
where in the last step we used the Cauchy-Schwarz inequality. In other words we have found $X\otimes Y \in \sLx 1 {\P, \blonspace {E'}F}$.

To establish \eqref{lem:cross-cov-as-integral-form}, we first note that  \eqref{eq:step-fct-approx-1} and \eqref{eq:step-fct-approx-2}
for $X$ and $(X_n)$ give $\E X_n \to \E X = 0$ by the definition of Bochner integrals and our assumption that $X$ is centered.
Analogously, we find $\E Y_n \to 0$. Combining both gives $\nucnorm{\E X_n \otimes \E Y_n} \to 0$ by Lemma \ref{lem:elem-tens-conv}.
For $x'\in E'$ we thus find
\begin{align}\nonumber 
\biggl( \int_\Om X\otimes Y \intd \P \biggr) (x')
&=
\biggl(\lim_{n\to \infty}  \int_\Om X_n\otimes Y_n \intd \P \biggr) (x') \\ \nonumber
&=
\lim_{n\to \infty} \Biggl(  \biggl(  \int_\Om X_n\otimes Y_n \intd \P \biggr) (x') \Biggr) \\ \nonumber
&=
\lim_{n\to \infty} \Biggl(  \biggl(  \int_\Om X_n\otimes Y_n \intd \P \biggr) (x')-  (\E X_n \otimes \E Y_n)(x')  \Biggr) \\ \label{lem:cross-cov-as-integral-h1}
&=
\lim_{n\to \infty} \Bigl( \cov (Y_n,X_n) (x')  \Bigr) \, ,
\end{align}
where in the first step we used the definition of Bochner integrals, the second step used that the limit in the second expression
is with respect to the nuclear norm and thus also with respect to the operator norm, and in the last step
we used the already established identity \eqref{lem:cross-cov-as-integral-form} for step functions.
Moreover, we also have
\begin{align}\nonumber
\lim_{n\to \infty} \Bigl( \cov (Y_n,X_n) (x')  \Bigr)
=
\lim_{n\to \infty} \int_\Om \dualpair {x'} {X_n } E \cdot Y_n \intd \P
&= 
\int_\Om \dualpair {x'} {X } E \, Y \intd \P  \\ \label{lem:cross-cov-as-integral-h2}
&=
\cov(Y,X)(x')
\, ,
\end{align}
where the second step is based upon the dominated convergence property
discussed around \eqref{eq:meas-fct-approx-1} to \eqref{eq:dominated-conv-bnochner} with  the $\P$-integrable 
$g:\Om\to [0,\infty]$ defined by
$g(\om) := \snorm {x'}_{E'} \cdot \snorm{X(\om)}_E\cdot  \snorm{Y(\om)}_F$ and the convergence
\begin{align*}
\dualpair {x'} {X_n(\om) } E \cdot Y_n(\om) \to \dualpair {x'} {X(\om) } E \cdot Y(\om) 
\end{align*}
for all $\om \in \Om$.
Combining \eqref{lem:cross-cov-as-integral-h1} and \eqref{lem:cross-cov-as-integral-h2} yields \eqref{lem:cross-cov-as-integral-form}.
% 
% In the last step we consider the general case  $X\in \sLx 2 {\P,E}$ and $Y\in \sLx 2 {\P,F}$. 
% For  the centered 
% $X_0:= X-\E X$ and $Y_0 := Y-\E Y$ the bilinearity of tensors then gives
% \begin{align*}
% X_0 \otimes Y_0 = (X-\E X) \otimes (Y-\E Y) = X\otimes Y - X\otimes (\E Y)  - (\E X) \otimes Y + (\E X)\otimes (\E Y) \, .
% \end{align*}
% Now, the previous case showed $X_0\otimes Y_0 \in \sLx 1 {\P, \blonspace {E'}F}$, and Lemma \ref{lem:bochner-int-for-tensors} 
% ensures  both 
% $X\otimes (\E Y) \in \sLx 1 {\P, \blonspace {E'}F}$ and $(\E X) \otimes Y \in \sLx 1 {\P, \blonspace {E'}F}$.
% Finally, $ (\E X)\otimes (\E Y)\in \blonspace {E'}F$ does not depend on $\om$. In summary, we thus have 
% $X\otimes Y \in \sLx 1 {\P, \blonspace {E'}F}$.
% Moreover, the previous case also shows 
% \begin{align*} 
% \cov (Y,X)
% =
% \cov (Y_0,X_0)
% &=
% \int_\Om X_0\otimes Y_0 \intd \P   \\
% &= 
% \int_\Om  X\otimes Y - X\otimes (\E Y)  - (\E X) \otimes Y + (\E X)\otimes (\E Y) \intd \P   \\
% &=
% \int_\Om  X\otimes Y  \intd \P - \int_\Om   X\otimes (\E Y)  \intd \P - \int_\Om (\E X) \otimes Y + (\E X)\otimes (\E Y) \\
% &= 
% \int_\Om  X\otimes Y  \intd \P - (\E X)\otimes (\E Y)  - (\E X)\otimes (\E Y) + (\E X)\otimes (\E Y) \\
% &= 
% \int_\Om  X\otimes Y  \intd \P - (\E X)\otimes (\E Y) \, ,
% \end{align*}
% where in the second to last step we used Lemma \ref{lem:bochner-int-for-tensors}  twice.
\end{proof}

\stepcounter{appendix}
\section{Weak Convergence, Characteristic Functions, and Gaussian Random Variables}\label{app:cf}

In this supplement we mostly collect and unify known results on the objects listed in the title. 
In addition, a few results we could not find in the literature are presented and proved.

Let us begin by recalling 
characteristic functions in the Banach space case. To this end, 
 let $E$ be a separable Banach space and $\P$ be a 
probability measure on $\sborelnormx E = \sborelEx E$, see Theorem \ref{thm:pettis-var}. Then the characteristic function $\p_\P:E'\to \Cfield$ of $\P$ is defined by 
\begin{align*}
\p_\P(x') := \int_E \eul^{\imi \dualpair {x'}x E} \intd\P(x)\, , \myqquad x'\in E',  
\end{align*}
see e.g.~\cite[Chapter IV.2.1]{VaTaCh87}. Like in the finite dimensional case, the characteristic function can be used to identify 
probability measures. Namely, if $\P$ and $\Qm$ 
are probability measures with $\p_\P = \p_\Qm$, then we have $\P=\Qm$, see e.g.~\cite[Theorem 2.2 in Chapter IV.2.1]{VaTaCh87}.
Finally, if $X$ is an $E$-valued random variable  with distribution $\P_X$, we write $\p_X := \p_{\P_X}$ and call $\p_X$ the characteristic function of $X$.

% Let $\P$ be a probability measure on $\Rd$. Then the characteristic function $\p_\P :\Rd \to \Cfield$
% of $\P$ is 
% defined by 
% \begin{align*}
% \p_\P(t)  := \int_{\Rd} \exp( \imi \skprod tx ) \intd \P(x)\, , \myqquad t\in \R^d,
% \end{align*}
% where $\skprod \mycdot\mycdot$ denotes the standard inner product on $\R^d$.
% Moreover, if $X$ is an $\Rd$-valued random variable  with distribution $\P_X$, we write $\p_X := \p_{\P_X}$ and call $\p_X$ the characteristic function of $X$.
 
% In the case $E=\R^d$ it 
% is well known, see e.g.~\cite[Theorem 15.21]{Klenke14},
% % \todo{\cite{Cohn13} only shows continuity. Look for second reference!}
% % or ,
% that every characteristic function $\p_X$ is uniformly continuous and we have
% $|\p_X(t)| \leq 1$ for all $t\in \R$ with  $\p_X(0) = 1$.
% Moreover, the characteristic function can be used to identify measures. The following
% well-known theorem, which can be found in 
% e.g.~\cite[Proposition 10.3.11]{Cohn13} or \cite[Theorem 15.8]{Klenke14} formalizes this.
% 
% 
% 
% \begin{theorem}\label{thm:cf-identification}
% Let $\P$ and $\Qm$ be probability measures on $\Rd$. Then we have $\P = \Qm$ if and only if 
% $\p_\P = \p_\Qm$.
% \end{theorem}
% 

Recall that characteristic functions of affine linear transformations of random variables can
be easily computed. Here we only mention the finite dimensional case.

\begin{lemma}\label{lem:char-func-d-prop}
Let $(\Om,\sA, \P)$ be a probability space, $X:\Om\to \Rd$ be measurable,
$A$ be an $m\times d$-matrix,  $a\in \Rd$, and $b\in \R^m$.
Then, for all $s\in \R$ and $t\in \R^m$, the following two identities hold true
\begin{align*}
\p_{\skprod aX}(s) &= \p_X(sa)\,,  \\
\p_{AX+b}(t) &= \exp( \imi \skprod tb ) \cdot \p_X(A\transpose t)\, .
\end{align*}
\end{lemma}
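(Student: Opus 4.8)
The plan is to prove the two identities directly from the definition of the characteristic function together with the transformation formula for image measures, so this is a genuinely routine computation rather than anything requiring machinery. First I would recall that for a measurable map $X:\Om\to\Rd$ and a bounded linear map $L:\Rd\to\R^k$, the push-forward satisfies $\P_{L\circ X} = (\P_X)_L$, hence by the change-of-variables formula $\int_{\R^k} f\intd\P_{L\circ X} = \int_{\Rd} f\circ L\intd\P_X$ for every bounded measurable $f$. Applying this with $f(u) = \eul^{\imi u\cdot s}$ type integrands turns the characteristic function of a transformed variable into an integral against $\P_X$ of an exponential in the original variable.

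Concretely, for the first identity I would write, for $s\in\R$,
\begin{align*}
\p_{\skprod aX}(s)
= \int_\Om \eul^{\imi s\skprod a{X(\om)}} \intd\P(\om)
= \int_{\Rd} \eul^{\imi s\skprod ax} \intd\P_X(x)
= \int_{\Rd} \eul^{\imi \skprod {sa}x} \intd\P_X(x)
= \p_X(sa)\, ,
\end{align*}
using in the second step that $\skprod aX$ is the composition of $X$ with the linear functional $x\mapsto\skprod ax$, and in the third step the bilinearity $s\skprod ax = \skprod{sa}x$. For the second identity, fix $t\in\R^m$ and compute
\begin{align*}
\p_{AX+b}(t)
= \int_\Om \eul^{\imi \skprod t{AX(\om)+b}} \intd\P(\om)
= \eul^{\imi\skprod tb}\int_\Om \eul^{\imi\skprod t{AX(\om)}} \intd\P(\om)\, ,
\end{align*}
where I pulled the constant factor $\eul^{\imi\skprod tb}$ out of the integral. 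The remaining integral I rewrite using the adjoint relation $\skprod t{Ax} = \skprod{A\transpose t}x$, valid for the standard inner products on $\R^m$ and $\Rd$, to get $\int_{\Rd}\eul^{\imi\skprod{A\transpose t}x}\intd\P_X(x) = \p_X(A\transpose t)$, which yields $\p_{AX+b}(t) = \eul^{\imi\skprod tb}\cdot\p_X(A\transpose t)$ as claimed.

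There is no real obstacle here; the only points to be careful about are that $AX+b$ and $\skprod aX$ are indeed measurable (which is immediate, since they are compositions of the measurable $X$ with continuous, hence Borel, maps $\Rd\to\R^m$ and $\Rd\to\R$), and that the integrals all converge absolutely (which is automatic, since the integrands have modulus $1$). I would also note in passing that the first identity is the special case $m=1$, $A = a\transpose$, $b=0$ of the second, so strictly speaking one could derive it as a corollary, but stating both explicitly is cleaner for the later uses. I expect the write-up to be two short displayed computations plus a sentence on measurability.
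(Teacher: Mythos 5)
Your computation is correct and complete; it is the standard change-of-variables argument, and every step checks out. The paper itself does not carry out this computation: it simply cites a textbook (Jacod--Protter, Theorem 13.3) for the second identity and then derives the first identity from the second by viewing $\skprod aX$ as $a\transpose X$ with $s$ a $1\times 1$ matrix --- which is exactly the reduction you mention in passing at the end, just used in the paper as the actual argument rather than as an aside. So your write-up is a self-contained version of what the paper outsources to a reference; nothing is gained or lost mathematically, and your version has the minor advantage of not depending on an external source.
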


\begin{proof}[Proof of Lemma \ref{lem:char-func-d-prop}]
The second identity can  be found in e.g.~\cite[Theorem 13.3]{JaPr04}, 
% The second identity follows from 
% \begin{align*}
% \p_{AX+b}(t)
% \scriptonlyraw{&}= 
% \int_{\R^m} \exp( \imi \skprod tx ) \intd \P_{AX+b}(x) \scriptonlyraw{\\}
% &= 
% \int_{\Rd} \exp( \imi \skprod t {AX+b} ) \intd \P \\
% & =
%  \exp( \imi \skprod tb )
% \int_{\Rd} \exp( \imi \skprod {A\transpose t} X  ) \intd \P \\
% &= \exp( \imi \skprod tb ) \cdot \p_X(A\transpose t)\, 
% \end{align*}
and the first identity follows from the second one since $\skprod aX = a\transpose X$ and
$as = sa$, where $as$ denotes the matrix product and $a$ are $s$ are viewed as  $d\times 1$ and $1\times 1$-matrices, respectively.
\end{proof}

Let us now  recall the notion of weak convergence of probability measures. To this end, let 
 $E$ be a Banach space, $(\P_n)$ be a sequence of probability measures on $\sborelnorm E$, and $\P$
be another probability measure on $\sborelnorm E$. Then   $(\P_n)$ converges weakly to $\P$, if for all 
bounded and continuous $f:E\to \R$ we have 
\begin{align}\label{def:weak-converg-of-meas}
\int_E f \intd \P_n \to \int_E f \intd \P  \, .
\end{align}
Like in the finite dimensional case it turns out that the limit $\P$ is unique, see e.g.~\cite[Chapter I.3.7]{VaTaCh87}.
Moreover,  we say that 
a sequence $(X_n)$ of $E$-valued random variables converges to an $E$-valued random
variable $X$ in distribution, if the distributions of $(X_n)$  converge weakly to the distribution of $X$.
 
 The following result, whose implication \emph{iii)} $\Rightarrow$ \emph{i)} is 
known as L\'evy's continuity theorem relates convergence in distribution to 
pointwise convergence of the characteristic functions.

\begin{theorem}\label{thm:levy-continuity} 
Let $(X_n)$ be a sequence of $\Rd$-valued random variables  and $X$ be another $\Rd$-valued random variable. Then the following statements are equivalent:
\begin{enumerate}
\item We have $X_n \to X$ in distribution.
\item For all $a\in \Rd$  with $\snorm a_2 = 1$ we have $\skprod a{X_n} \to \skprod aX$ in distribution.
\item We have $\p_{X_n}(t) \to \p_X(t)$ for all $t\in \Rd$.
\end{enumerate}
% Moreover, if in this case $Y$ is another $\Rd$-valued random variable with $X_n\to Y$ in distribution, then $X$ and $Y$ have the same distribution. 
\end{theorem}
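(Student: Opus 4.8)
The plan is to establish \emph{i)} $\Leftrightarrow$ \emph{iii)} via the classical multivariate version of L\'evy's continuity theorem, and \emph{ii)} $\Leftrightarrow$ \emph{iii)} by reducing everything to one dimension with the help of Lemma \ref{lem:char-func-d-prop} (the Cram\'er--Wold device). Write $\P_n$ and $\P$ for the distributions of $X_n$ and $X$ on $\sborel^d$. The implication \emph{i)} $\Rightarrow$ \emph{iii)} is immediate: for fixed $t\in\Rd$ the maps $x\mapsto \cos\skprod tx$ and $x\mapsto \sin\skprod tx$ are bounded and continuous on $\Rd$, so the definition \eqref{def:weak-converg-of-meas} of weak convergence gives $\int_{\Rd}\cos\skprod t\cdot\intd\P_n\to\int_{\Rd}\cos\skprod t\cdot\intd\P$ and the analogous statement for $\sin$; forming the combination $\cos+\imi\sin$ yields $\p_{X_n}(t)\to\p_X(t)$.

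\textbf{The implication \emph{iii)} $\Rightarrow$ \emph{i)}.} This is the substantive half and the main obstacle. I would invoke the classical L\'evy continuity theorem, see e.g.~\cite{JaPr04,Klenke14}: since $\p_X$ is a characteristic function it is continuous at $0$, and therefore pointwise convergence $\p_{X_n}\to\p_X$ forces uniform tightness of $(\P_n)$. Concretely, the standard estimate bounding $\P_n(\{x:\snorm x_2>R\})$ by a multiple of $\int_{[-\delta,\delta]^d}(1-\mathrm{Re}\,\p_{X_n}(t))\intd t$, combined with dominated convergence (using $0\le 1-\mathrm{Re}\,\p_{X_n}\le 2$, the pointwise convergence to $1-\mathrm{Re}\,\p_X$, and the smallness of the latter near the origin), supplies the tightness. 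By Prokhorov's theorem every subsequence of $(\P_n)$ then has a further subsequence converging weakly to some probability measure $\mu$; arguing exactly as in the proof of \emph{i)} $\Rightarrow$ \emph{iii)} above, $\p_\mu=\lim\p_{X_{n_k}}=\p_X$, whence $\mu=\P$ by uniqueness of characteristic functions. Since every subsequential weak limit equals $\P$, the whole sequence converges weakly to $\P$, which is \emph{i)}. If one prefers, this entire step can be replaced by a citation of the finite-dimensional L\'evy theorem; then the only real work in the lemma sits in the references.

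\textbf{The equivalence \emph{ii)} $\Leftrightarrow$ \emph{iii)}.} Here Lemma \ref{lem:char-func-d-prop} does the bookkeeping. For \emph{iii)} $\Rightarrow$ \emph{ii)}, fix $a\in\Rd$ with $\snorm a_2=1$; the first identity of Lemma \ref{lem:char-func-d-prop} gives $\p_{\skprod a{X_n}}(s)=\p_{X_n}(sa)\to\p_X(sa)=\p_{\skprod aX}(s)$ for every $s\in\R$, and since $\p_{\skprod aX}$ is continuous at $0$ (being a characteristic function on $\R$), the one-dimensional L\'evy continuity theorem yields $\skprod a{X_n}\to\skprod aX$ in distribution. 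For the converse, apply the already proved implication \emph{i)} $\Rightarrow$ \emph{iii)} in dimension one to the sequences $(\skprod a{X_n})$: this gives $\p_{X_n}(sa)=\p_{\skprod a{X_n}}(s)\to\p_{\skprod aX}(s)=\p_X(sa)$ for all $s\in\R$ and all unit vectors $a$, and writing an arbitrary $t\in\Rd\setminus\{0\}$ as $t=\snorm t_2\cdot(t/\snorm t_2)$ with $t/\snorm t_2$ a unit vector (the case $t=0$ being trivial since $\p_{X_n}(0)=1=\p_X(0)$) produces $\p_{X_n}(t)\to\p_X(t)$ for all $t\in\Rd$, i.e.~\emph{iii)}. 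Combining the two equivalences \emph{i)} $\Leftrightarrow$ \emph{iii)} and \emph{ii)} $\Leftrightarrow$ \emph{iii)} finishes the proof.
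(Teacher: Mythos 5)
Your proposal is correct and follows essentially the same route as the paper: \emph{i)} $\Rightarrow$ \emph{iii)} from the definition of weak convergence, \emph{iii)} $\Rightarrow$ \emph{i)} by the classical L\'evy continuity theorem (which the paper simply cites, while you additionally sketch the tightness/Prokhorov argument), and \emph{ii)} $\Leftrightarrow$ \emph{iii)} by the one-dimensional case of \emph{i)} $\Leftrightarrow$ \emph{iii)} together with the first identity of Lemma \ref{lem:char-func-d-prop}. No gaps.
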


\begin{proof}[Proof of Theorem \ref{thm:levy-continuity}]
\atob i {iii} Follows immediately from the definition of convergence in distribution and from the form of  characteristic functions.

\atob {iii} i See, for example, \cite[Theorem 9.8.2]{Dudley02} or \cite[Theorem 15.23]{Klenke14}.

\aeqb {ii} {iii} We first note that $\skprod a{X_n} \to \skprod aX$ in distribution is equivalent to $\p_{\skprod a{X_n}}(s) \to \p_{\skprod a{X}}(s)$ 
for all $s\in \R$ by the already established equivalence of \emph{i)} and \emph{iii)}. Now the assertion
quickly follows by applying the first formula  of  Lemma \ref{lem:char-func-d-prop}.
% 
% 
% \atob {ii} {iii} Let us fix a $t\in \Rd$ with $t\neq 0$. We write $a:= \snorm t_2^{-1} t$ and  $s:= \snorm t_2$
% as well as $Y_n := \skprod a{X_n}$ and $Y := \skprod aX$.
% By  Lemma \ref{lem:char-func-d-prop} we then obtain
% \begin{align*} 
% \p_{X_n}(t) = \p_{X_n}(sa) = \p_{\skprod a{X_n}}(s) 
% =
% \p_{Y_n}(s) 
% \end{align*}
% and analogously we find  $\p_{X}(t) = \p_{Y}(s)$.
% Moreover, we have 
% $Y_n \to Y$ in distribution and hence the already established implication \emph{i)} $\Rightarrow$ \emph{iii)} 
% gives $\p_{Y_n}(s)\to\p_Y(s)$. The previous two identities then yield $\p_{X_n}(t) \to \p_X(t)$ for all $t\neq 0$.
% Moreover, we always have $\p_{X_n}(0) = 1 = \p_X(0)$, and hence \emph{iii)} has been shown.
% 
% \atob {i} {ii} Let us fix a bounded and continuous $f:\R\to \R$. Then $f(\skprod a\mycdot) :\Rd\to \R$
% is also bounded and continuous, and hence the transformation formula for image measures together with \emph{i)} yields 
% \begin{align*}
% \int_\R f \intd \P_{\skprod a{X_n}} 
% = \int_{\R^d} f(\skprod a\mycdot) \intd \P_{X_n}
% \to
% \int_{\R^d} f(\skprod a\mycdot) \intd \P_{X} 
% = 
% \int_\R f \intd \P_{\skprod a{X}} \, ,
% \end{align*}
% that is, we have found \emph{ii)}.
% 
% Finally, let us assume that $X_n \to X$ and  $X_n\to Y$ in distribution. Then the implication 
% \emph{i)} $\Rightarrow$ \emph{iii)} shows $\p_X(t) = \p_Y(t)$ for all $t\in \R$, and hence 
% $X$ and $Y$ have the same distribution
% by Theorem \ref{thm:cf-identification}.
\end{proof}

The next result applies Theorem \ref{thm:levy-continuity} to $\R$-valued Gaussian random variables.

\begin{theorem}\label{thm:levy-grv}
Let $(X_n)$ be a sequence of $\R$-valued Gaussian random variables and $X$ be an $\R$-valued random variable with 
 $X_n \to X$ in distribution. Then $X$ is a Gaussian random variable and we have:
 \begin{align*}
 \E X_n &\to \E X \, , \\
 \var X_n &\to \var X \, .
 \end{align*}
\end{theorem}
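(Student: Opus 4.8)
\textbf{Proof plan for Theorem \ref{thm:levy-grv}.}
The plan is to reduce everything to the fact that a convergent sequence of characteristic functions of $\R$-valued Gaussians forces convergence of the parameters. First I would write $\cfp{X_n}(t) = \eul^{\imi t\mu_n} \exp(-\s_n^2 t^2/2)$ with $\mu_n := \E X_n$ and $\s_n^2 := \var X_n$, which is available since each $X_n$ is $\R$-valued Gaussian. By Theorem \ref{thm:levy-continuity} (the equivalence of \emph{i)} and \emph{iii)} in the case $d=1$), the assumed convergence in distribution $X_n\to X$ gives $\cfp{X_n}(t)\to \cfp X(t)$ for all $t\in\R$. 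Taking absolute values yields $\exp(-\s_n^2 t^2/2) \to |\cfp X(t)|$ for all $t$; in particular at a fixed $t\neq 0$ the sequence $(\s_n^2)$ is bounded, since otherwise $\exp(-\s_n^2 t^2/2)$ would have a subsequence tending to $0$ while another stays bounded away from $0$ only if $|\cfp X(t)|$ is ill-defined — more cleanly, $(\s_n^2)$ is bounded because $\exp(-\s_n^2/2)$ converges to the real number $|\cfp X(1)|\in[0,1]$, and if $|\cfp X(1)|>0$ this pins down $\limsup \s_n^2<\infty$, while if $|\cfp X(1)|=0$ we are in the degenerate branch handled below.

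The main argument is a subsequence/compactness argument. Suppose first $|\cfp X(1)| > 0$. Then $(\s_n^2)$ is bounded, so along any subsequence we may extract a further subsequence with $\s_{n_k}^2 \to \s^2 \in [0,\infty)$. On that subsequence $\exp(-\s_{n_k}^2 t^2/2)\to \exp(-\s^2 t^2/2)$, hence $|\cfp X(t)| = \exp(-\s^2 t^2/2)$ for all $t$; since this limit is the same for every subsequence, $\s_n^2\to \s^2$. Next, from $\cfp{X_{n_k}}(t) = \eul^{\imi t\mu_{n_k}}\exp(-\s_{n_k}^2 t^2/2)$ and the now-established convergence of the modulus, the phase $\eul^{\imi t \mu_{n_k}}$ converges for every $t$ to $\cfp X(t)\exp(\s^2 t^2/2)$, a continuous function of $t$ equal to $1$ at $t=0$; a standard argument (e.g.\ choosing $t$ small enough that the limiting phase stays in the right half plane, so that $\mu_{n_k} t \bmod 2\pi$ is forced into a fixed small interval, then differentiating or using two incommensurate values of $t$) shows $(\mu_{n_k})$ is bounded and converges to some $\mu\in\R$; again independence of the subsequence gives $\mu_n\to\mu$. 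Therefore $\cfp X(t) = \eul^{\imi t\mu}\exp(-\s^2 t^2/2)$, so by the definition recalled in Section \ref{sec:prelims} the measure $\P_X$ is Gaussian with $\E X = \mu = \lim \E X_n$ and $\var X = \s^2 = \lim \var X_n$.

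It remains to dispose of the degenerate case $|\cfp X(1)| = 0$, i.e.\ $\exp(-\s_n^2/2)\to 0$, which means $\s_n^2\to\infty$. I would rule this out: if $\s_n^2\to\infty$ then $\cfp{X_n}(t)\to 0$ for every $t\neq 0$ while $\cfp{X_n}(0)=1$, so the limit function is not continuous at $0$; but a pointwise limit of characteristic functions that arises from genuine convergence in distribution must be the characteristic function of $X$, which is continuous. Hence this case cannot occur, and the proof is complete. The step I expect to be the only genuine obstacle is extracting convergence of the means $\mu_n$ from convergence of the phases $\eul^{\imi t\mu_n}$ — one must be slightly careful because $\eul^{\imi t\mu}$ does not determine $\mu$ for a single $t$; using the convergence for all $t$ in a neighbourhood of $0$ (where the limiting phase is close to $1$ and hence has a well-defined continuous logarithm) resolves this cleanly. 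Everything else is routine manipulation of the Gaussian characteristic function together with Theorem \ref{thm:levy-continuity}.
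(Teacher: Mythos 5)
Your overall strategy coincides with the paper's: pass to characteristic functions via Theorem \ref{thm:levy-continuity}, read off the variances from the moduli, and then pin down the means. The variance part is essentially identical (the paper picks a $t_0$ near $0$ with $|\cfp X(t_0)|>0$ using the continuity of $\cfp X$ and sets $\s^2 = -2t_0^{-2}\ln|\cfp X(t_0)|$; your case split at $t=1$ with the degenerate branch excluded by continuity of $\cfp X$ at $0$ amounts to the same thing). Where you genuinely diverge is the boundedness and convergence of the means: the paper argues probabilistically, choosing $a$ with $\P_X([-a,a])\geq 1/2$ and a continuous bump function $f$, and showing that $\mu_{n_k}\to\infty$ would force $\int f\,\ind\P_{X_{n_k}}\to 0$ because the Gaussian mass escapes the support of $f$; you instead stay entirely on the Fourier side and extract $\mu_n\to\mu$ from the pointwise convergence of the phases $\eul^{\imi t\mu_n}$. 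Your route is arguably cleaner once made precise, but the one step you flag is indeed the one that needs more care than your sketch provides: for a \emph{single} small $t$, knowing that $\eul^{\imi t\mu_{n}}$ eventually lies in the right half-plane only constrains $t\mu_{n}$ modulo $2\pi$, so it does not force $\mu_n$ into a bounded set. The standard repair, consistent with your remark about using all $t$ in a neighbourhood of $0$, is to integrate: dominated convergence gives $\int_0^\d \eul^{\imi t\mu_n}\intd t \to \int_0^\d g(t)\intd t$, and the right-hand side is bounded away from $0$ for small $\d$ while the left-hand side equals $(\eul^{\imi\d\mu_n}-1)/(\imi\mu_n)\to 0$ along any subsequence with $|\mu_{n_k}|\to\infty$; equivalently, apply L\'evy's continuity theorem to the point masses $\dirac{\mu_n}$. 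With that step filled in, the uniqueness of $\mu$ satisfying $\eul^{\imi t\mu}=g(t)$ for all $t$ gives convergence of the full sequence, and your argument is complete.
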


\begin{proof}[Proof of Theorem \ref{thm:levy-grv}]
Let us write $\mu_n := \E X_n$ and $\s_n := \sqrt{\var X_n}$. By Theorem \ref{thm:levy-continuity}
we then know that 
\begin{align}\label{cor:levy-grv-h0}
 \eul^{\imi t \mu_n} \cdot \exp\Bigl( - \frac{\s_n^2 t^2}{2} \Bigr) = \p_{X_n}(t) \to \p_X(t) \, , \qquad \qquad t\in \R.
\end{align}
Taking the absolute value in $\Cfield$ yields
\begin{align}\label{cor:levy-grv-h1}
 \exp\Bigl( - \frac{\s_n^2 t^2}{2} \Bigr) = |\p_{X_n}(t)| \to |\p_X(t)| =: a_t \, , \qquad \qquad t\in \R.
\end{align}
Obviously, we have $a_0 = |\p_X(t)| = 1$. 
Since $\p_X$ is uniformly continuous, see e.g.~\cite[Theorem 15.21]{Klenke14}, there thus exists a $t_0>0$ such that $a_t > 0$ for all $t\in [-t_0,t_0]$.
Consequently, we have 
\begin{align}\label{cor:levy-grv-h3}
\s^2_n \to \s^2 := - \frac 2 {t_0^2} \, \ln(a_{t_0})\, ,
\end{align}
and in particular, we find  $b:= 1+\sup_{n}\s_n< \infty$.

Our next step is to show that $(\mu_n)$ is a bounded sequence. To this end,
let us first assume that we had a subsequence $\mu_{n_k}\to \infty$. 
Since $\PX$ is a probability
measure there exists  an $a>0$ with $\PX([-a,a]) \geq 1/2$. 
Clearly, we may assume without loss of generality that $\mu_{n_k}> a+1$ for all $k\geq 1$.
Let us fix a continuous function $f:\R\to [0,1]$ with $f(t) = 1$ for all $t\in [-a,a]$ and $f(t) = 0$ for all 
$t\not \in [-a-1, a+1]$. By the assumed convergence $X_n \to X$ in distribution, we then obtain 
\begin{align}\label{cor:levy-grv-h2}
\int_\R f\intd \P_{X_{n_k}} \to \int_\R f\intd \P_{X} \geq \int_{[-a,a]} f\intd \P_{X} \geq 1/2\, .
\end{align}
To find a contradiction, let us first assume that there is an $n_k$ with
 $\s_{n_k} = 0$. This gives $\P_{X_{n_k}} = \dirac{\mu_{n_k}}$, and 
 since $\mu_{n_k}> a+1$ 
 our construction of $f$ would then yield
\begin{align*}
\int_\R f\intd \P_{X_{n_k}} = f(\mu_{n_k}) = 0\, .
\end{align*}
In view of \eqref{cor:levy-grv-h2} we thus conclude that there exists a $k_0\geq 1$ with 
$\s_{n_k} > 0$ for all $k\geq k_0$. 
For such $k$, however, we find
\begin{align*}
\int_\R f\intd \P_{X_{n_k}}
& = 
\frac {1}{\sqrt{2\pi \s_{n_k}^2}}\int_\R f(t) \exp\Bigl( -\frac{(t-\mu_{n_k})^2}{2\s_{n_k}^2}   \Bigr) \intd t \\
& = 
\frac {1}{\sqrt{2\pi \s_{n_k}^2}}\int_\R f(t + \mu_{n_k}) \exp\Bigl( -\frac{t^2}{2\s_{n_k}^2}   \Bigr) \intd t \\
& = 
\frac {1}{\sqrt{2\pi }}\int_\R f(\s_{n_k}t + \mu_{n_k}) \exp\Bigl( -\frac{t^2}{2}   \Bigr) \intd t \\
& = 
\frac {1}{\sqrt{2\pi }}\int_{-\infty}^{\s_{n_k}^{-1}(a+1-\mu_{n_k})} f(\s_{n_k}t + \mu_{n_k}) \exp\Bigl( -\frac{t^2}{2}   \Bigr) \intd t \\
& \leq 
\gauss 01 \bigl((-\infty, \s_{n_k}^{-1}(a+1-\mu_{n_k})] \bigr) \, ,
\end{align*}
where in the second to last step we used that $\s_{n_k} t+\mu_{n_k}\leq a+1$ is equivalent to $t\leq \s^{-1}_{n_k}(a+1-\mu_{n_k})$.
Now observe that the definition of $b$ implies $\s_{n_k} \leq b$ and thus 
$\s_{n_k}^{-1} \geq b^{-1}>0$ for all $k\geq k_0$. Since we also have 
$a+1-\mu_{n_k}<0$ for all $k\geq 1$, we thus conclude
\begin{align*}
\s_{n_k}^{-1}(a+1-\mu_{n_k}) \leq b^{-1}(a+1-\mu_{n_k})
\end{align*}
for all $k\geq k_0$. Consequently, the estimation above can be continued to 
\begin{align*}
\int_\R f\intd \P_{X_{n_k}} 
\leq \gauss 01 \bigl((-\infty, \s_{n_k}^{-1}(a+1-\mu_{n_k})] \bigr) 
&\leq  \gauss 01 \bigl((-\infty, b^{-1}(a+1-\mu_{n_k})] \bigr) \\
& \to 0
\end{align*}
for $k\to \infty$, where in the last step we used $\mu_{n_k}\to \infty$.
Clearly, this contradicts \eqref{cor:levy-grv-h2} and hence there is no
subsequence $\mu_{n_k}\to \infty$. Analogously, we find that there is no subsequence
$\mu_{n_k}\to -\infty$. In other words, $(\mu_n)$ is a bounded sequence.

Now, since $(\mu_n)$ is a bounded sequence, there exist a $\mu\in \R$ and a subsequence $(\mu_{n_k})$
with $\mu_{n_k}\to \mu$, and this in turn yields 
\begin{align}\label{cor:levy-grv-h4}
\p_{X_{n_k}}(t) 
= \eul^{\imi t \mu_{n_k}} \cdot \exp\Bigl( - \frac{\s_{n_k}^2 t^2}{2} \Bigr)  
\to
\eul^{\imi t \mu} \cdot \exp\Bigl( - \frac{\s^2 t^2}{2} \Bigr) 
\end{align}
for all $t\in \R$.
Since the limit function is the characteristic function of $\gauss \mu {\s^2}$ 
we see by \eqref{cor:levy-grv-h0} and the uniqueness of the characteristic function
% , 
% see 
% Theorem \ref{thm:cf-identification},
 that $\PX = \gauss \mu {\s^2}$, that is, $X$ is indeed a Gaussian random variable.

In addition, $\var X_n \to \var X$
immediately follows from \eqref{cor:levy-grv-h3}.
Finally, we already know that $\E X_{n_k} =\mu_{n_k} \to \mu= \E X$.
If, however, the full sequence $(\E X_{n})$ did not converge to  $\E X$, then there would be another subsequence $(\mu_{m_k})$  
and an $\e>0$ with $|\mu_{m_k} - \mu|\geq \e$ for all $k\geq 1$. Since $(\mu_n)$ is bounded, so is
$(\mu_{m_k})$, and hence there would exist a subsequence of $(\mu_{m_k})$ converging to some $\tilde \mu\in \R$. Our construction
ensures $\mu\neq \tilde \mu$ and repeating the arguments around \eqref{cor:levy-grv-h4} would give us 
$\PX = \gauss{\tilde \mu}{\s^2}$. This, however, is 
% 
% If, however, there was a $\tilde \mu\in \R$ with $\tilde \mu \neq \mu$ and another subsequence $\mu_{m_k} \to \tilde \mu$, then repeating the arguments around \eqref{cor:levy-grv-h4} would give us 
% $\PX = \gauss{\tilde \mu}{\s^2}$, which is 
impossible by the uniqueness of the weak limit of probability measures, see discussion below \eqref{def:weak-converg-of-meas}.
\end{proof}

Let us now turn to general $E$-valued Gaussian random variables.
The first result in this direction  shows that the Gaussianity of an $E$-valued random variable can be checked with the help 
of $\tauweaks$-dense   subsets $\denseblo$ of $B_E'$, where we recall from Theorem \ref{thm:alaoglu-and-more} that there exist such sets that are even countable.

\begin{theorem}\label{thm:test-for-gms}
Let $E$ be a separable Banach space %and $\denseblo\subset E'$ 
and $\denseblo \subset B_{E'}$ be  $\tauweaks$-dense in $B_{E'}$.
Then a probability measure $\P$ on $\sborelnorm E = \sborelEx E$
is a Gaussian measure, if and only if the image measure $\P_{x'}$ 
is a Gaussian measure on $\sborel$  for all $x'\in \denseblo$.

Moreover, an $E$-valued  random variable $X$ is a Gaussian random variable, if and only if 
$\dualpair {x'}XE$ is an $\R$-valued  Gaussian random variable for all $x'\in \denseblo$.
\end{theorem}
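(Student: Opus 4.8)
\textbf{Proof plan for Theorem \ref{thm:test-for-gms}.}
The plan is to prove both equivalences simultaneously by reducing the ``hard'' direction to a characteristic-function argument. The trivial directions are immediate: if $\P$ is Gaussian then by definition $\P_{x'}$ is Gaussian on $\sborel$ for \emph{every} $x'\in E'$, hence in particular for $x'\in \denseblo$; and the random-variable statement follows from the measure statement applied to $\P := \P_X$, since $(\P_X)_{x'} = \P_{\dualpair{x'}XE}$. So the entire content is the converse: assuming $\P_{x'}$ is Gaussian for all $x'\in \denseblo$, show $\P_{x'}$ is Gaussian for all $x'\in E'$, which by definition is exactly what ``$\P$ is a Gaussian measure'' means.

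First I would fix an arbitrary $x'\in E'$. By rescaling it suffices to treat $x'\in B_{E'}$, since $\P_{\lambda x'}$ is Gaussian iff $\P_{x'}$ is (a Gaussian law is stable under scalar multiplication, with mean and variance scaling accordingly). Using that $\denseblo$ is $\tauweaks$-dense in $B_{E'}$ and, by Theorem \ref{thm:alaoglu-and-more}, that the relative $\tauweaks$-topology on $B_{E'}$ is metrizable, I can pick a sequence $(x_n')\subset \denseblo$ with $x_n'\to x'$ in the $\tauweaks$-topology, i.e.\ $\dualpair{x_n'}xE \to \dualpair{x'}xE$ for all $x\in E$. Let $Z_n$ be an $\R$-valued random variable with law $\P_{x_n'}$ and $Z$ one with law $\P_{x'}$; concretely, on the probability space $(E,\sborelnorm E,\P)$ take $Z_n := \dualpair{x_n'}{\id_E}E$ and $Z := \dualpair{x'}{\id_E}E$, so $Z_n \to Z$ pointwise on $E$ and hence $\P$-almost surely. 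Pointwise (a.s.) convergence implies convergence in distribution, so $Z_n \to Z$ in distribution. Each $Z_n$ is an $\R$-valued Gaussian random variable by hypothesis, so Theorem \ref{thm:levy-grv} applies and yields that $Z$ is an $\R$-valued Gaussian random variable; equivalently $\P_{x'}$ is a Gaussian measure on $\sborel$. Since $x'\in E'$ was arbitrary, $\P$ is a Gaussian measure.

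The main obstacle — and it is already resolved by the machinery assembled in the excerpt — is the passage from ``test functionals in the dense set'' to ``all functionals''. A naive attempt might try to identify the limiting law via characteristic functions directly, but one has to rule out degeneracy and control the means and variances in the limit; this is precisely the delicate part of Theorem \ref{thm:levy-grv} (boundedness of the means $\mu_n$, positivity of $a_t=|\varphi_X(t)|$ near $0$), so invoking that theorem as a black box is the clean route. A secondary point to be careful about is measurability/well-definedness: $\P_{x'}$ makes sense because $x':E\to\R$ is $(\sborelEx E,\sborel)$-measurable and $\sborelEx E = \sborelnorm E$ by Theorem \ref{thm:pettis-var}, so the image measures are genuine Borel probability measures on $\R$. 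Finally, for the random-variable half one should note that $X$ Gaussian is \emph{defined} as $\dualpair{x'}XE$ being $\R$-valued Gaussian for all $x'\in E'$, so once the measure statement is established it transfers verbatim by taking $\P=\P_X$, with no further work.
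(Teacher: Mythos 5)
Your proposal is correct and follows essentially the same route as the paper: approximate an arbitrary $x'$ by a sequence in $\denseblo$ using the metrizability of the relative $\tauweaks$-topology on $B_{E'}$ (Theorem \ref{thm:alaoglu-and-more}), pass from pointwise to distributional convergence, and invoke Theorem \ref{thm:levy-grv}; the only cosmetic difference is that you derive the random-variable statement from the measure statement via $\P=\P_X$, while the paper goes the other way via $X=\id_E$ on $(E,\sborelEx E,\P)$. Your explicit rescaling to reduce to $x'\in B_{E'}$ is a small point the paper leaves implicit, and it is a welcome clarification.
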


\begin{proof}[Proof of Theorem \ref{thm:test-for-gms}]
Let us first consider the case of an $E$-valued  random variable $X:\Om\to E$. 
We begin by assuming that $\dualpair {x'}XE$ is a Gaussian random variable  for all $x'\in \denseblo$.
Let us now fix an  $x'\in E'$. Since the relative $\tauweaks$-topology of $B_{E'}$ is metrizable, see Theorem \ref{thm:alaoglu-and-more},
 there then exists a sequence $(x'_n)\subset \denseblo$ with $\langle x_n', x\rangle \to \langle x',x\rangle$ for all $x\in E$.
Consequently, we have 
\begin{align*}
\dualpair {x_n'}{X(\om)}E \to \dualpair {x'}{X(\om)}E\, , \myqquad \om\in \Om ,
% \P_{x_n'} \to \P_{x'}
\end{align*}
and this in turn shows that $\dualpair {x_n'}{X}E \to \dualpair {x'}{X}E$ in distribution. 
Now, all $\langle x'_n, X\rangle$ are $\R$-valued Gaussian random variables by assumption, and consequently Theorem \ref{thm:levy-grv}
ensures that $\langle x', X\rangle$ is also a Gaussian random variable. Consequently, $X$ is a Gaussian random variable. The converse implication is trivial.

Finally, the assertion for Gaussian measures follows from considering the random variable $X:= \id_E:E\to E$ on 
the probability space $(E,\sborelEx E,\P)$, since in this case we have $\P_X = \P$ and $\P_{\dualpair {x'}XE} = \P_{x'}$.
\end{proof}

By definition, every one-dimensional projection $\dualpair {x'}{X}E$ of a centered
Gaussian random variable is a  centered Gaussian random variable, and it is obvious that 
the set of these collections form a linear space. This motivates the following definition taken from \cite{Janson97}.

\begin{definition}
Let $(\Om,\sA,\P)$ be a probability space and $H\subset \Lx 2 \P$. Then $H$ is called a Gaussian Hilbert space, if 
$H$ is a closed subspace of  $\Lx 2 \P$ and every $h\in H$ is a centered Gaussian random variable.
\end{definition}

The next result connects Gaussian Hilbert spaces to Gaussian random variables.

\begin{lemma}\label{lem:gaussspace-of-grv}
Let $(\Om,\sA,\P)$ be a probability space, $E$ be a separable Banach space, and $X:\Om\to E$ be a  Gaussian random variable.
Then
\begin{align*}
\gaussspace X := \overline{\bigl\{  \dualpair{x'}{X-\E X}E: x'\in E' \bigr\}}^{\Lx 2 \P}
\end{align*}
is a Gaussian Hilbert space and each $\P$-equivalence class in $\gaussspace X$ has a $\s(X)$-measurable representative.
In addition, if $F$ is another separable Banach space, then the following statements hold true:
\begin{enumerate}
\item If $A:E\to F$ is a bounded and linear operator, then $\gaussspace {A\circ X} \subset \gaussspace X$.
\item If $Y:\Om\to F$ is a random variable such that $X$ and $Y$ are jointly Gaussian random variables, then 
\begin{align*}
\gaussspace{X,Y}  := \gaussspace{(X,Y)} =  \overline{\gaussspace X + \gaussspace Y}^{\Lx 2 \P} \, .
\end{align*}
\end{enumerate}
\end{lemma}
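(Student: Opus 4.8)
\textbf{Plan for the proof of Lemma \ref{lem:gaussspace-of-grv}.}

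The plan is to break the statement into its three claims and handle them in order: first that $\gaussspace X$ is a Gaussian Hilbert space with a $\s(X)$-measurable representative in each equivalence class, then the two numbered assertions. For the first claim, the set $\{\dualpair{x'}{X-\E X}E : x'\in E'\}$ is by definition a linear subspace of $\Lx 2 \P$ (linearity of $x'\mapsto \dualpair{x'}{X-\E X}E$), and each $\dualpair{x'}{X-\E X}E$ is a centered $\R$-valued Gaussian random variable since $X$ is Gaussian and $\E \dualpair{x'}{X-\E X}E = \dualpair{x'}{\E(X-\E X)}E = 0$ using the Pettis property \eqref{eq:bochner-is-pettis}. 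Taking the $\Lx 2 \P$-closure gives a closed subspace, and closedness is preserved under taking limits of Gaussian random variables: if $h_n \to h$ in $\Lx 2 \P$ with each $h_n$ centered Gaussian, then $h_n \to h$ in distribution, so Theorem \ref{thm:levy-grv} shows $h$ is Gaussian, and $\E h = \lim \E h_n = 0$. Hence $\gaussspace X$ is a Gaussian Hilbert space. For the $\s(X)$-measurable representative, note that each $\dualpair{x'}{X-\E X}E$ is already $\s(X)$-measurable by construction; for a general $h\in \gaussspace X$ pick a sequence $h_n = \dualpair{x_n'}{X-\E X}E$ with $h_n \to h$ in $\Lx 2 \P$, pass to a subsequence converging $\P$-almost surely, and let $\tilde h := \limsup_n h_n \cdot \eins_{\{(h_n)\text{ converges}\}}$, which is $\s(X)$-measurable and $\P$-a.s.\ equal to $h$.

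For part \emph{i)}, let $A:E\to F$ be bounded linear. Since $\E(A\circ X) = A(\E X)$ by \eqref{eq:transform-bochner}, for any $y'\in F'$ we have $\dualpair{y'}{A\circ X - \E(A\circ X)}F = \dualpair{y'}{A(X-\E X)}F = \dualpair{A'y'}{X-\E X}E$, where $A':F'\to E'$ is the adjoint; this lies in $\{\dualpair{x'}{X-\E X}E : x'\in E'\} \subset \gaussspace X$. Taking the $\Lx 2 \P$-closure and using that $\gaussspace X$ is already closed gives $\gaussspace{A\circ X} \subset \gaussspace X$. For part \emph{ii)}, the dual of $E\times F$ is $E'\times F'$ with pairing $\dualpair{(x',y')}{(x,y)}{E\times F} = \dualpair{x'}xE + \dualpair{y'}yF$, so elements of $\{\dualpair{(x',y')}{(X,Y)-\E(X,Y)}{E\times F} : x'\in E', y'\in F'\}$ are exactly sums $\dualpair{x'}{X-\E X}E + \dualpair{y'}{Y-\E Y}F$, i.e.\ this set equals $\{\dualpair{x'}{X-\E X}E : x'\in E'\} + \{\dualpair{y'}{Y-\E Y}F : y'\in F'\}$. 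The inclusion $\gaussspace{X,Y} \subset \overline{\gaussspace X + \gaussspace Y}^{\Lx 2 \P}$ follows since the left-hand generating set is contained in $\gaussspace X + \gaussspace Y$, and the right-hand set is closed; conversely $\gaussspace X \subset \gaussspace{X,Y}$ and $\gaussspace Y \subset \gaussspace{X,Y}$ by part \emph{i)} applied to the coordinate projections $E\times F\to E$ and $E\times F\to F$, so $\gaussspace X + \gaussspace Y \subset \gaussspace{X,Y}$ and taking closures (using that $\gaussspace{X,Y}$ is closed) gives the reverse inclusion. Here one also needs that $(X,Y)$ is separable-valued and the $\s$-algebra matters work out, which is immediate from separability of $E\times F$ via Theorem \ref{thm:pettis-var}.

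The main obstacle I expect is not any single deep step but getting the bookkeeping right: verifying that a closure of a set of Gaussian variables stays Gaussian (which is exactly what Theorem \ref{thm:levy-grv} buys us, so this is routine given the excerpt), and, slightly more delicately, the $\s(X)$-measurable representative claim, where one must be a little careful to choose the representative as a genuine pointwise $\limsup$ of $\s(X)$-measurable functions on the set where the subsequence converges, and zero elsewhere, so that it is $\s(X)$-measurable \emph{and} a member of the given $\P$-equivalence class. Everything else is a direct manipulation of adjoints, the Pettis property \eqref{eq:bochner-is-pettis}, and the identity \eqref{eq:transform-bochner} for Bochner integrals under bounded operators.
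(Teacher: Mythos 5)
Your proposal is correct and follows essentially the same route as the paper's proof: centered one-dimensional projections plus Theorem \ref{thm:levy-grv} for closedness under $\Lx 2 \P$-limits, an a.s.\ convergent subsequence restricted to its (measurable) convergence set for the $\s(X)$-measurable representative, the adjoint identity $\dualpair{y'}{A(X-\E X)}F = \dualpair{A'y'}{X-\E X}E$ for part \emph{i)}, and the identification of $(E\times F)'$ with $E'\times F'$ together with the coordinate projections for the two inclusions in part \emph{ii)}. The only differences are cosmetic (e.g.\ your $\limsup$-times-indicator construction versus the paper's limit on the convergence set).
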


% \todo{Check, if we really need the closure on the right hand side! Here is a  Hilbert space counterexample showing that the sum of two
% closed subspaces of a Hilbert space is not closed in general:
% \url{https://math.stackexchange.com/questions/1786739/sum-of-closed-spaces-is-not-closed}
% I have not checked, if this can be used to construct a counterexample for Gaussian Hilbert spaces.
% }

\begin{proof}[Proof of Lemma \ref{lem:gaussspace-of-grv}]
Without loss of generality we assume that $X$ and $Y$ are centered. 

Now, \cite[Example 1.8]{Janson97} shows that $\gaussspace X$ is a Gaussian 
Hilbert space. However, this also directly follows from the observation that $\gaussspace X$ is a closed subspace of $\Lx 2 \P$ by construction 
and from the fact that $\Lx 2 \P$-limits of centered Gaussian random variables are centered Gaussian random variables, see also Theorem \ref{thm:levy-grv}.

Let us now find a  $\s(X)$-measurable representative. Clearly, 
 each $\dualpair{x'}XE \in \gaussspace X$ is $\s(X)$-measurable, and therefore there is nothing to prove for the $\P$-equivalence classes
of these functions. Let us now fix an arbitrary $f\in \gaussspace X$, so that our goal is to find a $\s(X)$-measurable $g:\Om\to \R$ with $\P(\{f\neq g\}) = 0$. To this end, we first note that by the definition of $\gaussspace X$, there exists a sequence $(x'_n)\subset E'$ with
$f_n := \dualpair{x'_n}{X}E \to f$ in $\Lx 2 \P$. Since there exists a $\P$-almost surely converging subsequence,  we may assume without loss
of generality that we actually have the $\P$-almost sure convergence $f_n\to f$.
Moreover, each $f_n$ is $\s(X)$-measurable, and therefore we have
\begin{align*}
A:= \bigl\{ \om\in \Om : \exists \lim_{n\to \infty} f_n(\om)  \bigr\} \in \s(X)\, ,
\end{align*}
see e.g.~\cite[Lemma 2.1.7]{Bogachev07_I}. In addition, the $\P$-almost sure convergence $f_n\to f$ shows $\P(A) = 1$.
We define $g_n := \eins_A f_n$ and $g:\Om\to \R$ by $g(\om) := \lim_{n\to \infty} f_n(\om)$ for $\om \in A$ and $g(\om ) := 0$ for $\om \in \Om\setminus A$.
Since each $g_n$ is $\s(X)$-measurable and $g_n(\om) \to g(\om)$ for all $\om\in \Om$ we then see that
 $g$ is $\s(X)$-measurable. Moreover, we have
 \begin{align*}
\P(\{f\neq g\})
= \P (A \cap \{f\neq g\} )  + \P\bigl( (\Om\setminus A ) \cap \{f\neq g\} ) \bigr)
% = \P (A \cap \{f\neq g\} )
= \P (A \cap \{f\neq \lim_{n\to \infty} f_n\} )
= 0\, ,
 \end{align*}
and hence we have found the desired $\s(X)$-measurable representative $g$ of the $\P$-equivalence class of $f$.

\ada i For $y'\in F'$ we have $x':= y'\circ A\in E'$, and this yields
\begin{align*}
\dualpair {y'}{A\circ X}F = y'\circ A\circ X = \dualpair{x'}{  X}E \in \gaussspace X\, .
\end{align*}
In other words we have $\{  \dualpair{y'}{A\circ X}F: y'\in F'\} \subset \gaussspace X$ and
taking the $\Lx 2 \P$-closure then yields the assertion.

\ada {ii} For the inclusion ``$\supset$'' we consider the coordinate 
projection $\coordproj E:E\times F\to E$. Then we have $\coordproj E(X,Y) = X$ and therefore 
 \emph{i)}  gives $\gaussspace X = \gaussspace {\coordproj E(X,Y)} \subset \gaussspace{X,Y}$. Since we analogously find 
 $\gaussspace Y \subset \gaussspace{X,Y}$, we conclude that $\gaussspace X + \gaussspace Y \subset \gaussspace{X,Y}$. Taking the $\Lx2 \P$-closure 
 then yields the desired inclusion since $\gaussspace{X,Y}$ is a closed linear subspace of $\Lx 2 \P$.
% 
% follows from \emph{i)} by considering the coordinate 
% projections $\coordproj E:E\times F\to E$ and  $\coordproj F:E\times F\to F$
% and the fact that $\gaussspace{X,Y}$ is a closed linear subspace of $\Lx 2 \P$.

To prove the converse inclusion, 
we first recall that  $E'\times F' \to   (E\times F)'$ defined by 
$(x',y') (x,y) := x'(x) + y'(y)$ for all $x'\in E'$, $y'\in F'$, $x\in E$, and $y\in F$
is an isomorphism.
If we now 
 fix an $z'\in (E\times F)'$,
there then exist $x'\in E'$ and $y'\in F'$ such that
$z'(x,y) = x'(x) + y'(y)$ 
% (x',y') (x,y) := x'(x) + y'(y) = z$ 
for all  $x\in E$  and $y\in F$, and this in turn implies 
% By \eqref{def:joint-gauss-eq} we then find 
\begin{align*}
\dualpairxy {z'}{(X,Y)}{(E\times F)'}{E\times F} = \dualpair{x'}XE + \dualpair{y'}YF  \in \gaussspace X + \gaussspace Y\, .
\end{align*}
Taking the $\Lx 2 \P$-closure thus gives the inclusion ``$\subset$''.
\end{proof}

% Finally, we briefly discuss characteristic functions in the Banach space case. To this end, let $E$ be a separable Banach space and $\P$ be a 
% probability measure on $\sborelnormx E = \sborelEx E$. Then the characteristic function $\p_\P:E'\to \Cfield$ of $\P$ is defined by 
% \begin{align*}
% \p_\P(x') := \int_E \eul^{\imi \dualpair {x'}x E} \intd\P(x)\, , \myqquad x'\in E',  
% \end{align*}
% see e.g.~\cite[Chapter IV.2.1]{VaTaCh87}. Like in the finite dimensional case, the characteristic function can be used to identify 
% probability measures. Namely, if $\P$ and $\Qm$ 
% are probability measures with $\p_\P = \p_\Qm$, then we have $\P=\Qm$, see e.g.~\cite[Theorem 2.2 in Chapter IV.2.1]{VaTaCh87}.

Similar to the finite dimensional case, a Gaussian measure $\P$ on a separable Banach space $E$ has a
characteristic function that is solely described by its mean and covariance.
The following result, which can be found in e.g.~\cite[Proposition 2.8 in Chapter IV.2.4]{VaTaCh87}, provides the details, where 
we refer to Definition \ref{def:abstr-cov} for the notion of 
abstract covariance operators.

\begin{proposition}\label{prop:cf-of-general-gm}
Let $E$ be a separable Banach space and $\P$ be a probability measure on $\sborelnormx E$. Then the following statements are equivalent:
\begin{enumerate}
\item   $\P$ is a Gaussian measure.
\item There exist a $\mu\in E$ and an abstract covariance operator $K:E'\to E$ such that 
\begin{align}\label{eq:cf-of-gm-general}
\p_\P(x') =  \eul^{\imi \dualpair{x'}\mu E} \cdot \exp\biggl( -\frac{\dualpair {x'}{Kx'}E}{2} \biggr) \, , \myqquad x'\in E'\, .
\end{align}
% 
% , then 
% \begin{align}\label{eq:cf-of-gm-general}
% \p_\P(x') =  \eul^{\imi \dualpair{x'}\mu E} \cdot \exp\biggl( -\frac{\dualpair {x'}{Kx'}E}{2} \biggr) \, , \myqquad x'\in E'\, ,
% \end{align}
% where  $\mu\in E$ is the mean of $\P$ and $K:E'\to E$ is the covariance operator of $\P$. 
% \item If the characteristic function of $\P$ is of the form \eqref{eq:cf-of-gm-general}, then $\P$ is a Gaussian measure. 
\end{enumerate}
In this case, $\mu$ is the mean of $\P$ and $K$ is the covariance operator of $\P$.
\end{proposition}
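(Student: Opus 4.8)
\textbf{Proof plan for Proposition \ref{prop:cf-of-general-gm}.}
The statement to prove is the equivalence of \emph{i)} and \emph{ii)} together with the identification of $\mu$ and $K$. The plan is to prove the two implications separately, and then read off the last sentence from the argument for \emph{i)} $\Rightarrow$ \emph{ii)}.

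First I would treat \emph{i)} $\Rightarrow$ \emph{ii)}. Assume $\P$ is a Gaussian measure, so by definition each image measure $\P_{x'}$ on $\R$ is Gaussian. Since $\P$ is a Gaussian measure on a separable Banach space, Fernique's theorem \eqref{eq:Fernique's-theorem} applies to the random variable $\id_E$ on $(E,\sborelEx E,\P)$, so $\mu := \mu_\P = \E_\P \id_E \in E$ and $K := \cov(\P)$ is a well-defined abstract covariance operator $E'\to E$ (as recalled around \eqref{eq:gauss-meas-ex-cov} and in Definition \ref{def:abstr-cov}, using \eqref{eq:weak-cov-sym} and \eqref{eq:weak-cov-pos}). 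Now fix $x'\in E'$. Then $\P_{x'}$ is a Gaussian measure on $\R$ with mean $\int_\R t\intd\P_{x'}(t) = \int_E \dualpair{x'}xE\intd\P(x) = \dualpair{x'}\mu E$ by the Pettis property \eqref{eq:bochner-is-pettis}, and variance $\int_\R (t-\dualpair{x'}\mu E)^2\intd\P_{x'}(t) = \var(\dualpair{x'}{\id_E}E) = \dualpair{x'}{Kx'}E$ by \eqref{eq:weak-cov-pos}. Hence the characteristic function of $\P_{x'}$ — evaluated at $1$ — gives
\begin{align*}
\p_\P(x') = \int_E \eul^{\imi\dualpair{x'}xE}\intd\P(x) = \int_\R \eul^{\imi t}\intd\P_{x'}(t) = \eul^{\imi\dualpair{x'}\mu E}\exp\Bigl(-\tfrac{\dualpair{x'}{Kx'}E}{2}\Bigr),
\end{align*}
which is exactly \eqref{eq:cf-of-gm-general}. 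This simultaneously establishes \emph{ii)} and the final identification of $\mu$ and $K$.

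For \emph{ii)} $\Rightarrow$ \emph{i)}, assume \eqref{eq:cf-of-gm-general} holds for some $\mu\in E$ and abstract covariance operator $K$. I want to show $\P_{x'}$ is Gaussian on $\R$ for every $x'\in E'$; the Gaussianity of $\P$ then follows directly from the definition (equivalently from Theorem \ref{thm:test-for-gms}). Fix $x'\in E'$. For $t\in\R$ the transformation rule for characteristic functions under the linear map $E\to\R$, $x\mapsto \dualpair{x'}xE$ — i.e.\ the Banach-space analogue of the first formula in Lemma \ref{lem:char-func-d-prop}, which is immediate from $\int_\R\eul^{\imi ts}\intd\P_{x'}(s) = \int_E\eul^{\imi t\dualpair{x'}xE}\intd\P(x) = \int_E\eul^{\imi\dualpair{tx'}xE}\intd\P(x)$ — yields
\begin{align*}
\p_{\P_{x'}}(t) = \p_\P(tx') = \eul^{\imi t\dualpair{x'}\mu E}\exp\Bigl(-\tfrac{t^2\dualpair{x'}{Kx'}E}{2}\Bigr).
\end{align*}
Since $\dualpair{x'}{Kx'}E\geq 0$ by non-negativity of $K$, the right-hand side is precisely the characteristic function of the Gaussian measure $\gauss{\dualpair{x'}\mu E}{\dualpair{x'}{Kx'}E}$ on $\R$ (including the degenerate case $\dualpair{x'}{Kx'}E=0$). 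By uniqueness of characteristic functions on $\R$ we conclude $\P_{x'} = \gauss{\dualpair{x'}\mu E}{\dualpair{x'}{Kx'}E}$, hence $\P_{x'}$ is Gaussian, and therefore $\P$ is a Gaussian measure.

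The argument is essentially bookkeeping built on tools already in the excerpt, so there is no serious obstacle; the one point requiring a little care is making sure that in \emph{i)} $\Rightarrow$ \emph{ii)} the expectation and covariance of $\P$ are legitimately defined before manipulating them — this is exactly what Fernique's theorem \eqref{eq:Fernique's-theorem} supplies — and that the passage between "$\int\eul^{\imi t}\intd\P_{x'}$" and "the characteristic function of $\P_{x'}$ at $1$" is handled consistently. Since the claimed formula follows for a single $x'$ at a time, no additional $\tauweaks$-continuity or density argument is needed here, unlike in the proofs of the main theorems.
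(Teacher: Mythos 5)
Your argument is correct. The paper does not prove this proposition at all --- it is quoted from the literature (\cite[Proposition 2.8 in Chapter IV.2.4]{VaTaCh87}) --- so there is no in-paper proof to compare against; your reduction to the one-dimensional image measures $\P_{x'}$, with Fernique supplying integrability for \emph{i)} $\Rightarrow$ \emph{ii)} and the scaling identity $\p_{\P_{x'}}(t)=\p_\P(tx')$ for the converse, is exactly the standard route. The only point worth tightening is the closing claim that the computation "simultaneously establishes the final identification of $\mu$ and $K$": strictly, for an arbitrary pair $(\mu,K)$ satisfying \emph{ii)} you should note that your \emph{ii)} $\Rightarrow$ \emph{i)} step gives $\P_{x'}=\gauss{\dualpair{x'}\mu E}{\dualpair{x'}{Kx'}E}$, so $\dualpair{x'}{\mu}E=\dualpair{x'}{\mu_\P}E$ and $\dualpair{x'}{Kx'}E=\dualpair{x'}{\cov(\P)x'}E$ for all $x'$, whence $\mu=\mu_\P$ by Hahn--Banach and $K=\cov(\P)$ by Lemma \ref{lem:abstr-cov-on-diag}; all the ingredients are already in your write-up.
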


% 
% \begin{align*}
% \p_\P(x') =  \eul^{\imi \dualpair{x'}\mu E} \cdot \exp\biggl( -\frac{\dualpair {x'}{Kx'}E}{2} \biggr) \, , \myqquad x'\in E'\, ,
% \end{align*}
% where $\mu\in E$ is the mean of $\P$ and $K:E'\to E$ is the covariance operator of $\P$. We refer to e.g.~\cite[Proposition 2.8 in Chapter IV.2.4]{VaTaCh87}.

Our final goal is to generalize  Theorem \ref{thm:levy-continuity} to the infinite dimensional case. To this end,  
let $E$ be a separable Banach space. Then  a 
  set $\ca M$ of probability measures on $(E,\sborelnormx E)$ is called uniformly tight, if for all $\e>0$ there exists 
a compact $K\subset E$ with $\mu(E\setminus K) \leq \e$ for all $\mu \in \ca M$.
To quickly discuss the definition recall
 that 
given a probability measure $\mu$ on $(E,\sborelnormx E)$, Ulam's theorem, see e.g.~\cite[Lemma 26.2]{Bauer01} or
\cite[Theorem 3.1 in Chapter I.3.2]{VaTaCh87}, 
shows that $\mu$ is a Radon measure in the sense of inner regularity, that is
\begin{align}\label{def:radon-by-inner-reg}
\mu(B) = \sup\{\mu(K) : K\subset B \mbox{ and $K$ is compact} \}
\end{align}
for all $B\in \sborelnormx E$. In particular, $\{\mu\}$ is (uniformly) tight.
% 
% 
% shows that $\{\mu\}$ is (uniformly) tight as well as a Radon measure in the sense of inner regularity, that is
% \begin{align}\label{def:radon-by-inner-reg}
% \mu(B) = \sup\{\mu(K) : K\subset B \mbox{ and $K$ is compact} \}
% \end{align}
% for all $B\in \sborelnormx E$. 
Here we emphasize that we use the term ``Radon measure'' in the sense of \cite[p.~28]{VaTaCh87}, \cite[p.~97]{Bogachev98} and \cite[Definition 7.1.1]{Bogachev07_II},
as we will need this definition also in the proof of Theorem \ref{thm:general-levy} below when applying  results from \cite{VaTaCh87} and \cite{Bogachev98}.
By \eqref{def:radon-by-inner-reg} we conclude that  every finite set $\ca M$ of probability measures on $(E,\sborelnormx E)$ is  uniformly tight. 
Similarly, the finite union of uniformly tight sets is also uniformly tight.

Before we generalize  Theorem \ref{thm:levy-continuity}, we also like to recall a  characterization of uniform tightness
 from \cite[Example 8.6.5]{Bogachev07_II}. 
% The following lemma, which provides a simple characterization of uniform tightness,
% is taken from \cite[Example 8.6.5]{Bogachev07_II}. 
In view of our formulation below we note that \cite[Example 8.6.5]{Bogachev07_II}   additionally lists the
condition
$\mu(\{V=\infty\}) = 0$ for all $\mu \in \ca M$. However, this condition is   implied by the finiteness 
of the integrals in \emph{ii)}, and hence we omitted it.

\begin{lemma}\label{lem:uni-tight}
Let $E$ be a separable Banach space and $\ca M$ be a set of probability measures on $(E,\sborelnormx E)$. 
Then the following statements are equivalent:
\begin{enumerate}
\item $\ca M$ is uniformly tight.
\item There exists a $\sborelnormx E$-measurable $V:E\to [0,\infty]$ such that $\{V\leq c\}$ is compact for all $c\in [0,\infty)$ and 
\begin{align*}
\sup_{\mu \in \ca M} \int_E V \intd \mu < \infty \, .
\end{align*}
\end{enumerate}
\end{lemma}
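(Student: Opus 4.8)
The plan is to prove the two implications of Lemma~\ref{lem:uni-tight} separately, with the direction \emph{ii)} $\Rightarrow$ \emph{i)} being an immediate consequence of Markov's inequality and the direction \emph{i)} $\Rightarrow$ \emph{ii)} requiring an explicit construction of the function $V$.

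For \emph{ii)} $\Rightarrow$ \emph{i)}, I would set $M := \sup_{\mu \in \ca M} \int_E V \intd\mu < \infty$ and, for a given $\e > 0$, pick $c \in [0,\infty)$ with $c \geq M/\e$. The set $K := \{V \leq c\}$ is then compact by hypothesis, and Markov's inequality gives $\mu(E \setminus K) = \mu(\{V > c\}) \leq M/c \leq \e$ uniformly over $\mu \in \ca M$, which is exactly uniform tightness.

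For \emph{i)} $\Rightarrow$ \emph{ii)}, the idea is as follows. Using uniform tightness, first choose for every $n \geq 1$ a compact set $\tilde K_n \subset E$ with $\sup_{\mu \in \ca M}\mu(E \setminus \tilde K_n) \leq 4^{-n}$, and then pass to the increasing family $K_n := \tilde K_1 \cup \dots \cup \tilde K_n$, which is again compact and for which the bound $\sup_{\mu \in \ca M} \mu(E \setminus K_n) \leq 4^{-n}$ is preserved. Next define
$$V := \sum_{n=1}^\infty 2^n \eins_{E \setminus K_n}\, ,$$
which is $\sborelnormx E$-measurable as a countable sum of non-negative Borel functions, since each $K_n$ is closed. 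The remaining work then consists of two routine verifications: first, for $x \in \bigcup_n K_n$ one has $V(x) = 2^{m(x)} - 2$ with $m(x) := \min\{n : x \in K_n\}$, while $V(x) = \infty$ otherwise, so that for $c\in[0,\infty)$ the sublevel set is $\{V \leq c\} = K_{N}$ with $N := \lfloor \log_2(c+2)\rfloor \geq 1$, hence compact; second, by Tonelli's theorem $\int_E V \intd\mu = \sum_{n\geq 1} 2^n \mu(E \setminus K_n) \leq \sum_{n \geq 1} 2^n 4^{-n} = 1$ for every $\mu \in \ca M$.

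The construction carries no real obstacle beyond bookkeeping; the only two points needing attention are forcing the compact sets to be increasing, so that every sublevel set of $V$ is literally one of the $K_n$ and thus compact, and choosing the weights $2^n$ together with the tolerances $4^{-n}$ so that the weighted series of tail masses is summable. Alternatively, since this is precisely \cite[Example~8.6.5]{Bogachev07_II}, one may simply cite that reference.
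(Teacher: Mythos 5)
Your proof is correct. Note that the paper itself does not prove this lemma at all: it simply recalls it as a known characterization and cites \cite[Example 8.6.5]{Bogachev07_II} (adding only the remark that Bogachev's extra condition $\mu(\{V=\infty\})=0$ is implied by the finiteness of the integrals). Your argument is the standard construction underlying that reference, carried out in full: the direction \emph{ii)} $\Rightarrow$ \emph{i)} is indeed just Markov's inequality (you implicitly want $c>0$ there, e.g.\ $c:=\max\{M/\e,1\}$, but that is cosmetic), and for \emph{i)} $\Rightarrow$ \emph{ii)} the two points you flag as needing care — making the compacts $K_n$ increasing so that every sublevel set of $V$ is literally some $K_N$, and choosing weights $2^n$ against tolerances $4^{-n}$ so that $\sum_n 2^n\mu(E\setminus K_n)\leq 1$ — are exactly the right ones, and your verification of $\{V\leq c\}=K_{\lfloor \log_2(c+2)\rfloor}$ is accurate. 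So compared with the paper you buy self-containedness at the cost of a page of bookkeeping; either option is legitimate, and your closing sentence already acknowledges the citation route the paper actually takes.
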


Finally, we present the announced generalization of Theorem \ref{thm:levy-continuity}  to the infinite dimensional case.

\begin{theorem}\label{thm:general-levy}
Let $E$ be a separable Banach space and $(\P_n)$ be a sequence of probability measures on $\sborelnormx E$ such that the following assumptions are satisfied:
\begin{enumerate}
\item The set $\{\P_n: n\geq 1\}$ is uniformly tight.
\item There exists a function $\p:E'\to \Cfield$ such that $\p_{\P_n}(x') \to \p(x')$ for all $x'\in E'$.
\end{enumerate}
Then there exists a probability measure $\P$ on $E$ such $\P_n\to \P$ weakly and $\p_\P = \p$. 
\end{theorem}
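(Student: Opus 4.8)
\textbf{Proof plan for Theorem \ref{thm:general-levy}.}

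The plan is to combine the classical finite-dimensional L\'evy continuity theorem with a compactness argument in the space of probability measures, using uniform tightness to obtain a convergent subsequence and then characteristic functions to pin down the limit. First I would invoke Prokhorov's theorem: since $E$ is a separable (hence separable metric) space and $\{\P_n : n \geq 1\}$ is uniformly tight, Prokhorov's theorem (in the form for separable metric spaces, see e.g.~\cite[Theorem 3.2.2]{Bogachev07_II} or the corresponding statement in \cite{VaTaCh87}) guarantees that $\{\P_n\}$ is relatively compact in the topology of weak convergence. Hence there exist a subsequence $(\P_{n_k})$ and a probability (Radon) measure $\P$ on $\sborelnormx E$ with $\P_{n_k} \to \P$ weakly.

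Next I would identify the characteristic function of $\P$. For every fixed $x' \in E'$, the map $x \mapsto \eul^{\imi \dualpair{x'}{x}E}$ is bounded and continuous, so its real and imaginary parts are bounded continuous real functions on $E$; applying the definition \eqref{def:weak-converg-of-meas} of weak convergence to each yields $\p_{\P_{n_k}}(x') \to \p_\P(x')$. Comparing with assumption \emph{ii)}, which gives $\p_{\P_{n_k}}(x') \to \p(x')$, we conclude $\p_\P(x') = \p(x')$ for all $x' \in E'$. In particular $\p$ is the characteristic function of an honest probability measure, and by the uniqueness of characteristic functions on separable Banach spaces (recalled in Supplement \ref{app:cf}), $\P$ is the \emph{unique} probability measure on $\sborelnormx E$ with characteristic function $\p$.

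It remains to upgrade the subsequential convergence $\P_{n_k} \to \P$ to convergence of the full sequence $\P_n \to \P$. Here I would use the standard subsequence principle: suppose $\P_n \not\to \P$ weakly. Then there is a bounded continuous $f : E \to \R$, an $\e > 0$, and a subsequence $(\P_{m_j})$ with $|\int_E f \intd \P_{m_j} - \int_E f \intd \P| \geq \e$ for all $j$. But $\{\P_{m_j}\}$ is still uniformly tight (being a subset of $\{\P_n\}$), so by Prokhorov it has a further subsequence converging weakly to some probability measure $\Qm$; repeating the characteristic-function argument above shows $\p_\Qm = \p = \p_\P$, hence $\Qm = \P$ by uniqueness. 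This contradicts the choice of $f$ and $\e$. Therefore $\P_n \to \P$ weakly, and since we have already shown $\p_\P = \p$, the proof is complete.

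The main obstacle is making sure the version of Prokhorov's theorem being invoked applies in exactly this generality: one needs that relative compactness in the weak topology follows from uniform tightness on a separable Banach space, which holds because such spaces are separable metric (indeed Polish when complete, and here completeness is part of the definition of a Banach space), so this is a clean citation rather than a genuine difficulty. A secondary, very minor point is checking that the weak limit $\P$ is automatically Radon — but this is immediate from Ulam's theorem \eqref{def:radon-by-inner-reg} since any Borel probability measure on a separable Banach space is Radon. Everything else is the routine bounded-continuous-test-function computation for characteristic functions plus the subsequence trick.
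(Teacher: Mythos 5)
Your proposal is correct, and at its core it follows the same route as the paper: uniform tightness plus Prokhorov's theorem gives relative weak compactness, and the pointwise convergence of characteristic functions identifies the limit. The only real difference is in how the second half is handled. The paper outsources it to a single citation, namely \cite[Theorem 3.1 in Chapter IV.3.1]{VaTaCh87} applied with the separating family $\Gamma := E'$ (after checking via Hahn--Banach that $E'$ separates points and, via \cite[Theorem 3.8.4]{Bogachev98}, that tightness yields relative weak compactness). You instead unpack that black box: extract a weakly convergent subsequence, compute its characteristic function by testing against the bounded continuous functions $x\mapsto \cos\dualpair{x'}xE$ and $x\mapsto \sin\dualpair{x'}xE$, invoke the uniqueness theorem for characteristic functions on separable Banach spaces (which the paper records in Supplement \ref{app:cf}), and finish with the standard subsequence principle. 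Your version is more self-contained and elementary; the paper's is shorter but leans on a more general result about separating families of continuous functions. Both are valid, and all the auxiliary facts you need (metrizability of the weak topology on measures over a separable metric space so that relative compactness gives sequential compactness, Radonness of Borel probability measures on separable Banach spaces via Ulam's theorem, uniqueness of characteristic functions) are available in the paper's supplements or standard references.
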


\begin{proof}[Proof of Theorem \ref{thm:general-levy}]
We will apply \cite[Theorem 3.1 in Chapter IV.3.1]{VaTaCh87} for $\Gamma := E'$. Here we first note that Hahn-Banach's theorem ensures that $E'$ is
a separating set consisting of continuous functions $E\to \R$. Moreover, every probability measure on $\sborelnormx E$ is a Radon probability measure, see 
the references around \eqref{def:radon-by-inner-reg}.
% 
% by Ulam's theorem,
% see e.g.~\cite[Lemma 26.2]{Bauer01} or
% \cite[Theorem 3.1 in Chapter I.3.2]{VaTaCh87}. 
Finally, \cite[Theorem 3.8.4]{Bogachev98} in combination with  \emph{i)} ensures that  
$\{\P_n: n\geq 1\}$ is relatively weakly compact since the underlying space $E$ is a metric space, and thus  a completely regular topological space,
where the latter notion can be found in e.g.~\cite[Appendix A.1 on page 363]{Bogachev98}.
Now the assertion follows from 
\cite[Theorem 3.1 in Chapter IV.3.1]{VaTaCh87}. 
\end{proof}

\stepcounter{appendix}

\section{Regular Conditional Probabilities}\label{app:rcp+cond-ex}

In this supplement we collect some useful properties of regular conditional probabilities and their 
relationship to Banach space valued conditional expectations.

We begin with a general statement that ensures the existence and uniqueness of 
regular conditional probabilities. For its formulation recall 
that a topological space $(T,\topol)$ is called Polish, if there exists 
a metric $\metric:T\times T\to [0,\infty)$ generating the topology $\topol$, such that $(T,\metric)$ is 
complete and separable. Obviously, separable Banach spaces are Polish spaces. 
Now, the following result, which can be found in e.g.~\cite[Theorem 44.3]{Bauer96},
\cite[Theorem 10.2.2]{Dudley02}, and
\cite[Theorem 8.37]{Klenke14}, shows that the
existence and uniqueness  of regular conditional probabilities can be guaranteed as long as the space 
$T$ in Definition \ref{def:reg-cond-prob}
is a Polish space and $\sB$ is the corresponding Borel $\s$-algebra.

\begin{theorem}\label{thm:ex-ei-reg-cond-prob}
Let $(T,\topol)$ be a Polish space with Borel $\s$-algebra $\sborel$, $(U,\sA)$ be a measurable space, and
$\P$ be a probability measure on $(T\times U, \sborel\otimes \sA)$. Then the following statements hold true:
\begin{enumerate}
\item There exists a
regular conditional probability $\P(\mycdot|\mycdot):\sborel \times U\to [0,1]$ of $\P$ given $U$.
\item If $\P_1(\mycdot|\mycdot)$ and $\P_2(\mycdot|\mycdot)$ are regular conditional probabilities of $\P$ given $U$,
then there exists an $N\in \sA$ with $\PU(N) = 0$ such that for all $B\in \sborel$ and $u\in U\setminus N$ we have 
\begin{align*}
\P_1(B|u) = \P_2(B|u)\, .
\end{align*}
\end{enumerate}
\end{theorem}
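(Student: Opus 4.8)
The plan is the classical one: reduce the Polish space $T$ to the real line by a Borel isomorphism, construct the kernel from a countable family of conditional distribution functions, and settle both existence and uniqueness by $\pi$--$\lambda$ (monotone class) arguments. By the Borel isomorphism theorem, $(T,\topol)$ is Borel-isomorphic either to a countable discrete space or to $\R$. In the countable case one simply lets $\P(\{t\}|u)$ be a version of the conditional expectation of $\eins_{\{t\}\times U}$ regarded as a function of $u$, and renormalises on the $\PU$-null set where $\sum_t\P(\{t\}|u)\neq1$. Hence it suffices to treat $T=\R$: a Borel isomorphism $\iota:T\to\R$ carries a regular conditional probability of the image measure of $\P$ under $\iota\times\id_U$ back to one of $\P$, and conversely, and this transport preserves conditions \emph{1)}--\emph{3)} of Definition \ref{def:reg-cond-prob} together with the relevant $\PU$-null sets.

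Assume then $T=\R$ with $\sborel=\sborel(\R)$. For each $q\in\Q$ choose an $\sA$-measurable $F_q:U\to[0,1]$ with $\P\bigl((-\infty,q]\times A\bigr)=\int_A F_q\intd\PU$ for all $A\in\sA$, i.e.\ such that $F_q\circ\coordproj U$ is a version of $\E_\P\bigl(\eins_{(-\infty,q]\times U}\mid\coordproj U^{-1}(\sA)\bigr)$. Elementary properties of conditional expectations give $F_q\le F_{q'}$ $\PU$-a.s.\ for $q\le q'$, while monotone convergence gives $F_{q_k}\uparrow1$ and $F_{-q_k}\downarrow0$ $\PU$-a.s.\ for $q_k\uparrow\infty$, and right-continuity $F_{q_k}\downarrow F_q$ $\PU$-a.s.\ for $q_k\downarrow q$. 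As $\Q$ is countable there is a single $N\in\sA$ with $\PU(N)=0$ off which $q\mapsto F_q(u)$ is non-decreasing, right-continuous, and has limits $0$ and $1$ at $\mp\infty$, hence extends to a bona fide distribution function; for $u\in U\setminus N$ let $\P(\mycdot|u)$ be the associated Lebesgue--Stieltjes probability measure on $\R$, and for $u\in N$ put $\P(\mycdot|u):=\dirac{t_0}$ with $t_0$ fixed. Condition \emph{1)} holds by construction. For \emph{2)}, the family of all $B\in\sborel$ for which $u\mapsto\P(B|u)$ is $\sA$-measurable is a $\lambda$-system containing the $\pi$-system $\{(-\infty,q]:q\in\Q\}$, hence equals $\sborel$. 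For \emph{3)}, with $A\in\sA$ fixed, both sides of \eqref{eq:def-reg-cond-prob} are finite measures in $B$ that agree on that same $\pi$-system (by construction on $U\setminus N$, and trivially because $\PU(N)=0$), hence agree on $\sborel$. This proves \emph{i)}.

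For \emph{ii)}, let $\P_1(\mycdot|\mycdot)$ and $\P_2(\mycdot|\mycdot)$ be regular conditional probabilities of $\P$ given $U$ and fix a countable $\pi$-system $\ca G$ generating $\sborel$ (rational half-lines after the reduction to $\R$, or their $\iota$-preimages in general). For each $B\in\ca G$, condition \emph{3)} applied to both families shows that $\P_1(B|\mycdot)$ and $\P_2(B|\mycdot)$ are both versions of $\E_\P\bigl(\eins_{B\times U}\mid\coordproj U^{-1}(\sA)\bigr)$, hence agree off a $\PU$-null set $N_B\in\sA$. Then $N:=\bigcup_{B\in\ca G}N_B$ satisfies $N\in\sA$ and $\PU(N)=0$, and for every $u\notin N$ the probability measures $\P_1(\mycdot|u)$ and $\P_2(\mycdot|u)$ coincide on the generating $\pi$-system $\ca G$, hence on all of $\sborel$ by the uniqueness theorem for measures.

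The main obstacle is the measure-theoretic bookkeeping in the existence step: the monotonicity, right-continuity and limit relations for $(F_q)_{q\in\Q}$ are only $\PU$-almost-sure statements about individual rationals or pairs of rationals, so one must collect the countably many exceptional sets into a single $\PU$-null set before the Lebesgue--Stieltjes construction can be carried out pointwise in $u$; once this is done, both the measurability in condition \emph{2)} and the integral identity in condition \emph{3)} follow from the same $\pi$--$\lambda$ mechanism. The Borel-isomorphism reduction and the uniqueness argument are then routine.
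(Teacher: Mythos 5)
The paper does not prove this theorem itself: it is quoted as a known result with references to Bauer, Dudley, and Klenke, and your argument is precisely the classical proof given in those sources (Borel-isomorphism reduction to $\R$, conditional distribution functions over the rationals, a single collected null set, Lebesgue--Stieltjes extension, and $\pi$--$\lambda$ arguments for measurability, the disintegration identity, and uniqueness). Your write-up is correct, including the two points that actually require care — merging the countably many exceptional sets before the pointwise Stieltjes construction, and using a countable generating $\pi$-system so the uniqueness null set is a countable union.
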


% --------------------------------------------------------------------------------------

Our next result describes how regular conditional probabilities of image measures can be computed
with the help of regular conditional probabilities of the initial measure.

\begin{theorem}\label{thm:rcp-under-trafo}
For $i=0,1$, let $(T_i,\topol_i)$ be Polish spaces with Borel $\s$-algebras $\sborel_i$,
 $(U_i,\sA_i)$ be measurable spaces, 
 and $\P\hochkl i$ be  probability measures on $(T_i\times U_i, \sborel_i\otimes \sA_i)$. 
Moreover, let $\Phi:T_0\to T_1$ and $\Psi:U_0\to U_1$ be measurable maps, such that we have 
\begin{align*}
\s(\Psi) = \sA_0 \myqquad \mbox{ and } \myqquad \P\hochkl 1 = \P\hochkl 0_{(\Phi,\Psi)}\, .
\end{align*}
Then the following statements hold true:
\begin{enumerate}
\item We have $\P\hochkl 1_{U_1} = (\P\hochkl 0_{U_0})_\Psi$.
\item For all  regular conditional probabilities $\P\hochkl 0(\mycdot |\mycdot)$ of $\P\hochkl 0$ given $U_0$ and $\P\hochkl 1(\mycdot |\mycdot)$ of $\P\hochkl 1$ given $U_1$ 
there exists an $N\in \sA_0$ with $\P\hochkl 0_{U_0}(N) =0$ such that for all $B_1\in \sborel_1$ we have 
% and $u_0\in U_0\setminus N$ we have 
\begin{align}\label{eq:rcp-under-trafo}
\P\hochkl 0\bigl((\Phi^{-1}(B_1)| u_0\bigr) = \P\hochkl 1 \bigl(B_1|\Psi(u_0)\bigr) \, , \myqquad u_0\in U_0\setminus N.
\end{align}
\end{enumerate}
\end{theorem}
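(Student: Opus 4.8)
\textbf{Proof plan for Theorem \ref{thm:rcp-under-trafo}.}

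The plan is to verify the two assertions directly against Definition \ref{def:reg-cond-prob}, using the uniqueness part of Theorem \ref{thm:ex-ei-reg-cond-prob} to convert a ``defining property'' computation into the pointwise identity \eqref{eq:rcp-under-trafo}. First I would dispose of \emph{i)}: for $A_1\in \sA_1$ we have $\P\hochkl 1_{U_1}(A_1) = \P\hochkl 1(T_1\times A_1) = \P\hochkl 0_{(\Phi,\Psi)}(T_1\times A_1) = \P\hochkl 0\bigl(\Phi^{-1}(T_1)\times \Psi^{-1}(A_1)\bigr) = \P\hochkl 0(T_0\times \Psi^{-1}(A_1)) = \P\hochkl 0_{U_0}(\Psi^{-1}(A_1)) = (\P\hochkl 0_{U_0})_\Psi(A_1)$, which is exactly the claim. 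This also identifies the marginal that governs the ``almost everywhere'' in part \emph{ii)}.

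For \emph{ii)}, the idea is to show that the map $(B_1,u_0)\mapsto \P\hochkl 1\bigl(B_1|\Psi(u_0)\bigr)$ is a regular conditional probability of $\P\hochkl 0$ given $U_0$, when $B_1$ is restricted to range over a suitably transported $\s$-algebra, and then invoke uniqueness. More precisely, I would fix regular conditional probabilities $\P\hochkl 0(\mycdot|\mycdot)$ and $\P\hochkl 1(\mycdot|\mycdot)$ as in the statement and consider, for each fixed $B_1\in \sborel_1$, the two maps $u_0\mapsto \P\hochkl 0\bigl(\Phi^{-1}(B_1)|u_0\bigr)$ and $u_0\mapsto \P\hochkl 1\bigl(B_1|\Psi(u_0)\bigr)$. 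The first is $\sA_0$-measurable by the definition of a regular conditional probability; the second is $\sA_0$-measurable because $\P\hochkl 1(B_1|\mycdot)$ is $\sA_1$-measurable and $\Psi$ is $(\sA_0,\sA_1)$-measurable. For the defining identity, fix $A_0\in \sA_0$; by the hypothesis $\s(\Psi) = \sA_0$ there is $A_1\in \sA_1$ with $A_0 = \Psi^{-1}(A_1)$. Then on one hand
\begin{align*}
\int_{U_0} \eins_{A_0}(u_0)\, \P\hochkl 0\bigl(\Phi^{-1}(B_1)|u_0\bigr) \intd \P\hochkl 0_{U_0}(u_0)
= \P\hochkl 0\bigl(\Phi^{-1}(B_1)\times A_0\bigr)
= \P\hochkl 0\bigl(\Phi^{-1}(B_1)\times \Psi^{-1}(A_1)\bigr) = \P\hochkl 1(B_1\times A_1)\, ,
\end{align*}
using the definition of the regular conditional probability of $\P\hochkl 0$ and then $\P\hochkl 1 = \P\hochkl 0_{(\Phi,\Psi)}$. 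On the other hand, using the definition of the regular conditional probability of $\P\hochkl 1$ together with part \emph{i)} and the transformation formula for image measures,
\begin{align*}
\int_{U_0} \eins_{A_0}(u_0)\, \P\hochkl 1\bigl(B_1|\Psi(u_0)\bigr) \intd \P\hochkl 0_{U_0}(u_0)
= \int_{U_1} \eins_{A_1}(u_1)\, \P\hochkl 1(B_1|u_1) \intd (\P\hochkl 0_{U_0})_\Psi(u_1)
= \int_{U_1} \eins_{A_1}(u_1)\, \P\hochkl 1(B_1|u_1) \intd \P\hochkl 1_{U_1}(u_1)
= \P\hochkl 1(B_1\times A_1)\, .
\end{align*}
Hence both maps have the same $\P\hochkl 0_{U_0}$-integral over every $A_0\in \sA_0$, and since both are $\sA_0$-measurable they agree $\P\hochkl 0_{U_0}$-a.e.; this gives, for each fixed $B_1$, a $\P\hochkl 0_{U_0}$-null set outside of which \eqref{eq:rcp-under-trafo} holds.

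The remaining obstacle — and the only genuinely delicate point — is to make the exceptional null set \emph{independent of} $B_1$, since a priori we only get one null set per $B_1$ and $\sborel_1$ is uncountable. The standard fix is to observe that $\ca D := \{B_1\in \sborel_1 : \P\hochkl 0(\Phi^{-1}(B_1)|\mycdot)\circ\Psi^{-1}\cdots\}$ — more cleanly: define $\nu_1(\mycdot|u_0) := \P\hochkl 0\bigl(\Phi^{-1}(\mycdot)|u_0\bigr)$ and $\nu_2(\mycdot|u_0) := \P\hochkl 1\bigl(\mycdot|\Psi(u_0)\bigr)$, both of which are, for each fixed $u_0$, probability measures on $\sborel_1$. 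Because $T_1$ is Polish and hence $\sborel_1$ is countably generated by an algebra $\ca A$, the set of $B_1$ on which $\nu_1(B_1|u_0)=\nu_2(B_1|u_0)$ (for a fixed $u_0$) is a Dynkin system containing $\ca A$ whenever it contains $\ca A$; so it suffices to secure agreement on the countable algebra $\ca A$. Running the argument above for each $B_1\in\ca A$ yields countably many null sets, whose union $N\in\sA_0$ still satisfies $\P\hochkl 0_{U_0}(N)=0$, and for $u_0\notin N$ the two probability measures $\nu_1(\mycdot|u_0)$ and $\nu_2(\mycdot|u_0)$ agree on $\ca A$, hence on $\sigma(\ca A)=\sborel_1$ by the uniqueness theorem for measures. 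This delivers \eqref{eq:rcp-under-trafo} simultaneously for all $B_1\in\sborel_1$ and all $u_0\in U_0\setminus N$, completing the proof. If one prefers to avoid the explicit countable generator, an equivalent route is to apply the uniqueness clause of Theorem \ref{thm:ex-ei-reg-cond-prob} to the two regular conditional probabilities $B_1\mapsto\nu_1(B_1|\mycdot)$ and $B_1\mapsto\nu_2(B_1|\mycdot)$ of the image measure $\P\hochkl 1$ given $U_0$ (note both are easily checked to satisfy all three conditions of Definition \ref{def:reg-cond-prob} relative to $\P\hochkl 1$ viewed via $\Psi$), which produces the desired single null set at once.
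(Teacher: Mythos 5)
Your proposal is correct, and parts \emph{i)} and the two integral identities in \emph{ii)} are computed exactly as in the paper. The only place where you genuinely diverge is the final, delicate step of making the exceptional null set independent of $B_1$. The paper handles this by introducing the auxiliary measure $\Qm := \P\hochkl 0_{(\Phi,\id_{U_0})}$ on $\sborel_1\otimes \sA_0$, checking $\Qm_{U_0}=\P\hochkl 0_{U_0}$, verifying that both kernels $(B_1,u_0)\mapsto \P\hochkl 0(\Phi^{-1}(B_1)|u_0)$ and $(B_1,u_0)\mapsto \P\hochkl 1(B_1|\Psi(u_0))$ are regular conditional probabilities of $\Qm$ given $U_0$, and then invoking the uniqueness clause of Theorem \ref{thm:ex-ei-reg-cond-prob} (which is where the Polish hypothesis on $T_1$ enters). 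Your primary route instead uses that $\sborel_1$ is generated by a countable algebra (again via $T_1$ Polish, hence second countable), secures the a.e.\ identity on this countable $\pi$-system, takes one union of null sets, and concludes by the uniqueness theorem for finite measures agreeing on a generating $\pi$-system; this is equally valid and arguably more elementary, since it avoids re-verifying the three axioms of Definition \ref{def:reg-cond-prob} for an auxiliary measure. Your sketched alternative is essentially the paper's argument, but the phrase ``the image measure $\P\hochkl 1$ given $U_0$'' is imprecise: $\P\hochkl 1$ lives on $T_1\times U_1$, so to make that route rigorous you must explicitly introduce the intermediate measure $\Qm$ on $T_1\times U_0$ as above. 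Also, the sentence defining $\ca D$ is garbled (``containing $\ca A$ whenever it contains $\ca A$'') and should simply be dropped in favour of the clean formulation that follows it; neither issue affects the mathematics.
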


% --------------------------------------------------------------------------------------

\begin{proof}[Proof of Theorem \ref{thm:rcp-under-trafo}]
We begin by making some initial observations. To this end, 
we write $\Upsilon :=(\Phi,\Psi): T_0\times U_0\to T_1\times U_1$.  
A simple calculation then shows  
\begin{align*}
\coordproj{T_1}\circ \Upsilon = \Phi \circ \coordproj{T_0} \myqquad \mbox{ and } \myqquad \coordproj {U_1}\circ \Upsilon = \Psi \circ \coordproj {U_0}\, ,
\end{align*}
where $\coordproj{T_i}$ and $\coordproj{U_i}$ are the coordinate projections for $i=0,1$.
Since $ \Phi \circ \coordproj{T_0}$ and $\Psi \circ \coordproj {U_0}$ are measurable, we then see that 
$\Upsilon$ is measurable with respect to the product $\s$-algebras, see e.g.~\cite[Corollary 1.82]{Klenke14}, and in particular our assumption
$\P\hochkl 1 = \P\hochkl 0_{(\Phi,\Psi)} = \P\hochkl 0_{\Upsilon}$ makes sense.
% 
% In particular, our 
% % $\coordproj{T_1}\circ \Upsilon = \Phi \circ \coordproj{T_0}$
% assumption guarantees $\P\hochkl 1 = \P\hochkl 0_{\Upsilon}$.
For $B_1\in \sborel_1$ and $A_1\in\sA_1$ this in turn implies 
\begin{align}\label{thm:rcp-under-trafo-hi0-new}
 \P\hochkl 1(B_1\times A_1) 
 = 
 \P\hochkl 0 (\Upsilon^{-1}(B_1\times A_1))  
 &=
 \P\hochkl 0 \bigl(\Phi^{-1}(B_1)\times \Psi^{-1}(A_1)\bigr) \, .
\end{align}

\ada i
Our initial observations together with the standard 
formula 
% \eqref{eq:image-measure-under-compositions} 
for  image measures of composed transformations   imply
\begin{align*}
\P\hochkl 1_{U_1} 
= \P\hochkl 1_{\coordproj {U_1}} 
= (\P\hochkl 0_\Upsilon)_{\coordproj {U_1}}
= \P\hochkl 0_{\coordproj {U_1}\circ \Upsilon}
= \P\hochkl 0_{\Psi \circ \coordproj {U_0}}
= (\P\hochkl 0 _{\coordproj {U_0}})_\Psi
= (\P\hochkl 0_{U_0})_\Psi \, .
\end{align*}
% where we have used the 
% standard 
% formula 
% % \eqref{eq:image-measure-under-compositions} 
% for  image measures of composed transformations  twice.
% In other words we have shown \emph{i)}.

\ada {ii}
For $A_0\in \sA_0$ and $A_1\in \sA_1$ with $\Psi^{-1}(A_1) = A_0$ and 
measurable $g:U_1\to [0,\infty)$ an  application of 
the transformation formula
% \eqref{eq:transformation-formula} 
for integrals of image measures, see e.g.~\cite[Proposition 2.6.8]{Cohn13}
 yields
\begin{align}\nonumber 
 \int_{U_0} \!\eins_{A_0} \cdot  (g\circ \Psi)  \intd  \P\hochkl 0_{U_0} 
 = 
 \int_{U_0} \! (\eins_{A_1}\circ \Psi) \cdot  (g\circ \Psi)  \intd (\P\hochkl 0_{U_0})  
&= 
\int_{U_1}\! \eins_{A_1}  \cdot g  \intd (\P\hochkl 0_{U_0})_\Psi  \\ \label{thm:rcp-under-trafo-hi1}
 &= 
 \int_{U_1} \!\eins_{A_1} \cdot  g  \intd \P\hochkl 1_{U_1} \, ,
\end{align}
where in the last step we used the already established \emph{i)}.
Applying \eqref{thm:rcp-under-trafo-hi1} to $g:= \P\hochkl 1(B_1|\mycdot)$ shows
\begin{align}\nonumber
 \int_{U_0} \eins_{A_0}(u_0) \P\hochkl 1(B_1|\Psi(u_0)) \intd  \P\hochkl 0_{U_0}(u_0) 
 &= 
 \int_{U_1} \eins_{A_1}(u_1) \P\hochkl 1(B_1|u_1) \intd \P\hochkl 1_{U_1}(u_1) \\  \label{thm:rcp-under-trafo-hi11}
 &= 
 \P\hochkl 1(B_1\times A_1) \, .
\end{align}
Moreover, for a regular conditional probability
$\P\hochkl 0(\mycdot |\mycdot)$   of $\P\hochkl 0$ given $U_0$ our observation \eqref{thm:rcp-under-trafo-hi0-new} gives
\begin{align}\nonumber
 \P\hochkl 1(B_1\times A_1) 
 &=
 \P\hochkl 0 \bigl(\Phi^{-1}(B_1)\times A_0\bigr)  \\ \label{thm:rcp-under-trafo-hi00}
 & = 
  \int_{U_0} \eins_{A_0}(u_0) \P\hochkl 0(\Phi^{-1}(B_1)|u_0) \intd \P\hochkl 0_{U_0}(u_0) \, .
\end{align}
To motivate the next steps we note that combining \eqref{thm:rcp-under-trafo-hi11} and \eqref{thm:rcp-under-trafo-hi00} gives 
the $\P\hochkl 0_{U_0}$-almost sure identity
\begin{align*}
\P\hochkl 0(\Phi^{-1}(B_1)|\mycdot ) = \P\hochkl 1(B_1|\Psi(\mycdot))\, .
\end{align*}
 However, the occurring $\P\hochkl 0_{U_0}$-zero set depends on $B_1$, and hence
we need to work harder to establish the assertion.

Let us therefore
define the probability measure $\Qm := \P\hochkl 0_{(\Phi,\id_{U_0})}$ on $\sborel_1\otimes \sA_0$.
For $B_1\in \sborel_1$, $A_0\in \sA_0$, and $A_1\in \sA_1$ with $\Psi^{-1}(A_1) = A_0$ the calculation \eqref{thm:rcp-under-trafo-hi0-new} then shows 
\begin{align}\label{thm:rcp-under-trafo-qm}
\Qm (B_1\times A_0) = \P\hochkl 0 \bigl(\Phi^{-1}(B_1)\times A_0\bigr)  =  \P\hochkl 1(B_1\times A_1) \, .
\end{align}
Moreover, we have $\Qm_{U_0}(A_0) = \Qm (T_1\times A_0) = \P\hochkl 0(T_0\times A_0) = \P\hochkl0_{U_0}(A_0)$, that is 
$\Qm_{U_0} = \P\hochkl 0_{U_0}$.

With these preparations let us first
consider the map
$\Qm_1 :\sborel_1\times U_0\to [0,1]$ given by  
\begin{align*}
\Qm_1(B_1,u_0) := \P\hochkl 1(B_1|\Psi(u_0)) \, .
\end{align*}
Obviously, $\Qm_1(B_1,\mycdot):U_0\to [0,1]$ is $\sA_0$-measurable for all $B_1\in \sborel_1$ 
and $\Qm_1(\mycdot ,u_0):\sborel_1\to [0,1]$ is a probability measure on $\sborel_1$ for all $u_0\in U_0$.
Let us fix some $B_1\in \sborel_1$ and $A_0\in \sA_0$.
Since we assumed
\begin{align*}
\sA_0 = \s(\Psi) = \{ \Psi^{-1}(A_1): A_1\in \sA_1  \}\, ,
\end{align*}
where the latter identity is the very definition of $\s(\Psi)$, there then exists an $A_1\in \sA_1$
with $\Psi^{-1}(A_1) = A_0$. Combining the observed $\Qm_{U_0} = \P\hochkl 0_{U_0}$ with 
\eqref{thm:rcp-under-trafo-hi11} and \eqref{thm:rcp-under-trafo-qm}
then yields
\begin{align*}
 \int_{U_0} \eins_{A_0}(u_0) \Qm_1(B_1,u_0) \intd  \Qm_{U_0}(u_0) 
& =
 \int_{U_0} \eins_{A_0}(u_0) \P\hochkl 1(B_1|\Psi(u_0)) \intd  \P\hochkl 0_{U_0}(u_0) \\
 &= 
 \P\hochkl 1(B_1\times A_1) \\
&= 
\Qm (B_1\times A_0) \, .
\end{align*}
In other words, $\Qm_1$ is a regular conditional probability of $\Qm$ given $U_0$.

Besides $\Qm_1$, we also
 consider the map $\Qm_0 :\sborel_1\times U_0\to [0,1]$ given by
\begin{align*}
\Qm_0(B_1,u_0) := \P\hochkl 0(\Phi^{-1}(B_1)|u_0)\, .
\end{align*}
Again, $\Qm_0(B_1,\mycdot):U_0\to [0,1]$ is $\sA_0$-measurable for all $B_1\in \sborel_1$ 
and $\Qm_0(\mycdot ,u_0):\sborel_1\to [0,1]$ is a probability measure on $\sborel_1$ for all $u_0\in U_0$, as it is
the image measure of $\P\hochkl 0(\mycdot|u_0)$ under $\Phi$.
Let us fix some $B_1\in \sborel_1$ and $A_0\in \sA_0$. Again there then exists 
an $A_1\in \sA_1$
with $\Psi^{-1}(A_1) = A_0$ and by combining $\Qm_{U_0} = \P\hochkl 0_{U_0}$ 
with 
\eqref{thm:rcp-under-trafo-hi00} and \eqref{thm:rcp-under-trafo-qm}
we find 
\begin{align*}
 \int_{U_0} \eins_{A_0}(u_0) \Qm_0(B_1,u_0) \intd  \Qm_{U_0}(u_0) 
& = 
  \int_{U_0} \eins_{A_0}(u_0) \P\hochkl 0(\Phi^{-1}(B_1)|u_0) \intd  \P\hochkl 0_{U_0}(u_0) \\
&= \P\hochkl 1(B_1\times A_1) \\
&= 
\Qm (B_1\times A_0) \, .
\end{align*}
In other words, $\Qm_0$ is also a regular conditional probability of $\Qm$ given $U_0$.
Now the uniqueness of regular conditional probabilities, see Theorem \ref{thm:ex-ei-reg-cond-prob}, yields the assertion, since we have 
already noted $\Qm_{U_0} = \P\hochkl 0_{U_0}$.
\end{proof}

% --------------------------------------------------------------------------------------

Our next goal is to investigate Bochner integrals with respect to regular conditional
probabilities and how they relate to Banach space valued conditional expectations. 
Let us begin by introducing the latter. 
To this end, let $E$ be a Banach space, $(\Om,\sA,\P)$ be a probability space, and 
$\siC\subset \sA$ be a sub-$\s$-algebra. 
We denote the restriction of $\P$ to $\siC$ by $\P_{|\siC}$ and write accordingly
\begin{align*}
\sLx 1 {\P_{|\siC},E} := \bigl\{Z\in \sLx 1 {\P,E}: Z:\Om\to E \mbox{ is $(\siC,  \sborel (E))$-measurable } \bigr\}\, ,
\end{align*}
% where 
% 
% $E$ is a separable Banach space, $(\Om,\sA,\P)$ is a probability space, and 
% $\siC\subset \sA$ is a sub-$\s$-algebra. 
With this notation, we can now define conditional expectations.

\begin{definition}\label{def:con-exp}
Let $E$ be a  Banach space,  $(\Om,\sA,\P)$ be a probability space,  $\siC\subset \sA$ be a
sub-$\s$-algebra, and  $X\in \sLx 1 {\P,E}$. Then  
% $(\siC, \s(E'))$-measurable $Y:\Om\to E$ with
$Z\in \sLx 1 {\P_{|\siC},E}$ is called  a conditional expectation of $X$ given $\siC$, if 
\begin{align*}
\int_\Om \eins_C Z\intd \P_{|\siC} = \int_\Om \eins_C X \intd \P \, , \myqquad C\in \siC.
\end{align*}
\end{definition}

One can show that for each $X\in \sLx 1 {\P,E}$ there exists a 
conditional expectation of $X$ given $\siC$, 
see e.g.~\cite[Proposition 4.1 in Chapter II.4.1]{VaTaCh87} or \cite[Theorem 2.6.20]{HyvNVeWe16}.
Moreover, if we have two conditional expectations $Z_1$ and $Z_2$ of $X$ given $\siC$, then
$\P_{|\siC}(\{Z_1\neq Z_2\}) = 0$, 
see e.g.~\cite[Corollary 5 in Chapter II.2]{DiUh77} or \cite[Theorem 2.6.20]{HyvNVeWe16}. As in the case $E=\R$, we thus 
write $\E(X|\siC)$ for any version of the conditional expectation of $X$ given $\siC$.
Finally,   in the case $\siC = \s(Y)$, we write, as usual, $\E(X|Y) := \E(X|\siC)$.

Moreover,
 note that
for $X_1,X_2\in \sLx 1 {\P,E}$ with $\P(\{X_1\neq X_2\})=0$ we obviously have 
$\E(X_1|\siC)(\om) = \E(X_2|\siC)(\om)$ for $\P_{|\siC}$-almost all $\om \in \Om$.
Consequently we can view the conditional expectation as a map $\Lx 1 {\P,E} \to \Lx 1 {\P_{|\siC},E}$.

The next results collects some useful properties of conditional expectations, where we note that most of them 
are literal generalizations of   well-known properties in the case $E=\R$. For a proof we refer to 
\cite[Chapter II.4.1]{VaTaCh87} or \cite[Chapter 2.6.d]{HyvNVeWe16}.

\begin{theorem}\label{thm:prop-cond-ex}
Let $E$ be a Banach space,  $(\Om,\sA,\P)$ be a probability space, and $\siC\subset \sA$ be a 
sub-$\s$-algebra. Then the following statements hold true:
\begin{enumerate}
\item The map $\E (\mycdot |\siC): \Lx 1 {\P,E} \to \Lx 1 {\P_{|\siC},E}$ is linear and bounded 
with $\snorm{\E (\mycdot |\siC)} = 1$ and $\E (X |\siC) = X$ for all $X\in \Lx 1 {\P_{|\siC},E}$.
\item For all $1\leq p\leq \infty$ the map $\E (\mycdot |\siC): \Lx p {\P,E} \to \Lx p {\P_{|\siC},E}$ is well-defined, linear, and
continuous with $\snorm{\E (\mycdot |\siC)} = 1$.
\item For all sub-$\s$-algebras $\siC_1\subset \siC_2\subset \sA$ and all $X\in \sLx 1 {\P,E}$
we have $\E ( \E (X |\siC_2) |\siC_1) = \E (X |\siC_1)$.
\item If $T:E\to F$ is bounded and linear, then $\E (T\circ X |\siC) = T\circ \E (X |\siC)$ for all $X\in \sLx 1 {\P,E}$.
\item We have $\E(\E (X |\siC) ) = \E X$ for all $X\in \sLx 1 {\P,E}$.
\end{enumerate}
\end{theorem}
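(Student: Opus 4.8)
The plan is to prove the five items in an order that lets each rely on the previous ones, taking as given the existence and the $\P_{|\siC}$-almost sure uniqueness of conditional expectations recalled above. I would begin with \emph{iv)}, which is self-contained: given a version $Z$ of $\E(X|\siC)$, the map $T\circ Z$ is $(\siC,\sborelnorm F)$-measurable as a composition of a measurable and a continuous map, it lies in $\sLx 1{\P,F}$ because $\snorm{T\circ Z}_F\le\snorm T\cdot\snorm Z_E$, and for every $C\in\siC$ the transformation rule \eqref{eq:transform-bochner} yields
\begin{align*}
\int_\Om\eins_C\,(T\circ Z)\intd\P=T\Bigl(\int_\Om\eins_C Z\intd\P\Bigr)=T\Bigl(\int_\Om\eins_C X\intd\P\Bigr)=\int_\Om\eins_C\,(T\circ X)\intd\P,
\end{align*}
so $T\circ Z$ is a version of $\E(T\circ X|\siC)$. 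Specialising to $T=x'\in E'$ gives the scalar identity $\dualpair{x'}{\E(X|\siC)}E=\E(\dualpair{x'}XE\mid\siC)$ $\P$-a.s., which I will use below.

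The technical heart is the pointwise contraction
\begin{align*}
\snorm{\E(X\mid\siC)(\om)}_E\le\E\bigl(\snorm X_E\bigm|\siC\bigr)(\om)
\end{align*}
for $\P$-almost every $\om$. To obtain it, I would fix a countable $\tauweaks$-dense set $\denseblo\subset B_{E'}$ (Theorem \ref{thm:alaoglu-and-more}); since $E$ is separable, $\snorm z_E=\sup_{x'\in\denseblo}\dualpair{x'}zE$ for all $z\in E$. For each fixed $x'\in\denseblo$ the scalar identity from \emph{iv)} together with the classical conditional Jensen inequality gives $\dualpair{x'}{\E(X\mid\siC)}E=\E\bigl(\dualpair{x'}XE\bigm|\siC\bigr)\le\E\bigl(|\dualpair{x'}XE|\bigm|\siC\bigr)\le\E\bigl(\snorm X_E\bigm|\siC\bigr)$ $\P$-a.s., and taking the supremum over the \emph{countable} set $\denseblo$ — which discards only countably many null sets — yields the displayed bound. (Alternatively, one can prove it by approximating $X$ with measurable step functions, for which the inequality is elementary, and passing to the limit via \eqref{eq:step-fct-approx-1}.)

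With this in hand, \emph{i)} and \emph{ii)} are short. Linearity of $\E(\mycdot\mid\siC)$ follows from the defining integral identity and uniqueness: if $Z_1,Z_2$ are versions of $\E(X_1|\siC),\E(X_2|\siC)$, then $\a Z_1+\b Z_2$ satisfies the defining property for $\a X_1+\b X_2$. Integrating the pointwise contraction over $\Om$ gives $\snorm{\E(X\mid\siC)}_{\Lx 1{\P,E}}\le\int_\Om\E\bigl(\snorm X_E\bigm|\siC\bigr)\intd\P=\int_\Om\snorm X_E\intd\P=\snorm X_{\Lx 1{\P,E}}$, so the operator norm is at most $1$; moreover $\E(X|\siC)=X$ for every $\siC$-measurable $X\in\sLx 1{\P,E}$ since such an $X$ trivially satisfies the defining identity, and testing on a nonzero constant $X\equiv x_0$ shows the norm equals $1$. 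For \emph{ii)} with $1\le p<\infty$, the same pointwise contraction combined with the $\Lx p\P$-contractivity of the scalar conditional expectation gives $\snorm{\E(X\mid\siC)}_{\sLx p{\P,E}}\le\snorm{\E(\snorm X_E\mid\siC)}_{\Lx p\P}\le\snorm{\snorm X_E}_{\Lx p\P}=\snorm X_{\sLx p{\P,E}}$, which also gives well-definedness; the case $p=\infty$ follows from $\snorm X_E\le\snorm X_\infty$ $\P$-a.s.\ and monotonicity of the scalar conditional expectation, and the constant example again yields norm $1$.

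Finally, \emph{iii)} and \emph{v)} are immediate from the definition. For \emph{iii)}, I would pick a version $Z_2$ of $\E(X|\siC_2)$ and a version $Z_1$ of $\E(Z_2|\siC_1)$; then $Z_1$ is $\siC_1$-measurable, and for $C\in\siC_1\subset\siC_2$ we have $\int_\Om\eins_C Z_1\intd\P=\int_\Om\eins_C Z_2\intd\P=\int_\Om\eins_C X\intd\P$, so $Z_1$ is a version of $\E(X|\siC_1)$. For \emph{v)}, taking $C=\Om$ in the defining identity and using that $\P$ and $\P_{|\siC}$ agree on $\siC$-measurable integrands gives $\E(\E(X|\siC))=\int_\Om\E(X|\siC)\intd\P=\int_\Om X\intd\P=\E X$. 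The only genuinely non-routine point is the pointwise Jensen-type contraction in the second paragraph; everything else is bookkeeping with the defining property and the uniqueness of conditional expectations.
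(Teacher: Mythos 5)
The paper does not actually prove Theorem \ref{thm:prop-cond-ex}; it defers entirely to \cite[Chapter II.4.1]{VaTaCh87} and \cite[Chapter 2.6.d]{HyvNVeWe16}. Your self-contained argument is essentially the standard one from those sources: prove \emph{iv)} directly from \eqref{eq:transform-bochner}, use the scalar reduction $\dualpair {x'}{\E(X|\siC)}E=\E(\dualpair{x'}XE\,|\,\siC)$ to get the pointwise contraction $\snorm{\E(X|\siC)}_E\le\E(\snorm X_E\,|\,\siC)$ $\P$-a.s., and read off \emph{i)} and \emph{ii)}; parts \emph{iii)} and \emph{v)} are indeed immediate from the defining identity together with a.s.\ uniqueness. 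All steps are correct, and the order of the items is chosen sensibly. One point needs tightening: the theorem assumes only that $E$ is a Banach space, whereas your derivation of the pointwise contraction invokes Theorem \ref{thm:alaoglu-and-more} ``since $E$ is separable'' to obtain a countable $\denseblo\subset B_{E'}$ with $\snorm z_E=\sup_{x'\in\denseblo}\dualpair{x'}{z}E$; for non-separable $E$ no countable subset of $B_{E'}$ is norming, so as written this step does not apply. The repair is routine and should be stated explicitly: $X$ and $Z:=\E(X|\siC)$ are strongly measurable, hence their ranges lie (up to a $\P$-null set) in a closed separable subspace $E_0\subset E$; take a countable norming subset of $B_{E_0'}$ and extend its elements to $B_{E'}$ by Hahn--Banach, or simply run your parenthetical step-function approximation, which avoids norming functionals altogether. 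With that one sentence added, the proof is complete and is a reasonable substitute for the citation the paper relies on.
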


Our next goal is to collect information about $E$-valued martingales. To this end, let 
$(\Om,\sA,\P)$ be a probability space and $E$ be a Banach space.
Then  a sequence $(\siC_n)$ of 
sub-$\s$-algebras of $\sA$ is said to be a filtration, if $\siC_n \subset \siC_{n+1}$ for all $n\geq 1$.
In this case a sequence $(X_n)$ of maps $X_n:\Om\to E$ is called a $(\siC_n)$-martingale, if for all $n\geq 1$ we have 
$X_n\in \sLx 1 {\P_{|\siC_n},E}$ and 
\begin{align*}
\E (X_{n+1}|\siC_n) = X_n\, .
\end{align*}
Recall that $\R$-valued martingales enjoy well-known convergence results. The following theorem, whose proof can be 
found in e.g.~\cite[Theorems 4.1 and 4.2 in Chapter II.4.2]{VaTaCh87}
or \cite[Theorem 3.3.2]{HyvNVeWe16}, collects analogous results for $E$-valued martingales.

\begin{theorem}\label{thm:martingale-convergence}
Let $E$ be a  Banach space,  $(\Om,\sA,\P)$ be a probability space,  $(\siC_n)$ be a filtration,
and  $\siC_\infty :=  \s(\siC_n:n\geq 1)$. Moreover, let $1\leq p<\infty$ and $X\in \sLx p {\P,E}$, and for $n\geq 1$ we
define 
\begin{align*}
X_n := \E (X|\siC_n) \myqquad \mbox{ and } \myqquad X_\infty := \E (X|\siC_\infty)\, .
\end{align*}
Then $(X_n)$ is a $(\siC_n)$-martingale and we have both
\begin{align*}
\snorm{X_n-X_\infty}_{\sLx p {\P,E}} \to 0 \myqquad \mbox{ and } \myqquad \P(\lim_{n\to \infty} X_n \neq X_\infty\}) = 0.
\end{align*}
\end{theorem}

Our next result establishes a disintegration formula for Bochner integrals and regular conditional
probabilities. In addition, it shows that regular conditional probabilities can be used to construct
conditional expectations. In the case $E=\R$, these results are well-known, see
e.g.~\cite[Theorem 10.2.5]{Dudley02}  and to some extent also
% to
%  \cite[Equation (44.3)]{Bauer96}, or 
 \cite[Theorem 8.38]{Klenke14}, where in all cases actually a more general setting
 that goes beyond the ``product case'' is treated.

\begin{theorem}\label{thm:disintegration-bs}
Let $(T,\sB)$ and $(U,\sA)$ be measurable spaces, $\P$ be a probability measure on $(T\times U, \sB\otimes \sA)$,
and
$\P(\mycdot|\mycdot):\sB\times U\to [0,1]$ be a  regular conditional probability
of $\P$ given $U$. Moreover, let $E$ be a   Banach space %, $p\in [1,\infty)$,
and $X\in \sLx 1 {\P, E}$.
Then there exists an $N\in \sA$ with $\PU(N) = 0$ such that for all $u\in U\setminus N$ we have 
\begin{align}\label{eq:disintegration-bochner-int}
X(\mycdot, u) \in \sLx 1 {\P(\mycdot|u), E}\, .
\end{align}
Moreover, for all $N\in \sA$ with $\PU(N) = 0$ and \eqref{eq:disintegration-bochner-int},
 the map  $Z:U\to E$ defined by 
\begin{align}\label{eq:disintegration:inner-integral-bs}
Z(u) := \int_T \eins_{U\setminus N}(u) X(t,u) \, \P(\ind t|u)\, , \myqquad u\in U.
\end{align}
 is strongly measurable
%  $(\sA, \s(E'))$-measurable
 and we have both $Z\in \sLx 1 {\PU,E}$ and
\begin{align}\label{eq:disintegration}
\int_{T\times U} X \intd \P
=
\int_U   Z \intd \PU
=
\int_U \int_T \eins_{U\setminus N}(u) X(t,u) \, \P(\ind t|u) \PU(u)\, .
\end{align}
Finally, the function $Z\circ \coordproj U:T\times U\to E$ is a version of $\E(X|\siC)$, where $\siC:= \coordproj U^{-1} (\sA)$.
\end{theorem}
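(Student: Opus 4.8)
\textbf{Proof plan for Theorem \ref{thm:disintegration-bs}.}
The plan is to prove the three assertions in order: first the $\PU$-almost sure integrability \eqref{eq:disintegration-bochner-int}, then the strong measurability and integrability of $Z$ together with the disintegration formula \eqref{eq:disintegration}, and finally the identification of $Z\circ\coordproj U$ as a version of $\E(X|\siC)$. The natural strategy throughout is the usual measure-theoretic bootstrap: establish everything first for indicator functions $X=\eins_{B\times A}x$ with $B\in\sB$, $A\in\sA$, $x\in E$, then extend to measurable step functions by linearity, and finally pass to general $X\in\sLx 1{\P,E}$ by the approximation \eqref{eq:step-fct-approx-1}--\eqref{eq:step-fct-approx-2} together with the dominated convergence theorem for Bochner integrals \eqref{eq:dominated-conv-bnochner}. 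For the scalar-valued ``mass'' bookkeeping one can either invoke the known $E=\R$ disintegration result (cited from \cite[Theorem 10.2.5]{Dudley02}) applied to the nonnegative function $\snorm{X(\mycdot,\mycdot)}_E$, or reprove it directly; I would use it as a black box to control $\int_T\snorm{X(t,u)}_E\,\P(\ind t|u)$.

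First I would show \eqref{eq:disintegration-bochner-int}. Since $X\in\sLx 1{\P,E}$, the function $(t,u)\mapsto\snorm{X(t,u)}_E$ is a nonnegative $\sB\otimes\sA$-measurable function with $\int_{T\times U}\snorm X_E\intd\P<\infty$. Applying the scalar disintegration theorem to this function gives an $N\in\sA$ with $\PU(N)=0$ such that for $u\notin N$ we have $\int_T\snorm{X(t,u)}_E\,\P(\ind t|u)<\infty$ and $u\mapsto\int_T\snorm{X(t,u)}_E\,\P(\ind t|u)$ is $\sA$-measurable and $\PU$-integrable. Now fix $u\notin N$; the map $t\mapsto X(t,u)$ is $(\sB,\sborel(E))$-measurable as a section of a strongly measurable map, hence strongly measurable (its range sits inside the separable set $X(T\times U)$), and the finiteness of $\int_T\snorm{X(t,u)}_E\,\P(\ind t|u)$ shows $X(\mycdot,u)\in\sLx 1{\P(\mycdot|u),E}$. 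This is \eqref{eq:disintegration-bochner-int}.

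Next I would handle $Z$ and the formula \eqref{eq:disintegration}. For the base case $X=\eins_{B\times A}x$ one computes directly $Z(u)=\eins_A(u)\P(B|u)\,x$ (for $u\notin N$, and $Z(u)=0$ otherwise), which is $\sA$-measurable with values in the one-dimensional span of $x$, hence strongly measurable; and $\int_U Z\intd\PU=\bigl(\int_U\eins_A\P(B|\mycdot)\intd\PU\bigr)x=\P(B\times A)x=\int_{T\times U}X\intd\P$ by \eqref{eq:def-reg-cond-prob} and \eqref{def:step-fct-bochner-int}. Linearity extends this to step functions. For general $X$, pick step functions $X_m$ with $X_m\to X$ pointwise and $\snorm{X_m}_E\leq\snorm X_E$; the associated $Z_m$ converge pointwise to $Z$ on $U\setminus N$ by the Bochner dominated convergence theorem \eqref{eq:dominated-conv-bnochner} applied inside the integral \eqref{eq:disintegration:inner-integral-bs} with dominating function $t\mapsto\snorm{X(t,u)}_E$, so $Z$ is a pointwise limit of strongly measurable maps and hence strongly measurable. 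The bound $\snorm{Z(u)}_E\leq\int_T\snorm{X(t,u)}_E\,\P(\ind t|u)$ from \eqref{eq:bochner-int-cont} together with the $\PU$-integrability of the right-hand side gives $Z\in\sLx 1{\PU,E}$, and passing to the limit in $\int_U Z_m\intd\PU=\int_{T\times U}X_m\intd\P$ — using \eqref{eq:dominated-conv-bnochner} on both sides — yields \eqref{eq:disintegration}. The independence of the conclusion from the particular choice of null set $N$ (as stated) follows since changing $N$ on a $\PU$-null set changes $Z$ only $\PU$-a.e.

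Finally, for the last assertion I would verify the two defining properties of $\E(X|\siC)$ with $\siC=\coordproj U^{-1}(\sA)$. Strong $\siC$-measurability of $Z\circ\coordproj U$ is immediate since $Z$ is strongly $\sA$-measurable and $\coordproj U$ is $(\sB\otimes\sA,\sA)$-measurable. For the averaging identity, fix $C=T\times A\in\siC$ with $A\in\sA$; then $\eins_C\cdot(Z\circ\coordproj U)=(\eins_A Z)\circ\coordproj U$, and applying \eqref{eq:disintegration} to the function $\eins_{T\times A}X$ (whose section integral is $\eins_A(u)\int_T X(t,u)\,\P(\ind t|u)$) gives $\int_{T\times U}\eins_{T\times A}X\intd\P=\int_U\eins_A Z\intd\PU=\int_{T\times U}\eins_C(Z\circ\coordproj U)\intd\P$, which is exactly the condition in Definition \ref{def:con-exp}. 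The main obstacle in all of this is the purely technical one of justifying the interchange of Bochner integration and the pointwise/step-function limits — in particular making sure the dominating function $t\mapsto\snorm{X(t,u)}_E$ is genuinely $\P(\mycdot|u)$-integrable for the relevant $u$, which is why the scalar disintegration result is invoked at the outset to produce the exceptional null set $N$ uniformly.
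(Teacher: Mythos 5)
Your overall strategy coincides with the paper's: scalar disintegration applied to $\snorm{X(\mycdot,\mycdot)}_E$ to produce the null set $N$ and establish \eqref{eq:disintegration-bochner-int}, a step-function base case, passage to general $X$ via the approximation \eqref{eq:step-fct-approx-1}--\eqref{eq:step-fct-approx-2} and dominated convergence, and verification of the two defining properties of $\E(X|\siC)$ on sets $C=T\times A$. The last part of your argument is essentially identical to the paper's.

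There is, however, one step that does not work as written. You take as base case $X=\eins_{B\times A}\,x$ with $B\in\sB$, $A\in\sA$, and claim that ``linearity extends this to step functions.'' A measurable step function in $\sLx 1{\P,E}$ has the form $\sum_{i=1}^m\eins_{D_i}x_i$ with \emph{arbitrary} $D_i\in\sB\otimes\sA$, and the approximating sequence from \eqref{eq:step-fct-approx-1}--\eqref{eq:step-fct-approx-2} produces exactly such functions; linear combinations of rectangle indicators only cover step functions whose level sets lie in the algebra generated by rectangles, which is a strictly smaller class. For a general $D\in\sB\otimes\sA$ you need to know that the section $D_u$ is in $\sB$, that $u\mapsto\int_T\eins_{D}(t,u)\,\P(\ind t|u)$ is $\sA$-measurable, and that its $\PU$-integral equals $\P(D)$ --- none of which follows ``by linearity'' from the rectangle case; it requires a monotone class (Dynkin) argument or, equivalently, the scalar disintegration theorem applied to $\eins_D$. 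This is precisely how the paper handles it: the step-function case is reduced to the scalar result for the coefficient functions $f_i(u)=\int_T\eins_{D_i}(t,u)\,\P(\ind t|u)$, with general $D_i$. Since you already invoke the scalar disintegration theorem as a black box for $\snorm X_E$, the repair is immediate --- invoke it for indicators of arbitrary product-measurable sets rather than starting from rectangles --- but as stated your base case does not reach the step functions you actually need for the approximation argument. The remainder of your limit passage (pointwise convergence $Z_m\to Z$ on $U\setminus N$ via domination by $t\mapsto\snorm{X(t,u)}_E$, the bound $\snorm{Z(u)}_E\leq\int_T\snorm{X(t,u)}_E\,\P(\ind t|u)$, and dominated convergence on both sides of $\int_U Z_m\intd\PU=\int_{T\times U}X_m\intd\P$) is sound and matches the paper.
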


\begin{proof}[Proof of Theorem \ref{thm:disintegration-bs}]
% \idea{Check, if the first part can be simplified using results \cite[Chapter 14.2]{Klenke14}.}
We first note that for $X:T\times U\to [0,\infty)$  the assertions \eqref{eq:disintegration-bochner-int} and \eqref{eq:disintegration} are  well-known: Indeed,
in this case, the measurability of $Z:U\to [0,\infty]$ defined by \eqref{eq:disintegration:inner-integral-bs}
with $N:= \emptyset$
can be found in e.g.~\cite[Lemma 14.20]{Klenke14}. Moreover, \eqref{eq:disintegration}
is shown in e.g.~\cite[Theorem 14.29]{Klenke14}. 
 For $\P$-integrable $X:T\times U\to [0,\infty)$, the just verified
 \eqref{eq:disintegration} then shows that there exists an $N\in \sA$ with $\PU(N)=0$ and 
$Z(u) \in [0,\infty)$ for all $u\in U\setminus N$, that is, we have \eqref{eq:disintegration-bochner-int}. 
Finally, if we fix such an $N$ and
apply the previous results to $X_0 := \eins_{U\setminus N} X$,
we again have the measurability of $Z$ and \eqref{eq:disintegration}. 
% Furthermore, \eqref{eq:disintegration-bochner-int} is ensured by construction.

Let us now assume that $E$ is a general Banach space. We first consider the case in which
$X\in \sLx 1 {\P,E}$ is a step function. To this end, we pick a representation
\begin{align*}
X = \sum_{i=1}^{m}\eins_{D_{i}} x_{i}
\end{align*}
for some $x_{i}\in E$ and $D_{i}\in \sB\otimes \sA$. 
For $u\in U$ we have  $D_{i,u}:= \{t\in T: (t,u)\in D_i\}\in \sB$, see e.g.~\cite[Lemma 14.13]{Klenke14},
and hence we see that 
\begin{align}\label{thm:disintegration-bs-h666}
X(\mycdot, u) = \sum_{i=1}^{m}\eins_{D_{i}}(\mycdot, u) x_{i} = \sum_{i=1}^{m}\eins_{D_{i,u}} x_{i}
\end{align}
is a measurable step functions for all fixed $u\in U$. Consequently, \eqref{eq:disintegration-bochner-int} holds for $N:= \emptyset$. 
For this $N$ we 
define $Z$ by \eqref{eq:disintegration:inner-integral-bs}.
For $u\in U$  %and the sets $D_{i,u}:= \{t\in T: (t,u)\in D_i\}\in \sB$, see e.g.~\cite[Lemma 14.13]{Klenke14},
the definition of Bochner integrals for step functions, see \eqref{def:step-fct-bochner-int}, yields
% This yields
\begin{align*}
Z(u)
= \int_T  \sum_{i=1}^{m}  \eins_{D_{i}}(t,u) x_{i} \, \P(\ind t|u)
&= \int_T  \sum_{i=1}^{m}  \eins_{D_{i,u}}(t) x_{i} \, \P(\ind t|u) \\
&=   \sum_{i=1}^{m}  \biggl(\int_T\eins_{D_{i,u}}(t)  \, \P(\ind t|u) \biggr)\cdot  x_{i} \\
&=   \sum_{i=1}^{m}  \biggl(\int_T\eins_{D_{i}}(t,u)  \, \P(\ind t|u) \biggr)\cdot  x_{i} \, .
\end{align*}
Let us   define $f_i:U \to [0,\infty)$ by
\begin{align*}
f_i(u) :=  \int_T\eins_{D_{i}}(t,u)  \, \P(\ind t|u)  \, .
\end{align*}
From our initial remarks we know that each $f_i$ is measurable, and hence $u\mapsto   f_i(u)x_i$ is a strongly measurable, $E$-valued function.
Since $Z$ is the sum of these $m$ functions, it is also strongly measurable.
Moreover, the already established \eqref{eq:disintegration} for non-negative, measurable functions gives
\begin{align*}
\int_U   Z \intd \PU
&=
\int_U \sum_{i=1}^{m} \biggl(\int_T\eins_{D_{i}}(t,u)  \, \P(\ind t|u)  \biggr)\cdot  x_{i}  \intd \PU(u) \\
&=
 \sum_{i=1}^{m} \biggl( \int_U  \int_T\eins_{D_{i}}(t,u)   \, \P(\ind t|u) \intd \PU(u) \biggr) \cdot x_{i} \\
 &=
 \sum_{i=1}^{m}  \biggl( \int_{T\times U} \eins_{D_{i}}    \intd \P \biggr)  \cdot x_{i} \\
 &=
 \int_{T\times U} X  \intd \P\, .
\end{align*}
In other words, \eqref{eq:disintegration} holds for measurable $E$-valued step functions and $N= \emptyset$.

Let us now consider the case of general Banach spaces $E$ and $X\in \sLx 1 {\P,E}$.
To prove \eqref{eq:disintegration-bochner-int}, 
we write $Y:= \snorm{X(\mycdot, \mycdot)}_E:T\times U\to [0,\infty)$. 
Since $X$ is strongly measurable,
 $Y$ is measurable. Applying  Equation \eqref{eq:disintegration} in the already established case of $E=\R$ with
 non-negative functions to $Y$ we then obtain
 \begin{align}\label{thm:disintegration-bs-h1}
 \int_U \int_T \snorm{X(t,u)}_E \, \P(\ind t|u) \PU(u)
 = 
 \int_{T\times U} \snorm{X(\mycdot, \mycdot)}_E \intd \P
 < \infty\, .
 \end{align}
Consequently, there exists an $N\in \sA$ with $\PU(N) = 0$ such that for all $u\in U\setminus N$ we have 
\begin{align*}
\int_T \snorm{X(t,u)}_E \, \P(\ind t|u) < \infty. 
\end{align*}
This shows \eqref{eq:disintegration-bochner-int}.

For  the following steps we fix an $N\in \sA$ with $\PU(N) = 0$ such that \eqref{eq:disintegration-bochner-int} holds.
We write $X_0 := \eins_{U\setminus N} X$ and define $g:T\times U\to [0,\infty)$ by
$g(t,u) := \snorm{X_0(t,u)}_E$ for all
$t\in T$ and $u\in U$. By our assumption,  $g:T\times U\to [0,\infty)$ is $\P$-integrable.
% and $g(\mycdot,u)\in \sLx p {\P(\mycdot|u)}$ for all $u\in U$.

Let us first verify that $Z$ is strongly measurable. Since $X$ is strongly measurable, so is $X_0$, and therefore
there exists a sequence $(X_n)$ of
measurable step functions $X_n:T\times U\to E$ approximating $X_0$ in the sense of
 \eqref{eq:step-fct-approx-1} and \eqref{eq:step-fct-approx-2}.
We define $Z_n:U\to E$ in the sense of \eqref{eq:disintegration:inner-integral-bs}, that is
\begin{align}\label{eq:Zn_def}
Z_n(u) := \int_T   X_n(t,u) \, \P(\ind t|u)\, , \myqquad u\in U.
\end{align}
Since $X_n$ is a measurable step function
we already know that this definition is possible and 
that $Z_n$ is strongly measurable. Moreover, for each fixed $u$, the sequence $(X_n(\mycdot,u))$ consists of
measurable step functions, see \eqref{thm:disintegration-bs-h666}, which approximates $X_0(\mycdot,u)$ in the sense of \eqref{eq:step-fct-approx-1} and \eqref{eq:step-fct-approx-2}.
By the definition of the Bochner integral \eqref{def:bochner-integral} we thus find
\begin{align}\label{eq:Zn_conv}
Z_n(u) = \int_T  X_n(t,u) \, \P(\ind t|u) \to \int_T  X_0(t,u) \, \P(\ind t|u) = Z(u) \, , \myqquad u\in U.
\end{align}
In other words, $Z$ is the pointwise limit of a sequence of strongly measurable functions, and thus also strongly measurable.

To verify that  $Z\in \sLx 1 {\PU,E}$, we simply note that the norm inequality for Bochner integrals
 \eqref{eq:bochner-int-cont} together with \eqref{thm:disintegration-bs-h1} yields
\begin{align*}
\int_U \snorm{Z}_E \intd \PU
&= 
\int_U \bnorm{ \int_T \eins_{U\setminus N}(u) X(t,u) \, \P(\ind t|u)}_E \intd \PU(u) \\
&\leq 
\int_U  \int_T  \snorm{X(t,u)}_E \, \P(\ind t|u)   \intd \PU(u) \\
% &\leq 
% \int_U  \int_T  \snorm{X(t,u)}_E  \, \P(\ind t|u)   \intd \PU(u) \\
&< \infty\, .
\end{align*}

To
 establish \eqref{eq:disintegration}
for general $X\in \sLx 1 {\P,E}$ we again fix  a sequence $(X_n)$ of
measurable step functions $X_n:T\times U\to E$ approximating $X_0$ in the sense of
 \eqref{eq:step-fct-approx-1} and \eqref{eq:step-fct-approx-2}.
For the strongly measurable functions $Z_n:U\to E$ defined by \eqref{eq:Zn_def} we have already established
\eqref{eq:Zn_conv}.
By \eqref{eq:step-fct-approx-2} we further find
\begin{align*}
\snorm{Z_n(u)}_E
=
\bnorm  {\int_T  X_n(t,u) \, \P(\ind t|u)   }_E
\leq
\int_T  \snorm{ X_n(t,u)}_E \, \P(\ind t|u)
&\leq
\int_T  g(t,u) \, \P(\ind t|u) \\
&=: h(u)
\end{align*}
for all $u\in U$. Since $g$ is $\P$-integrable, the initially considered case of non-negative, measurable functions
shows that $h:U\to [0,\infty]$ is $\PU$-integrable. %, where we note that due to our construction we can choose $N=\emptyset$.
In summary, the sequence $(Z_n)$ together with $Z$ and $h$ satisfy \eqref{eq:meas-fct-approx-1}
and \eqref{eq:meas-fct-approx-2}, and hence an application of the dominated convergence theorem for Bochner integrals 
\eqref{eq:dominated-conv-bnochner} shows
\begin{align}\label{eq:disintegration-limit-11}
\int_U   Z_n \intd \PU \to  \int_U   Z \intd \PU \, .
\end{align}
Since the sequence $(X_n)$ of
measurable step functions $X_n$ approximates $X_0$ in the sense of
 \eqref{eq:step-fct-approx-1} and \eqref{eq:step-fct-approx-2}, the definition of the Bochner integral further shows
\begin{align}\label{eq:disintegration-limit-1}
\int_{T\times U} X_n \intd \P \to  \int_{T\times U} X_0 \intd \P\, .
\end{align}
Moreover,  we already know that  \eqref{eq:disintegration} holds for all pairs of $X_n$ and $Z_n$,
that is 
\begin{align*}
\int_U   Z_n \intd \PU  = \int_{T\times U} X_n \intd \P \, ,
\end{align*}
and therefore
the just established
\eqref{eq:disintegration-limit-11} and \eqref{eq:disintegration-limit-1} then yield
  \eqref{eq:disintegration}  in the general case.

Finally, we need to verify that $Z\circ \coordproj U$ is a version of $\E(X|\siC)$.
Here we first note that $Z\circ \coordproj U$ is $\siC$-measurable by construction.
Let us now pick a $C\in \siC$. Then there exists an $A\in \sA$ with $C=T\times A$.
Now, the function  $X_A:= \eins_A X_0$ satisfies $X_A\in \sLx 1 {\P,E}$
and by construction we have \eqref{eq:disintegration-bochner-int} for
$N_A := \emptyset$. In addition, we find
%
% we will apply
% Lemma \ref{lem:cond-probs-in-prod-case} to $X$ and the modification $\tilde Z:= \eins_{U\setminus N} Z:U\to [0,\infty)$.
% Here, we already know that $\tilde Z$
% is $\sA$-measurable. Moreover, for a picked
%  $A\in \sA$ the map $X_A:= \eins_A X:T\times U\to [0,\infty)$ is
%  $\sB\otimes \sA$
%  and we have
\begin{align*}
Z_A(u) := \int_T X_A(t,u) \, \P(\ind t|u)=  \eins_A(u) \int_T X_0(t,u) \, \P(\ind t|u) =  \eins_A(u) Z(u)
\end{align*}
for all $u\in U$. 
% Moreover note that, $X_A$ satisfies \eqref{eq:disintegration-bochner-int} with 
% $N_A := \emptyset$. % for $X_A$.
From this we can conclude $Z_A\circ \coordproj U = 1_C \cdot (Z\circ \coordproj U)$, and
applying \eqref{eq:disintegration} to $X_A$  thus yields
\begin{align*}
\int_{T\times U} \eins_{C} X\intd \P
=
\int_{T\times U} X_A \intd \P
=
\int_U   Z_A  \intd  \PU
=
\int_U   Z_A  \intd  \P_{\coordproj U}
&=
\int_{T\times U} Z_A \circ \coordproj U \intd \P \\
&=
\int_{T\times U} \eins_C \cdot (Z \circ \coordproj U) \intd  \P  \, .
\end{align*}
Consequently, $Z\circ \coordproj U$ is a version of $\E(X|\siC)$.
\end{proof}

\stepcounter{appendix}
\section{Continuous Gaussian Processes}\label{app:gp-on-ct}

In this supplement we recall some basics about the relationship between $\sC T$-valued Gaussian random variables and Gaussian processes with continuous paths.
Since we could not find a source that really fits to our purposes, all material presented here is also proven.
 
We begin with a technical result in the spirit of  Krein-Milman's theorem.
 
\begin{lemma}\label{lem:krein-milman-for-mt}
Let $(T,\metric)$ be a compact metric space and $T_0 \subset T$ be dense. Then we have
\begin{align*}
\overline {\aco \{   \diracf t: t\in  T_0  \}}^{\tauweaksseq} = B_{\sC T'}\, ,
\end{align*}
where $\aco A$ denotes the absolute convex hull of a set $A$.
\end{lemma}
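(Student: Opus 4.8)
The statement asserts that the $w^*$-sequential closed absolute convex hull of the point evaluations $\{\diracf t : t \in T_0\}$ is all of $B_{\sC T'}$. The plan is to combine the Riesz representation $\sC T' \cong \measpace T$ from \eqref{eq:riesz-represent} with a classical approximation of signed measures by discrete measures. First I would observe that under this identification $\diracf t$ corresponds to the Dirac measure $\dirac t$, and $\aco\{\diracf t : t \in T_0\}$ corresponds to the set of signed measures of the form $\sum_{i=1}^m \a_i \dirac{t_i}$ with $t_i \in T_0$ and $\sum_{i=1}^m |\a_i| \leq 1$, i.e.\ the finitely supported signed measures on $T_0$ of total variation at most $1$. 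The inclusion ``$\subseteq$'' is immediate since each $\diracf t$ lies in $B_{\sC T'}$, which is $w^*$-closed (Theorem \ref{thm:alaoglu-and-more}) and convex and balanced; hence it contains the $w^*$-sequential closure of the absolute convex hull. The work is in ``$\supseteq$''.

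For the nontrivial inclusion I would fix $\nu \in B_{\sC T'}$, i.e.\ a finite signed Borel measure $\nu$ on $T$ with $\tvnorm\nu \leq 1$, and produce a sequence of finitely supported signed measures $\nu_n$ with masses at points of $T_0$, total variation $\leq 1$, converging to $\nu$ in the $w^*$-topology, i.e.\ $\int_T f \intd \nu_n \to \int_T f \intd \nu$ for every $f \in \sC T$. Since $T$ is a compact metric space, for each $n$ I would choose a finite partition of $T$ into Borel sets $B_{n,1},\dots,B_{n,k_n}$ of diameter $< 1/n$, pick a point $t_{n,j} \in B_{n,j} \cap T_0$ for each nonempty piece (possible because $T_0$ is dense — after discarding empty or $T_0$-missing pieces and relabeling, one can even refine the partition so each piece meets $T_0$), and set $\nu_n := \sum_j \nu(B_{n,j})\, \dirac{t_{n,j}}$. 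Then $\tvnorm{\nu_n} = \sum_j |\nu(B_{n,j})| \leq \tvnorm\nu \leq 1$, so $\nu_n \in \aco\{\diracf t : t \in T_0\}$. For $w^*$-convergence, fix $f \in \sC T$; by uniform continuity of $f$ on the compact space $T$, with modulus of continuity $\unimodcont f\delta \to 0$ as $\delta \to 0$, one estimates
\begin{align*}
\biggl| \int_T f \intd \nu_n - \int_T f \intd \nu \biggr|
= \biggl| \sum_j \int_{B_{n,j}} \bigl( f(t_{n,j}) - f(s) \bigr) \intd \nu(s) \biggr|
\leq \unimodcont f{1/n} \cdot \tvnorm\nu \to 0,
\end{align*}
which gives $\nu_n \to \nu$ in $\tauweaks$. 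Since $(B_{\sC T'}, \tauweaks)$ is metrizable by Theorem \ref{thm:alaoglu-and-more}, $w^*$-convergence of a sequence coincides with $w^*$-sequential convergence, so $\nu \in \overline{\aco\{\diracf t : t\in T_0\}}^{\tauweaksseq}$. As $\nu$ was arbitrary, this proves ``$\supseteq$'' and hence equality.

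\textbf{Main obstacle.} The only genuinely delicate point is arranging, for each $n$, a finite Borel partition of $T$ into sets of small diameter \emph{each of which contains a point of $T_0$} — or, equivalently, handling pieces that are nonempty but happen to miss $T_0$. This is routine: cover $T$ by finitely many balls $B(t,1/(2n))$ with \emph{centers in $T_0$} (possible since $T_0$ is dense and $T$ compact), disjointify them into Borel sets $B_{n,j}$ of diameter $< 1/n$, and for each nonempty $B_{n,j}$ note it lies in some ball centered at a point $t_{n,j} \in T_0$ with $\metric(t_{n,j}, s) < 1/n$ for all $s \in B_{n,j}$; then the estimate above goes through with $\unimodcont f{1/n}$ replaced by $\unimodcont f{1/n}$ unchanged (the bound $\metric(t_{n,j},s) < 1/n$ is all that is used). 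A secondary, purely bookkeeping point is to carry the identification $\sC T' \cong \measpace T$ of \eqref{eq:riesz-represent} consistently throughout, so that ``absolute convex hull of Dirac functionals'' translates exactly to ``finitely supported signed measures on $T_0$ in $B_{\measpace T}$.''
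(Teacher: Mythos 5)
Your proof is correct, but it takes a genuinely different route from the paper. The paper first settles the case $T_0=T$ by identifying the extreme points of $B_{\measpace T}$ as $\{\pm\dirac t : t\in T\}$ (citing Conway) and invoking Krein--Milman to conclude $\overline{\aco\{\diracf t : t\in T\}}^{\tauweaks}=B_{\sC T'}$; it then handles a general dense $T_0$ by a second approximation step, perturbing the support points of an approximating sequence into $T_0$ and controlling the error by the uniform modulus of continuity. You instead bypass Krein--Milman entirely: you take an arbitrary $\nu\in B_{\measpace T}$ and discretize it directly, partitioning $T$ into finitely many small Borel pieces associated with centers in $T_0$ and placing the mass $\nu(B_{n,j})$ at those centers. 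Your total-variation bound $\sum_j|\nu(B_{n,j})|\leq \tvnorm\nu\leq 1$ and the estimate via $\unimodcont f{1/n}$ are both sound, and your handling of the only delicate point (ensuring each piece is within distance $1/n$ of a designated point of $T_0$, noting that the Dirac mass need not sit inside the piece after disjointification) is correct. What each approach buys: the paper's argument makes the $T_0=T$ case a one-line consequence of a structural theorem but then needs a second pass for general $T_0$, whereas yours is more elementary and self-contained, treats general $T_0$ in a single step, and yields an explicit approximating sequence with a quantitative rate in terms of the modulus of continuity of the test function. Both proofs rely in the same way on the metrizability of the relative $\tauweaks$-topology on $B_{\sC T'}$ (Theorem \ref{thm:alaoglu-and-more}) to pass from weak-$*$ closure to sequential closure.
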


\begin{proof}[Proof of Lemma \ref{lem:krein-milman-for-mt}]
We first consider the case $T_0 = T$. Here, we   note that the set of extreme points $\expoints B_{\measpace T}$ of $B_{\measpace T}$ is
given by
\begin{align*}
\expoints B_{\measpace T}=   \bigl\{ \a \dirac t: t\in  T \mbox{ and } \a\in \{-1,1\}\bigr\} \, ,
\end{align*}
see e.g.~\cite[Theorem V.8.4]{Conway90}.
Using the identification \eqref{eq:riesz-represent} of $\sC T'$ with $\measpace T$ and the obvious $\Cdualmap \d_{\{t\}} = \diracf t$
% \eqref{eq:riesz-for-dirac} 
we conclude that the set
$\expoints B_{\sC T'}$ of extreme points  of $B_{\sC T'}$ is
given by
\begin{align*}
\expoints B_{\sC T'}=  \bigl\{ \a \diracf t: t\in  T \mbox{ and } \a\in \{-1,1\}\bigr\} \, .
\end{align*}
Krein-Milman's theorem, see e.g.~\cite[Corollary 2.10.9]{Megginson98}, thus gives
\begin{align*}
\overline {\aco  \{   \diracf t: t\in  T  \}}^{\tauweaks}
=
\overline {\co \bigl\{ \a \diracf t: t\in  T \mbox{ and } \a\in \{-1,1\}\bigr\}}^{\tauweaks} = B_{\sC T'}\, ,
\end{align*}
where  $\co A$ denotes the convex hull of a set $A$ and the first identity is trivial.
Moreover, $\sC T$ is separable, and hence the relative  $\tauweaks$-topology on $B_{\sC T'}$ is metrizable. %, see Theorem \ref{thm:alaoglu-and-more}.
This gives
\begin{align*}
\overline {\aco  \{  \diracf t: t\in  T   \}}^{\tauweaks} = \overline {\aco  \{   \diracf t: t\in  T  \}}^{\tauweaksseq}\, ,
\end{align*}
where we note   this identity can also be deduced from \cite[Corollary 2.7.13]{Megginson98}.

Let us now prove the assertion in  the case of a general, dense $T_0\subset T$.
To this end, we fix a $\Cdualmap \mu\in B_{\sC T'}$. By the already established case we then find a
sequence
\begin{align*}
(\Cdualmap\mu_n)\subset \aco  \{   \diracf t: t\in  T  \}
\end{align*}
with $\dualpair {\Cdualmap\mu_n}f {\sC T} \to  \dualpair {\Cdualmap \mu}f {\sC T}$ for all $f\in \sC T$.
Clearly, each $\Cdualmap\mu_n$ is of the form
\begin{align}\label{cor:dirac-pointwise-dense-h1}
\Cdualmap\mu_n = \sum_{i=1}^{m_n} \a_{i,n}\d_{t_{i,n}}\, ,
\end{align}
where $t_{i,n}\in T$ and $\sum_{i=1}^{m_n} |\a_{i,n}|\leq 1$. Since $T_0\subset T$ is dense, we then find $s_{i,n}\in T_0$
with $\metric(t_{i,n},s_{i,n})\leq 1/n$ for all $i=1,\dots,m_n$ and $n\geq 1$. We define
\begin{align*}
\Cdualmap\nu_n := \sum_{i=1}^{m_n} \a_{i,n}\d_{s_{i,n}} \, .
\end{align*}
Let
us fix an $f\in \sC T$ and denote its uniform modulus of continuity by $\unimodcont f\mycdot$, see e.g.~\cite[page 232]{AdFo03}. Then we have
\begin{align*}
\bigl| \dualpair {\Cdualmap\mu_n-\Cdualmap\nu_n} f{\sC T}  \bigr|
 =
\biggl| \sum_{i=1}^{m_n} \a_{i,n}\dualpair{ \d_{t_{i,n}}\!-\!\d_{s_{i,n}}}f{\sC T}  \biggr|
&\leq
 \sum_{i=1}^{m_n} |\a_{i,n}|   |f(t_{i,n}) - f(s_{i,n})| \\
 &\leq
 \sum_{i=1}^{m_n} |\a_{i,n}| \cdot \unimodcont f{t_{i,n}-s_{i,n}} \\
 & \leq \unimodcont f {1/n}\, ,
\end{align*}
and $\lim_{n\to \infty}  \unimodcont f {1/n} = 0$ as $f$ is uniformly continuous by the compactness of $T$.
This gives
\begin{align*}
\bigl|\dualpair{\Cdualmap\mu- \Cdualmap\nu_n}f{\sC T} \bigr|
\leq  \bigl|\dualpair {\Cdualmap\mu- \Cdualmap\mu_n}f{\sC T}\bigr| + \bigl|\dualpair{ \Cdualmap\mu_n- \Cdualmap\nu_n}f{\sC T}\bigr| \to 0 \, .
\end{align*}
This gives $\Cdualmap\mu \in \overline {\aco  \{  \diracf t: t\in  T_0   \}}^{\tauweaksseq}$, and hence we have shown the inclusion
``$\supset$''. The  inclusion ``$\subset$'' follows by  monotonicity from the already established case $T=T_0$.
\end{proof}

To describe the relationship between  $\sC T$-valued Gaussian random variables and Gaussian processes with continuous paths, we now fix a 
map $X:\Om\to \sC T$. As in Section \ref{sec:examples} we then  obtain
a family $(X_t)_{t\in T}$ with $X_t:\Om\to \R$ by setting
\begin{align}\label{eq:rv-gives-sp-app}
X_t(\om) := \dualpair  {\diracf t} {X(\om)} {\sC T} = (X(\om))(t) \, , \myqquad \om\in \Om.
\end{align}
Obviously,  all maps $t\mapsto X_t(\om)$ are continuous.
Conversely, if we have a family $(X_t)_{t\in T}$ of functions $X_t:\Om\to \R$ such that for each $\om \in \Om$ the path $\sppath X(\om)$ is
continuous, then
\begin{align} %\nonumber
% X:\Om & \to \sC T \\
\om & \mapsto \sppath X(\om)
% \bigl(t\mapsto X_t(\om)  \bigr) 
\label{eq:sp-gives-rv-app}
\end{align}
defines a map $X:\Om \to \sC T $.
% $\sC T$-valued map. 
Clearly, the operations  \eqref{eq:rv-gives-sp-app} and \eqref{eq:sp-gives-rv-app} are inverse to each other.
The following folklore result shows that they map $\sC T$-valued (Gaussian) random variables to (Gaussian) processes with continuous paths and vice versa.

\begin{lemma}\label{lem:sp-gives-rv-app}
Let $(T,\metric)$ be a compact metric space, $(\Om, \sA, \P)$ be a probability space,
$X:\Om\to \sC T$ be a map, and
$(X_t)_{t\in T}$ be given by \eqref{eq:rv-gives-sp-app}. Then we have:
\begin{enumerate}
\item $X$ is weakly measurable if and only if $(X_t)_{t\in T}$ is a stochastic process with continuous paths.
\item $X$ is a Gaussian random variable if and only if $(X_t)_{t\in T}$ is a Gaussian process with continuous paths.
\end{enumerate}
\end{lemma}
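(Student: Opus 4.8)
\textbf{Proof plan for Lemma \ref{lem:sp-gives-rv-app}.}

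The plan is to prove both equivalences by exploiting the standard dictionary between $\sC T$-valued maps and families of coordinate maps, using Lemma \ref{lem:krein-milman-for-mt} to reduce weak measurability (resp.\ Gaussianity) of $X$, which \emph{a priori} involves all functionals $\Cdualmap\mu\in\sC T'$, to the single-point evaluations $\diracf t$. For \emph{i)}, first I would observe that since $T$ is a compact metric space, $\sC T$ is separable, so by Theorem \ref{thm:pettis-var} weak measurability of $X$ is the same as $(\sA,\sborelnormx{\sC T})$-measurability; moreover, by the discussion following Definition \ref{def:reg-cond-prob}, weak measurability of $X$ means $\dualpair{\Cdualmap\mu}{X}{\sC T}$ is measurable for all $\mu\in\measpace T$. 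The ``only if'' direction is immediate: if $X$ is weakly measurable, then for each fixed $t$ the functional $\diracf t$ gives that $X_t=\dualpair{\diracf t}{X}{\sC T}$ is measurable, so $(X_t)_{t\in T}$ is a stochastic process, and it has continuous paths by construction of \eqref{eq:rv-gives-sp-app}. For the ``if'' direction, suppose $(X_t)_{t\in T}$ is a stochastic process with continuous paths, so all $X_t=\dualpair{\diracf t}{X}{\sC T}$ are measurable; fix a countable dense $T_0\subset T$, then each $\dualpair{\diracf t}{X}{\sC T}$ with $t\in T_0$ is measurable, hence so is every finite linear combination $\dualpair{\sum\a_i\diracf{t_i}}{X}{\sC T}$ with $t_i\in T_0$. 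For a general $\Cdualmap\mu\in\sC T'$, Lemma \ref{lem:krein-milman-for-mt} produces a sequence $(\Cdualmap\nu_n)$ of such finite combinations of point evaluations at points of $T_0$ with $\dualpair{\Cdualmap\nu_n}{f}{\sC T}\to\dualpair{\Cdualmap\mu}{f}{\sC T}$ for all $f\in\sC T$; applying this to $f=X(\om)$ for each $\om$ gives the pointwise convergence $\dualpair{\Cdualmap\nu_n}{X}{\sC T}\to\dualpair{\Cdualmap\mu}{X}{\sC T}$ on $\Om$, so $\dualpair{\Cdualmap\mu}{X}{\sC T}$ is a pointwise limit of measurable functions and hence measurable. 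Thus $X$ is weakly measurable, and by separability a (strongly measurable) random variable.

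For \emph{ii)}, the ``only if'' direction again specializes to the point evaluations: if $X$ is a Gaussian random variable, then $(X_{t_1},\dots,X_{t_n})=(\dualpair{\diracf{t_1}}{X}{\sC T},\dots,\dualpair{\diracf{t_n}}{X}{\sC T})$ is, for any $t_1,\dots,t_n$, a linear image of the Gaussian random variable $X$ under the bounded operator $f\mapsto(f(t_1),\dots,f(t_n))$, hence Gaussian, and paths are continuous by construction; so $(X_t)_{t\in T}$ is a Gaussian process with continuous paths. For the ``if'' direction, assume $(X_t)_{t\in T}$ is a Gaussian process with continuous paths. Part \emph{i)} already gives that $X$ is a random variable, so by Theorem \ref{thm:test-for-gms} it suffices to show $\dualpair{\Cdualmap\mu}{X}{\sC T}$ is an $\R$-valued Gaussian random variable for all $\Cdualmap\mu$ in some $\tauweaks$-dense subset of $B_{\sC T'}$. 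I would take this dense set to be the absolutely convex combinations $\Cdualmap\nu$ of finitely many point evaluations $\diracf t$, $t\in T_0$; each such $\dualpair{\Cdualmap\nu}{X}{\sC T}=\sum_i\a_i X_{t_i}$ is a fixed finite linear combination of the jointly Gaussian $(X_{t_i})$ and thus Gaussian. Combining Lemma \ref{lem:krein-milman-for-mt} (which guarantees $\overline{\aco\{\diracf t:t\in T_0\}}^{\tauweaksseq}=B_{\sC T'}$, so in particular this $\aco$ is $\tauweaks$-dense in $B_{\sC T'}$) with Theorem \ref{thm:test-for-gms} then yields that $X$ is Gaussian.

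The main obstacle is bookkeeping around two subtly different forms of density: Lemma \ref{lem:krein-milman-for-mt} gives sequential $\tauweaks$-density of $\aco\{\diracf t:t\in T_0\}$, which is exactly what is needed to pass measurability and Gaussianity through a pointwise limit in part \emph{i)} and the ``if'' direction of part \emph{ii)}, whereas Theorem \ref{thm:test-for-gms} is phrased for a $\tauweaks$-dense subset of $B_{E'}$; I would simply note that sequential $\tauweaks$-density implies $\tauweaks$-density (and in fact, since $\sC T$ is separable, the relative $\tauweaks$-topology on $B_{\sC T'}$ is metrizable by Theorem \ref{thm:alaoglu-and-more}, so the two notions coincide), so the hypothesis of Theorem \ref{thm:test-for-gms} is met. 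No delicate estimates are required beyond what Lemma \ref{lem:krein-milman-for-mt} already provides; the remaining steps are routine verifications of measurability and of the Gaussianity of fixed finite linear combinations of jointly Gaussian random variables.
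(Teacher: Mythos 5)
Your proof is correct, and part \emph{ii)} follows essentially the same route as the paper: the ``only if'' direction specializes $X$ to finite tuples of point evaluations, and the ``if'' direction combines Lemma \ref{lem:krein-milman-for-mt} with Theorem \ref{thm:test-for-gms} (the paper works with $\aco\{\diracf t : t\in T\}$ rather than restricting to a countable dense $T_0$, but this is immaterial). Where you genuinely diverge is in the ``if'' direction of part \emph{i)}. The paper does not invoke the Krein--Milman-type density there at all; instead it fixes $\mu\in\measpace T$ with Hahn--Jordan decomposition $\mu=\mu^+-\mu^-$, observes that $(\om,t)\mapsto X_t(\om)$ is measurable in $\om$ and continuous in $t$ and hence jointly measurable (a Carath\'eodory-type result, using only the separability of $T$), and then applies Fubini--Tonelli to conclude that $\om\mapsto\int_T X_t(\om)\intd\mu^\pm(t)$ is measurable. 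You instead approximate a general functional in the $\tauweaks$-topology by absolutely convex combinations of point evaluations at a countable dense set and pass measurability through the pointwise limit. Both arguments are sound; yours has the appeal of running parts \emph{i)} and \emph{ii)} on the same engine, namely Lemma \ref{lem:krein-milman-for-mt}, while the paper's version of part \emph{i)} is more elementary in that it needs no functional-analytic input beyond the Riesz representation of $\sC T'$. The only housekeeping your write-up leaves implicit is the rescaling needed to pass from functionals in $B_{\sC T'}$, where the density lemma applies, to arbitrary $\Cdualmap\mu\in\sC T'$ in part \emph{i)}; this is of course trivial.
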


\begin{proof}[Proof of Lemma \ref{lem:sp-gives-rv-app}]
\ada i If $X$ is weakly measurable, then \eqref{eq:rv-gives-sp-app} ensures that $(X_t)_{t\in T}$ is a stochastic process with continuous paths.
To show the converse, we
fix a $\mu\in \measpace T$
with Hahn-Jordan decomposition $\mu = \mu^+ - \mu^-$ and
we consider the map
\begin{align}\label{lem:sp-gives-rv-h1}
\Om\times T & \to \R \\ \nonumber
(\om,t) & \mapsto X_t(\om) \, .
\end{align}
Then, $\om\mapsto X_t(\om)$ is measurable for all $t\in T$ since we have a stochastic process. In addition, 
$t\mapsto X_t(\om)$ is continuous for all $\om \in \Om$ since the stochastic process has continuous paths.
Combining these observations with the separability of $(T,\metric)$, we conclude by e.g.~\cite[Lemma 4.51]{AlBo06} that 
  \eqref{lem:sp-gives-rv-h1} is $\sA\otimes \sborel(T)$-measurable.
Fubini-Tonelli's theorem, see e.g.~\cite[Theorem 14.16]{Klenke14}, then shows that 
 the two functions
\begin{align*}
\om \mapsto \dualpairmc {\mu^\pm} {X(\om)} T  := \int_\Om  X_t(\om) \intd \mu^\pm(t)
\end{align*}
are measurable, and hence $\om \mapsto   \dualpairmc {\mu} {X(\om)} T$ is also measurable. Using the identification of $\sC T'$ with $\measpace T$ given in
\eqref{eq:riesz-represent} we conclude that $X$ is weakly measurable.

\ada {ii} Let us first assume that $X$ is Gaussian. We fix some  $t_1,\dots,t_n\in T$ and write $Y:= (X_{t_1},\dots,X_{t_n})$.
Clearly, our goal is to show that the $\R^n$-valued random variable $Y$ is Gaussian. To this end, we fix some
$a\in \R^n$ and write $\Cdualmap \mu := a_1 \diracf{t_1}+\dots+a_n \diracf{t_n}$.
Then a simple calculation using \eqref{eq:rv-gives-sp-app}  shows
\begin{align}\label{thm:grs-vs-gps-h1}
\skprod  aY_{\R^n}
= \sum_{i=1}^n a_i X_{t_i}
=  \sum_{i=1}^n a_i \dualpair {\diracf{t_i}} X{\sC T}
= \dualpair {\Cdualmap\mu} X{\sC T} \, ,
\end{align}
and since $X$ is a Gaussian random variable, so is $\dualpair {\Cdualmap\mu} X{\sC T}$, and thus also $\skprod  aY_{\R^n}$. Since $a\in \R^n$ was chosen arbitrarily,
we conclude that $Y$ is Gaussian.

Let us now assume that $(X_t)_{t\in T}$ is a Gaussian process with continuous paths. It suffices to show that $X:\Om\to \sC T$ is Gaussian.
To this end, we consider the set
\begin{align*}
\denseblo := \aco  \{   \diracf t: t\in  T   \}\, .
\end{align*}
By Lemma \ref{lem:krein-milman-for-mt} we know that $\denseblo$ is $\tauweaks$-dense in $B_{\sC T'}$. By \eqref{eq:riesz-represent}
and Theorem \ref{thm:test-for-gms} it therefore
  suffices to show that $\dualpair {\Cdualmap\mu} X{\sC T}$ is Gaussian for all $\Cdualmap\mu\in \denseblo$. 
Let us therefore fix a $\Cdualmap\mu\in \denseblo$. Then there exist an $n\geq 1$ and
  $a_1,\dots,a_n\in \R$ with  $|a_1|+\dots+ |a_n|\leq 1$ and $\Cdualmap\mu = a_1 \diracf{t_1}+\dots+a_n \diracf{t_n}$.
%   
%   $\Cdualmap\mu\in \sC T'$ of the form
%   $\Cdualmap\mu = a_1 \diracf{t_1}+\dots+a_n \diracf{t_n}$, where the coefficients $a_i\in \R$ satisfy $|a_1|+\dots+ |a_n|\leq 1$.
% Let us fix such a  $\Cdualmap\mu$.
Defining $Y:= (X_{t_1},\dots,X_{t_n})$, we then find
\begin{align*}
\dualpair {\Cdualmap\mu} X{\sC T} = \skprod  aY_{\R^n}
\end{align*}
as in \eqref{thm:grs-vs-gps-h1}. Since $Y$ is Gaussian, so is $\skprod  aY_{\R^n}$,
and thus $\dualpair {\Cdualmap\mu} X{\sC T}$ is   Gaussian.
\end{proof}

\begin{lemma}\label{lem:cont-mean+cov-4-cont-GP}
Let $(T,\metric)$ be a  metric space, $(\Om, \sA, \P)$ be a probability space,
and
$(X_t)_{t\in T}$ be a  Gaussian process with continuous paths. Then its mean and covariance functions are continuous.
\end{lemma}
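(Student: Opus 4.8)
The statement to prove is Lemma~\ref{lem:cont-mean+cov-4-cont-GP}: if $(X_t)_{t\in T}$ is a Gaussian process with continuous paths on a metric space $(T,\metric)$, then $m(t) := \E X_t$ and $k(s,t) := \cov(X_s,X_t)$ are continuous. The plan is to deduce this from Fernique's theorem and the $\sLx 2{}$-convergence that continuity of paths forces on the increments $X_t - X_s$.

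\textbf{First step: reduce to a moment-convergence statement.} Fix $t_0\in T$ and a sequence $t_n\to t_0$ in $T$. Since the paths are continuous, $X_{t_n}(\om)\to X_{t_0}(\om)$ for every $\om$, so $X_{t_n}\to X_{t_0}$ pointwise and hence in distribution. Each $X_{t_n}$ is an $\R$-valued Gaussian random variable (take $n=1$ in the definition of a Gaussian process), so Theorem~\ref{thm:levy-grv} applies directly and yields $\E X_{t_n}\to \E X_{t_0}$ and $\var X_{t_n}\to \var X_{t_0}$. The first of these is exactly continuity of $m$ at $t_0$, and since $t_0$ was arbitrary, $m$ is continuous on $T$. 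The convergence of variances is a first instalment of what we need for $k$.

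\textbf{Second step: continuity of $k$.} To handle $k(s,t)=\cov(X_s,X_t)$, I would first upgrade the pointwise convergence to $\Lx 2\P$-convergence. The $\R$-valued Gaussian vector $(X_{t_n}-X_{t_0})$ has $\E(X_{t_n}-X_{t_0})\to 0$ and $\var(X_{t_n}-X_{t_0})\to 0$: indeed $(X_{t_n},X_{t_0})$ is a Gaussian random vector in $\R^2$ (again from the definition of a Gaussian process applied to the pair $\{t_n,t_0\}$), its difference $X_{t_n}-X_{t_0}$ converges to $0$ pointwise hence in distribution, and Theorem~\ref{thm:levy-grv} forces its mean and variance to $0$. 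Therefore $\E|X_{t_n}-X_{t_0}|^2 = \var(X_{t_n}-X_{t_0}) + (\E(X_{t_n}-X_{t_0}))^2 \to 0$, i.e. $X_{t_n}\to X_{t_0}$ in $\Lx 2\P$. (To invoke Theorem~\ref{thm:levy-grv} for the difference one needs $X_{t_n}-X_{t_0}$ to be Gaussian, which holds because linear images of Gaussian vectors are Gaussian; alternatively one can bypass this by noting $X_{t_n}-X_{t_0}$ is bounded in $\Lx 2$ — by Fernique applied to the sup-norm random variable $\sup_{t}|X_t|$, which is finite since paths are continuous on a space we may assume compact, or by uniform integrability — and then pointwise convergence plus $\Lx 2$-boundedness gives $\Lx 2$-convergence; I expect the Gaussian route to be cleanest.) Now for $s_n\to s_0$ and $t_n\to t_0$ we have $X_{s_n}\to X_{s_0}$ and $X_{t_n}\to X_{t_0}$ in $\Lx 2\P$, and the covariance bilinear form is continuous on $\Lx 2\P\times\Lx 2\P$ (Cauchy--Schwarz: $|\cov(U,V)-\cov(U',V')|\le \|U-U'\|_2\|V\|_2 + \|U'\|_2\|V-V'\|_2$), so $k(s_n,t_n)\to k(s_0,t_0)$, establishing joint continuity of $k$ on $T\times T$.

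\textbf{Main obstacle.} The only genuine subtlety is the passage from pointwise (path-)convergence to $\Lx 2$-convergence of the increments, since a priori continuity of paths gives no uniform integrability on its own. The Gaussian structure rescues this: each increment is Gaussian and its distribution is pinned down by two parameters, both of which Theorem~\ref{thm:levy-grv} sends to their limits. If one prefers not to argue that linear images of the process are Gaussian, the alternative is to observe that when $T$ is compact (which one may assume after restricting to a compact neighbourhood of the limit point, using that metric spaces are locally such only up to passing to closures of balls — or simply noting $T$ can be taken compact as in the rest of Section~\ref{sec:examples}) the random variable $\sup_{t\in T}|X_t|$ is finite everywhere, measurable, and lies in every $\Lx p$ by Fernique's theorem~\eqref{eq:Fernique's-theorem} applied to the $\sC T$-valued Gaussian variable associated to the process via Lemma~\ref{lem:sp-gives-rv-app}; this dominating function gives uniform integrability of $|X_{t_n}-X_{t_0}|^2$ and hence $\Lx 2$-convergence by dominated convergence. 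Either way the argument is short; the bulk of the work is just citing Theorem~\ref{thm:levy-grv} correctly.
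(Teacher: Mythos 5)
Your proposal is correct and follows essentially the same route as the paper: pointwise convergence of the paths gives convergence in distribution, Theorem \ref{thm:levy-grv} applied to the (Gaussian) increments $X_{t_n}-X_{t_0}$ upgrades this to $\Lx 2\P$-convergence, and Cauchy--Schwarz on the covariance bilinear form yields joint continuity of $k$ (the paper phrases this with the centered variables $Y_n, Z_n$, but the argument is the same). The Fernique/compactness alternative you sketch is unnecessary and would not apply as stated, since $T$ is only assumed to be a metric space here; fortunately you correctly identify the Gaussian route as the one to use.
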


\begin{proof}[Proof of Lemma \ref{lem:cont-mean+cov-4-cont-GP}]
To show the continuity of the mean function, we fix a $t_\infty\in T$ and a sequence $(t_n)\subset T$ with $t_n\to t_\infty$. Our assumption then guarantees
$X_{t_n}(\om) \to X_{t_\infty}(\om)$ for all $\om \in \Om$, and therefore also $X_{t_n} \to X_{t_\infty}$ is distribution. Now, Theorem \ref{thm:levy-grv} gives $\E X_{t_n}\to \E X_{t_\infty}$.

For the continuity of the covariance function $k$, we again fix a $t_\infty\in T$  and a sequence $(t_n)\subset T$ with  $t_n\to t_\infty$. As for the mean function, 
Theorem \ref{thm:levy-grv} then shows  
\begin{align}\label{lem:cont-mean+cov-4-cont-GP-h1}
k(t_n,t_n) = \var X_{t_n} \to \var X_t = k(t,t)\, .
\end{align}
Let us now fix an $s_\infty\in T$ and a sequence $(s_n)\subset T$ with  $s_n\to s_\infty$.
We write   $Y_n := X_{s_n}- \E X_{s_n}$ and  $Z_n := X_{t_n} - \E X_{t_n}$ for $n\in \N\cup\{\infty\}$. Then we have 
$Z_n(\om) -  Z_\infty(\om) \to 0$ for all $\om \in \Om$ by our assumption and the already established continuity of the mean function. Moreover, each
$Z_n - Z_\infty$ is a centered Gaussian random variable, and therefore another application of Theorem \ref{thm:levy-grv}  shows $\var (Z_n-Z_\infty) \to 0$. The latter implies 
\begin{align}\nonumber
| k(s_\infty,t_n) - k(s_\infty,t_\infty)  |
= 
| \E Y_\infty Z_n  - \E Y_\infty Z_\infty  |
&\leq 
( \E Y_\infty^2)^{1/2} ( \E (Z_n - Z_\infty)^2)^{1/2} \\ \label{lem:cont-mean+cov-4-cont-GP-h2}
&\to 0\, .
\end{align}
In combination with \eqref{lem:cont-mean+cov-4-cont-GP-h1} we analogously obtain
\begin{align}\label{lem:cont-mean+cov-4-cont-GP-h3}
| k(s_n,t_n) - k(s_\infty,t_n)  |
= 
| \E Y_n Z_n  - \E Y_\infty Z_n |
\leq 
 ( \E (Y_n - Y_\infty)^2)^{1/2} \cdot ( \E Z_n^2)^{1/2}\to 0
\end{align}
Combining \eqref{lem:cont-mean+cov-4-cont-GP-h2} and \eqref{lem:cont-mean+cov-4-cont-GP-h3} with 
\begin{align*}
| k(s_n,t_n) - k(s_\infty,t_\infty)  |
\leq 
| k(s_n,t_n) - k(s_\infty,t_n)  | + | k(s_\infty,t_n) - k(s_\infty,t_\infty)  |
\end{align*}
then yields the assertion.
\end{proof}

\begin{lemma}\label{lem:cov-gp-comp}
Let $(T,\metric)$ be a compact metric space, $(\Om, \sA, \P)$ be a probability space,
$X:\Om\to \sC T$ be a Gaussian random variable. Moreover, let 
$(X_t)_{t\in T}$ be the associated Gaussian process given by \eqref{eq:rv-gives-sp-app} and $k$ be its covariance function.
Then for all $\mu,\nu \in \measpace T$ we have 
\begin{align*}
\dualpair {\Cdualmap\mu}{\cov (X) \Cdualmap\nu}{\sC T} = \int_T \int_T k(s,t) \intd\mu(s) \intd\nu(t) \, .
\end{align*}
\end{lemma}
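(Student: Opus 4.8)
The plan is to reduce the statement to the already-established weak-integral formula \eqref{eq:weak-cross-cov} for covariance operators, applied to the functionals $\Cdualmap\mu$ and $\Cdualmap\nu$ on $\sC T$, and then to identify the resulting one-dimensional covariance with the double integral via Fubini-Tonelli. Concretely, fix $\mu,\nu\in\measpace T$. By \eqref{eq:weak-cross-cov} applied with $X=Y$ (recall $\cov(X)=\cov(X,X)$) we have
\begin{align*}
\dualpair {\Cdualmap\mu}{\cov (X) \Cdualmap\nu}{\sC T}
= \cov\bigl(\dualpair{\Cdualmap\nu}{X}{\sC T},\,\dualpair{\Cdualmap\mu}{X}{\sC T}\bigr)\, ,
\end{align*}
so the task becomes to compute the two $\R$-valued random variables $\dualpair{\Cdualmap\mu}{X}{\sC T}$ and $\dualpair{\Cdualmap\nu}{X}{\sC T}$ and the covariance between them.

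First I would show that $\dualpair{\Cdualmap\mu}{X(\om)}{\sC T} = \int_T X_t(\om)\intd\mu(t)$ $\P$-almost surely, and similarly for $\nu$. This is essentially the content of \eqref{eq:riesz-represent} together with the definition \eqref{eq:rv-gives-sp-app} of the associated process: by the Riesz identification, $\Cdualmap\mu$ acts on $f\in\sC T$ by $f\mapsto\int_T f\intd\mu$, and $X(\om)\in\sC T$ is the path $t\mapsto X_t(\om)$, so $\dualpair{\Cdualmap\mu}{X(\om)}{\sC T}=\int_T X_t(\om)\intd\mu(t)$ for \emph{every} $\om$. The measurability of $\om\mapsto\int_T X_t(\om)\intd\mu(t)$ was already verified inside the proof of Lemma \ref{lem:sp-gives-rv-app} (the Hahn-Jordan decomposition plus the joint measurability of $(\om,t)\mapsto X_t(\om)$ via \cite[Lemma 4.51]{AlBo06} and Fubini-Tonelli), so this is quotable.

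Next, I would compute the covariance. Writing $m$ for the mean function, which is continuous by Lemma \ref{lem:cont-mean+cov-4-cont-GP}, linearity of the expectation (justified by Fubini-Tonelli applied to the dominating function $|X_t(\om)|$, which is integrable on $\Om\times T$ because $\int_T\E|X_t|\intd|\mu|(t)<\infty$ by Fernique and compactness) gives $\E\dualpair{\Cdualmap\mu}{X}{\sC T}=\int_T m(t)\intd\mu(t)$. Then
\begin{align*}
\cov\bigl(\dualpair{\Cdualmap\nu}{X}{\sC T},\dualpair{\Cdualmap\mu}{X}{\sC T}\bigr)
= \E\biggl[\int_T (X_s-m(s))\intd\nu(s)\cdot\int_T (X_t-m(t))\intd\mu(t)\biggr]\, ,
\end{align*}
and I would apply Fubini-Tonelli once more to pull both integrals outside the expectation; the dominating function is $|X_s-m(s)|\,|X_t-m(t)|$ on $\Om\times T\times T$, integrable against $\P\otimes|\nu|\otimes|\mu|$ by Cauchy-Schwarz and Fernique. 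This yields $\int_T\int_T \cov(X_s,X_t)\intd\nu(s)\intd\mu(t)=\int_T\int_T k(s,t)\intd\nu(s)\intd\mu(t)$, and by the symmetry $k(s,t)=k(t,s)$ together with Tonelli this equals $\int_T\int_T k(s,t)\intd\mu(s)\intd\nu(t)$, as claimed.

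\textbf{Main obstacle.} The only genuine technical point is the repeated justification of the Fubini-Tonelli swaps: one must check joint measurability of the integrands on the relevant product spaces and produce integrable dominating functions. Joint measurability of $(\om,t)\mapsto X_t(\om)$ is already available from the proof of Lemma \ref{lem:sp-gives-rv-app}; joint measurability of $(\om,s,t)\mapsto(X_s(\om)-m(s))(X_t(\om)-m(t))$ follows from it plus continuity of $m$; and the integrability of the dominating functions is an immediate consequence of Fernique's theorem \eqref{eq:Fernique's-theorem} for $X$ (hence $\E\inorm{X}^2<\infty$, so $\sup_t\E X_t^2<\infty$) together with the finiteness of the total variation norms $\snorm\mu_{\mathrm{TV}}$, $\snorm\nu_{\mathrm{TV}}$ and the compactness of $T$. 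Everything else is bookkeeping with \eqref{eq:weak-cross-cov}, \eqref{eq:riesz-represent}, and the definition of the associated process.
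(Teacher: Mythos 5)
Your proposal is correct, but it takes a genuinely different route from the paper. The paper never reduces to a scalar covariance at the outset; instead it works at the level of the Banach space $\sC T$: it observes that $k(s,\mycdot) = \cov(X)\diracf s$ (from \eqref{eq:covfct-by-cov}), shows that $t\mapsto k(\mycdot,t)$ lies in $\sLx 1{\nu,\sC T}$, and then identifies $\cov(X)\Cdualmap\nu$ with the $\sC T$-valued Bochner integral $\int_T k(\mycdot,t)\intd\nu(t)$ by testing against all point evaluations $\diracf s$ and using the symmetry \eqref{eq:weak-cov-sym} together with the Pettis property \eqref{eq:bochner-is-pettis}; a second application of the Pettis property with $\Cdualmap\mu$ then yields the double integral. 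You instead collapse everything to real-valued integration immediately via \eqref{eq:weak-cross-cov} and push the computation through a Fubini--Tonelli argument on $\Om\times T\times T$. Both arguments are sound. The paper's version buys a cleaner justification burden: the only measurability issue is the weak measurability of $t\mapsto k(\mycdot,t)$, and the heavy lifting is delegated to the already-developed Bochner/Pettis machinery, while the fact that continuous functions are determined by point evaluations replaces your final symmetry-and-swap step. Your version buys elementarity: no vector-valued integration is needed, at the cost of having to verify joint measurability of $(\om,s,t)\mapsto (X_s(\om)-m(s))(X_t(\om)-m(t))$ and an integrable dominant on the triple product -- which you correctly identify as the main obstacle and discharge via the joint measurability of $(\om,t)\mapsto X_t(\om)$ from the proof of Lemma \ref{lem:sp-gives-rv-app}, continuity of $m$, Fernique's theorem, and the finiteness of the total variations. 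One cosmetic remark: in the final swap the integrand $k$ is signed, so the step is Fubini rather than Tonelli, but since $k$ is bounded on the compact $T\times T$ and the measures are finite this is immaterial.
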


\begin{proof}[Proof of Lemma \ref{lem:cov-gp-comp}]
We first note that using Hahn-Jordan decompositions for $\mu,\nu \in \measpace T$ we may assume without loss of generality that $\mu$ and $\nu$ are finite measures.
Moreover, 
from Lemma \ref{lem:sp-gives-rv-app} and Lemma \ref{lem:cont-mean+cov-4-cont-GP} we know that $k$ is continuous, and consequently, we can consider the map $\Phi:T\to \sC T$ defined by 
$\Phi(t) := k(\mycdot,t)$. 

Our first goal is to show that $\Phi\in \sLx 1 {\nu, {\sC T}}$. To this end, we first note that for all $\mu\in \measpace T$
the measurability of 
\begin{align*}
t\mapsto \dualpair {\Cdualmap\mu}{\Phi(t)}{\sC T} = \int_T k(s,t) \intd \mu(s)  
\end{align*}
follows from measurability of $k$ and Fubini-Tonelli's theorem, see e.g.~\cite[Theorem 14.16]{Klenke14}. Consequently, $\Phi$ is weakly measurable, and since $\sC T$ is separable,
$\Phi$ is also strongly measurable. In addition we have 
\begin{align*}
\sup_{t\in T} \inorm{\Phi(t)} = \sup_{s,t\in T} |k(s,t)| < \infty
\end{align*}
since $k$ is continuous and $T\times T$ is compact. Combining these considerations yields $\Phi\in \sLx 1 {\nu, {\sC T}}$. 

Now, \eqref{eq:covfct-by-cov} and the symmetry of $k$ shows $k(s, \mycdot) =  \cov (X) \diracf {s}$ for all $s\in T$. Together with  \eqref{eq:weak-cov-sym} 
and the Pettis property \eqref{eq:bochner-is-pettis} of Bochner integrals
this in turn implies 
\begin{align*}
\dualpair {\diracf {s}}{\cov (X) \Cdualmap\nu}{\sC T}
=
\dualpair {\Cdualmap\nu }{\cov (X) \diracf {s}}{\sC T}
&= 
\dualpair {\Cdualmap\nu }{k(s, \mycdot) }{\sC T} \\
&= 
\int_T k(s,t) \intd \nu(t) \\
&=
\int_T \dualpair {\diracf {s}}{\Phi(t)}{\sC T} \intd \nu(t) \\
&= 
\dualpairB {\diracf {s}}{\int_T \Phi(t)\intd \nu(t) }{\sC T} 
\end{align*}
for all $s\in T$. In other words, the functions $\cov (X) \Cdualmap\nu$ and $\int_T \Phi(t)\intd \nu(t)$ have the same values for all $s\in T$, and hence we have 
\begin{align*}
\cov (X) \Cdualmap\nu = \int_T \Phi(t)\intd \nu(t)\, .
\end{align*}
This in turn gives
\begin{align*}
\dualpair {\Cdualmap\mu}{\cov (X) \Cdualmap\nu}{\sC T} 
= 
\dualpairB {\Cdualmap\mu}{\int_T \Phi(t)\intd \nu(t) }{\sC T} 
% &= 
% \int_T \dualpair {\Cdualmap\mu}{\Phi(t)}{\sC T} \intd \nu(t) \\
&= 
\int_T \dualpair {\Cdualmap\mu}{k(\mycdot, t)}{\sC T} \intd \nu(t) \\
&= 
\int_T \int_T k(s,t) \intd\mu(s) \intd\nu(t)\, ,
\end{align*}
where in the second step we again used \eqref{eq:bochner-is-pettis}.
\end{proof}

%
% \newpage
%
% \input{appendix}

\bibliographystyle{plain}
\bibliography{../../literatur-db/steinwart-mine,../../literatur-db/steinwart-books,../../literatur-db/steinwart-article}

\begin{thebibliography}{10}

\bibitem{AdFo03}
R.~A. Adams and J.~J.~F. Fournier.
\newblock {\em Sobolev Spaces}.
\newblock Academic Press, New York, 2nd edition, 2003.

\bibitem{AlYu17a}
G.~A. Alekseev and E.~V. Yurova.
\newblock On {G}aussian conditional measures depending on a parameter.
\newblock {\em Theory Stoch. Process.}, 22:1--7, 2017.

\bibitem{AlBo06}
C.D. Aliprantis and K.C. Border.
\newblock {\em Infinite Dimensional Analysis. A Hitchhiker's Guide}.
\newblock Springer, Berlin, 3rd edition, 2006.

\bibitem{BaDVRoVi23a}
F.~Bartolucci, E.~{De Vito}, L.~Rosasco, and S.~Vigogna.
\newblock Understanding neural networks with reproducing kernel {B}anach
  spaces.
\newblock {\em Appl. Comput. Harmon. Anal.}, 62:194--236, 2023.

\bibitem{Bauer96}
H.~Bauer.
\newblock {\em Probability Theory}.
\newblock De Gruyter, Berlin, 1996.

\bibitem{Bauer01}
H.~Bauer.
\newblock {\em Measure and Integration Theory}.
\newblock De Gruyter, Berlin, 2001.

\bibitem{Bishop06}
C.~M. Bishop.
\newblock {\em Pattern Recognition and Machine Learning}.
\newblock Springer, New York, 2006.

\bibitem{Bogachev98}
V.~I. Bogachev.
\newblock {\em Gaussian Measures}.
\newblock American Mathematical Society, Providence, RI, 1998.

\bibitem{Bogachev07_I}
V.~I. Bogachev.
\newblock {\em Measure Theory, Vol.~I}.
\newblock Springer, Berlin, 2007.

\bibitem{Bogachev07_II}
V.~I. Bogachev.
\newblock {\em Measure Theory, Vol.~II}.
\newblock Springer, Berlin, 2007.

\bibitem{ChHoOwSt21a}
Y.~Chen, B.~Hosseini, H.~Owhadi, and A.~M. Stuart.
\newblock Solving and learning nonlinear {PDE}s with {G}aussian processes.
\newblock {\em Journal of Computational Physics}, 447:110668, 2021.

\bibitem{CoOaSuGi17a}
J.~Cockayne, C.~J. Oates, T.~J. Sullivan, and M.~Girolami.
\newblock Probabilistic numerical methods for pde-constrained bayesian inverse
  problems.
\newblock {\em AIP Conference Proceedings}, 1853:060001, 2017.

\bibitem{CoOaSuGi19a}
J.~Cockayne, C.~J. Oates, T.~J. Sullivan, and M.~Girolami.
\newblock Bayesian probabilistic numerical methods.
\newblock {\em SIAM Rev.}, 61:756--789, 2019.

\bibitem{Cohn13}
D.L. Cohn.
\newblock {\em Measure Theory}.
\newblock Birkh\"{a}user/Springer, New York, 2nd edition, 2013.

\bibitem{Conway90}
J.~B. Conway.
\newblock {\em A Course in Functional Analysis}.
\newblock Springer, New York, 2nd edition, 1990.

\bibitem{DiJaTo95}
J.~Diestel, H.~Jarchow, and A.~Tonge.
\newblock {\em Absolutely Summing Operators}.
\newblock Cambridge University Press, Cambridge, 1995.

\bibitem{DiUh77}
J.~Diestel and J.~J. Uhl.
\newblock {\em Vector Measures}.
\newblock American Mathematical Society, Providence, 1977.

\bibitem{Dudley02}
R.~M. Dudley.
\newblock {\em Real Analysis and Probability}.
\newblock Cambridge University Press, Cambridge, 2002.

\bibitem{FiSt20a}
S.~Fischer and I.~Steinwart.
\newblock Sobolev norm learning rates for regularized least-squares algorithm.
\newblock {\em J. Mach. Learn. Res.}, 21:Paper No. 205, 2020.

\bibitem{Garnett23}
Roman Garnett.
\newblock {\em Bayesian Optimization}.
\newblock Cambridge University Press, Cambridge, 2023.

\bibitem{GeDiFuGu10}
A.~E. Gelfand, P.~J. Diggle, M.~Fuentes, and P.~Guttorp, editors.
\newblock {\em Handbook of Spatial Statistics}.
\newblock CRC Press, Boca Raton, FL, 2010.

\bibitem{GeSc16a}
A.~E. Gelfand and E.~M. Schliep.
\newblock Spatial statistics and {G}aussian processes: {A} beautiful marriage.
\newblock {\em Spat. Stat.}, 18:86--104, 2016.

\bibitem{GhVa17}
S.~Ghosal and A.~van~der Vaart.
\newblock {\em Fundamentals of Nonparametric {B}ayesian Inference}.
\newblock Cambridge University Press, Cambridge, 2017.

\bibitem{GiNi16}
E.~Gin\'{e} and R.~Nickl.
\newblock {\em Mathematical Foundations of Infinite-Dimensional Statistical
  Models}.
\newblock Cambridge University Press, New York, 2016.

\bibitem{GoMa08a}
B.~Goldys and B.~Maslowski.
\newblock The {O}rnstein-{U}hlenbeck bridge and applications to {M}arkov
  semigroups.
\newblock {\em Stochastic Process. Appl.}, 118:1738--1767, 2008.

\bibitem{HaRoSi01}
R.~Hagen, S.~Roch, and B.~Silbermann.
\newblock {\em {{\(C^*\)}}-Algebras and Numerical Analysis}.
\newblock Marcel Dekker, New York, 2001.

\bibitem{HaStVoWi05a}
M.~Hairer, A.~M. Stuart, J.~Voss, and P.~Wiberg.
\newblock Analysis of {SPDE}s arising in path sampling. {I}. {T}he {G}aussian
  case.
\newblock {\em Commun. Math. Sci.}, 3:587--603, 2005.

\bibitem{HeOsKe22}
P.~Hennig, M.~A. Osborne, and H.~P. Kersting.
\newblock {\em Probabilistic Numerics---Computation as Machine Learning}.
\newblock Cambridge University Press, Cambridge, 2022.

\bibitem{HyvNVeWe16}
T.~Hyt{\"o}nen, J.~van Neerven, M.~Veraar, and L.~Weis.
\newblock {\em Analysis in Banach Spaces, Vol.~I}.
\newblock Springer, Cham, 2016.

\bibitem{JaPr04}
J.~Jacod and P.~Protter.
\newblock {\em Probability Essentials}.
\newblock Springer, Berlin, 2nd corrected edition, 2004.

\bibitem{Janson97}
S.~Janson.
\newblock {\em Gaussian {H}ilbert Spaces}.
\newblock Cambridge University Press, Cambridge, 1997.

\bibitem{KaHeSeSrXXa}
M.~Kanagawa, P.~Hennig, D.~Sejdinovic, and B.~K. Sriperumbudur.
\newblock {G}aussian processes and kernel methods: A review on connections and
  equivalences.
\newblock arXiv:1807.02582, 2018.

\bibitem{KlSpSu21a}
I.~Klebanov, B.~Sprungk, and T.~J. Sullivan.
\newblock The linear conditional expectation in {H}ilbert space.
\newblock {\em Bernoulli}, 27:2267--2299, 2021.

\bibitem{Klenke14}
A.~Klenke.
\newblock {\em Probability Theory}.
\newblock Springer, London, 2nd edition, 2014.

\bibitem{KoPf21a}
P.~Koepernik and F.~Pfaff.
\newblock Consistency of {G}aussian process regression in metric spaces.
\newblock {\em J. Mach. Learn. Res.}, 22:Paper No. 244, 2021.

\bibitem{LaGatta13a}
T.~LaGatta.
\newblock Continuous disintegrations of {G}aussian processes.
\newblock {\em Theory Probab. Appl.}, 57:151--162, 2013.

\bibitem{LeTa91}
M.~Ledoux and M.~Talagrand.
\newblock {\em Probability in {B}anach Spaces}.
\newblock Springer, Berlin, 1991.

\bibitem{Mandelbaum84a}
A.~Mandelbaum.
\newblock Linear estimators and measurable linear transformations on a
  {H}ilbert space.
\newblock {\em Z. Wahrsch. Verw. Gebiete}, 65:385--397, 1984.

\bibitem{Megginson98}
R.~E. Megginson.
\newblock {\em An Introduction to {B}anach Space Theory}.
\newblock Springer, New York, 1998.

\bibitem{Neal96}
R.~M. Neal.
\newblock {\em Bayesian Learning for Neural Networks}.
\newblock Springer, New York, 1996.

\bibitem{NoWo08}
E.~Novak and H.~Wo{\'z}niakowski.
\newblock {\em Tractability of Multivariate Problems. {V}ol. 1: {L}inear
  Information}.
\newblock European Mathematical Society, Z{\"u}rich, 2008.

\bibitem{NoWo10}
E.~Novak and H.~Wo{\'z}niakowski.
\newblock {\em Tractability of multivariate problems. {V}olume {II}: {S}tandard
  Information for Functionals}.
\newblock European Mathematical Society, Z{\"u}rich, 2010.

\bibitem{OwSc18a}
H.~Owhadi and C.~Scovel.
\newblock Conditioning {G}aussian measure on {H}ilbert space.
\newblock {\em J. Math. Stat. Anal.}, 1:1--15, 2018.

\bibitem{PfStHeWeXXa}
M.~Pf{\"o}rtner, I.~Steinwart, P.~Hennig, and J.~Wenger.
\newblock Physics-informed {G}aussian process regression generalizes linear
  {PDE} solvers.
\newblock arXiv:2212.12474, 2022.

\bibitem{PoKeRoMe24a}
A.~Poot, P.~Kerfriden, I.~Rocha, and F.~van~der Meer.
\newblock A {B}ayesian approach to modeling finite element discretization
  error.
\newblock {\em Stat. Comput.}, 34:Paper No. 167, 2024.

\bibitem{RaPeKa17a}
M.~Raissi, P.~Perdikaris, and G.~E. Karniadakis.
\newblock Machine learning of linear differential equations using {G}aussian
  processes.
\newblock {\em Journal of Computational Physics}, 348:683--693, 2017.

\bibitem{RaWi06}
C.~E. Rasmussen and C.~K.~I. Williams.
\newblock {\em Gaussian Processes for Machine Learning}.
\newblock MIT Press, Cambridge, MA, 2006.

\bibitem{Ritter00}
K.~Ritter.
\newblock {\em Average-Case Analysis of Numerical Problems}.
\newblock Springer, Berlin, 2000.

\bibitem{Rudin91}
W.~Rudin.
\newblock {\em Functional Analysis}.
\newblock McGraw-Hill, New York, 2nd edition, 1991.

\bibitem{ScSt25a}
M.~Sch{\"o}lpple and I.~Steinwart.
\newblock Which spaces can be embedded in reproducing kernel {H}ilbert spaces?
\newblock {\em Constr. Approx.}, 2025.

\bibitem{StCh08}
I.~Steinwart and A.~Christmann.
\newblock {\em Support Vector Machines}.
\newblock Springer, New York, 2008.

\bibitem{StHuSc09b}
I.~Steinwart, D.~Hush, and C.~Scovel.
\newblock Optimal rates for regularized least squares regression.
\newblock In S.~Dasgupta and A.~Klivans, editors, {\em Proceedings of the 22nd
  {A}nnual {C}onference on {L}earning {T}heory}, pages 79--93, 2009.

\bibitem{Stroock11a}
D.~W. Stroock.
\newblock {\em Probability Theory, an Analytic View}.
\newblock Cambridge University Press, Cambridge, 2nd edition, 2011.

\bibitem{Stuart10a}
A.~M. Stuart.
\newblock Inverse problems: {A} {B}ayesian perspective.
\newblock {\em Acta Numer.}, 19:451--559, 2010.

\bibitem{Sullivan15}
T.~J. Sullivan.
\newblock {\em Introduction to Uncertainty Quantification}.
\newblock Springer, Cham, 2015.

\bibitem{TaVa07a}
V.~Tarieladze and N.~Vakhania.
\newblock Disintegration of {G}aussian measures and average-case optimal
  algorithms.
\newblock {\em J. Complexity}, 23:851--866, 2007.

\bibitem{TrWaWo88}
J.~F. Traub, G.~W. Wasilkowski, and H.~Wo\'{z}niakowski.
\newblock {\em Information-based complexity}.
\newblock Academic Press, Boston, MA, 1988.

\bibitem{TrGi24a}
C.~Travelletti and D.~Ginsbourger.
\newblock Disintegration of {G}aussian measures for sequential assimilation of
  linear operator data.
\newblock {\em Electron. J. Stat.}, 18:3825--3857, 2024.

\bibitem{VaTaCh87}
N.~N. Vakhaniya, V.~I. Tarieladze, and S.~A. Chobanyan.
\newblock {\em Probability Distributions on Banach Spaces}.
\newblock D.~Reidel Publishing Company, Dordrecht, 1987.

\bibitem{VaZa11a}
A.~van~der Vaart and H.~van Zanten.
\newblock Information rates of nonparametric {G}aussian process methods.
\newblock {\em J. Mach. Learn. Res.}, 12:2095--2119, 2011.

\bibitem{WaWeQi18}
G.~Wang, Y.~Wei, and S.~Qiao.
\newblock {\em Generalized Inverses: Theory and Computations}.
\newblock Springer \& Science Press, Singapore \& Beijing, 2nd edition, 2018.

\bibitem{Wendland05}
H.~Wendland.
\newblock {\em Scattered Data Approximation}.
\newblock Cambridge University Press, Cambridge, 2005.

\bibitem{Werner11}
D.~Werner.
\newblock {\em Funktionalanalysis}.
\newblock Springer, Berlin, 7th edition, 2011.

\end{thebibliography}

\end{document}